\newcommand{\g}{\mathfrak{g}}
\newcommand{\ad}{\operatorname{ad}}
\newcommand{\ind}{\operatorname{ind}}
\def\ep{\varepsilon}
\def\p{\mathfrak p}
\def\s{\mathfrak s}
\def\h{\mathfrak h}
\def\a{\mathfrak a}
\def\n{\mathfrak n}
\def\ch{\rm ch}
\def\q{\mathfrak q}
\newtheorem*{thm}{Theorem}
\newtheorem*{prop}{Proposition}
\newtheorem*{lm}{Lemma}
\newtheorem*{cor}{Corollary}
\newtheorem*{Rqs}{Remarks}
\newtheorem*{Rq}{Remark}
\newcommand{\isomto}{\overset{\sim}{\rightarrow}}
\title[Weierstrass sections]{Weierstrass sections for some truncated parabolic subalgebras.}
\author{Florence Fauquant-Millet}
\address{Univ Lyon,
UJM Saint-Etienne\\
CNRS UMR 5208\\
Institut Camille Jordan\\
F- 42023 Saint-Etienne\\
France}
\email{florence.millet@univ-st-etienne.fr}
\begin{document}

\begin{abstract}

In this paper, using Bourbaki's convention, we consider a simple Lie algebra $\g\subset\g{\mathfrak l}_m$ of type B, C or D and a parabolic subalgebra $\p$ of $\g$ associated with a  Levi factor composed  essentially, on each side of the second diagonal, by successive blocks of size two, except possibly for the first and the last ones. 
Extending the notion of a Weierstrass section introduced by Popov to the coadjoint action of the truncated parabolic subalgebra associated with $\p$,
we construct explicitly Weierstrass sections, which give the polynomiality (when it was not yet known) for the algebra generated by semi-invariant polynomial  functions on the dual space $\p^*$ of $\p$ and which allow to linearize  semi-invariant generators.
Our Weierstrass sections  require the construction of an adapted pair, which is the analogue of a principal $\s\mathfrak l_2$-triple in the non reductive case.

\end{abstract}

\maketitle

{\it Mathematics Subject Classification} : 16 W 22, 17 B 22, 17 B 35.

{\it Key words} : Weierstrass section, adapted pair, slice, parabolic subalgebra, polynomiality, symmetric invariants, semi-invariants.

\section{Introduction.}\label{Intro}

The base field $\Bbbk$ is algebraically closed of characteristic zero.

\subsection{}\label{Introbounds}

Let $\g$ be a simple Lie algebra over $\Bbbk$ and $\p$ be a standard parabolic subalgebra of $\g$, acting by coadjoint action on its dual space $\p^*$. Denote by $Sy(\p)$ the vector space generated by the semi-invariant polynomial functions on  $\p^*$. This is a subalgebra of the symmetric algebra $S(\p)$ of $\p$. Moreover there exists a canonically defined subalgebra $\p_{\Lambda}$ of $\p$, called the canonical truncation of $\p$ or the truncated parabolic subalgebra associated with $\p$, such that the algebra $Y(\p_{\Lambda})$ of  invariant polynomial functions on $\p_{\Lambda}^*$ coincides with the algebra $Sy(\p)$ (see \ref{CT} for more details). For some parabolic subalgebras $\p$ which we define below, we will study whether $Sy(\p)$ is isomorphic to a polynomial algebra over $\Bbbk$ and whether one can linearize generators of $Sy(\p)$.

\subsection{}\label{Cases}

Now we consider $\g$ simple of type ${\rm B}_n$, ${\rm C}_n$ or ${\rm D}_n$ and integers $\ell\in\mathbb N$ and $s\in\mathbb N^*$ with $s+2\ell\le n$. 

Using Bourbaki's labelling \cite{BOU} for a chosen set  $\pi=\{\alpha_1,\,\ldots,\,\alpha_n\}$ of simple roots of $\g$  with respect to some Cartan subalgebra $\h$, we focus on several standard parabolic subalgebras $\p$ of $\g$ associated with a particular subset $\pi'$ of  $\pi$, where roughly speaking every second root  in a chain of simple roots is deleted. 

 More precisely we consider the parabolic subalgebra $\p_{s,\,\ell}$ of $\g$ associated with the subset $\pi'\subset\pi$ such that $$\pi'=\pi\setminus\{\alpha_s,\,\alpha_{s+2},\,\ldots,\,\alpha_{s+2\ell}\}$$ with $s+2\ell\le n$.

When $\g$ is of type ${\rm D}_n$, we also study some parabolic subalgebras associated with a subset $\pi'\subset\pi$ which does not contain the last two roots $\alpha_{n-1}$ and $\alpha_n$ and also does not contain every second root  in a chain of simple roots.
Indeed we consider two other cases of parabolic subalgebras in $\g$ of type ${\rm D}_n$ which we define below.

The first case consists in deleting $\alpha_n$, $\alpha_{n-1}$ and then possibly every second simple root preceding $\alpha_{n-1}$ until $\alpha_{n-1-2\ell}$ with $0\le \ell\le(n-2)/2$. 
Thus  we denote by $\p_{\ell}$ the parabolic subalgebra  of $\g$ of type ${\rm D}_n$ associated with the subset $\pi'\subset\pi$ such that $$\pi'=\pi\setminus\{\alpha_{n-1-2k},\,\alpha_n\mid 0\le k\le \ell\}$$
 with $0\le \ell\le(n-2)/2$.
 
The second case consists in deleting $\alpha_n$, $\alpha_{n-1}$ and then every second simple root from some simple root $\alpha_s$ until $\alpha_{s+2\ell}$ with  $s+2\ell\le n-2$. 
Thus we denote by $\q_{s,\,\ell}$ the parabolic subalgebra of $\g$ of type ${\rm D}_n$ associated with 
$$\pi'=\pi\setminus\{\alpha_s,\,\alpha_{s+2},\,\ldots,\,\alpha_{s+2\ell},\,\alpha_{n-1},\,\alpha_n\}$$
with  $s+2\ell\le n-4$ or $s+2\ell=n-2$.
Note that, if $s+2\ell=n-3$, then $\q_{s,\,(n-3-s)/2}=\p_{(n-1-s)/2}$ or simplier $\q_{n-3-2\ell,\,\ell}=\p_{\ell+1}$.

Roughly speaking, identifying $\g$ with a Lie subalgebra of some $\g\mathfrak l_m$ and adopting the conventions in \cite[Chap VIII]{Bou1} the Levi factor of every parabolic subalgebra $\p$ as defined above is composed, on each side of the second diagonal, by $\ell$ successive blocks of size two, a first block and possibly a last block (in type ${\rm D}_n$, when $\alpha_n\in\pi'$ but $\alpha_{n-1}\not\in\pi'$, we may notice that we have a pair of blocks along the second diagonal, symmetric with respect to the first diagonal). In other words the Levi subalgebra of such $\p$ is of type ${\rm A}_{s-1}\times{\rm A}_1^{\ell}\times{\rm R}_{r}$, where
$$\begin{cases}
s=n-1-2\ell \;{\rm and}\; {\rm R}_{r}=\{0\}&{\rm for}\; \p_{\ell}\\
{\rm R}_r={\rm A}_{n-2-s-2\ell}&{\rm for}\; \q_{s,\,\ell} \\
{\rm R}_r={\rm B}_{n-s-2\ell}&{\rm for}\; \p_{s,\,\ell} \;{\rm and}\;\g\; {\rm of\; type}\;{\rm B}_n\\
{\rm R}_r={\rm D}_{n-s-2\ell}&{\rm for}\; \p_{s,\,\ell} \;{\rm and}\;\g \;{\rm of\; type}\; {\rm D}_n\\
\end{cases}$$
with the convention that ${\rm A}_0={\rm B}_0={\rm D}_0={\rm A}_1^{0}=\{0\}$, ${\rm B}_1={\rm D}_1={\rm A}_1$ and ${\rm D}_2={\rm A}_1\times{\rm A}_1$ (here, for any $k\in\mathbb N^*$, type $\{0\}\times {\rm A}_k$ or ${\rm A}_k\times\{0\}$, resp. $\{0\}\times{\rm B}_k$, resp. $\{0\}\times{\rm D}_k$, simply means type ${\rm A}_k$, resp. ${\rm B}_k$, resp. ${\rm D}_k$).

Note that the parabolic subalgebra  $\p_{s,\,0}$  is a maximal parabolic subalgebra and it has already been treated in \cite{F1},\, \cite{FL} and \cite{FL1}. Thus we will not consider this case.
This work is a continuation and a generalization of \cite{F1},\, \cite{FL} and \cite{FL1}.

\subsection{}

Let $X$ be a finite dimensional vector space on which a reductive Lie algebra $\a$ acts linearly. Denote by $S(X^*)$ the symmetric algebra of the dual space $X^*$ of $X$, which may be identified with the algebra of polynomial functions $\Bbbk[X]$ on $X$. Let $S(X^*)^{\a}$ denote the algebra of invariants in $S(X^*)$ under the action of $\a$ (induced by the action of $\a$ on $X$), which is also the algebra of invariant polynomial functions on $X$. By a Hilbert's theorem (see \cite[II, Thm. 3.5]{PoVin} for an exposition), the algebra of invariants $S(X^*)^{\a}$ is finitely generated and Popov considered in \cite[2.2.1]{Po}
the problem of linearizing  invariant generators in $S(X^*)^{\a}$  by introducing the so-called Weierstrass sections for the action of $\a$ on $X$.
Now assume that $\a$ is a finite dimensional Lie algebra, not necessarily reductive. We may extend Popov's notion for $X=\a^*$ the dual space of  $\a$, on which $\a$ acts by coadjoint action, and define a Weierstrass section for coadjoint action of $\a$ as an affine subspace $\mathscr S$ of $\a^*$ such that restriction of functions  to $\mathscr S$ induces an algebra isomorphism between
 the algebra of symmetric invariants $Y(\a)=S(\a)^{\a}$   and the algebra of polynomial functions $\Bbbk[\mathscr S]$ on $\mathscr S$. Then the existence of a Weierstrass section for coadjoint action of $\a$ implies the polynomiality of $Y(\a)$, and the restriction map gives a linearization of invariant generators of $Y(\a)$. More details on Weierstrass sections are given in \ref{WS}.
 
 In the semisimple case (that is, when $\a=\g$ a semisimple Lie algebra, see \ref{RC}), a Weierstrass section $\mathscr S$ was constructed  by Kostant  in \cite{K} using a principal $\s\mathfrak l_2$-triple. This particular Weierstrass section is called the Kostant slice, or Kostant section in \cite{Po}. The Kostant slice is also an affine slice in the sense that, if  $G$ is the adjoint group of $\g$, then $G.\mathscr S$  is dense in $\g^*$ and every coadjoint orbit in $\g^*$ meets $\mathscr S$ in at most one point, and transversally.  In \ref{AS} are more details on affine slices.

In this article, our aim is to
construct Weierstrass sections for coadjoint action of the canonical truncation $\p_{\Lambda}$ of the standard parabolic subalgebra $\p$ whenever $\p$ is either equal  to $\p_{s,\ell}$ or $\p_{\ell}$ or $\q_{s,\,\ell}$ defined in the previous subsection.

 \subsection{}\label{CUT} Similarly to the Kostant slice, a Weierstrass section for coadjoint action of $\p_{\Lambda}$ is also an affine slice to the coadjoint action of $\p_{\Lambda}$ by \cite{FJ4}. In particular if there exists a Weierstrass section $\mathscr S\subset\p_{\Lambda}^*$ for coadjoint action of $\p_{\Lambda}$, then every coadjoint orbit in $\p_{\Lambda}^*$ meets $\mathscr S$ in at most one point.
 
 \subsection{} Unlike the reductive case where  a principal $s\mathfrak l_2$-triple exists, a Weierstrass section in the non reductive case cannot be given by such a triple, since the latter does not exist.  To fill in this lack, the notion of an adapted pair  was
  introduced in~\cite{JL}.  Denote by $\h_{\Lambda}:=\h\cap\p_{\Lambda}$ the Cartan subalgebra of the truncated parabolic subalgebra $\p_{\Lambda}$ and by $ad$  the coadjoint action of $\p_{\Lambda}$ on $\p_{\Lambda}^*$. An adapted pair for $\p_{\Lambda}$  is a pair $(h,\,y)\in\h_{\Lambda}\times\p_{\Lambda}^*$  such that :
  \begin{enumerate}
 \item $ad\, h(y)=-y$  and
 \item $y$ is regular in $\p_{\Lambda}^*$ that is, there exists a subspace $V$ of $\p_{\Lambda}^*$ of minimal dimension (called the index of $\p_{\Lambda}$ and denoted by $\ind\p_{\Lambda}$) such that $ad\,\p_{\Lambda}(y)\oplus V=\p_{\Lambda}^*$.
 \end{enumerate}
More details on adapted pairs are given in \ref{ADP}. 
 Unfortunately adapted pairs do not always exist and are quite hard to construct. They may not exist even when Weierstrass sections for coadjoint action exist, as it was shown in \cite[Thm. 9.4]{J8} for the truncated Borel subalgebra $\mathfrak b_{\Lambda}$ in type ${\rm B}_{2n+1}$, ${\rm D}$, ${\rm E}$ and ${\rm G}_2$. However in \cite[11.4 Example 2]{J8}, although $Sy(\mathfrak b)=Y(\mathfrak b_{\Lambda})$ is always a polynomial algebra by \cite{J1}, it was also noticed that a Weierstrass section for coadjoint action of $\mathfrak b_{\Lambda}$ does not exist for $\g$ of type ${\rm C}_2$ since  the invariant generators cannot be linearized in this case. As  in  \cite{F1},\,\cite {FL} and \cite{FL1} we are able  in our present cases to construct Weierstrass sections thanks to adapted pairs.

\subsection{}
In  \cite{J5} Weierstrass sections  were constructed for coadjoint action of any truncated (bi)parabolic  subalgebra  in a simple Lie algebra of type A. Thus we do not consider this type.

\subsection{Main result}\label{Mainres}

Recall the notation of subsection \ref{Cases}.
In this paper we  prove that Weierstrass sections exist for the following cases :

\begin{enumerate}

\item
for coadjoint action of the canonical truncation of $\p_{s,\,\ell}$ when :

\begin{enumerate}

\item\label{cas1} $\g$ is of type ${\rm B}_n$ with $n\ge 2$, $s$ odd and $\ell\ge 1$.

\item\label{cas2} $\g$ is of type ${\rm D}_n$ with $n\ge 4$, $s$ odd and $\ell\ge 1$.

\item\label{cas3} $\g$ is of type ${\rm B}_n$ with $n\ge 4$, $s$ even and $\ell=1$.

\item\label{cas4} $\g$ is of type ${\rm D}_n$ with $n\ge 6$, $s$  even, $s\le n-4$ and $\ell=1$. 

\item\label{cas5} $\g$ is of type ${\rm C}_n$ with $n\ge 3$ and $\ell\ge 1$.

\end{enumerate}

\item for coadjoint action of the canonical truncation of $\p_{\ell}$ for $\g$ simple of type ${\rm D}_n$ when :
\begin{enumerate}
 \item\label{cas6} $n\ge 4$, and $n$ even. 
\item\label{cas7} $n\ge 5$, $n$ odd, and $\ell=0$.

\item\label{cas8} $n\ge 5$, $n$ odd, and $\ell=1$.

\end{enumerate}
\item\label{cas9} for coadjoint action of the canonical truncation of $\q_{s,\,\ell}$ 
when $\g$ is of type ${\rm D}_n$ with $n\ge 5$, $n$ odd and $s$ odd.

\end{enumerate}

\subsection{The proof}

The proof is in two steps and via a case by case consideration. Let $\p$ denote one of the above parabolic subalgebras and $\p_{\Lambda}$ its canonical truncation.

Step 1 consists of constructing explicitly an adapted pair for $\p_{\Lambda}$, thanks to Proposition \ref{propAP} which uses extensively the notion of Heisenberg sets, generalizing the sets of roots of generators in Heisenberg Lie algebras, see subsection \ref{HS}. 

Step 2 is to prove that this adapted pair gives the required Weierstrass section. For this purpose, two means are available. 
The simplest way is to check that the equality of a lower and an upper bounds for the formal character of $Sy(\p)$ (see Sect. \ref{bounds}) holds. This equality implies polynomiality of $Sy(\p)$ and then the existence of an adapted pair for $\p_{\Lambda}$ implies the existence of a Weierstrass section for coadjoint action of $\p_{\Lambda}$ (see also subsection \ref{WS}).
However in some of our cases the  lower and upper bounds mentioned above do not coincide and then the polynomiality of $Sy(\p)=Y(\p_{\Lambda})$ was not yet known. We then  check that the lower bound and a so-called improved upper bound introduced in \cite{J6bis} (see Sect. \ref{IUB}) coincide. The latter method concerns the cases \ref{cas3}, \ref{cas4}, \ref{cas7}, \ref{cas8}. The Weierstrass section we obtain in these cases assures then the polynomiality of $Sy(\p)$.

\section{Some definitions.}\label{Term}

In what follows, we specify the notions mentioned in Sect. \ref{Intro}.
Let $\a$ be an algebraic finite dimensional Lie algebra over $\Bbbk$, which acts on its symmetric algebra $S(\a)$ by the action
(denoted by $ad$) which extends by derivation the adjoint action of $\a$ on itself given by Lie bracket. We denote by $A$ the adjoint group of $\a$. 

\subsection{Algebra of symmetric invariants.}\label{SI} An invariant of $S(\a)$  (symmetric invariant of $\a$ for short) is  an element $s\in S(\a)$ such that, for all
$x\in\a$, $ad\,x(s)=0$. 

We denote by $Y(\a)=S(\a)^{\a}$ the set of symmetric invariants  of $\a$ : it is a subalgebra of $S(\a)$, called the algebra of symmetric invariants of $\a$.
We may notice that the algebra $Y(\a)$ also coincides with the centre of $S(\a)$ for its natural Poisson structure (and that is why it is sometimes also called the Poisson centre of $S(\a)$ or of $\a$ for short). Moreover $Y(\a)$ also coincides with the algebra $S(\a)^A$ of invariants of $S(\a)$ under the action of $A$ by automorphisms.

\subsection{Algebra of symmetric semi-invariants.}\label{SS}

An element $s\in S(\a)$ is called a (symmetric) semi-invariant of $\a$, if there exists $\lambda\in\a^*$ verifying that, for all $x\in\a$, $ad\,x(s)=\lambda(x) s$. We denote by $S(\a)_{\lambda}\subset S(\a)$ the space of such symmetric semi-invariants. The vector space generated by all symmetric semi-invariants of $\a$ will be denoted by $Sy(\a)$ : it is a subalgebra of $S(\a)$, called the algebra of symmetric semi-invariants of $\a$.  A linear form $\lambda\in\a^*$ such that $S(\a)_{\lambda}\neq\{0\}$  is said to be a weight of $Sy(\a)$. We denote by $\Lambda(\a)$ the set of weights of $Sy(\a)$. It is a semigroup. One has that $Sy(\a)=\bigoplus_{\lambda\in\Lambda(\a)}S(\a)_{\lambda}$.
Since $Y(\a)=S(\a)_0$, one always has that $Y(\a)\subset Sy(\a)$.

We will say that $\a$ has no proper semi-invariants when all the semi-invariants of $\a$ are invariant that is, when $Sy(\a)=Y(\a)$.

For example, when $\a=\g$ is a semisimple Lie algebra, then $\g$ has no proper semi-invariants.
Moreover if $\h$ is a Cartan subalgebra of $\g$, we will say that $s\in S(\g)$ is an $\h$-weight vector if there exists $\mu\in\h^*$ such that for all $x\in\h$, $ad\,x(s)=\mu(x) s$. If $\p$ is a (standard) parabolic subalgebra of $\g$, then the set of weights $\Lambda(\p)$ of the algebra of semi-invariants $Sy(\p)$ of $\p$ may be viewed as a subset of $\h^*$ (see Sect. \ref{not}). Hence the $\h$-weight vectors of $Sy(\p)$ are exactly the semi-invariants of $\p$.

A special case of a parabolic subalgebra is a Borel subalgebra $\mathfrak b=\n\oplus\h$ of $\g$ semi-simple, where $\n$ denotes the nilpotent radical  of $\mathfrak b$.
By \cite{J1} the algebra of symmetric semi-invariants $Sy(\mathfrak b)$, resp. the algebra of symmetric invariants $Y(\n)\subset Sy(\mathfrak b)$, is always a polynomial algebra, the former having  ${\rm rank}(\g)=\dim\h$ generators. Moreover both algebras have  the same set of weights.
(See \cite[Tables I and II]{J1} and \cite[Table]{FJ2} for an erratum, for an explicit description of weights and degrees of generators).

\subsection{Canonical truncation.}\label{CT}

Since $\a$ is algebraic,  there exists by~\cite{BGR} a canonically defined subalgebra of $\a$, called the canonical truncation of $\a$ and denoted by $\a_{\Lambda}$, such that $Y(\a_{\Lambda})=Sy(\a_{\Lambda})=Sy(\a)$. We also say that $\a_{\Lambda}$ is the truncated subalgebra of $\a$ : it is the largest subalgebra of $\a$ which vanishes on the weights of $Sy(\a)$. In particular, the canonical truncation of $\a$ has no proper semi-invariants.
By say \cite[29.4.3]{TY} a parabolic subalgebra $\p$ of a semisimple Lie algebra is algebraic, hence one has that $Sy(\p)=Y(\p_{\Lambda})=Sy(\p_{\Lambda})$ where $\p_{\Lambda}$ is the canonical truncation of $\p$. Moreover a result of Chevalley-Dixmier in \cite[Lem. 7]{D0}, also known as a theorem of Rosenlicht, implies that 
$$\ind\p_{\Lambda}={\rm degtr}_{\Bbbk}({\rm Fract}(Y(\p_{\Lambda}))).$$
In other words the index $\ind\p_{\Lambda}$ of $\p_{\Lambda}$ that is, the minimal codimension of a coadjoint orbit in $\p_{\Lambda}^*$, is also equal to the cardinality of a maximal set of algebraically independent elements in $Sy(\p)=Y(\p_{\Lambda})$. It is not known in general whether $Sy(\p)$ is or not finitely generated, but  the transcendence degree of its field of fractions was shown  to be finite with an explicit formula given in \eqref{index} of subsection \ref{ind}.
By~\cite[7.9]{J6} (see also~\cite[Chap. I, Sec. B, 8.2]{F}) the algebra of symmetric invariants $Y(\p)$ of a proper parabolic subalgebra $\p$ in a simple Lie algebra is always reduced to scalars, while by~\cite{D1} its algebra of symmetric semi-invariants $Sy(\p)$ is never. That is why we consider the algebra of symmetric semi-invariants $Sy(\p)=Y(\p_{\Lambda})$ of a parabolic subalgebra $\p$ rather than its algebra of symmetric invariants. Moreover the structure of $Sy(\p)$ may give informations about the field $C(\p)$ of invariant fractions  of  $S(\p)$. Specifically assume that $Sy(\p)=Y(\p_{\Lambda})$ is a polynomial algebra (freely generated by semi-invariants of $\p$). Then, since we have equality  ${\rm Fract}(Y(\p_{\Lambda}))=C(\p_{\Lambda})$, the latter is obviously a pure transcendental extension of $\Bbbk$. Moreover  by \cite[Thm. 66]{O} so is also the field $C(\p)$, answering positively to Dixmier's fourth problem \cite[Problem 4]{D}.

\subsection{Adapted pairs.}\label{ADP}

An adapted pair for $\a$ is a pair $(h,\,y)\in\a\times\a^*$ such that $ad\,h(y)=-y$, where $ad$ denotes here the coadjoint action of $\a$, $h$ is  a semisimple element of $\a$ and $y$ is a regular element in $\a^*$, that is, there exists a subspace $V$ of $\a^*$ of minimal dimension such that $ad\,\a(y)\oplus V=\a^*$ (the dimension of $V$ is called the index of $\a$, denoted by $\ind\a$).

 Call an element of $\a^*$ singular if it is not regular and denote by $\a^*_{sing}$ the set of singular elements in $\a^*$.
The set of regular elements in $\a^*$ is open dense in $\a^*$ and the codimension of $\a^*_{sing}$ is always bigger or equal to one. When equality holds the algebra $\a$ is said to be singular (nonsingular otherwise). The nonsingularity property is also called in~\cite[Def. 1.1]{P}  the ``codimension two property''.

If $(h,\,y)$ is an adapted pair for $\a$, then $y$ belongs to the zero set of the ideal of $S(\a)$ generated by the homogeneous elements of $Y(\a)$ with positive degree. 
 When $\a$ admits an adapted pair and has no proper semi-invariants, then it follows by \cite[1.7]{JS} that the algebra $\a$ is nonsingular.
In particular if $\a$ is a truncated parabolic subalgebra of a simple Lie algebra $\g$ and admits an adapted pair $(h,\,y)$ then by the above, $\a$ is nonsingular.
\subsection{Weierstrass sections.}\label{WS}

A Weierstrass section  for coadjoint action of $\a$ (see \cite{FJ4}) is an affine subspace $y+V$ of $\a^*$ (with $y\in\a^*$ and $V$ a vector subspace of $\a^*$) such that restriction of functions of $S(\a)=\Bbbk[\a^*]$ to $y+V$ induces an algebra isomorphism between $Y(\a)$ and the algebra of polynomial functions $\Bbbk[y+V]$ on $y+V$. Of course, since $\Bbbk[y+V]$ is isomorphic to $S(V^*)$, the existence of a Weierstrass section for coadjoint action of $\a$ implies that the algebra $Y(\a)$ is isomorphic to $S(V^*)$ and then that $Y(\a)$ is a polynomial algebra (on $\dim V$ generators). Moreover, under this isomorphism, a set of homogeneous algebraically independent generators of $Y(\a)$ is sent to  a basis of $V^*$, hence each element of this set is linearized. In \cite{J8} Weierstrass sections were called  algebraic slices.

 Assume that $\a$ has no proper semi-invariants, admits an adapted pair $(h,\,y)$, and that the algebra of symmetric  invariants $Y(\a)$ is polynomial. 
 Then by \cite[2.3]{JS}, for any $ad\, h$-stable complement $V$ of $ad\,\a(y)$ in $\a^*$,
the affine subspace $y+V$ is a Weierstrass section  for coadjoint action of $\a$.

Suppose now that $\a=\p_{\Lambda}$ is the canonical truncation of a proper parabolic subalgebra $\p$ in a simple Lie algebra. By \cite{FJ2} there exist a lower and an upper bounds for the formal character of $Sy(\p)=Y(\p_{\Lambda})$ (see also Sect. \ref{bounds}). Assume that these bounds coincide. This implies by \cite{FJ2} that $Y(\p_{\Lambda})$ is a polynomial algebra over $\Bbbk$. Assume further that we have constructed an adapted pair for $\p_{\Lambda}$. Thus by the above, this adapted pair provides a Weierstrass section for coadjoint action of $\p_{\Lambda}$. 
This method will be used in roughly half of the cases we will consider in this paper.

\subsection{Affine slice.}\label{AS}
An affine slice to the coadjoint action of $\a$  is an affine subspace $y+V$ of $\a^*$ such that
$A.(y+V)$ is dense in $\a^*$ and $y+V$ meets every coadjoint orbit in $A.(y+V)$ at exactly one point and transversally.
Assume that $\a$ has no proper semi-invariants. Then if there exists a Weierstrass section $y+V\subset\a^*$ for coadjoint action of $\a$, one has 
 by \cite[3.2]{FJ4} that $y+V$ is an affine slice to the coadjoint action of $\a$. The converse does not hold in general, but if $(y+V)_{sing}:=(y+V)\cap\a^*_{sing}$  is of codimension at least two in $y+V$ then it holds by \cite[3.3]{FJ4}.
One may also find in \cite{J8} more details on affine slices.

\subsection{The reductive case}\label{RC} Take  $\a=\g$ semisimple. Then  there exists a principal $\s\mathfrak l_2$-triple $(x,\,h,\,\,y)$ of $\g$ with $h\in\g$  a semisimple
element and $x$ and $y$  regular in $\g\simeq\g^*$, such that $[h,\,y]=-y$. Then the pair $(h,\,y)$ is an adapted pair for $\g$. Denote by $\g^x$ the centralizer of $x$ in $\g$.
Then by \cite{K}
$y+\g^x$ is a Weierstrass section and also an affine slice to the coadjoint action of $\g$. It is called the Kostant slice or Kostant section.

\subsection{Magic number and nonsingularity.}\label{MN}

The magic number  of $\a$ is $$c(\a)=\frac{1}{2}(\dim\a+\ind\a).$$ It is always an integer.
By~\cite[Prop. 3.1]{OV} one always has that $c(\a_{\Lambda})=c(\a)$, where $\a_{\Lambda}$ is the canonical truncation of $\a$. When $\a=\g$ is semisimple, one has that $c(\g)=\dim \mathfrak b$ where $\mathfrak b$ is a Borel subalgebra of $\g$.

Assume that $\a$ has no proper semi-invariants and is nonsingular (which is the case by \ref{ADP} when $\a$ admits an adapted pair for instance). Let $f_1,\,\ldots,\,f_l$ be $l=\ind\a$ homogeneous algebraically independent elements of $Y(\a)$.
Then by \cite[Thm. 1.2]{P} 
 \begin{equation}\label{eqdegree}\sum_{i=1}^l \deg(f_i)\ge c(\a)\tag{deg}.\end{equation}
   Moreover by~\cite[5.6]{JS} and \cite[Thm. 1.2]{P}, equality holds in \eqref{eqdegree} if and only if  
 $Y(\a)$ is generated by $f_1,\,,\ldots,\,f_l.$
 
In particular when $\a=\p_{\Lambda}$ is the canonical truncation of a parabolic subalgebra $\p$ then by the above  the existence of a Weierstrass section for coadjoint action of  $\p_{\Lambda}$, given by an adapted pair for $\p_{\Lambda}$, implies that equality  holds in \eqref{eqdegree}  for a set of $\ind\p_{\Lambda}$ homogeneous algebraically independent elements of $Y(\p_{\Lambda})$.

\section{Notation.}\label{not}

Let $\g$ be a semisimple Lie algebra over $\Bbbk$ and $\h$ be a fixed Cartan subalgebra of $\g$. 
Let $\Delta$ be the set of roots of $\g$ (or root system of $\g$) with respect  to $\h$ and $\pi$ a chosen set of simple roots.
Denote by $\Delta^\pm$ the subset of $\Delta$ formed by the positive, resp. negative, roots of $\Delta$, with respect to $\pi$.

With each root $\alpha\in\Delta$ is associated a root vector space $\g_{\alpha}$ and a nonzero root vector $x_{\alpha}\in\g_{\alpha}$. For all $A\subset\Delta$, set $\g_A=\bigoplus_{\alpha\in A}\g_{\alpha}$ and $-A=\{\gamma\in\Delta\mid -\gamma\in A\}$. We denote by $\alpha^\vee$ the coroot associated with the root $\alpha\in\Delta$.
Then $(\alpha^\vee)_{\alpha\in\pi}$ is a basis for the $\Bbbk$-vector space $\h$. We denote
 by $\n$, resp. $\n^-$, the subalgebra of $\g$ such that $\n=\g_{\Delta^+}$, resp. $\n^-=\g_{\Delta^-}$. We have the following triangular decomposition
$$\g=\n\oplus\h\oplus\n^-.$$

A standard parabolic subalgebra of $\g$ is given by the choice of a subset $\pi'$ of $\pi$. That is why we may denote it by $\p_{\pi'}$. 
Let $\Delta^\pm_{\pi'}$ denote the subset of $\Delta^\pm$ associated to $\pi'$, namely $\Delta^\pm_{\pi'}=\pm\mathbb N\pi'\cap\Delta^\pm$. Set $\n^\pm_{\pi'}=\g_{\Delta^\pm_{\pi'}}$. Then 
$\p_{\pi'}=\n\oplus\h\oplus\n^-_{\pi'}.$
Moreover
$\p^-_{\pi'}=\n^+_{\pi'}\oplus\h\oplus\n^-$
is the opposite algebra of $\p_{\pi'}$. 

Via the Killing form $K$ on $\g$, the dual space $\p^*_{\pi'}$ of $\p_{\pi'}$ is isomorphic to $\p^-_{\pi'}$ which is then endowed with the coadjoint action of $\p_{\pi'}$.

We denote by $(\,,\,)$ the non-degenerate symmetric bilinear form on $\h^*\times\h^*$, induced by the Killing form on $\h\times\h$, and denote by $\mathcal H:\h\longrightarrow\h^*$ the isomorphism induced by the latter. The form $(\,,\,)$ is invariant under the action of the Weyl group of $(\g,\,\h)$.  If $\g$ is simple of type ${\rm B}_n$, ${\rm C}_n$ or ${\rm D}_n$, resp. ${\rm A}_n$, we may also view the form $(\,,\,)$ as a scalar product on $\mathbb R^n$, resp. on $\mathbb R^{n+1}$.  For all $\gamma,\,\gamma'\in\h^*$, one has that $\gamma(\mathcal H^{-1}(\gamma'))=(\gamma,\,\gamma')$. We have that $\mathcal H(\alpha^\vee)=2\alpha/(\alpha,\,\alpha)$, for all $\alpha\in\Delta$ so that, for all $\alpha,\,\beta\in\Delta$, we have that $\beta(\alpha^\vee)=(2\alpha/(\alpha,\,\alpha),\,\beta)$.

We use Bourbaki's labelling for the roots, as in \cite[Planches I, resp. II, resp. III, resp. IV]{BOU} when $\g$ is simple of type ${\rm A}_n$, resp. ${\rm B}_n$, resp. ${\rm C}_n$, resp. ${\rm D}_n$.
We then set $\pi=\{\alpha_1,\,\ldots,\alpha_n\}$ and denote by $\varpi_i$, or sometimes $\varpi_{\alpha_i}$, $1\le i\le n$, the fundamental weight associated with $\alpha_i$.
Similarly, if $\pi'=\{\alpha_{i_1},\,\ldots,\,\alpha_{i_r}\}\subset\pi$ we denote by $\varpi'_{i_j}$, or sometimes $\varpi'_{\alpha_{i_j}}$, the fundamental weight associated with $\alpha_{i_j}$ with respect to $\pi'$.

We denote by $\varepsilon_i$, $1\le i\le n$, resp. $1\le i\le n+1$, the elements of an orthonormal basis of $\mathbb R^n$, resp. $\mathbb R^{n+1}$, with respect to the scalar product $(\,,\,)$ and according to which
the simple roots $\alpha_i$, $1\le i\le n$, are expanded as in \cite[Planches II,  III, IV, resp. I]{BOU} for type ${\rm B}_n$, ${\rm C}_n$,  ${\rm D}_n$, resp. ${\rm A}_n$.

Recall the definition of the canonical truncation given in \ref{CT} and denote by $\p_{\pi',\,\Lambda}$ the canonical truncation of $\p_{\pi'}$.
Then one has that
$$\p_{\pi',\,\Lambda}=\n\oplus\h_{\Lambda}\oplus\n^-_{\pi'}$$
where $\h_{\Lambda}\subset\h$ is the largest subalgebra of $\h$ which vanishes on $\Lambda(\p_{\pi'})$, the set of weights of $Sy(\p_{\pi'})$ which may be identified with a subset of $\h^*$. For an explicit description of $\h_{\Lambda}$, see \cite[5.2.2, 5.2.9 and 5.2.10]{FJ3} or \cite[2.2]{FL}.
Denote by $\p'_{\pi'}$ the derived subalgebra of $\p_{\pi'}$ and set $\h'=\h\cap\p'_{\pi'}$. Then $\h'$ is the vector space generated by the coroots $\alpha^\vee$ with $\alpha\in\pi'$ and $\h'\subset\h_{\Lambda}$. Let $w_0$ be the longest element of the Weyl group of $(\g,\,\h)$. If $w_0=-Id$ then $\h_{\Lambda}=\h'$. 
In particular if $\g$ is simple of type ${\rm B}_n$, ${\rm C}_n$, or ${\rm D}_{2m}$, then we have  that $\h_{\Lambda}=\h'$. Now assume that $\g$ is simple of type ${\rm D}_{n}$ with $n$ odd. Then  if both $\alpha_{n-1}$ and $\alpha_n$ do not belong to $\pi'$, we have that 
$$\h_{\Lambda}=\h'\oplus\Bbbk\mathcal H^{-1}(\varpi_n-\varpi_{n-1})=\h'\oplus\Bbbk\mathcal H^{-1}(\ep_n)=\h'\oplus\Bbbk(\alpha_n^\vee-\alpha_{n-1}^\vee),$$ otherwise $\h_{\Lambda}=\h'$.

For convenience we will replace $\p_{\pi'}$ by its opposite algebra $\p^-_{\pi'}$ (simply denoted by $\p$ from now on) and we will consider
the canonical truncation $\p_{\Lambda}=\p^-_{\pi',\,\Lambda}$ of $\p=\p^-_{\pi'}$.
We have that $$\p_{\Lambda}=\p^-_{\pi',\,\Lambda}=\n^-\oplus\h_{\Lambda}\oplus\n^+_{\pi'}$$ and its dual space $\p_{\Lambda}^*$ may be identified via the Killing form $K$ on $\g$ with $\p_{\pi',\,\Lambda}$ (since by \cite[5.2.2, 5.2.9]{FJ3} the restriction of $K$ to
$\h_{\Lambda}\times\h_{\Lambda}$ is non-degenerate).

We will denote by $\g'$ the  Levi subalgebra of $\p$ (and of $\p_{\Lambda}$), namely :
$$\g'=\n^+_{\pi'}\oplus\h'\oplus\n^-_{\pi'}.$$
Then $w_0'$ will denote the longest element of the Weyl group of $(\g',\,\h')$.

\section{Bounds for formal character.}\label{bounds}

Keep the notation of previous Section.
A $\h$-module $M$ is called a weight module if $M=\oplus_{\nu\in\h^*}M_{\nu}$, with finite dimensional weight subspaces $M_{\nu}:=\{m\in M\mid\forall h\in\h,\,h.m=\nu(h)m\}$. For a weight module $M$ one defines the formal character ${\ch}\,M$ of $M$ as follows :
$${\ch}\,M=\sum_{\nu\in\h^*}\dim M_{\nu}e^{\nu}$$ where $e^{\mu+\nu}=e^{\mu}e^{\nu}$ for all $\mu,\,\nu\in\h^*$.
Obviously the formal character is multiplicative on tensor products that is, if $M$ and $N$ are  weight modules, then 
$${\ch}\,(M\otimes N)={\ch}\,M{\ch\,}\,N.$$
Hence if $\mathscr A\subset S(\p)$ is a polynomial algebra with algebraically independent $\h$-weight generators $a_i$, $1\le i\le l$, each of them having a nonzero weight $\lambda_i\in\h^*$, then $${\ch}\,\mathscr A=\prod_{1\le i\le l}(1-e^{\lambda_i})^{-1}.$$
Moreover for weight modules $M$ and $N$,
we write ${\ch}\,M\le{\ch}\, N$ if $\dim M_{\nu}\le \dim N_{\nu}$ for all $\nu\in\h^*$. Hence if $M\subset N$, then ${\ch}\,M\le{\ch}\, N$ and if equality holds then $M=N$.
 
We will specify below (see subsection \ref{equalitybounds}) the lower and upper bounds for ${\ch}\,Y(\p_{\Lambda})$ mentioned in subsection \ref{WS}.
For this, we have to summarize results in \cite{FJ1},\,\cite{FJ2},\,\cite{FJ3} and \cite{J7}.

\subsection{}\label{ind}
Let $i$ and $j$ be involutions of $\pi$ defined as in \cite[5.1]{FJ3} or as in \cite[2.2]{FL}. More precisely $j=-w_0$ and $i(\alpha)=-w_0'(\alpha)$ for all $\alpha\in\pi'$. If now $\alpha\in\pi\setminus\pi'$, then $i(\alpha)=j(\alpha)$ if $j(\alpha)\not\in\pi'$, and otherwise $i(\alpha)=j(ij)^r(\alpha)$ where $r$ is the smallest integer such that $j(ij)^r(\alpha)\not\in\pi'$.
Let $E(\pi')$ be the set of $\langle ij\rangle$-orbits in $\pi$.
By \cite[2.5]{FJ3} and \cite[3.2]{FJ1}, we have that \begin{equation}\label{index}\ind\p_{\Lambda}={\rm degtr}_{\Bbbk}({\rm Fract}(Y(\p_{\Lambda})))=\lvert E(\pi')\rvert.\end{equation}

\subsection{}\label{comppoids}
Following \cite[5.2.1]{FJ3} one may set, for each $\Gamma\in E(\pi')$ :
\begin{equation}\label{poids}
\delta_{\Gamma}=-\sum_{\gamma\in\Gamma}\varpi_{\gamma}-\sum_{\gamma\in j(\Gamma)}\varpi_{\gamma}+\sum_{\gamma\in\Gamma\cap\pi'}\varpi'_{\gamma}+\sum_{\gamma\in i(\Gamma\cap\pi')}\varpi'_{\gamma}.\end{equation}

Note that, for all $\Gamma\in E(\pi')$, one has that $i(\Gamma\cap\pi')=j(\Gamma)\cap\pi'$ by \cite[3.2.2]{FJ2}.

\subsection{}\label{compepsilon}
 Let $\Gamma\in E(\pi')$.  One sets  $d_{\Gamma}=\sum_{\gamma\in\Gamma}\varpi_{\gamma}$ and $d'_{\Gamma}=\sum_{\gamma\in\Gamma\cap\pi'}\varpi'_{\gamma}$ and one denotes by $\mathcal B_{\pi}:=\Lambda(\n\oplus\h)\subset\h^*$, resp. $\mathcal B_{\pi'}:=\Lambda(\n^+_{\pi'}\oplus\h')\subset\h'^*$ the set of weights of the polynomial algebra of symmetric semi-invariants $Sy(\n\oplus\h)$, resp. $Sy(\n^+_{\pi'}\oplus\h')$ :  generators of the set $\mathcal B_{\pi}$ (and then also of $\mathcal B_{\pi'}$) are given in \cite[Table I and II]{J1} and in \cite[Table]{FJ2}.
 The set $\mathcal B_{\pi}$, resp. $\mathcal B_{\pi'}$, is equal to the set of weights of the polynomial algebra $Y(\n)$, resp. $Y(\n^+_{\pi'})$, see below subsection \ref{compdeg}.
 
 Then following \cite[3.2.7]{FJ2} one sets
\begin{equation}\label{epsilon}\ep_{\Gamma}=\begin{cases} 1/2 &{\rm if}\;\Gamma=j(\Gamma)\,{\rm and}\,d_{\Gamma}\in\mathcal B_{\pi}\,{\rm and}\; d'_{\Gamma}\in\mathcal B_{\pi'}\\
1&{\rm otherwise}.\\
\end{cases}\end{equation}

Below we give some details on the set $\mathcal B_{\pi}$, resp. $\mathcal B_{\pi'}$.
For a real number $x$, denote by $[x]$ the integer such that $x-1<[x]\le x$.

Assume  that $\g$ is simple of type ${\rm B}_n$, with $n\ge 2$. Recall that $j=Id_{\pi}$.
Let $\alpha\in\pi$.
If $\alpha=\alpha_{2k}$ with $1\le k\le [(n-1)/2]$, then $\varpi_{2k}\in\mathcal B_{\pi}$. Otherwise $2\varpi_{\alpha}\in\mathcal B_{\pi}$ but $\varpi_{\alpha}\not\in\mathcal B_{\pi}$. 

Now assume that $\g$ is simple of type ${\rm D}_n$, with $n\ge 4$. Then the same as above is true for the first $n-2$ simple roots. Moreover if $n$ is even then $j=Id_{\pi}$ and if $n$ is odd, then $j(\alpha_{n-1})=\alpha_n$ and $j$ is the identity if restricted to the $n-2$ first simple roots. In both cases, if  $\alpha\in\{\alpha_{n-1},\,\alpha_n\}$, then $\varpi_{\alpha}+\varpi_{j(\alpha)}\in\mathcal B_{\pi}$ but $\varpi_{\alpha}\not\in\mathcal B_{\pi}$. 

If $\g$ is simple of type ${\rm C}_n$, with $n\ge 2$, then, for all $1\le i\le n$, $2\varpi_i\in\mathcal B_{\pi}$  but $\varpi_i\not\in\mathcal B_{\pi}$.

Finally if $\alpha$ belongs to a connected component of $\pi'$ of type A, then $\varpi'_{\alpha}+\varpi'_{i(\alpha)}\in\mathcal B_{\pi'}$ but $\varpi'_{\alpha}\not\in\mathcal B_{\pi'}$. 

\subsection{}\label{equalitybounds}
Assume from now on that $\g$ is simple and that the parabolic subalgebra $\p$ is proper that is, $\pi'\subsetneq\pi$.
By \cite[Thm. 6.7]{J7} (see also \cite[7.1]{FJ2}) one has  that
\begin{equation}\label{ch}\prod_{\Gamma\in E(\pi')}(1-e^{\delta_{\Gamma}})^{-1}\le{\ch}\,(Y(\p_{\Lambda}))\le\prod_{\Gamma\in E(\pi')}(1-e^{\ep_{\Gamma}\delta_{\Gamma}})^{-1}.\end{equation}

Assume now that both bounds in \eqref{ch} coincide that is, that $\ep_{\Gamma}=1$ for all $\Gamma\in E(\pi')$.
 For example, it occurs when $\g$ is simple of type A or C.  Then  one deduces that $Sy(\p)=Y(\p_{\Lambda})$ is a polynomial algebra over $\Bbbk$
on $\lvert E(\pi')\rvert$ homogeneous and $\h$-weight algebraically independent generators. One generator corresponds to every $\Gamma\in E(\pi')$ and has  a weight $\delta_{\Gamma}$ given by \eqref{poids} above (recall that one has assumed that the parabolic subalgebra $\p$ contains the negative Borel subalgebra $\n^-\oplus\h$) and a degree  $\partial_{\Gamma}$ which may be easily computed by \cite[5.4.2]{FJ2}. To explain how one may compute this degree (see \eqref{degre} or \eqref{degrebis} below), we have to recall  results in subsection below. 

\subsection{}\label{compdeg}
By \cite{J1} $Y(\n^+_{\pi'})\subset Sy(\n^+_{\pi'}\oplus\h')$, resp. $Y(\n)\subset Sy(\n\oplus\h)$, is a polynomial algebra whose set of homogeneous and $\h'$-weight, resp. $\h$-weight, algebraically independent generators is formed by the elements $a_{\rho'_{\gamma}}$, resp. $a_{\rho_{\gamma}}$ :
their weight $\rho'_{\gamma}$, resp. $\rho_{\gamma}$, and their degree are given in \cite[Table I and II]{J1} and in \cite[Table]{FJ2} and we precise them below.
Recall the sets $\mathcal B_{\pi'}$, resp. $\mathcal B_{\pi}$, of subsection \ref{compepsilon} and that these sets are also the sets of weights of $Y(\n^+_{\pi'})$, resp. of $Y(\n)$. One has that, for all $\gamma\in\pi'$, resp. $\gamma\in\pi$,
$$\rho'_{\gamma}=\varpi'_{\gamma}\;\;\hbox{\rm if}\; \;\varpi'_{\gamma}\in\mathcal B_{\pi'},\;\;\hbox{\rm resp.}\;\;
\rho_{\gamma}=\varpi_{\gamma}\;\;\hbox{\rm if}\;\; \varpi_{\gamma}\in\mathcal B_{\pi}.$$
Otherwise $$\rho'_{\gamma}=\varpi'_{\gamma}+\varpi'_{i(\gamma)},\;\hbox{\rm resp.}\;\; \rho_{\gamma}=\varpi_{\gamma}+\varpi_{j(\gamma)}.$$

Assume that $\g$ is simple of type ${\rm B}_n$, resp. ${\rm D}_n$. For all $1\le u\le[(n-1)/2]$, resp. $1\le u\le[(n-2)/2]$, one has that $$\deg(a_{\rho_{\alpha_{2u}}})=\deg(a_{\varpi_{2u}})=u$$ and for all $1\le u\le [n/2]$, resp. $1\le u\le[(n-1)/2]$, $$\deg(a_{\rho_{\alpha_{2u-1}}})=\deg(a_{2\varpi_{2u-1}})=2u.$$ Moreover for $\g$ of type ${\rm B}_n$, $$\deg(a_{\rho_{\alpha_n}})=\deg(a_{2\varpi_n})=[(n+1)/2].$$
For $\g$ of type ${\rm D}_n$, then for $\alpha\in\{\alpha_{n-1},\,\alpha_n\}$, one has that
$$\deg(a_{\rho_{\alpha}})=\deg(a_{\varpi_{\alpha}+\varpi_{j(\alpha)}})=[n/2].$$

Finally assume that $\g$ is simple of type ${\rm A}_n$, resp. ${\rm C}_n$. Then for all $1\le u\le[(n+1)/2]$, resp. for all $1\le u\le n$, one has that $\deg(a_{\rho_{\alpha_u}})=\deg(a_{\varpi_u+\varpi_{n+1-u}})=u$, resp. $\deg(a_{\rho_{\alpha_u}})=\deg(a_{2\varpi_u})=u$.

\subsection{}\label{degreegenerator}
Assume now that, for all $\Gamma\in E(\pi')$, one has $\ep_{\Gamma}=1$.

Let $\Gamma\in E(\pi')$ be such that $\Gamma=j(\Gamma)$. The degree $\partial_{\Gamma}$ of the homogeneous generator of $Y(\p_{\Lambda})$ corresponding to $\Gamma$ verifies
\begin{multline}\label{degre}
\partial_{\Gamma}=\sum_{\gamma\in\Gamma\mid\rho_{\gamma}=\varpi_{\gamma}}2\deg(a_{\rho_{\gamma}})+\sum_{\gamma\in\Gamma\mid\rho_{\gamma}\neq\varpi_{\gamma}}\deg(a_{\rho_{\gamma}})+\\\sum_{\gamma\in\Gamma\cap\pi'\mid\rho'_{\gamma}=\varpi'_{\gamma}}2\deg(a_{\rho'_{\gamma}})+\sum_{\gamma\in\Gamma\cap\pi'\mid\rho'_{\gamma}\neq\varpi'_{\gamma}}\deg(a_{\rho'_{\gamma}}).\end{multline}

Let $\Gamma\in E(\pi')$ be such that $\Gamma=\{\alpha\}$ with $\alpha\in\pi\setminus\pi'$ and $i(\alpha)\neq\alpha$. Then necessarily one has that $\Gamma\neq j(\Gamma)$ (by \cite[5.2.6]{FJ3}) and there exist two homogeneous generators $s_{\Gamma}$ and $t_{\Gamma}$ of $Y(\p_{\Lambda})$ corresponding to $\Gamma$ (more precisely  one corresponds to $\Gamma$ and the other to $j(\Gamma)$) whose weight $\delta_{\Gamma}=\delta_{j(\Gamma)}$ is given by \eqref{poids} and whose degree $\partial_{\Gamma}$, resp. $\partial_{j(\Gamma)}$, is given by the  formula :

\begin{equation}\label{degrebis}
\partial_{\Gamma}=\deg(s_{\Gamma})=\deg(a_{\rho_{\alpha}})\;\;\hbox{\rm and}\;\;\partial_{j(\Gamma)}=\deg(t_{\Gamma})=\deg(a_{\rho_{\alpha}})+1.\end{equation}

The latter situation can occur when $\g$ is simple of type ${\rm D}_n$ with $n$ odd and when both $\alpha_{n-1}$ and $\alpha_n$ do not belong to $\pi'$ (see Sect. \ref{Cas9}).

\section{Improved upper bound.}\label{IUB}

Keep the notation and hypotheses of Section \ref{not} and  assume that $\p_{\Lambda}$ admits an adapted pair $(h,\,y)\in\h_{\Lambda}\times\p_{\Lambda}^*$.
Since $y$ is regular in $\p_{\Lambda}^*$ 
there exists an $ad\,h$-stable complement $V$ to $ad\,\p_{\Lambda}(y)$ in $\p_{\Lambda}^*$ of dimension $\ind\p_{\Lambda}$. Moreover by \cite[2.2.4]{FJ4}
we may assume that $V=\g_T$ with $T\subset\Delta^+\sqcup\Delta^-_{\pi'}$   that is, $ad\,\p_{\Lambda}(y)\oplus\g_T=\p_{\Lambda}^*$ with $\lvert T\rvert=\ind\p_{\Lambda}$.
Assume further that $y=\sum_{\gamma\in S}x_{\gamma}$ with $S\subset\Delta^+\sqcup\Delta^-_{\pi'}$ and that $S_{\mid\h_{\Lambda}}$ is a basis for $\h_{\Lambda}^*$.
Then for each  $\gamma\in T$, there exists a unique element $s(\gamma)\in\mathbb QS$ such that $\gamma+s(\gamma)$ vanishes on $\h_{\Lambda}$.
By \cite[Lem. 6.11]{J6bis}, one has that
\begin{equation}\label{Improv}
{\ch}\,(Y(\p_{\Lambda}))\le\prod_{\gamma\in T}(1-e^{-(\gamma+s(\gamma))})^{-1}.\end{equation}
The right hand side of the above inequality is called an improved upper bound for ${\ch}\,(Y(\p_{\Lambda}))$.

Assume now that 
\begin{equation}\label{eqIUB}\prod_{\Gamma\in E(\pi')}(1-e^{\delta_{\Gamma}})^{-1}=\prod_{\gamma\in T}(1-e^{-(\gamma+s(\gamma))})^{-1}.\end{equation} Then by \eqref{ch} of Sect. \ref{bounds} equality holds in \eqref{Improv} and  by \cite[Lem. 6.11]{J6bis} the restriction map gives an isomorphism $Y(\p_{\Lambda})\isomto\Bbbk[y+\g_T]$. Then $y+\g_T$ is a Weierstrass section  for coadjoint action of $\p_{\Lambda}$ as defined in \ref{WS}.

This implies that $Y(\p_{\Lambda})$ is a polynomial algebra over $\Bbbk$ on $\lvert E(\pi')\rvert=\lvert T\rvert$ algebraically independent homogeneous and $\h$-weight generators, each of them having $\delta_{\Gamma}$, for $\Gamma\in E(\pi')$, as a weight, given by \eqref{poids} of Sect. \ref{bounds} (this weight is also equal to $-(\gamma+s(\gamma))$, for some $\gamma\in T$). 
Moreover  the degree of each of these generators is equal to $1+\lvert s(\gamma)\rvert$, $\gamma\in T$, where $\lvert s(\gamma)\rvert=\sum_{\alpha\in S} m_{\alpha,\,\gamma}$ if $s(\gamma)=\sum_{\alpha\in S}m_{\alpha,\,\gamma}\alpha$ ($m_{\alpha,\,\gamma}\in\mathbb N$, actually). For all  $\gamma\in T$, the integer $\lvert s(\gamma)\rvert$  is also equal to the eigenvalue of $x_{\gamma}$ with respect to $ad\,h$. (For more details, see \cite[6.11]{J6bis}).

Conversely if $y+\g_T$ is a Weierstrass section for coadjoint action of $\p_{\Lambda}$, then equality holds in (\ref{Improv}) by \cite[Remark 6.11]{J6bis}.

\section{Construction of an adapted pair.}\label{CAP}

As we already said in the previous sections, our Weierstrass sections require the construction of an adapted pair.
This construction  uses the notions we already introduced in \cite{F1},\,\cite{FL} and \cite{FL1}. For convenience we recall some of them, notably the Heisenberg sets and the Kostant cascades.

\subsection{Heisenberg sets and Kostant cascades.}\label{HS}

A Heisenberg set  with centre $\gamma\in\Delta$ (\cite[Def. 7]{F1}) is a subset $\Gamma_{\gamma}$ of $\Delta$ such that $\gamma\in\Gamma_{\gamma}$ and for all $\alpha\in\Gamma_{\gamma}\setminus\{\gamma\}$, there exists a (unique) $\alpha'\in\Gamma_{\gamma}\setminus\{\gamma\}$ such that $\alpha+\alpha'=\gamma$. We may take care to not be confused by the above notation of a Heisenberg set and an element  $\Gamma\in E(\pi')$, resp. $\Gamma_u\in E(\pi')$, which denotes an $\langle ij\rangle$-orbit in $\pi$, the $\langle ij\rangle$-orbit of $\alpha_u\in\pi$.

A typical example of Heisenberg set is given by the Kostant cascade of $\g$ (see also \cite[Example 8]{F1}). More precisely assume that the semisimple Lie algebra $\g$ admits a set of roots $\Delta=\bigsqcup_{i\in I}\Delta_i$ with $I\subset \mathbb N^*$, each $\Delta_i$ being a maximal irreducible root system with highest root $\beta_i$. Then take $(\Delta_i)_{\beta_i}=\{\alpha\in\Delta_i\mid (\alpha,\,\beta_i)=0\}$. For every $i\in I$, set $(\Delta_i)_{\beta_i}=\bigsqcup_{j\in J}\Delta_{ij}$ with $J\subset\mathbb N^*$ and $\Delta_{ij}$ being a maximal irreducible root system  with highest root $\beta_{ij}$. Continuing we obtain a subset $\mathcal K(\g)\subset\mathbb N^*\cup\mathbb {N^*}^2\cup\ldots$ with ${\rm Card}\,\mathcal K(\g)\le{\rm rank}\,\g$, irreducible root systems $\Delta_K$, $K\in\mathcal K(\g)$ and a maximal set $\beta_{\pi}$ of strongly orthogonal positive roots $\beta_K$, $K\in\mathcal K(\g)$, called the {\it Kostant cascade} of $\g$. The subset $\mathcal K(\g)$ admits a partial order $\le$ through $K\le L$ if $K=L$ or if $L=\{K,\,l_1,\,\ldots,\,l_t\}$ with $l_i\in\mathbb N^*$. In type A or C, this order is actually a total order, since the sets $(\Delta_K)_{\beta_K}$ are already irreducible. So one can index the  subset $\beta_{\pi}$ of $\Delta^+$ simply by $\mathbb N$ in these types, so that the roots in $\beta_{\pi}$ are simply denoted by $\beta_i$, $1\le i\le {\rm Card}\,\mathcal K(\g)$.  In type B or D, the order is not total. In type ${\rm B}_n$ or ${\rm D}_{2n+1}$, resp. ${\rm D}_{2n}$, for the elements $\beta_K$, $K\in\mathcal K(\g)$, we use the notation $\beta_i,\,\beta_{i'}$, resp. $\beta_i,\,\beta_{i'},\,\beta_{i''}$ with order relation $i<i'$, resp. $i<i'$ and $i<i''$.
  For more details, see for example \cite[Table]{FJ2}, \cite[Table I]{FL}, \cite[Sect. 7]{FL1} or \cite[Tables I, II, III]{J1}.

 Let $\beta_K$ be an element of the Kostant cascade $\beta_{\pi}$ of $\g$ and set
$$H_{\beta_K}=\{\alpha\in\Delta_K\mid (\alpha,\,\beta_K)>0\}.$$
Then $H_{\beta_K}$ is a Heisenberg set with centre $\beta_K$ : it is the largest Heisenberg set with centre $\beta_K$ which is included in $\Delta^+$ by $ii)$ of Lemma below. Moreover the vector subspace $\g_{H_{\beta_K}}$ of $\g$ associated with $H_{\beta_K}$ (with the notation in Sect. \ref{not}) is a Heisenberg Lie subalgebra of $\g$  by $iv)$ of Lemma below.
Of course all the Heisenberg sets are not necessarily associated with Heisenberg Lie subalgebras and even not with Lie subalgebras of $\g$, since $iv)$ of Lemma below need not be true for a Heisenberg set in general.

By \cite[Lem. 2.2]{J1} (see also \cite[Lem. 3]{FL}) we have the following Lemma, which is very useful to construct adapted pairs
thanks to the Kostant cascade $\beta_{\pi}$ and to the largest Heisenberg sets $H_{\beta}$, $\beta\in\beta_{\pi}$, which are defined above.

\begin{lm}{\cite[Lem. 2.2]{J1}}

Let $\beta_{\pi}$ denote the Kostant cascade of $\g$. Then we have that :
\begin{enumerate}

\item[i)]
$\Delta^+=\bigsqcup_{\beta\in\beta_{\pi}}H_{\beta}$ (disjoint union).

\item[ii)]
If $\gamma,\,\delta\in\Delta^+$ are such that $\gamma+\delta=\beta\in\beta_{\pi}$ then $\gamma,\,\delta\in H_{\beta}\setminus\{\beta\}$.

\item[iii)] 
If $\gamma\in H_{\beta_K}$ and $\delta\in H_{\beta_L}$ are such that $\gamma+\delta\in H_{\beta_M}$ with $K,\,L,\,M\in\mathcal K(\g)$, then $K\le L$ (resp. $L\le K$) and $M=K$ (resp. $M=L$).

\item[iv)] 
If $\gamma,\,\delta\in H_{\beta},\,\beta\in\beta_{\pi}$, and $\gamma+\delta\in\Delta$ then $\gamma+\delta=\beta$.

\end{enumerate}

\end{lm}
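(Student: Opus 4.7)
The plan is to prove all four assertions by induction on the depth of the Kostant cascade, exploiting two classical properties of the highest root $\beta_K$ of an irreducible root system $\Delta_K$: first, that $(\alpha,\beta_K)\geq 0$ for every $\alpha\in\Delta_K^+$, since $\beta_K+\alpha$ cannot be a root; second, that $\beta_K$ is long, so that $(\sigma,\beta_K^\vee)\leq 1$ for every $\sigma\in\Delta_K$ with $\sigma\neq\beta_K$. Throughout I would use that each $\Delta_K$ is a closed subsystem of $\Delta$, being obtained by successively intersecting $\Delta$ with the orthogonal complements of previously chosen cascade roots.

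For part i), I would decompose $\Delta^+=\bigsqcup_i\Delta_i^+$ at the top level and then use the first property to write $\Delta_i^+=H_{\beta_i}\sqcup(\Delta_i)_{\beta_i}^+$, where $(\Delta_i)_{\beta_i}^+$ is the subset of $\Delta_i^+$ orthogonal to $\beta_i$. The disjoint union then propagates downwards through the cascade and terminates because the subsystems strictly shrink at each step. For part iv), the key observation is that $\gamma,\delta\in H_{\beta_K}$ forces $(\gamma,\beta_K^\vee),(\delta,\beta_K^\vee)\geq 1$ (positive integers); if $\gamma+\delta\in\Delta$, then it lies in $\Delta_K$ by closedness and satisfies $(\gamma+\delta,\beta_K^\vee)\geq 2$, which together with the second property forces $\gamma+\delta=\beta_K$.

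For part ii), I would first show that $\gamma,\delta\in\Delta_K$ by peeling off the cascade one layer at a time: at each branching, if $\beta_K$ lies in the subcomponent $\Delta_{N,a_0}$ of an ancestor $\Delta_N$, then $(\gamma+\delta,\beta_N)=(\beta_K,\beta_N)=0$ together with the nonnegativity property gives $(\gamma,\beta_N)=(\delta,\beta_N)=0$, and a projection argument places both $\gamma$ and $\delta$ into the same orthogonal component $\Delta_{N,a_0}$. Once $\gamma,\delta\in\Delta_K$, the identity $(\gamma,\beta_K^\vee)+(\delta,\beta_K^\vee)=(\beta_K,\beta_K^\vee)=2$ with nonnegative integer summands, neither of which can equal $2$ (lest the other vanish and make the missing summand a nonroot), yields $(\gamma,\beta_K^\vee)=(\delta,\beta_K^\vee)=1$, so $\gamma,\delta\in H_{\beta_K}\setminus\{\beta_K\}$. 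For part iii), I would treat three cases: if $K=L$, part iv) immediately gives $\gamma+\delta=\beta_K\in H_{\beta_K}$; if $K$ and $L$ are incomparable, then $\Delta_K$ and $\Delta_L$ sit in orthogonal components of their common ancestor and a projection argument forbids $\gamma+\delta$ from being a single root; if $K<L$, then $\delta\perp\beta_K$ while $(\gamma,\beta_K)>0$ gives $(\gamma+\delta,\beta_K)>0$, which forces $M$ to be a weak ancestor of $K$, and then $\gamma,\delta\perp\beta_{K'}$ for every strict ancestor $K'$ of $K$ rules out $M<K$, leaving $M=K$.

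The main obstacle I anticipate is the non-simply-laced case in parts ii) and iv): specifically, confirming that the highest root $\beta_K$ of each $\Delta_K$ is always long and that $(\sigma,\beta_K^\vee)\leq 1$ for all $\sigma\in\Delta_K\setminus\{\beta_K\}$. Once this is checked type by type, what remains is organized bookkeeping on the cascade tree.
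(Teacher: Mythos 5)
The paper does not prove this lemma: it is imported verbatim from Joseph \cite[Lem. 2.2]{J1} (see also \cite[Lem. 3]{FL}), so there is no internal proof to measure your argument against. That said, your outline is essentially the standard proof of these cascade properties and I see no gap in it. The two facts you isolate about the highest root $\beta_K$ of the irreducible closed subsystem $\Delta_K$ --- that $(\alpha,\beta_K)\ge 0$ for all $\alpha\in\Delta_K^+$, and that $\beta_K(\,\cdot\,)$ paired against $\beta_K^\vee$ takes the value $2$ only at $\beta_K$ itself among positive roots of $\Delta_K$ --- do drive all four parts exactly as you describe; the closedness of each $\Delta_K$ (being $\Delta$ intersected with the span of $\Delta_K$) is what legitimises the steps where you need $\gamma+\delta\in\Delta_K$, and the tree structure (two subsystems $\Delta_K$, $\Delta_M$ are either nested or disjoint, siblings being mutually orthogonal) is exactly the bookkeeping that parts \textit{ii)} and \textit{iii)} require. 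The one correction I would make concerns your anticipated obstacle: the inequality $\langle\sigma,\beta_K^\vee\rangle\le 1$ for $\sigma\in\Delta_K^+\setminus\{\beta_K\}$ needs no type-by-type verification in the non-simply-laced case. It follows uniformly from the highest-root property: since $\beta_K$ is highest, $\sigma+\beta_K\notin\Delta_K$, so the $\beta_K$-string through $\sigma$ gives $\sigma-\langle\sigma,\beta_K^\vee\rangle\beta_K\in\Delta_K$; if $\langle\sigma,\beta_K^\vee\rangle\ge 2$ then $2\beta_K-\sigma=\beta_K+(\beta_K-\sigma)$ would be a root strictly higher than $\beta_K$ (note $\beta_K-\sigma$ is a positive root because $\sigma<\beta_K$), a contradiction. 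With that substitution your plan closes up completely.
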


For an explicit description of Kostant cascades, see for example \cite{FL}, \cite{FL1} or \cite{J1}.
The Heisenberg sets (not only the largest Heisenberg sets $H_{\beta}$, $\beta\in\beta_{\pi}$) are very helpful for the construction of an adapted pair. They were used in \cite{J5}, resp. in \cite{F1},\,\cite{FL} and \cite{FL1}, to build adapted pairs for every truncated biparabolic subalgebra in a simple Lie algebra of type ${\rm A}$, resp. for truncated maximal parabolic subalgebras. Below is a proposition where Heisenberg sets appear to be crucial for constructing an adapted pair.

\subsection{A  proposition of regularity.}\label{REG}

The following proposition (see \cite[Prop. 9]{F1}) is a generalization of \cite[Thm. 8.6]{J5}.
We keep the notation of Sect. \ref{not} and consider $S$, $T$ and $T^*$ disjoint subsets of $\Delta^+\sqcup\Delta^-_{\pi'}$ and set $y=\sum_{\gamma\in S} x_{\gamma}$.

\begin{prop}{\cite[Prop. 9]{F1}}\label{propAP}

We assume that, for each $\gamma\in S$, there exists $\Gamma_{\gamma}\subset\Delta^+\sqcup\Delta^-_{\pi'}$ a Heisenberg set with centre $\gamma$ and that all the sets $\Gamma_{\gamma}$, for $\gamma\in S$, together with $T$ and $T^*$ are disjoint.  

We also assume that we can decompose $S$ into $S^+\sqcup S^-$ where $S^+$, resp. $S^-$, is the subset of $S$ containing those $\gamma\in S$ with $\Gamma_{\gamma}\subset\Delta^+$, resp. $\Gamma_{\gamma}\subset\Delta^-_{\pi'}$. 

For all $\gamma\in S$, set $\Gamma_{\gamma}^0=\Gamma_{\gamma}\setminus\{\gamma\}$, $O=\bigsqcup_{\gamma\in S}\Gamma_{\gamma}^0$ and $O^{\pm}=\bigsqcup_{\gamma\in S^{\pm}}\Gamma_{\gamma}^0$. 

We assume further that :
\begin{enumerate}
\item[(i)] $S_{\mid\h_{\Lambda}}$ is a basis for $\h_{\Lambda}^*$.

\item[(ii)] If $\alpha\in\Gamma_{\gamma}^0$ with $\gamma\in S^+$, is such that there exists $\beta\in O^+$, with $\alpha+\beta\in S$, then $\beta\in\Gamma_{\gamma}^0$ and $\alpha+\beta=\gamma$.

\item[(iii)] If $\alpha\in\Gamma_{\gamma}^0$ with $\gamma\in S^-$, is such that there exists $\beta\in O^-$, with $\alpha+\beta\in S$, then $\beta\in\Gamma_{\gamma}^0$ and $\alpha+\beta=\gamma$.

\item[(iv)] $\Delta^+\sqcup\Delta^-_{\pi'}=\bigsqcup_{\gamma\in S}\Gamma_{\gamma}\sqcup T\sqcup T^*$.

\item[(v)] For all $\alpha\in T^*$, $\g_{\alpha}\subset ad\,\p_{\Lambda}(y)+\g_T$.

\item[(vi)] $\lvert T\rvert=\ind\p_{\Lambda}$.
\end{enumerate}
Then $y$  is regular in $\p^*_{\Lambda}$ and $$ad\,\p_{\Lambda}(y)\oplus\g_T=\p_{\Lambda}^*.$$ Moreover we can uniquely define
$h\in\h_{\Lambda}$ by $\gamma(h)=-1$ for all $\gamma\in S$, and then $(h,\,y)$ is an adapted pair for $\p_{\Lambda}$.
\end{prop}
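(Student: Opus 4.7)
The plan is to split the argument into three parts: define $h$ and reduce to a surjectivity claim, dispose of the easy portions of $\p_\Lambda^*$, and then deal with the non-centre elements of the Heisenberg sets, which is the hard step. By (i) the restrictions $\gamma_{\mid\h_\Lambda}$ for $\gamma \in S$ form a basis of $\h_\Lambda^*$, so there is a unique $h \in \h_\Lambda$ with $\gamma(h) = -1$ for every $\gamma \in S$, and then $ad\,h(y) = \sum_{\gamma\in S}\gamma(h)\,x_\gamma = -y$ is immediate. To prove that $y$ is regular with $ad\,\p_\Lambda(y) \oplus \g_T = \p_\Lambda^*$, I would reduce to the surjectivity claim
\[
ad\,\p_\Lambda(y) + \g_T = \p_\Lambda^*,
\]
since by the very definition of the index one has $\dim ad\,\p_\Lambda(y) \le \dim\p_\Lambda - \ind\p_\Lambda$, and combining this with (vi) and the above surjectivity forces equality, directness of the sum, and regularity of $y$.

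For the surjectivity, decompose $\p_\Lambda^*$ along the partition of $\Delta^+ \sqcup \Delta^-_{\pi'}$ furnished by (iv). Root vectors $x_\alpha$ with $\alpha \in T^*$ are hit directly by (v). For the centres $\gamma \in S$, condition (i) lets one choose $h_\gamma \in \h_\Lambda$ dual to the basis $\{\gamma_{\mid\h_\Lambda}\}_{\gamma\in S}$, so that $ad\,h_\gamma(y) = x_\gamma$ lies in $ad\,\h_\Lambda(y) \subset ad\,\p_\Lambda(y)$. For the Cartan part, the brackets $[x_{-\gamma}, x_\gamma]$ for $\gamma \in S$ contribute coroot elements whose projections to $\h_\Lambda$ span $\h_\Lambda$, once again thanks to (i) and the non-degeneracy of $K$ on $\h_\Lambda$; any mixed contributions $[x_{-\gamma}, x_{\gamma'}]$ with $\gamma' \ne \gamma$ are root vectors that will be absorbed once the $O$-part is taken care of.

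The main work is the non-centre part $O = \bigsqcup_{\gamma\in S} \Gamma_\gamma^0$. For $\alpha \in \Gamma_\gamma^0$ set $\alpha' = \gamma - \alpha \in \Gamma_\gamma^0$; since $\alpha' \in \Delta^+ \sqcup \Delta^-_{\pi'}$ one has $x_{-\alpha'} \in \p_\Lambda$, and the $\delta = \gamma$ summand in $ad\,x_{-\alpha'}(y) = \sum_{\delta \in S}[x_{-\alpha'}, x_\delta]$ produces a nonzero multiple of $x_\alpha$. The delicate point is the control of the parasitic summands for $\delta \neq \gamma$: each such bracket either vanishes or yields a root vector $x_{\delta-\alpha'}$ which by (iv) lies in $\bigsqcup_{\gamma''} \Gamma_{\gamma''} \sqcup T \sqcup T^*$. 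Disjointness of the Heisenberg sets excludes $\delta = \alpha'$, and conditions (ii) and (iii) are precisely what rule out the dangerous possibility that $\delta-\alpha' \in O^+$ (when $\gamma \in S^+$) or $\delta-\alpha' \in O^-$ (when $\gamma \in S^-$), since the conclusion of these conditions would then force $\delta = \gamma$, contrary to our hypothesis $\delta \neq \gamma$. Hence a parasitic term produced on one sign block can only live in $T$, $T^*$, the span of centres, the Cartan, or the \emph{opposite} sign block $O^{\mp}$; one recovers every $x_\alpha$ with $\alpha \in O$ by induction on a well-founded ordering — for instance by decreasing value of $\alpha(h)$, exploiting that $ad\,x_{-\alpha'}$ shifts $ad\,h$-weight by a fixed amount so that the parasitic contributions sit strictly earlier in the order.

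The main obstacle is to set up this last induction cleanly so that the error terms created while producing $x_\alpha$ are strictly earlier in the chosen ordering, ruling out circular dependencies; conditions (ii) and (iii) are tailored exactly to make this possible. Once done the three checks combine to yield the surjectivity, and together with the identity $ad\,h(y) = -y$ and (vi) they give the adapted pair $(h,y)$ for $\p_\Lambda$ claimed in the statement.
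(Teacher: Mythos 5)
Your overall architecture is the same as the paper's: reduce to the surjectivity $ad\,\p_{\Lambda}(y)+\g_T=\p_{\Lambda}^*$ and let (vi) force directness and regularity; get $\g_{T^*}$ from (v), $\g_S$ from $ad\,\h_{\Lambda}(y)$ via (i), and $\h_{\Lambda}$ from $ad\,\g_{-S}(y)$ via (i) and the non-degeneracy of $K$ on $\h_{\Lambda}\times\h_{\Lambda}$. All of that is fine. The problem is the step you yourself identify as the main one, the recovery of $\g_O$.

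Your proposed induction is not well-founded for the order you give. If $\alpha\in\Gamma_{\gamma}^0$ and $\alpha'=\gamma-\alpha$, then the target $x_{\alpha}=x_{\gamma-\alpha'}$ has $ad\,h$-eigenvalue $\gamma(h)-\alpha'(h)=-1-\alpha'(h)$, while a parasitic term $x_{\delta-\alpha'}$ with $\delta\in S$, $\delta\neq\gamma$, has eigenvalue $\delta(h)-\alpha'(h)=-1-\alpha'(h)$ as well, since \emph{every} element of $S$ has eigenvalue $-1$. So the parasitic contributions sit at exactly the same level as $x_{\alpha}$, not strictly earlier, and the induction "by decreasing value of $\alpha(h)$" is circular. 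Conditions (ii) and (iii) only kill parasites landing in $O^{+}$ (resp. $O^{-}$) for $\gamma\in S^{+}$ (resp. $S^{-}$); the cross-contributions into the opposite sign block remain, and no ordering is supplied that controls them. What the paper does instead is to treat the whole block at once: it introduces the skew-symmetric form $\Phi_y(x,x')=K(y,[x,x'])$ and invokes \cite[Lem. 8.5]{J5}, which says that (ii) and (iii) force the restriction of $\Phi_y$ to $\g_{-O}\times\g_{-O}$ to be non-degenerate. Since the component of $ad\,x'(y)$ in $\g_O$ is detected by $K$-pairing against $\g_{-O}$, this non-degeneracy makes $x'\mapsto ad\,x'(y) \bmod (\h_{\Lambda}\oplus\g_S\oplus\g_T\oplus\g_{T^*})$ an isomorphism from $\g_{-O}$ onto $\g_O$ by a dimension count — no term-by-term triangularization is needed. (The paper then uses $O\cap S=\emptyset$ to discard the $\h_{\Lambda}$-component of $ad\,\g_{-O}(y)$, so that the chain of inclusions $\g_O\subset ad\,\g_{-O}(y)+\g_S+\g_T+\g_{T^*}$, $\g_S=ad\,\h_{\Lambda}(y)$, $\h_{\Lambda}\subset ad\,\g_{-S}(y)+\g_O+\g_S+\g_T+\g_{T^*}$ closes up without circularity.) To repair your argument you would either need to prove this non-degeneracy statement — which amounts to showing that the block matrix with non-degenerate diagonal blocks coming from the pairings $\alpha\leftrightarrow\gamma-\alpha$ inside $O^{+}$ and inside $O^{-}$ stays non-degenerate despite arbitrary cross terms — or cite the lemma; the inductive mechanism as you describe it does not establish it.
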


We give below the proof of the above proposition for the reader's convenience.

\begin{proof}

Condition $(iv)$ implies that $\p_{\Lambda}=\h_{\Lambda}\oplus\g_{-O}\oplus\g_{-S}\oplus\g_{-T^*}\oplus\g_{-T}$ and that $\p^*_{\Lambda}=\h_{\Lambda}\oplus\g_O\oplus\g_{S}\oplus\g_{T^*}\oplus\g_{T}$.

Let $\Phi_y$ denote the skew-symmetric bilinear form defined by $\Phi_y(x,\,x')=K(y,\,[x,\,x'])$ for all $x,\,x'\in \g$ where recall $K$ is the Killing form on $\g$.

Conditions $(ii)$ and $(iii)$ imply  by~\cite[Lem. 8.5]{J5} that the restriction of $\Phi_y$ to $\g_{-O} \times\g_{-O}$ is non-degenerate. Then $\g_O\subset ad\,\g_{-O}(y)+\h_{\Lambda}+\g_S+\g_T+\g_{T^*}$.

But since  $O\cap S=\emptyset$ one has that for all $x\in \g_O$ and $x'\in\g_{-O}$, the element $x-ad\,x'( y)$ belongs to the orthogonal of $\h_{\Lambda}$ for the Killing form. Then $\g_O\subset ad\,\g_{-O}(y)+\g_S+\g_T+\g_{T^*}$.

Condition $(i)$ implies that $\g_S=ad\,\h_{\Lambda}(y)$ and that $\h_{\Lambda}\subset ad\,\g_{-S}(y)+\g_O+\g_S+\g_T+\g_{T^*}$.
Condition $(v)$ implies that $\g_{T^*}\subset ad\,\p_{\Lambda}(y)+\g_T$.
Hence $\p^*_{\Lambda}=\h_{\Lambda}\oplus\g_O\oplus\g_{S}\oplus\g_{T^*}\oplus\g_{T}\subset ad\,\p_{\Lambda}(y)+\g_T$. Finally condition $(vi)$ implies that the latter sum is direct, since $\dim\g_T=\ind\p_{\Lambda}\le{\rm codim}\,ad\,\p_{\Lambda}(y)$.
\end{proof}

\begin{Rqs}
\rm

\begin{enumerate}
\item Notice that \cite[Thm. 8.6]{J5} is a special case of the above Proposition, with $T^*=\emptyset$. Here we need to take sometimes  a set $T^*\neq\emptyset$ as in \cite{F1}.\par
\item In \cite[Lem. 3.2 and Lem. 6.1]{FL1} lemmas were given to insure  condition $(v)$ in the above Proposition. In this paper, as in \cite{F1}, we verify by hand that condition $(v)$ of the above Proposition is satisfied, using if necessary Lemma and Prop. \ref{condition(v)} below.\par
\item Assume that there exists an adapted pair $(h,\,y)$ for $\p_{\Lambda}$ and denote by $\g_T$ a complement of $ad\,\p_{\Lambda}(y)$ in $\p_{\Lambda}^*$, with $T\subset\Delta^+\sqcup\Delta^-_{\pi'}$. 
\begin{enumerate}
\item \label{rqeqbounds} Assume further that $\varepsilon_{\Gamma}=1$ for all $\Gamma\in E(\pi')$ (as defined in \eqref{epsilon} of Sect. \ref{bounds}). Then $Y(\p_{\Lambda})$ is a polynomial algebra and by what we said in subsection \ref{WS} one has that $y+\g_T$ is a Weierstrass section for coadjoint action of $\p_{\Lambda}$ (since $\g_T$ is $ad\,h$-stable).\par
\item\label{rqeqIUB} Assume now that there exists $\Gamma\in E(\pi')$ such that $\varepsilon_{\Gamma}=1/2$. Assume further that \eqref{eqIUB} of Sect. \ref{IUB} holds. Then by what we said in Sect. \ref{IUB}, $y+\g_T$ is a Weierstrass section for coadjoint action of $\p_{\Lambda}$.
\end{enumerate}
\end{enumerate}
\end{Rqs}

\subsection{Condition {\it (v)} of Prop. \ref{propAP}.}\label{condition(v)}
Keeping the notation of Sect. \ref{not}, we consider $S,\,T,\,T^*\subset\Delta^+\sqcup\Delta^-_{\pi'}$ three disjoint subsets and $y=\sum_{\alpha\in S}x_{\alpha}$. 
We give in the Proposition below a sufficient condition which implies condition $(v)$ of Prop. \ref{propAP} for some roots $\alpha\in T^*$.
Recall Sect. \ref{not} that for all $\alpha\in\Delta$, we have fixed a nonzero root vector $x_{\alpha}\in\g_{\alpha}$, that we will rescale if necessary, except those associated with the roots $\alpha\in S$, since $y=\sum_{\alpha\in S}x_{\alpha}$ is fixed.

\begin{lm}

Let $\gamma_1,\,\gamma_2,\,\gamma_3\in S$ and $\gamma'_1,\,\gamma'_2,\,\gamma'_3\in(\Delta^-\sqcup\Delta^+_{\pi'})\setminus S$ such that 
\begin{enumerate}
\item $\gamma_i+\gamma'_i\in(\Delta^+\sqcup\Delta^-_{\pi'})\setminus S$ for all $1\le i\le 3$ 
\item $\gamma_2+\gamma'_2=\gamma_1+\gamma'_3$
\item $\gamma_3+\gamma'_3=\gamma_2+\gamma'_1$ 
\item $\gamma_1+\gamma'_1+\gamma_2\in\Delta$
\item $\gamma_1+\gamma_2\not\in\Delta$, $\gamma_2+\gamma_3\not\in\Delta$, $\gamma_1+\gamma_3\not\in\Delta$.
\end{enumerate}

Then $\gamma_1+\gamma'_1=\gamma_3+\gamma'_2$ and up to rescaling the nonzero root vectors $x_{\gamma'_i}\in\p_{\Lambda}$ for all $1\le i\le 3$ and the nonzero root vectors $x_{\gamma_i+\gamma'_i}\in\p_{\Lambda}^*$ for all $1\le i\le 3$, we have that

$$(\Sigma)\begin{cases}
[x_{\gamma'_1},\,x_{\gamma_1}]=[x_{\gamma'_2},\,x_{\gamma_3}]=x_{\gamma_1+\gamma'_1}\\
[x_{\gamma'_2},\,x_{\gamma_2}]=[x_{\gamma'_3},\,x_{\gamma_1}]=x_{\gamma_2+\gamma'_2}\\
[x_{\gamma'_3},\,x_{\gamma_3}]=[x_{\gamma'_1},\,x_{\gamma_2}]=x_{\gamma_3+\gamma'_3}\\
\end{cases}$$

\end{lm}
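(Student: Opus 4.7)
The plan is to first derive the equality $\gamma_1+\gamma'_1 = \gamma_3+\gamma'_2$ by a simple linear combination of hypotheses (2) and (3): subtracting $\gamma'_3-\gamma'_2 = \gamma_2-\gamma_1$ (from (2)) and $\gamma'_3-\gamma'_1 = \gamma_2-\gamma_3$ (from (3)) yields $\gamma'_1-\gamma'_2 = \gamma_3-\gamma_1$, which is the asserted identity. Using it together with (2) and (3), the three triple sums $\gamma_1+\gamma'_1+\gamma_2$, $\gamma_2+\gamma'_2+\gamma_3$ and $\gamma_3+\gamma'_3+\gamma_1$ all collapse to a single element $\mu$, which is a root of $\g$ by hypothesis (4); in particular the root space $\g_\mu$ is one-dimensional.

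By (1) and the three derived root equalities, every sum $\gamma'_i+\gamma_j$ appearing in $(\Sigma)$ is a root, so each of the six brackets in $(\Sigma)$ is a nonzero element of a one-dimensional root space. I would write $[x_{\gamma'_i}, x_{\gamma_j}] = a_{i,j}\,x_{\gamma_i+\gamma'_i}$ (with the understanding that, for instance, $x_{\gamma_1+\gamma'_1}=x_{\gamma_3+\gamma'_2}$, since these denote the same root) for six nonzero scalars $a_{i,j}\in\Bbbk^*$; the aim is then to rescale the $x_{\gamma'_i}$ and the $x_{\gamma_i+\gamma'_i}$ so that every $a_{i,j}$ becomes $1$.

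The crucial input is the Jacobi identity applied to the three triples $(x_{\gamma'_k},\,x_{\gamma_k},\,x_{\gamma_{k+1}})$ for $k=1,2,3$ (indices modulo $3$): hypothesis (5) annihilates the bracket of the two $\gamma$'s, and the two surviving terms both lie in $\g_\mu$ by the first step. Writing $[x_{\gamma_k},\,x_{\gamma_i+\gamma'_i}] = p_{k,i}\,x_\mu$ for nonzero scalars $p_{k,i}$ (nonzero because $\mu$ is a root), the three Jacobi identities read
\[ a_{1,1}p_{2,1} = a_{1,2}p_{1,3},\qquad a_{2,2}p_{3,2} = a_{2,3}p_{2,1},\qquad a_{3,3}p_{1,3} = a_{3,1}p_{3,2}. \]
Multiplying these together, the factors $p_{k,i}$ cancel identically on both sides, and the compatibility relation
\[ a_{1,1}\,a_{2,2}\,a_{3,3} = a_{1,2}\,a_{2,3}\,a_{3,1} \]
drops out.

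This compatibility is exactly the necessary and sufficient condition for the linear system of six equations $c_i/d_k = 1/a_{i,j}$ in the six unknowns $c_i,d_k\in\Bbbk^*$ (where $c_i$ rescales $x_{\gamma'_i}$ and $d_k$ rescales $x_{\gamma_k+\gamma'_k}$) to be solvable in $(\Bbbk^*)^6$, and any solution realizes the system $(\Sigma)$. The main obstacle is the Jacobi identity bookkeeping: one must carefully identify the root vectors (using $x_{\gamma_1+\gamma'_1}=x_{\gamma_3+\gamma'_2}$ and its two analogues) so that the surviving terms do lie in $\g_\mu$, and then track the three scalar equations with the correct cyclic labelling to see that the product $p_{2,1}p_{3,2}p_{1,3}$ appears on both sides and cancels, yielding the clean multiplicative relation on the $a_{i,j}$'s.
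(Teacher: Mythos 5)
Your proposal is correct and follows essentially the same route as the paper: the identity $\gamma_1+\gamma'_1=\gamma_3+\gamma'_2$ from (2) and (3), then the Jacobi identity applied three times with hypothesis (5) killing the $[x_{\gamma_i},x_{\gamma_j}]$ cross-terms and hypothesis (4) guaranteeing that the resulting brackets into the one-dimensional space $\g_{\gamma_1+\gamma'_1+\gamma_2}$ are nonzero. The only difference is organizational --- you keep all six structure constants symbolic and extract the single multiplicative compatibility $a_{1,1}a_{2,2}a_{3,3}=a_{1,2}a_{2,3}a_{3,1}$ needed to solve the rescaling system, whereas the paper normalizes four of the brackets first and checks the remaining equality by chaining the same three Jacobi identities; the mathematical content is identical.
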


\begin{proof} The equality $\gamma_1+\gamma'_1=\gamma_3+\gamma'_2$ comes directly from the equalities (2) and (3). Moreover
the rescaling of the nonzero root vectors $x_{\gamma'_i}$ and $x_{\gamma_i+\gamma'_i}$ (which is possible since the roots $\gamma'_i$ and $\gamma_i+\gamma'_i$ do not belong to $S$) gives for example the last two equalities of $(\Sigma)$. Then we obtain the first one, since we prove easily that $[x_{\gamma'_1},\,x_{\gamma_1}]=[x_{\gamma'_2},\, x_{\gamma_3}]$. Indeed by applying Jacobi identity several times, 
it is easy to prove, under the assumptions, that $[[x_{\gamma'_1},\,x_{\gamma_1}],\,x_{\gamma_2}]=[[x_{\gamma'_2},\, x_{\gamma_3}],\,x_{\gamma_2}]$ and using (4) one can conclude. 
\end{proof}

We then have directly the following proposition.
\begin{prop}

Let $\gamma_i$ and $\gamma'_i$, for $1\le i\le 3$, be roots satisfying the hypotheses of previous lemma. Recall that  $y=\sum_{\gamma\in S} x_{\gamma}$ and let $X,\,X',\,X''$ be vectors in $\p_{\Lambda}^*$ such that, after a possible rescaling of some suitable root vectors, we have

$$\begin{cases}
ad\,x_{\gamma'_1}(y)=x_{\gamma_1+\gamma'_1}+x_{\gamma_3+\gamma'_3}+X\\
 ad\,x_{\gamma'_2}(y)=x_{\gamma_1+\gamma'_1}+x_{\gamma_2+\gamma'_2}+X'\\
ad\,x_{\gamma'_3}(y)=x_{\gamma_2+\gamma'_2}+x_{\gamma_3+\gamma'_3}+X''\\
\end{cases}$$
with
$$\begin{cases}X\not\in Vect(x_{\gamma_1+\gamma'_1},\,x_{\gamma_3+\gamma'_3})\setminus\{0\}\\
X'\not\in Vect(x_{\gamma_1+\gamma'_1},\,x_{\gamma_2+\gamma'_2})\setminus\{0\}\\
X''\not\in Vect(x_{\gamma_2+\gamma'_2},\,x_{\gamma_3+\gamma'_3})\setminus\{0\}.\\
\end{cases}$$  If $X,\,X',\,X''\in ad\,\p_{\Lambda}(y)+\g_T$, 
then  $x_{\gamma_i+\gamma'_i}\in ad\,\p_{\Lambda}(y)+\g_T$ for all $1\le i\le 3$.

\end{prop}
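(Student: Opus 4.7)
The plan is to derive three pairwise-sum relations modulo $ad\,\p_{\Lambda}(y)+\g_T$ and then invert a simple $3\times 3$ linear system. First I would note that since each $\gamma'_i\in(\Delta^-\sqcup\Delta^+_{\pi'})\setminus S$, the root vector $x_{\gamma'_i}$ lies in $\p_{\Lambda}=\n^-\oplus\h_{\Lambda}\oplus\n^+_{\pi'}$, and therefore $ad\,x_{\gamma'_i}(y)\in ad\,\p_{\Lambda}(y)$. This is the only place where the placement of the $\gamma'_i$ in the dual parabolic enters.

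Rearranging each of the three displayed identities for $ad\,x_{\gamma'_i}(y)$ and using the hypothesis that $X,X',X''\in ad\,\p_{\Lambda}(y)+\g_T$, I obtain that the three pairwise sums
\begin{equation*}
x_{\gamma_1+\gamma'_1}+x_{\gamma_3+\gamma'_3},\quad x_{\gamma_1+\gamma'_1}+x_{\gamma_2+\gamma'_2},\quad x_{\gamma_2+\gamma'_2}+x_{\gamma_3+\gamma'_3}
\end{equation*}
all lie in $ad\,\p_{\Lambda}(y)+\g_T$. Setting $a:=x_{\gamma_1+\gamma'_1}$, $b:=x_{\gamma_2+\gamma'_2}$, $c:=x_{\gamma_3+\gamma'_3}$, the coefficient matrix of this system is invertible, so each of $a$, $b$, $c$ is recovered by an explicit linear combination, for example $a=\tfrac{1}{2}\bigl((a+b)+(a+c)-(b+c)\bigr)$ and symmetrically for $b,c$. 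Hence each $x_{\gamma_i+\gamma'_i}$ belongs to $ad\,\p_{\Lambda}(y)+\g_T$, which is exactly the conclusion.

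There is essentially no obstacle to overcome: the substantive work has already been absorbed in the preceding lemma, which, through repeated use of Jacobi and hypotheses (4) and (5), produces matched coefficients so that, after a compatible rescaling of root vectors, the three displayed identities really do hold with coefficient one on each of their two named summands. The conditions $X\notin Vect(x_{\gamma_1+\gamma'_1},x_{\gamma_3+\gamma'_3})\setminus\{0\}$ (and the analogues for $X'$, $X''$) guarantee that no term inside the remainders secretly cancels one of the named summands, so that the three pairwise-sum relations genuinely capture the individual root vectors after the linear-algebra step.
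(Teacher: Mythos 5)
Your proof is correct and is precisely the argument the paper has in mind (the paper states the proposition "directly" without writing it out): since each $x_{\gamma'_i}$ lies in $\p_{\Lambda}$, the three displayed identities give that the pairwise sums $x_{\gamma_1+\gamma'_1}+x_{\gamma_3+\gamma'_3}$, $x_{\gamma_1+\gamma'_1}+x_{\gamma_2+\gamma'_2}$, $x_{\gamma_2+\gamma'_2}+x_{\gamma_3+\gamma'_3}$ lie in $ad\,\p_{\Lambda}(y)+\g_T$, and the invertible $3\times 3$ system (characteristic zero) recovers each summand. Your remark on the role of the conditions $X\not\in Vect(\,\cdot\,)\setminus\{0\}$ is also the right reading: they ensure the stated coefficients are genuine and play no further role in the linear-algebra step.
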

Actually we will apply the previous proposition with $X,\,X',\,X''$ being vectors for which it will be immediate to verify that they belong to $ad\,\p_{\Lambda}(y)+\g_T$ by direct computation. Moreover one of the $\gamma_i+\gamma'_i$ will belong to the subset $T^*$. See for example proof of Lemma \ref{condition(v)caseeven}.

\subsection{The Kostant cascade in type A}\label{KCA}

Keep the notation of Sect. \ref{not} and assume that $\g$ is a simple Lie algebra of type ${\rm B}_n$, ${\rm C}_n$ or ${\rm D}_n$. We consider $\p=\n^-\oplus\h\oplus\n^+_{\pi'}$ the standard parabolic subalgebra of $\g$ containing the negative Borel subalgebra ${\mathfrak b}^-=\n^-\oplus\h$ and associated to the subset $\pi'\subset\pi$. 
Recall  that we are interested in studying $\p$ which is equal to $\p_{s,\,\ell}$, resp. $\p_{\ell}$, resp. $\q_{s,\,\ell}$ with $s\in\mathbb N^*$ and $\ell\in\mathbb N$, as defined in subsection \ref{Cases}. 
Then the subset $\pi'$ associated to $\p$ is $\pi'=\pi\setminus\{\alpha_s,\,\alpha_{s+2},\,\ldots,\,\alpha_{s+2\ell}\}$ with $1\le s\le n-2\ell$, resp. $\pi'=\pi\setminus\{\alpha_{n-1-2\ell},\,\ldots,\,\alpha_{n-1},\,\alpha_n\}$ and $\g$ of type ${\rm D}_n$, resp. $\pi'=\pi\setminus\{\alpha_s,\,\alpha_{s+2},\,\ldots,\,\alpha_{s+2\ell},\,\alpha_{n-1},\,\alpha_n\}$ with $s+2\ell\le n-2$ and $\g$ of type ${\rm D}_n$.

If $s\ge 2$ and in the  cases of $\p_{s,\,\ell}$ or of $\q_{s,\,\ell}$, $\pi'_1=\{\alpha_1,\,\alpha_2,\,\ldots,\,\alpha_{s-1}\}$ will  denote the connected component of $\pi'$ of type ${\rm A}_{s-1}$ and in the case of $\p_{\ell}$, $\pi'_1=\{\alpha_1,\,\alpha_2,\,\ldots,\,\alpha_{n-2-2\ell}\}$ will denote the  connected component of $\pi'$ of type ${\rm A}_{n-2-2\ell}$, if $n-2-2\ell\ge 1$. To keep homogeneous notation we will set in this subsection $s=n-1-2\ell$ when we are in the latter case.
We denote by $\beta_{\pi'_1}$ the Kostant cascade (see  \ref{HS}) of the simple Lie subalgebra $\g_{\pi'_1}$ of the Levi subalgebra $\g'$ of $\p$ which is of type ${\rm A}_{s-1}$. We also denote by ${\pi'_1}^\vee$ the subset of $\h'$ formed by the coroots $\alpha^\vee$ with $\alpha\in\pi'_1$ and by $\Delta^+_{\pi'_1}:=\Delta^+\cap\mathbb N\pi'_1$.
We have that $$\beta_{\pi'_1}=\{\beta'_i=\ep_i-\ep_{s+1-i}\mid 1\le i\le [s/2]\}\subset\Delta^+_{\pi'_1}.$$ Set $\beta^0_{\pi'_1}:=\beta_{\pi'_1}\setminus(\beta_{\pi'_1}\cap\pi'_1)$. If $s$ is odd, then $\beta^0_{\pi'_1}=\beta_{\pi'_1}$ and if $s$ is even, then $\beta^0_{\pi'_1}=\{\beta'_i\mid 1\le i\le (s-2)/2\}$.
The following lemma will be useful for the next sections, notably to prove that, for a suitable subset $S\subset\Delta^+\sqcup\Delta^-_{\pi'}$, one has that $S_{\mid\h_{\Lambda}}$ is a basis for $\h_{\Lambda}^*$ (see Lemma \ref{baseFC} or Lemma \ref{BaseC}).
If $s$ is even, set $t:=[s/4]$ and if $s$ is odd, set $t:=[(s+1)/4]$.
We consider the subset $\{h'_j\}_{1\le j\le [(s-1)/2]}\subset{\pi'_1}^\vee$, with the following order.
If $t=s/4$ with $s$ even, resp. $t=(s+1)/4$ with $s$ odd, then 
$$\{h'_j\}=\{{h'_{2j-1}=\alpha^\vee_{2j-1}},\,h'_{2j}={\alpha^\vee_{s-2j}}\,; 1\le j\le t-1,\,h'_{2t-1}={\alpha^\vee_{2t-1}}\}.$$
If $t=(s-2)/4$ with $s$ even, resp. $t=(s-1)/4$ with $s$ odd, then  
$$\{h'_j\}=\{{h'_{2j-1}=\alpha^\vee_{2j-1}},\,h'_{2j}={\alpha^\vee_{s-2j}}\,; 1\le j\le t\}.$$

\begin{lm}

Let $A$ be the square matrix of size $[(s-1)/2]$ which entries are $-\beta'_i(h'_j)$ with $1\le i,\,j\le [(s-1)/2]$.
Then $A$ is a lower triangular matrix with $-1$ on the diagonal. Hence $\det A=(-1)^{[(s-1)/2]}$.

\end{lm}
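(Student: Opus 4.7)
The strategy is a direct computation of the entries of $A$, exploiting the simple structure of the roots $\beta'_i = \ep_i - \ep_{s+1-i}$ in the type-${\rm A}_{s-1}$ root system $\Delta_{\pi'_1}$.

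First, I would expand $\beta'_i = \alpha_i + \alpha_{i+1} + \cdots + \alpha_{s-i}$ and use the Cartan matrix of type ${\rm A}$, namely $\alpha_a(\alpha^\vee_k) = 2\delta_{a,k} - \delta_{a,k-1} - \delta_{a,k+1}$, to obtain the pairing
$$
\beta'_i(\alpha^\vee_k) =
\begin{cases}
\phantom{-}1 & \text{if } k\in\{i,\,s-i\},\\
-1 & \text{if } k\in\{i-1,\,s-i+1\},\\
\phantom{-}0 & \text{otherwise},
\end{cases}
$$
the various endpoint cases being handled by the fact that, for any $i$ in the range $1\le i\le[(s-1)/2]$, one has $i<s/2$, so $i\neq s-i$ and the four indices $i-1,i,s-i,s-i+1$ are well-separated.

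Next, I would check the diagonal entries. If $j=2m-1$ is odd, then $h'_j=\alpha^\vee_{2m-1}$ and the previous formula with $i=j$ gives $\beta'_j(h'_j)=1$ (it falls in the case $k=i$). If $j=2m$ is even, then $h'_j=\alpha^\vee_{s-2m}$ and with $i=j$ one is in the case $k=s-i$, again giving $\beta'_j(h'_j)=1$. This handles both parities of $s$ and both ranges of $t$ uniformly, since in all cases the rule $h'_{2j-1}=\alpha^\vee_{2j-1}$, $h'_{2j}=\alpha^\vee_{s-2j}$ applies. Hence $A_{jj}=-1$ for every $j$.

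The main content is then the upper-triangularity. Writing $h'_j=\alpha^\vee_{k_j}$ with $k_j=j$ or $k_j=s-j$ according to the parity of $j$, the formula above shows that $\beta'_i(h'_j)\neq 0$ forces $i\in\{k_j+1,\,k_j,\,s-k_j,\,s-k_j+1\}$, which in either parity of $j$ reduces to $i\in\{j,\,j+1,\,s-j,\,s-j+1\}$. The inequality $j\le[(s-1)/2]$ yields $s-j\ge j+1>j$, so all admissible rows $i$ satisfy $i\ge j$. Therefore $A_{ij}=0$ whenever $i<j$, proving that $A$ is lower triangular. The determinant is then immediate: $\det A=(-1)^{[(s-1)/2]}$.

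The only delicate point is bookkeeping of the parities of $s$ and of $j$ together with the different definitions of $t$ in the four cases listed before the lemma; I expect that once the pairing formula for $\beta'_i(\alpha^\vee_k)$ is written down, all four cases collapse into the single check above, and no case distinction beyond ``$j$ odd versus $j$ even'' is actually needed.
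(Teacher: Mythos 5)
Your proposal is correct. The pairing formula $\beta'_i(\alpha^\vee_k)=1$ for $k\in\{i,s-i\}$, $-1$ for $k\in\{i-1,s-i+1\}$, $0$ otherwise, is valid because $i\le[(s-1)/2]<s/2$ guarantees $i<s-i$, and your reduction of the non-vanishing condition to $i\in\{j,\,j+1,\,s-j,\,s-j+1\}$ is right in both parities of $j$ (with $k_j=s-j$ one solves $s-i+1=s-j$ to get $i=j+1$, so the same set appears). Combined with $s-j\ge j+1$ this gives vanishing strictly above the diagonal, and the diagonal entries are $-1$; your observation that the four $t$-cases all reduce to the single rule $h'_{2m-1}=\alpha^\vee_{2m-1}$, $h'_{2m}=\alpha^\vee_{s-2m}$ on the relevant index range is also correct. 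The route is genuinely different from the paper's: the paper does not expand $\beta'_i$ in simple roots at all, but instead invokes the recursive construction of the Kostant cascade — the nested sets $\Delta^+_{j+1}\subset\Delta^+_j$ satisfy $(\beta'_i,\alpha)=0$ for every $\alpha\in\Delta^+_j$ with $j>i$, and $\alpha_{2j-1}\in\Delta^+_{2j-1}$, $\alpha_{s-2j}\in\Delta^+_{2j}$, which yields the vanishing above the diagonal structurally, the diagonal being $(\beta'_j,\alpha_{k_j})=1$. The paper's argument is shorter and transfers verbatim to cascades in other types, while your Cartan-matrix computation is more self-contained and gives finer information (the matrix is in fact lower bidiagonal, with the only possible subdiagonal nonzero entries at $(j+1,j)$).
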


\begin{proof}
Recall the construction of the Kostant cascade of $\g_{\pi'_1}$ (see \ref{HS}).
 Set $\Delta_1^+
=\Delta^+_{\pi'_1}$, then set $\Delta^+_2=\{\alpha\in\Delta_1^+;\;(\alpha,\beta'_1)=0\}$. Here $\beta'_1$ is the highest root of $\g_{\pi'_1}$ and $\beta'_1=\varpi'_1+\varpi'_{s-1}$. Then $\Delta^+_2=\Delta^+_1\cap\mathbb N\pi'_2$ where $\pi'_2=\pi'_1\setminus\{\alpha_1,\,\alpha_{s-1}\}$. Continuing we set $\Delta^+_{i+1}=\{\alpha\in\Delta^+_i;\;(\alpha,\,\beta'_i)=0\}$ where $\beta'_i$ is the highest root of $\Delta^+_i$. Then we have that
$\Delta^+_{i+1}\subset\Delta^+_i\subset\cdots\subset\Delta^+_1$ and then $(\beta'_i,\,\alpha)=0$ for all $\alpha\in\Delta^+_j$ with $j>i$. Finally observe that, for all $1\le j\le [(s+1)/4]$, $\alpha_{2j-1}\in\Delta^+_{2j-1}$, $\alpha_{s-2j}\in\Delta^+_{2j}$, $\beta'_{2j-1}(\alpha_{2j-1}^\vee)=(\beta'_{2j-1},\,\alpha_{2j-1})=1$ and that $\beta'_{2j}(\alpha_{s-2j}^\vee)=(\beta'_{2j},\,\alpha_{s-2j})=1$ while $2j\le (s-1)/2$. Hence the lemma.
\end{proof}

\section{Some examples.}\label{Ex}

Before stating the main result (see subsection \ref{Mainres}), we give below two examples 
which will enlighten our construction of  a Weierstrass section, each of these examples using a different method to obtain the latter from the adapted pair we construct. 
Thus  case \ref{cas3} of subsection \ref{Mainres} (see also Sect. \ref{FC}) is illustrated
 by the first example and  case \ref{cas9} of subsection \ref{Mainres} (see also Sect. \ref{Cas9}) is illustrated by the second example.
We keep the notation of Sect. \ref{not}.

\subsection{Comparison of multiplicities.}\label{Gr}

Assume that we have constructed an adapted pair $(h,\,y)\in\h_{\Lambda}\times\p_{\Lambda}^*$ for $\p_{\Lambda}$ via Prop. \ref{propAP}.
Let $\lambda\in \Bbbk$ and set $\mathfrak r=\h_{\Lambda}\oplus\g_O\oplus\g_S\oplus\g_{T^*}\subset\p_{\Lambda}^*$ (one has that $\mathfrak r\oplus\g_T=\p_{\Lambda}^*$). Recall that the endomorphism $ad\,h$ of $\p_{\Lambda}^*$, resp. of $\p_{\Lambda}$ (with $ad$ the coadjoint action, resp. the adjoint action)  is semisimple. Then $\lambda$ is an eigenvalue of $ad\,h$ on $\p_{\Lambda}^*$ if and only if $-\lambda$ is an eigenvalue of $ad\,h$ on $\p_{\Lambda}$. Write $m'_{\lambda}$ for the multiplicity of $\lambda$ in $\mathfrak r$, $m_{\lambda}$ for the multiplicity of $\lambda$ in $\p_{\Lambda}$ and $m^*_{\lambda}$ for the multiplicity of $\lambda$ in $\p_{\Lambda}^*$. Then by the above $m_{-\lambda}=m^*_{\lambda}$ and obviously $m'_{\lambda}\le m_{-\lambda}$. Moreover since $ad\,h(y)=-y$ and that $\p_{\Lambda}^*=ad\,\p_{\Lambda}(y)\oplus\g_T$, we must have that 
\begin{equation}\label{mult}
m'_{\lambda}\le m_{\lambda+1}\end{equation}(see also \cite[7.1]{F1}).

In the examples below, we will check that inequality (\ref{mult}) is satisfied.

\subsection{First example.} 

We assume that the Lie algebra $\g$ is simple of type ${\rm B}_6$ and we set $\pi'=\pi\setminus\{\alpha_2,\,\alpha_4\}$. Then we consider the parabolic subalgebra
$\p=\p^-_{\pi'}$ as defined in Sect. \ref{not}. We are then in case \ref{cas3} of subsection \ref{Mainres}.
We take 
$S=S^+\sqcup S^-$ with $$S^+=\{\ep_1+\ep_3,\,\ep_2,\,\ep_4+\ep_5\},\,
 S^-=\{-\beta''_1=-\ep_5-\ep_6\}$$ where $\beta''_1$ is an element of the Kostant cascade of $\g'$,
 $$T=\{\ep_1+\ep_2,\,\ep_1-\ep_3,\,\ep_2+\ep_4,\,\ep_4-\ep_5,\,\ep_4-\ep_3,\,\ep_6-\ep_5\},$$
 $$T^*=\{\ep_2+\ep_6,\,\ep_2+\ep_5,\,\ep_2-\ep_1,\,\ep_2-\ep_5,\,\ep_2-\ep_4,\,\ep_2-\ep_3,\,\ep_6,\,\ep_2-\ep_6\}.$$
We set 
$$\begin{array}{cl}\Gamma_{\ep_1+\ep_3}=&\{\ep_1+\ep_3,\,\ep_1+\ep_4,\,\ep_3-\ep_4,\,\ep_1+\ep_5,\,\ep_3-\ep_5,\\
&\ep_1+\ep_6,\,\ep_3-\ep_6,\,\ep_1,\,\ep_3,\,\ep_1-\ep_6,\,\ep_3+\ep_6,\\
&\ep_1-\ep_5,\,\ep_3+\ep_5,\,\ep_1-\ep_4,\,\ep_3+\ep_4,\,\ep_1-\ep_2,\,\ep_2+\ep_3\},\end{array}$$
$$\Gamma_{\ep_2}=\{\ep_2\},$$
$$\Gamma_{\ep_4+\ep_5}=\{\ep_4+\ep_5,\,\ep_4+\ep_6,\,\ep_5-\ep_6,\,\ep_4,\,\ep_5,\,\ep_4-\ep_6,\,\ep_5+\ep_6\},$$
$$\Gamma_{-\ep_5-\ep_6}=\{-\ep_5-\ep_6,\,-\ep_5,\,-\ep_6\}=-H_{\beta''_1}$$
where $H_{\beta''_1}$ is the largest Heisenberg set  with centre $\beta''_1$ included in $\Delta^+_{\pi'}$, as defined in  \ref{HS}.

By setting $y=\sum_{\gamma\in S}x_{\gamma}$ one verifies (see for more details Sect. \ref{FC}) that all conditions of Proposition \ref{propAP} are satisfied (indeed it is more complicated than what we have to do in the second example). 
Then $h\in\h'$ such that $\gamma(h)=-1$ for all $\gamma\in S$ is :
 $$h=\ep_1-\ep_2-2\ep_3+2\ep_4-3\ep_5+4\ep_6=\alpha_1^\vee-2\alpha_3^\vee-3\alpha_5^\vee+1/2\alpha_6^\vee.$$ 
Hence the pair $(h,\,y)$ is an adapted pair for $\p_{\Lambda}$. This adapted pair is not sufficient a priori to give a Weierstrass section for coadjoint action of $\p_{\Lambda}$,
since there is one $\Gamma\in E(\pi')$ such that $\varepsilon_{\Gamma}=1/2$. But we can easily check that \eqref{eqIUB}  in Sect. \ref{IUB} holds. Hence by Remark \ref{rqeqIUB} of subsection \ref{REG}
 one has that $y+\g_T$ is a Weierstrass section for coadjoint action of $\p_{\Lambda}$, and then $Y(\p_{\Lambda})$ is a polynomial algebra over $\Bbbk$ (result which was not yet known since the criterion that $\ep_{\Gamma}=1$ for all $\Gamma\in E(\pi')$ is here not satisfied).

To convince oneself that $(h,\,y)$ given above is indeed an adapted pair for $\p_{\Lambda}$ (although the inequality \eqref{mult} of \ref{Gr} is just a necessary condition), one gives
in the table below  the multiplicities $m'_{\lambda}$ and $m^*_{\lambda}=m_{-\lambda}$ for all eigenvalue $\lambda\in \Bbbk$ of $\ad h$ on $\p_{\Lambda}^*$ and one easily checks that inequality \eqref{mult} of \ref{Gr} holds.

\newcolumntype{R}[1]{>{\raggedleft\arraybackslash}b{#1}}
\newcolumntype{C}[1]{>{\centering\arraybackslash}b{#1}}
\newcolumntype{L}[1]{>{\raggedright\arraybackslash}b{#1}}

\begin{center}
\begin{tabular}{|C{2cm}|C{0.6cm}|C{0.6cm}|C{0.6cm}|C{0.6cm}|C{0.6cm}|C{0.6cm}|C{0.6cm}|C{0.6cm}|C{0.6cm}|}

\hline $\lambda$&-7&-6&-5&-4&-3&-2&-1&0\\
\hline $m'_{\lambda}$&1&1&2&3&4&4&5&5\\
\hline $m_{-\lambda}$&1&1&2&3&4&4&5&6\\
\hline

\end{tabular}

\begin{tabular}{|C{2cm}|C{0.6cm}|C{0.6cm}|C{0.6cm}|C{0.6cm}|C{0.6cm}|C{0.6cm}|C{0.6cm}|C{0.6cm}|C{0.6cm}|}

\hline $\lambda$&1&2&3&4&5&6&7\\
\hline $m'_{\lambda}$&4&4&3&2&1&1&0\\
\hline $m_{-\lambda}$&5&4&4&3&2&1&1\\
\hline

\end{tabular}
\end{center}

\subsection{Second example.}

Here we assume that $\g$ is simple of type ${\rm D}_9$ and consider $\pi'=\pi\setminus\{\alpha_1,\,\alpha_3,\,\alpha_5,\,\alpha_8,\,\alpha_9\}$ and the parabolic subalgebra $\p=\p^-_{\pi'}$ associated with $\pi'$. Here we are in case \ref{cas9} of subsection \ref{Mainres}.
We take $S=S^+\sqcup S^-$ with
$$S^+=\{\beta_1=\ep_1+\ep_2,\,\beta_2=\ep_3+\ep_4,\,\beta_3=\ep_5+\ep_6,\,\tilde\beta_4=\beta_4-\alpha_8=\ep_7+\ep_9\}$$
$$\hbox{\rm and}\;\;\;S^-=\{-\beta''_1=\ep_8-\ep_6\}.$$

Here $\beta_i=\ep_{2i-1}+\ep_{2i}$ ($1\le i\le 4$) are elements of the Kostant cascade $\beta_{\pi}$ of $\g$ and $\beta''_1$ is an element of the Kostant cascade $\beta_{\pi'}$ of $\g'$. More precisely setting $\beta^0_{\pi'}=\beta_{\pi'}\setminus(\beta_{\pi'}\cap\pi')$, we have that
$S^-=-\beta^0_{\pi'}$.
We also set
$$T=\{\ep_1-\ep_2,\,\ep_3-\ep_4,\,\ep_5-\ep_6,\,\ep_7+\ep_8,\,\ep_7-\ep_9,\,\ep_8-\ep_9,\,\ep_3-\ep_2,\,\ep_5-\ep_4\}$$
and $T^*=\emptyset$.

For all $1\le i\le 3$, we take $\Gamma_{\beta_i}=H_{\beta_i}$ where $H_{\beta}$ is the largest Heisenberg set with centre $\beta\in\beta_{\pi}$ which is included in $\Delta^+$ as defined in subsection \ref{HS}.
We set $\Gamma_{\tilde\beta_4}=\{\tilde\beta_4,\,\ep_7-\ep_8,\,\ep_8+\ep_9\}$ and $\Gamma_{-\beta''_1}=-H_{\beta''_1}$ where $H_{\beta''_1}\subset\Delta^+_{\pi'}$ is the largest Heisenberg set with centre $\beta''_1$ which is included in $\Delta^+_{\pi'}$.
Since $H_{\beta_4}\cup H_{\alpha_7}=\Gamma_{\tilde\beta_4}\sqcup(T\cap H_{\beta_4})$,  Lemma \ref{HS} $i)$ gives condition $(iv)$ of Prop. \ref{propAP}. Moreover  Lemma \ref{HS} $ii)$ and $iii)$ gives conditions $(ii)$ and $(iii)$ of Prop. \ref{propAP}. Finally we verify by hand that conditions $(i)$ and $(vi)$ of Prop. \ref{propAP} are satisfied, noting that $\h_{\Lambda}=\h'\oplus\Bbbk(\alpha_9^\vee-\alpha_8^\vee)$. Setting
$$h=-\alpha_2^\vee-2\alpha_4^\vee-3\alpha_6^\vee+4\alpha_7^\vee-4(\alpha_9^\vee-\alpha_8^\vee)\in\h_{\Lambda}$$ and $y=\sum_{\alpha\in S} x_{\alpha}$, one checks that $(h,\,y)$ is an adapted pair for $\p_{\Lambda}$. Moreover one checks easily that both bounds in \eqref{ch} of Sect. \ref{bounds} coincide, then $Y(\p_{\Lambda})$ is a polynomial algebra and by what we said in Remark \ref{rqeqbounds} of subsection \ref{REG}, $y+\g_T$ is a Weierstrass section for coadjoint action of $\p_{\Lambda}$. In the table below we give the multiplicities $m'_{\lambda}$ and $m^*_{\lambda}=m_{-\lambda}$ for all eigenvalue $\lambda\in \Bbbk$ of $\ad h$ on $\p_{\Lambda}^*$ and one easily checks that inequality \eqref{mult} of \ref{Gr} holds.

\begin{center}
\begin{tabular}{|C{2cm}|C{0.6cm}|C{0.6cm}|C{0.6cm}|C{0.6cm}|C{0.6cm}|C{0.6cm}|C{0.6cm}|C{0.6cm}|C{0.6cm}|C{0.6cm}|}
\hline $\lambda$&-12&-11&-10&-9&-8&-7&-6&-5&-4&-3\\
\hline $m'_{\lambda}$&1&2&2&2&2&3&3&3&3&4\\
\hline$m_{-\lambda}$&1&2&2&2&2&3&3&3&3&4\\
\hline
\end{tabular}

\begin{tabular}{|C{2cm}|C{0.6cm}|C{0.6cm}|C{0.6cm}|C{0.6cm}|C{0.6cm}|C{0.6cm}|C{0.6cm}|C{0.6cm}|C{0.6cm}|C{0.6cm}|}
\hline$\lambda$&-2&-1&0&1&2&3&4&5&6&7\\
\hline$m'_{\lambda}$&5&7&7&5&4&3&3&3&3&2\\
\hline $m_{-\lambda}$&5&7&7&6&5&5&5&4&3&2\\
\hline
\end{tabular}

\begin{tabular}{|C{2cm}|C{0.6cm}|C{0.6cm}|C{0.6cm}|C{0.6cm}|C{0.6cm}|C{0.6cm}|C{0.6cm}|C{0.6cm}|C{0.6cm}|C{0.6cm}|}
\hline$\lambda$&8&9&10&11&12&13&14&15\\
\hline$m'_{\lambda}$&2&2&2&1&0&0&0&0\\
\hline$m_{-\lambda}$&2&2&2&1&0&0&0&1\\
\hline

\end{tabular}
\end{center}
\section{Cases {\ref{cas1}} and {\ref{cas2}}  for type B or D.}\label{SC}

In this Section we consider truncated parabolic subalgebras described  in  \ref{cas1} and in \ref{cas2} of subsection \ref{Mainres}.
More precisely (with the notation of Sect. \ref{not} and of subsection \ref{Cases}) let $\p=\p_{s,\,\ell}=\n^-\oplus\h\oplus\n^+_{\pi'}$ be a parabolic subalgebra associated to the subset
$\pi'=\pi\setminus\{\alpha_s,\,\alpha_{s+2},\ldots,\alpha_{s+2\ell}\}$ with $\ell\in\mathbb N$ and $s$ an {\it odd} integer,
$1\le s\le n-2\ell$, in a simple Lie algebra $\g$ of type ${\rm B}_n$, resp. ${\rm D}_n$, with $n\ge 2$, resp. $n\ge 4$.

If $\ell=0$, then the parabolic subalgebra $\p$ is maximal and this case was already treated in \cite{FL}. Thus we will assume
from now on that $\ell\ge 1$. 
Note that $\h_{\Lambda}=\h'$, in type ${\rm B}_n$ but also in type ${\rm D}_n$ with the above hypotheses,
by what we said in Sect. \ref{not} (since it is not true that $\alpha_{n-1}$ and $\alpha_n$ are  both deleted  from $\pi'$).

Here we will show (see lemma \ref{coincidencebounds}) that the lower and upper bounds for ${\ch}\,(Y(\p_{\Lambda}))$ in \eqref{ch} of Sect. \ref{bounds} coincide, and then the algebra of symmetric invariants $Y(\p_{\Lambda})$ is  polynomial. 
By Remark \ref{rqeqbounds} of subsection \ref{REG} the existence of an adapted pair for $\p_{\Lambda}$ is sufficient to give a Weierstrass section for coadjoint action of $\p_{\Lambda}$.
Our construction of an adapted pair for $\p_{\Lambda}$ generalizes the construction of an adapted pair in \cite[Sect. 4 and 5]{FL} in case of a maximal parabolic subalgebra.

We will use Proposition \ref{propAP}, which here is quite easy  to apply. Indeed it suffices to take $S\cup T$ to be the union of the Kostant cascade in $\g$ and the opposite of the Kostant cascade in $\g'$. Moreover for each $\gamma\in S^+$, resp. $\gamma\in S^-$, we take the  Heisenberg set $\Gamma_{\gamma}$  to be  equal, resp. to be the opposite, to $H_{\gamma}$, resp. of $H_{-\gamma}$, where $H_{\gamma}$, resp. $H_{-\gamma}$, is the largest Heisenberg set with centre $\gamma$, resp. $-\gamma$, included in $\Delta^+$, resp. $\Delta^+_{\pi'}$, as defined in \ref{HS}. Here moreover we set $T^*=\emptyset$. Then Lemma \ref{HS} will give most of conditions of Prop. \ref{propAP}.

\subsection{The Kostant cascades.}\label{KcasBD}\par\noindent

Recall \ref{HS} the Kostant cascade $\beta_{\pi}$ of $\g$  and set $\beta_{\pi}^0=\beta_{\pi}\setminus(\beta_{\pi}\cap\pi)$.
If $\g$ is of type ${\rm B}_n$ then we have that $$\beta^0_{\pi}=\{\beta_i=\ep_{2i-1}+\ep_{2i}\mid 1\le i\le[n/2]\},$$ and
if $\g$ is of type ${\rm D}_n$ then $$\beta^0_{\pi}=\{\beta_i=\ep_{2i-1}+\ep_{2i}\mid 1\le i\le [(n-1)/2]\}.$$
Moreover if $\g$ is of type ${\rm B}_n$, then we have that $$\beta_{\pi}\cap\pi=\{\alpha_{2i-1}\mid 1\le i\le [(n+1)/2]\}.$$
 If $\g$ is of type ${\rm D}_n$ and $n$ odd, then we have that $$\beta_{\pi}\cap\pi=\{\alpha_{2i-1}\mid 1\le i\le (n-1)/2\},$$ 
and if $\g$ is of type ${\rm D}_n$ and $n$ even, then we have that $$\beta_{\pi}\cap\pi=\{\alpha_n,\,\alpha_{2i-1}\mid 1\le i\le n/2\}.$$ 
Now for the Kostant cascade $\beta_{\pi'}$ of $\g'$, set similarly $\beta^0_{\pi'}=\beta_{\pi'}\setminus(\beta_{\pi'}\cap\pi')$.
If $\g$ is of type ${\rm B}_n$, then we have that 
$$\begin{array}{cl}
\beta^0_{\pi'}=&\bigl\{\beta'_i=\ep_i-\ep_{s+1-i}\mid 1\le i\le (s-1)/2,\\
&\;\beta''_j=\ep_{s+2\ell+2j-1}+\ep_{s+2\ell+2j}\mid \,
1\le j\le [(n-s-2\ell)/2]\bigr\}\end{array}$$ and $$\beta_{\pi'}\cap\pi'=\bigl\{\alpha_{s+2\ell+2i-1},\;\alpha_{s+2j-1}\mid 1\le i\le [(n-s-2\ell+1)/2],\,1\le j\le \ell\bigr\}.$$
Now suppose that $\g$ is of type ${\rm D}_n$ and that $s+2\ell\le n-2$.
 Then we have that $$\begin{array}{cl}
 \beta^0_{\pi'}=&\bigl\{\beta'_i=\ep_i-\ep_{s+1-i} \mid 1\le i\le (s-1)/2,\\
& \beta''_j=\ep_{s+2\ell+2j-1}+\ep_{s+2\ell+2j}\mid 
1\le j\le [(n-s-1-2\ell)/2]\bigr\}.\end{array}$$
If moreover $n$ is odd then $$\beta_{\pi'}\cap\pi'=\bigl\{\alpha_n,\;\alpha_{s+2\ell+2i-1},\;\alpha_{s+2j-1}\mid 1\le i\le (n-s-2\ell)/2,\,1\le j\le \ell\bigr\}$$ and
if  $n$ is even then $$\beta_{\pi'}\cap\pi'=\bigl\{\alpha_{s+2\ell+2i-1},\;\alpha_{s+2j-1}\mid 1\le i\le (n-s-2\ell-1)/2,\,1\le j\le \ell\bigr\}.$$
Now assume that $\g$ is of type ${\rm D}_n$ and that $s+2\ell\in\{n-1,\,n\}$. Since the case $\pi'=\pi\setminus\{\alpha_s,\,\alpha_{s+2},\ldots,\alpha_{s+2\ell-2},\,\alpha_{n-1}\}$ and the case
$\pi'=\pi\setminus\{\alpha_s,\,\alpha_{s+2},\ldots,\alpha_{s+2\ell-2},\,\alpha_{n}\}$ are symmetric, one may suppose that we are in the latter case.
 More precisely if $n$ is odd then we assume that $\pi'=\pi\setminus\{\alpha_s,\,\alpha_{s+2},\,\ldots,\,\alpha_{n-2},\,\alpha_n\}$ and if $n$ is even then we assume that $\pi'=\pi\setminus\{\alpha_s,\,\alpha_{s+2},\,\ldots,\,\alpha_{n-3},\,\alpha_n\}$.
If $n$ is odd then 
$$\beta^0_{\pi'}=\bigl\{\beta'_i=\ep_i-\ep_{s+1-i}\,\;;\;1\le i\le (s-1)/2\bigr\}$$
$$\hbox{\rm and}\;\;\beta_{\pi'}\cap\pi'=\bigl\{\alpha_{s+2i-1}=\ep_{s+2i-1}-\ep_{s+2i}\;;\;1\le i\le\ell=(n-s)/2\bigr\}.$$
If $n$ is even then $$\beta^0_{\pi'}=\bigl\{\beta'_i=\ep_i-\ep_{s+1-i},\,\beta''_1=\ep_{n-2}-\ep_n\;;\;1\le i\le (s-1)/2\bigr\}$$
$$\hbox{\rm and}\;\;\beta_{\pi'}\cap\pi'=\bigl\{\alpha_{s+2i-1}=\ep_{s+2i-1}-\ep_{s+2i}\;;\;1\le i\le\ell-1=(n-3-s)/2\bigr\}.$$

\subsection{Conditions {\it (i)} to {\it (v)} of Proposition \ref{propAP}.}\label{APodd}\par\noindent

For $\g$ of type ${\rm D}_n$ with $n$ even and $s+2\ell\le n-2$, we set $$S^+=\beta^0_{\pi}\sqcup
\{\beta_{n/2}:=\alpha_n=\ep_{n-1}+\ep_n\}.$$
Otherwise we set $S^+=\beta^0_{\pi}$.

For $\g$ of type ${\rm D}_n$ with $n$ odd and $s+2\ell\le n-2$, we set $$S^-=-\beta^0_{\pi'}\sqcup\{-\beta''_{(n-2\ell-s)/2}:=-\alpha_n=-(\ep_{n-1}+\ep_n)\}.$$
Otherwise we set  $S^-=-\beta^0_{\pi'}$.

For $\g$ of type ${\rm D}_n$ with $n$ even and $s+2\ell\le n-2$, we set $$T^+=(\beta_{\pi}\cap\pi)\setminus\{\alpha_n\}.$$
For $\g$ of type ${\rm D}_n$ with $n$ odd and $s+2\ell\le n-2$, we set $$T^-=-(\beta_{\pi'}\cap\pi')\setminus\{-\alpha_n\}.$$
Otherwise we set $T^+=\beta_{\pi}\cap\pi$ and $T^-=-(\beta_{\pi'}\cap\pi')$. 
Finally we set $S=S^+\sqcup S^-$, $T=T^+\sqcup T^-$ and $T^*=\emptyset$.
Then $S^+,\,T^+\subset\Delta^+$ and $S^-,\,T^-\subset\Delta^-_{\pi'}$. 
In all cases we have  that $\beta_{\pi}=S^+\sqcup T^+$ and $-\beta_{\pi'}=S^-\sqcup T^-$.

Then for all $\gamma\in S^+$, resp. $\gamma\in S^-$, we choose $\Gamma_{\gamma}=H_{\gamma}$, resp. $\Gamma_{\gamma}=-H_{-\gamma}$, where $H_{\gamma}$, resp. $H_{-\gamma}$, is the largest Heisenberg set with centre $\gamma\in\beta_{\pi}$, resp. $-\gamma\in\beta_{\pi'}$, included in $\Delta^+$, resp. $\Delta^+_{\pi'}$, as defined in  \ref{HS}.
Observe also that, if $\alpha\in\beta_{\pi}\cap\pi$, resp. $\alpha\in\beta_{\pi'}\cap\pi'$, then $H_{\alpha}=\{\alpha\}$.

By Lemma \ref{HS} $i)$ we have that $\Delta^+=\bigsqcup_{\gamma\in S^+}\Gamma_{\gamma}\sqcup T^+$, resp. $\Delta^-_{\pi'}=\bigsqcup_{\gamma\in S^-}\Gamma_{\gamma}\sqcup T^-$, hence condition $(iv)$ of Proposition \ref{propAP}  is satisfied. 
 Conditions $(ii)$ and $(iii)$ of Proposition \ref{propAP} follow from Lemma \ref{HS} $ii)$. Moreover condition $(v)$ of Prop. \ref{propAP} is empty since $T^*=\emptyset$. 
Below we check condition $(i)$ of Proposition \ref{propAP}.

\begin{lm}\label{baseFC}

$S_{\mid\h_{\Lambda}}$ is a basis for $\h_{\Lambda}^*$.

\end{lm}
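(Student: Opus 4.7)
Since we are in type $\mathrm{B}_n$ or $\mathrm{D}_n$ and at least one of $\alpha_{n-1},\alpha_n$ lies in $\pi'$, we have $\h_{\Lambda}=\h'$ by Sect.~\ref{not}, so $\dim\h_{\Lambda}=|\pi'|$. The basis $\{\alpha^{\vee}\mid\alpha\in\pi'\}$ of $\h'$ gives us a natural identification, and the claim reduces to (a) a cardinality check $|S|=|\pi'|$, and (b) nonvanishing of the determinant of the matrix $M=(\gamma(\alpha^{\vee}))_{\gamma\in S,\,\alpha\in\pi'}$.

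For step (a) I would split into the various subcases of subsection~\ref{KcasBD}. In type $\mathrm{B}_n$, $|S^+|=[n/2]$ and $|S^-|=(s-1)/2+[(n-s-2\ell)/2]$; in type $\mathrm{D}_n$ the endpoint adjustments (adding or subtracting the element $\alpha_n$ when $s+2\ell\le n-2$) are exactly compensated by the different shape of $\beta^0_{\pi'}$. In each subcase a direct arithmetic check yields $|S|=n-(\ell+1)=|\pi'|$.

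For step (b), I would split the columns of $M$ according to the three natural blocks of $\pi'$: the type $\mathrm{A}$ component $\pi'_1=\{\alpha_1,\ldots,\alpha_{s-1}\}$, the ``middle'' part $\pi'_{\mathrm{mid}}=\{\alpha_{s+2j-1}\mid 1\le j\le\ell\}$ consisting of isolated simple roots, and the ``tail'' $\pi'_{\mathrm{end}}=\{\alpha_{s+2\ell+1},\ldots,\alpha_n\}$, and split the rows of $M$ according to $S=\{-\beta'_i\}\sqcup\{\beta_i\}\sqcup\{-\beta''_j\}$ (together with the extra element $\pm\alpha_n$ in the relevant $\mathrm{D}_n$ cases). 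Since $\beta'_i\in\mathbb{N}\pi'_1$, the rows labelled by $-\beta'_i$ are supported on the $\pi'_1$-columns; since $\beta''_j=\ep_{s+2\ell+2j-1}+\ep_{s+2\ell+2j}$ is supported at indices $>s+2\ell$, the rows labelled by $-\beta''_j$ are supported on the $\pi'_{\mathrm{end}}$-columns; and $\beta_i=\ep_{2i-1}+\ep_{2i}$ only pairs nontrivially with $\alpha^{\vee}_j$ for $j\in\{2i-1,2i,2i+1\}$. A careful bookkeeping shows that the $\beta_i$ with $2i\le s-1$ contribute only to the first two blocks, those with $s+1\le 2i-1\le s+2\ell$ pair cleanly with exactly one element of $\pi'_{\mathrm{mid}}$ (giving a diagonal contribution), and the remaining $\beta_i$ (together with the possibly added $\alpha_n$) contribute to the tail block. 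After reordering rows and columns accordingly, $M$ becomes block-lower-triangular.

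It then suffices to show that each diagonal block is invertible. For the $\pi'_1$ block, this is exactly Lemma~\ref{KCA}: restricting $-\beta'_i$ to the distinguished family $\{h'_j\}\subset (\pi'_1)^{\vee}$ gives a lower-triangular matrix with $-1$ on the diagonal. For the middle block the pairing $\beta_i(\alpha^{\vee}_{s+2j-1})$ yields a $\pm 1$ on the diagonal by direct computation. For the tail block one recognises the matrix associated with the Kostant cascade of a maximal parabolic of type $\mathrm{B}$ or $\mathrm{D}$ already handled in \cite{FL}, so its nondegeneracy is known; the exceptional $\mathrm{D}$ cases involving $\alpha_n$ are dealt with by hand using $\alpha_n^{\vee}$. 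The main obstacle is carefully controlling how the ``bridging'' roots $\beta_{(s+1)/2}=\ep_s+\ep_{s+1}$ and $\beta_{(s+2\ell+1)/2}=\ep_{s+2\ell}+\ep_{s+2\ell+1}$ interact across block boundaries; since $\alpha_s$ and $\alpha_{s+2\ell}$ have been deleted from $\pi'$, their coroots are absent and the corresponding column contributions disappear, restoring the block-triangular shape.
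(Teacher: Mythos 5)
Your overall strategy --- reduce to the invertibility of the matrix of $S$ against the simple coroots of $\pi'$, reorder rows and columns into a block lower triangular shape, and invoke Lemma \ref{KCA} for the type ${\rm A}_{s-1}$ block --- is exactly the paper's, and your cardinality check in step (a) is fine. But the combinatorial bookkeeping in step (b) is wrong at the one point on which everything hinges. You assert that $\beta_i=\ep_{2i-1}+\ep_{2i}$ pairs nontrivially with $\alpha_j^\vee$ only for $j\in\{2i-1,\,2i,\,2i+1\}$. In fact $\beta_i=\varpi_{2i}-\varpi_{2i-2}$ (away from the end of the diagram), so $\beta_i(\alpha_j^\vee)\neq 0$ only for $j\in\{2i-2,\,2i\}$; in particular $\beta_i(\alpha_{2i-1}^\vee)=(\ep_{2i-1}-\ep_{2i},\,\ep_{2i-1}+\ep_{2i})=0$. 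The vanishing of the $\beta_i$ on \emph{all} odd-indexed coroots (recall $s$ is odd, so the deleted roots are odd-indexed and every even-indexed simple root survives in $\pi'$) is precisely what decouples the $\beta_i$-rows from the $-\beta'_j$- and $-\beta''_k$-rows. With the support you claim, the $\beta_i$-rows would spill into the odd-indexed columns and the triangularization you describe would not go through.

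Relatedly, the column blocking that actually works is by parity of the index, not by connected component of $\pi'$: the rows $\beta_i$ (together with the extra elements involving $\ep_n$ in the ${\rm D}_n$ subcases) are matched with the even-indexed columns $\alpha_2^\vee,\alpha_4^\vee,\dots$, which meet all three of your column blocks; the $(s-1)/2$ rows $-\beta'_j$ are matched with the odd-indexed columns of $\pi'_1$ taken in the order $h'_j$ of Lemma \ref{KCA}; and the rows $-\beta''_k$ with the odd-indexed columns of the tail. Your component-based blocks are not square against your row blocks ($\pi'_1$ contributes $s-1$ columns but only $(s-1)/2$ rows $-\beta'_i$ land there), and your claim that the middle-range $\beta_i$ hit exactly one element of $\pi'_{\rm mid}$ is also false: $\ep_{s+2m}+\ep_{s+2m+1}$ hits both $\alpha_{s+2m-1}^\vee$ and $\alpha_{s+2m+1}^\vee$, which is harmless only once the correct ordering places the second entry below the diagonal. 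Once these points are corrected you land on the paper's proof: a matrix of the form $\begin{pmatrix}A&0&0\\ *&B&0\\ *&*&C\end{pmatrix}$ with $A$, $B$, $C$ lower triangular with $\pm1$ (or $\pm2$ at the very end of the diagram) on the diagonal, $B$ being supplied by Lemma \ref{KCA}; the tail block $C$ is a one-line computation of the same bidiagonal type and does not need to be outsourced to \cite{FL}.
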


\begin{proof}

Recall that $\h_{\Lambda}=\h'$ and remark that $\lvert S\rvert=\dim\h'=n-\ell-1$.

Assume first that $\g$ is of type ${\rm D}_n$ with $n$ odd and that $s+2\ell=n$. Then $\lvert S\rvert=(n-1)/2+(s-1)/2=n-\ell-1$
and one may order the elements $s_u$ of $S$ as 
$$\beta_1,\,\beta_2,\,\ldots,\,\beta_{(n-1)/2},\,-\beta'_1,\,-\beta'_2,\,\ldots,\,-\beta'_{(s-1)/2}$$
and choose the following (ordered) basis $h_v$ of $\h'$ :
$$\begin{array}{cc}
h_i=\alpha_{2i}^\vee,\,\,1\le i\le (n-1)/2,\\
h_{(n-1)/2+2j-1}=h'_{2j-1}=\alpha_{2j-1}^\vee,\,h_{(n-1)/2+2j}=h'_{2j}=\alpha_{s-2j}^\vee,\\
\,1\le j\le [(s+1)/4]\end{array}$$
without repetition for the $h'_j$'s.
Then observe that, for all $1\le i\le (n-3)/2$, one has $\beta_i=\ep_{2i-1}+\ep_{2i}=\varpi_{2i}-\varpi_{2i-2}$
if we set $\varpi_0=0$ and $\beta_{(n-1)/2}=\ep_{n-2}+\ep_{n-1}=\varpi_{n-1}+\varpi_n-\varpi_{n-3}$.
It follows that the matrix $(s_u(h_v))_{1\le u,\,v\le (n-\ell-1)}$ has the form
$$\begin{pmatrix} A&0\\
*&B\\
\end{pmatrix}$$
where $A=(\beta_i(h_j))_{1\le i,\,j\le(n-1)/2}$ is a $(n-1)/2\times(n-1)/2$ lower triangular matrix with $1$ on the diagonal, and $B=(-\beta'_i(h'_j))_{1\le i,\,j\le (s-1)/2}$ is a $(s-1)/2\times(s-1)/2$ which by Lemma \ref{KCA} is a lower triangular matrix with $-1$ on the diagonal. Hence $\det(s_u(h_v))_{1\le u,\,v\le (n-\ell-1)}\neq 0$ and we are done in this case.\par\noindent
Assume now that $\g$ is of type ${\rm D}_n$ with $n$ even and that $s+2\ell=n-1$. Then consider the parabolic subalgebra $\p$ of $\g$ associated to $\pi'=\pi\setminus\{\alpha_s,\,\alpha_{s+2},\,\ldots,\,\alpha_{n-3},\,\alpha_n\}$ ($1\le s\le n-3$ is still an odd integer). Then $\lvert S\rvert=(n-2)/2+(s-1)/2+1=n-\ell-1$ and one may order the elements $s_u$ of $S$ as 
$$\beta_1,\,\beta_2,\,\ldots,\,\beta_{(n-2)/2},\,-\beta'_1,\,-\beta'_2,\,\ldots,\,-\beta'_{(s-1)/2},\,\ep_n-\ep_{n-2}$$
and choose the following (ordered) basis $h_v$ of $\h'$ :
$$\begin{array}{cc}
h_i=\alpha_{2i}^\vee,\,\,1\le i\le (n-2)/2,\\
\,h_{(n-2)/2+2j-1}=h'_{2j-1}=\alpha_{2j-1}^\vee,\,h_{(n-2)/2+2j}=h'_{2j}=\alpha_{s-2j}^\vee,\\
\,1\le j\le [(s+1)/4],\\
\,h_{n-\ell-1}=\alpha_{n-1}^\vee\\
\end{array}$$
without repetition for the $h'_j$'s.
Similarly as above one obtains that the matrix $(s_u(h_v))_{1\le u,\,v\le (n-\ell-1)}$ has the form
$$\begin{pmatrix} A&0\\
*&B\\
\end{pmatrix}$$
where $A=(\beta_i(h_j))_{1\le i,\,j\le(n-2)/2}$ is a $(n-2)/2\times(n-2)/2$ lower triangular matrix with $1$ on the diagonal, and $B$ is a $(s+1)/2\times(s+1)/2$ lower triangular matrix with $-1$ on the diagonal by Lemma \ref{KCA}. Hence $\det(s_u(h_v))_{1\le u,\,v\le (n-\ell-1)}\neq 0$ and we are done in this case.\par\noindent
Assume that $\g$ is of type ${\rm B}_n$.
Then one may order the elements $s_u$ of $S$ as follows :
$$\begin{array}{cc}
s_i=\beta_i,\,1\le i\le [n/2],\\
s_{[n/2]+j}=-\beta'_j,\,1\le j\le (s-1)/2,\\
s_{[n/2]+(s-1)/2+k}=-\beta''_k,\;1\le k\le [(n-s-2\ell)/2)]\\
\end{array}$$
and choose the following (ordered) basis $h_v$ of $\h'$ :
$$\begin{array}{cc}
h_i=\alpha_{2i}^\vee,\,1\le i\le [n/2]\\
h_{[n/2]+2j-1}=h'_{2j-1}=\alpha_{2j-1}^\vee,\,h_{[n/2]+2j}=h'_{2j}=\alpha_{s-2j}^\vee,\,1\le j\le [(s+1)/4]\\
h_{[n/2]+(s-1)/2+k}=\alpha_{s+2\ell+2k}^\vee,\;\, 1\le k\le [(n-2\ell-s)/2]\\
\end{array}$$
without repetition for the $h'_j$'s.
Now if $\g$ is of type ${\rm D}_n$ with $s+2\ell\le n-2$, we take the same set $S$ ordered as above and the same basis of $\h'$, up to replacing $\alpha_n^\vee$ by $2\ep_n$.
Then by what we explained before, the matrix $(s_u(h_v))_{1\le u,\,v\le (n-\ell-1)}$ has the form
$$\begin{pmatrix} A&0&0\\
*&B&0\\
*&*&C\\
\end{pmatrix}$$
where $A$ is a $[n/2]\times[n/2]$ lower triangular matrix with one on the diagonal (except for the case $n$ even where $2$ is the last entry of the diagonal), $B$ is a $(s-1)/2\times(s-1)/2$ lower triangular matrix with $-1$ on the diagonal (by Lemma \ref{KCA}) and $C$ is a $[(n-s-2\ell)/2]\times[(n-s-2\ell)/2]$ lower triangular matrix with $-1$ on the diagonal (except for the case $n$ odd where $-2$ is the last entry of the diagonal). Hence $\det(s_u(h_v))_{1\le u,\,v\le (n-\ell-1)}\neq 0$ and the proof is complete.
\end{proof}

\subsection{Condition {\it (vi)} of Proposition \ref{propAP}}

\begin{lm}\label{TS}

One has that $\lvert T\rvert=\ind\p_{\Lambda}$.

\end{lm}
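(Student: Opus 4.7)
The plan is to verify the identity $|T|=\ind\p_{\Lambda}$ by comparing $|T|$, computed directly from the descriptions in subsection \ref{KcasBD}, with the combinatorial formula $\ind\p_{\Lambda}=|E(\pi')|$ coming from \eqref{index}, case by case on the type and the parity of $n$.

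First I would tabulate $|T|=|T^+|+|T^-|$ in each of the six subcases arising from the definition in \ref{APodd}, using the explicit lists of $\beta_{\pi}\cap\pi$ and $\beta_{\pi'}\cap\pi'$ given in \ref{KcasBD}. For instance, in type ${\rm B}_n$ one reads off
\[
|T^+|=[(n+1)/2],\qquad |T^-|=\ell+[(n-s-2\ell+1)/2],
\]
while in type ${\rm D}_n$ one obtains analogous expressions in which the deletions of $\alpha_n$ from $T^+$ (when $n$ even, $s+2\ell\le n-2$) or $-\alpha_n$ from $T^-$ (when $n$ odd, $s+2\ell\le n-2$) exactly compensate the inclusion of the corresponding roots in $S^+$, $S^-$.

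Second, I would enumerate the $\langle ij\rangle$-orbits in $\pi$. Recall that $j=-w_0$ is the identity in types ${\rm B}_n$ and ${\rm D}_{2m}$, while for type ${\rm D}_n$ with $n$ odd, $j$ swaps $\alpha_{n-1}$ and $\alpha_n$ and fixes the other simple roots. On each type-$\rm A$ connected component of $\pi'$, the involution $i$ acts as the unique nontrivial Dynkin diagram automorphism (flip); on the tail component of type $\rm B_r$ or $\rm D_r$, $i$ acts as $-w_0'$ of that component. Outside $\pi'$ it is extended by the rule $i(\alpha)=j(ij)^r(\alpha)$. Tracing this through, the deleted roots $\alpha_s,\alpha_{s+2},\dots,\alpha_{s+2\ell}$ form a single $\langle ij\rangle$-orbit with the corresponding paired roots of the initial $\rm A_{s-1}$ block and the intermediate $\rm A_1$ blocks; so these contribute
\[
(s-1)/2+\ell+1
\]
orbits (one for each pair in $\rm A_{s-1}$, one per $\rm A_1$ block, plus the orbit containing the deleted roots). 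The remaining contribution comes from the orbits inside the tail component, whose count equals the number of fundamental generators of $Sy(\g_{R_r}\oplus\h)$, i.e.\ $[(n-s-2\ell+1)/2]$ in type $\rm B_n$, and the analogous value for type $\rm D_n$ (with the adjustment for $n$ odd when $\alpha_{n-1}$ and $\alpha_n$ are swapped by $j$ but lie in $\pi'$).

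Finally I would sum the two contributions, compare with the tabulated value of $|T|$, and check equality in each subcase. The main obstacle is the boundary bookkeeping: in type ${\rm D}_n$, the status of $\alpha_{n-1}$ and $\alpha_n$ (in or out of $\pi'$, swapped or fixed by $j$) shifts the count by one in subtle ways, which is precisely why the definition of $T$ in \ref{APodd} singles out the removal of $\alpha_n$ (resp.\ $-\alpha_n$) in the cases $n$ even (resp.\ odd) with $s+2\ell\le n-2$. Once this bookkeeping is done, the two counts match on the nose.
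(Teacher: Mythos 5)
Your overall strategy is the one the paper itself follows: compute $\lvert T\rvert$ from the explicit lists of $\beta_{\pi}\cap\pi$ and $\beta_{\pi'}\cap\pi'$ in \ref{KcasBD} and \ref{APodd}, compute $\ind\p_{\Lambda}=\lvert E(\pi')\rvert$ by enumerating the $\langle ij\rangle$-orbits, and compare case by case. The $\lvert T\rvert$ side of your computation is correct. The orbit enumeration, however, contains two genuine errors, and with the numbers you give the two sides do not agree, so your final step "check equality in each subcase" would fail.

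First, the deleted roots $\alpha_s,\alpha_{s+2},\dots,\alpha_{s+2\ell}$ do \emph{not} form a single $\langle ij\rangle$-orbit. For $\alpha\in\pi\setminus\pi'$ one has $i(\alpha)=j(\alpha)$ as soon as $j(\alpha)\notin\pi'$, and here $j$ fixes every deleted root (apart from the boundary root in type ${\rm D}_n$ when $s+2\ell\in\{n-1,n\}$), so each deleted root is a \emph{singleton} orbit. These roots therefore contribute $\ell+1$ orbits, not $1$: your count is short by $\ell$. Second, on the tail component $i$ and $j$ both act trivially (in type ${\rm B}_n$, and in type ${\rm D}_n$ up to the swap of $\alpha_{n-1},\alpha_n$), so the tail contributes one singleton orbit per simple root it contains, i.e. $n-s-2\ell$ orbits in type ${\rm B}_n$ -- not $[(n-s-2\ell+1)/2]$. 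The latter is the cardinality of $\beta_{\pi'}\cap\pi'$ restricted to the tail, which is what enters $\lvert T^-\rvert$, not the number of orbits. With the corrections, the total in type ${\rm B}_n$ is $(s-1)/2+(\ell+1)+\ell+(n-s-2\ell)=n-(s-1)/2$, which does equal $\lvert T\rvert=[(n+1)/2]+\ell+[(n-s-2\ell+1)/2]$; your total $(s-1)/2+\ell+1+[(n-s-2\ell+1)/2]$ does not. The same corrections are needed in the type ${\rm D}_n$ cases, on top of the $\{\alpha_{n-1},\alpha_n\}$ boundary bookkeeping that you correctly flag.
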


\begin{proof}

Recall \eqref{index} of Sect. \ref{bounds}, that $\ind\p_{\Lambda}=\lvert E(\pi')\rvert$ where $E(\pi')$ is the set of $\langle ij\rangle$-orbits in $\pi$.
Assume first that $\g$ is of type ${\rm D}_n$ and that $s+2\ell\in\{n-1,\,n\}$.
If $n$ is odd, then $$\begin{array}{cl}
E(\pi')=&\Bigl\{\Gamma_u=\{\alpha_u,\,\alpha_{s-u}\}, \,1\le u\le (s-1)/2,\\
&\Gamma_v=\{\alpha_{v}\},\,\, s\le v\le n-2,\,
\Gamma_{n-1}=\{\alpha_n,\,\alpha_{n-1}\}\Bigr\}.\\
\end{array}$$
If $n$ is even, assuming that $\pi'=\pi\setminus\{\alpha_s,\,\alpha_{s+2},\ldots,\,\alpha_{s+2\ell-2},\,\alpha_{n}\}$, with $s+2\ell-2=n-3$,
then
$$\begin{array}{cl}
E(\pi')=&\Bigl\{\Gamma_u=\{\alpha_u,\,\alpha_{s-u}\},\; 1\le u\le (s-1)/2,\\
&\Gamma_v=\{\alpha_{v}\},\, s\le v\le n-3,\,
\Gamma_{n-1}=\{\alpha_{n-2},\,\alpha_{n-1}\},\,\Gamma_n=\{\alpha_n\}\Bigr\}.
\end{array}$$
Hence $\ind\p_{\Lambda}=\lvert E(\pi')\rvert =n-(s+1)/2$.\par\noindent
On the other hand, one has that :
for $n$ even, $\lvert T^+\rvert=n/2+1$ and $\lvert T^-\rvert=\ell-1=(n-3-s)/2$,
and for $n$ odd, $\lvert T^+\rvert=(n-1)/2$, $\lvert T^-\rvert=\ell=(n-s)/2$. 
Then $\lvert T\rvert=\ind\p_{\Lambda}$ in both cases.\par\noindent
Now assume that $\g$ is of type ${\rm B}_n$. Then
$$\begin{array}{cc}
E(\pi')=\Bigl\{\Gamma_u=\{\alpha_u,\,\alpha_{s-u}\}, \,1\le u\le (s-1)/2,\\
\Gamma_v=\{\alpha_{v}\},\; s\le v\le n\Bigr\}.\end{array}$$
Hence $\ind\p_{\Lambda}=\lvert E(\pi')\rvert =n-(s-1)/2$ and one checks that this is also equal to $\lvert T\rvert$.\par\noindent
Finally assume that $\g$ is of type ${\rm D}_n$ and that $s+2\ell\le n-2$. Then
$$\begin{array}{cl}
E(\pi')=&\Bigl\{\Gamma_u=\{\alpha_u,\,\alpha_{s-u}\}, \,1\le u\le (s-1)/2,\\
&\Gamma_v=\{\alpha_{v}\},\,\, s\le v\le n-2,\,\Gamma_{n-1}=\{\alpha_{n-1},\,\alpha_n\}\Bigr\}.\\
\end{array}$$
Hence $\ind\p_{\Lambda}=\lvert E(\pi')\rvert =n-(s+1)/2$ and one checks that this is also equal to $\lvert T\rvert$.
\end{proof}

\subsection{}\label{ADPFC}

All conditions of Proposition \ref{propAP} are satisfied, thus one can deduce the following corollary.

\begin{cor}

Set $y=\sum_{\alpha\in S} x_{\alpha}$. Then $y$ is regular in $\p_{\Lambda}^*$ and more precisely
$ad\,\p_{\Lambda}(y)\oplus\g_T=\p_{\Lambda}^*$. Moreover since $S_{\mid\h_{\Lambda}}$ is a basis for $\h_{\Lambda}^*$,
there exists a uniquely defined element $h\in\h_{\Lambda}$ such that $\alpha(h)=-1$ for all $\alpha\in S$. Thus the pair $(h,\,y)$ is an adapted pair for $\p_{\Lambda}$.

\end{cor}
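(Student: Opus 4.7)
The plan is to apply Proposition \ref{propAP} directly; essentially all the work has already been done in the preceding subsections, so the proof consists in checking off the six hypotheses (i)--(vi) and then reading off the conclusion.

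First I would verify condition (iv), namely $\Delta^+\sqcup\Delta^-_{\pi'}=\bigsqcup_{\gamma\in S}\Gamma_{\gamma}\sqcup T\sqcup T^*$. Since $T^*=\emptyset$, we split into positive and negative pieces. The choice $S^+\sqcup T^+=\beta_{\pi}$ with $\Gamma_{\gamma}=H_{\gamma}$ for $\gamma\in S^+$ and $\Gamma_{\alpha}=\{\alpha\}=H_{\alpha}$ for $\alpha\in T^+\subset\beta_{\pi}\cap\pi$ means that $\bigsqcup_{\gamma\in S^+}\Gamma_{\gamma}\sqcup T^+=\bigsqcup_{\gamma\in\beta_{\pi}}H_{\gamma}$, which equals $\Delta^+$ by Lemma \ref{HS}(i). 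The same argument applied to the Kostant cascade $\beta_{\pi'}$ of $\g'$ gives $\bigsqcup_{\gamma\in S^-}\Gamma_{\gamma}\sqcup T^-=\Delta^-_{\pi'}$. Conditions (ii) and (iii) follow from Lemma \ref{HS}(ii): if $\alpha\in\Gamma_{\gamma}^0\subset H_{\gamma}$ and $\beta\in O^+\subset\Delta^+$ with $\alpha+\beta\in S\subset\beta_{\pi}$, then Lemma \ref{HS}(ii) forces $\alpha,\beta\in H_{\alpha+\beta}\setminus\{\alpha+\beta\}$, so by disjointness $\alpha+\beta=\gamma$ and $\beta\in\Gamma_{\gamma}^0$; the analogous statement using the Kostant cascade of $\g'$ handles case (iii).

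Condition (v) is vacuous because $T^*=\emptyset$. Condition (i), that $S_{\mid\h_{\Lambda}}$ is a basis of $\h_{\Lambda}^*$, is exactly the content of Lemma \ref{baseFC}, and condition (vi), that $\lvert T\rvert=\ind\p_{\Lambda}$, is the content of Lemma \ref{TS}.

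With all six hypotheses in hand, Proposition \ref{propAP} yields that $y=\sum_{\alpha\in S}x_{\alpha}$ is regular in $\p_{\Lambda}^*$ and that $ad\,\p_{\Lambda}(y)\oplus\g_T=\p_{\Lambda}^*$. Because $S_{\mid\h_{\Lambda}}$ is a basis of $\h_{\Lambda}^*$, the linear system $\alpha(h)=-1$ for $\alpha\in S$ admits a unique solution $h\in\h_{\Lambda}$; then $ad\,h(x_{\alpha})=\alpha(h)x_{\alpha}=-x_{\alpha}$ for every $\alpha\in S$, hence $ad\,h(y)=-y$, and the final clause of Proposition \ref{propAP} gives that $(h,y)$ is an adapted pair for $\p_{\Lambda}$. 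There is no genuine obstacle here: all the type-dependent difficulty was already absorbed into Lemmas \ref{baseFC} and \ref{TS}; the corollary is essentially a bookkeeping statement that collects the verifications into the precise conclusion of Proposition \ref{propAP}.
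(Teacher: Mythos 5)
Your proposal is correct and follows exactly the paper's route: the paper likewise disposes of conditions (ii)--(v) in subsection \ref{APodd} via Lemma \ref{HS} and the choice $T^*=\emptyset$, delegates (i) and (vi) to Lemmas \ref{baseFC} and \ref{TS} respectively, and then records the corollary as an immediate application of Proposition \ref{propAP}. No discrepancy to report.
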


\subsection{Coincidence of bounds of Sect. \ref{bounds}.}\label{coincidencebounds}

Recall Remark \ref{rqeqbounds} of subsection \ref{REG} that it suffices to show that both bounds in \eqref{ch} of Sect. \ref{bounds} coincide to obtain a Weierstrass section for coadjoint action of $\p_{\Lambda}$.
This is the following lemma.

\begin{lm}

For all $\Gamma\in E(\pi')$, one has that $\ep_{\Gamma}=1$. Then $Y(\p_{\Lambda})$ is a polynomial algebra over $\Bbbk$.

\end{lm}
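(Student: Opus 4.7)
The plan is to verify, for each $\Gamma\in E(\pi')$, that the criterion of \eqref{epsilon} forces $\epsilon_{\Gamma}=1$; that is, at least one of $\Gamma=j(\Gamma)$, $d_{\Gamma}\in\mathcal B_{\pi}$, or $d'_{\Gamma}\in\mathcal B_{\pi'}$ must fail. I will use the explicit description of $E(\pi')$ extracted from the proof of Lemma \ref{TS}, which lists: the doubletons $\Gamma_u=\{\alpha_u,\alpha_{s-u}\}$ for $1\le u\le (s-1)/2$ inside the type-$A_{s-1}$ component of $\pi'$; the singletons $\Gamma_v=\{\alpha_v\}$ for $s\le v\le n-2$ (together with $v=n$ in type B); and, in type D, the doubleton $\Gamma_{n-1}=\{\alpha_{n-1},\alpha_n\}$. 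The driving observation is that since $s$ is odd, the rank of every non-$A_1$ piece of $\pi'$ has a forced parity (even for the first $A_{s-1}$ piece, and of parity opposite to $n$ for the terminal component), and these parities conflict with the list of generators of $\mathcal B_{\pi}$ and $\mathcal B_{\pi'}$ recalled in \ref{compepsilon} and \ref{compdeg}.

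For a doubleton $\Gamma_u$, the weight $d_{\Gamma}=\varpi_u+\varpi_{s-u}$ has exactly one of $u,s-u$ odd. Expanding in the basis $(\varepsilon_i)$ one obtains a non-increasing step function on $\{1,\ldots,n\}$ with a single drop of $1$ at an even-indexed position (namely $u+1$ or $s-u+1$, whichever is even). On the other hand each generator of $\mathcal B_{\pi}$ (the $\varpi_{2k}$, the $2\varpi_{2k-1}$, and the spinor terms recalled in \ref{compepsilon}) yields drops of $1$ only at odd positions, while the spinor terms either contribute no drops within $\{1,\ldots,n-1\}$ or, when they do, force an incompatible $\varepsilon_n$-sign that can only be balanced by raising some coefficient to an even value. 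Hence $d_{\Gamma}\notin\mathcal B_{\pi}$. The same step-function argument applied to $d_{\Gamma}=\varpi_v$ with $v$ odd, $s\le v\le n$, gives $d_{\Gamma}\notin\mathcal B_{\pi}$ as well, disposing of the singletons of odd index.

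For a singleton $\Gamma_v=\{\alpha_v\}$ with $v$ even, necessarily $\alpha_v\in\pi'$ (the removed simple roots have odd index), so I turn to $d'_{\Gamma}=\varpi'_v$. Either $\alpha_v$ is an isolated $A_1$ component of $\pi'$ (when $s+1\le v\le s+2\ell-1$), where the type-A statement of \ref{compepsilon} immediately gives $\varpi'_v\notin\mathcal B_{\pi'}$; or $\alpha_v$ lies in the terminal type-B or type-D component of $\pi'$, where its local index $v-(s+2\ell)$ is odd (even minus odd). The parity rule for $\mathcal B$ inside that component, identical to the rule in $\g$, then gives $\varpi'_v\notin\mathcal B_{\pi'}$.

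The remaining case is the orbit $\Gamma_{n-1}$ in type D. When $s+2\ell\in\{n-1,n\}$ (so necessarily $n$ odd), $\alpha_{n-1}$ is an isolated $A_1$ inside $\pi'$ and $d'_{\Gamma}=\varpi'_{n-1}$ is handled by the same type-A rule. When $s+2\ell\le n-2$, both $\alpha_{n-1},\alpha_n$ lie in the terminal $D_{n-s-2\ell}$ component; for $n$ even one has $d_{\Gamma}=\varpi_{n-1}+\varpi_n=\varepsilon_1+\cdots+\varepsilon_{n-1}$, and the same step-function/balancing argument (now using the generators $2\varpi_{n-1},2\varpi_n$ of $\mathcal B_\pi$ to cancel the $\varepsilon_n$-component, which forces an even coefficient on $\varepsilon_{n-1}$) shows $d_{\Gamma}\notin\mathcal B_{\pi}$; for $n$ odd the analogous calculation inside the terminal component, whose rank $n-s-2\ell$ is then even, gives $d'_{\Gamma}=\varpi'_{n-1}+\varpi'_n\notin\mathcal B_{\pi'}$. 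The main obstacle is keeping the parity bookkeeping straight across the various components and spinor generators, but once the oddness of $s$ is exploited consistently, every orbit falls into one of the above patterns, giving $\epsilon_{\Gamma}=1$ and hence, by \eqref{ch} of Sect. \ref{bounds}, the polynomiality of $Y(\p_{\Lambda})$.
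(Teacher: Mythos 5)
Your overall strategy is the paper's own: run through the $\langle ij\rangle$-orbits listed in the proof of Lemma \ref{TS} and show for each one that $d_{\Gamma}\not\in\mathcal B_{\pi}$ or $d'_{\Gamma}\not\in\mathcal B_{\pi'}$ by a parity comparison with the generators recalled in \ref{compepsilon}; the $\varepsilon$-coordinate ``step function'' packaging is just a change of basis from the fundamental-weight bookkeeping the paper uses. For the doubletons $\Gamma_u$, the singletons of odd and of even index, and the orbit $\{\alpha_{n-1},\,\alpha_n\}$ when $s+2\ell\le n-2$, your parity arguments are correct.

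There is, however, a genuine gap in the boundary case in type D. You assert that $s+2\ell\in\{n-1,\,n\}$ forces $n$ odd. Since $s$ is odd, $s+2\ell$ is odd, so $s+2\ell=n-1$ forces $n$ \emph{even}; this case is squarely in scope (it is treated throughout Sect. \ref{SC}, e.g. in \ref{KcasBD} and in the last table of weights and degrees). In that case, after the paper's normalization $\pi'=\pi\setminus\{\alpha_s,\ldots,\alpha_{n-3},\,\alpha_n\}$, the orbits at the end of the diagram are $\Gamma_{n-1}=\{\alpha_{n-2},\,\alpha_{n-1}\}$ and $\Gamma_n=\{\alpha_n\}$ --- not the doubleton $\{\alpha_{n-1},\,\alpha_n\}$ that your initial enumeration allows for --- and the singletons only run up to $v=n-3$. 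Your proof therefore never examines $d_{\Gamma_{n-1}}=\varpi_{n-2}+\varpi_{n-1}$ or $d_{\Gamma_n}=\varpi_n$. The gap is repairable by exactly your parity machinery (for $n$ even the only generator of $\mathcal B_{\pi}$ involving $\varpi_{n-1}$ is $2\varpi_{n-1}$, so the odd coefficient of $\varpi_{n-1}$ in $\varpi_{n-2}+\varpi_{n-1}$ rules it out, and likewise $\varpi_n\not\in\mathcal B_{\pi}$ since only $2\varpi_n$ is a generator), but as written the case is missing and rests on a false parity claim.
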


\begin{proof}

Recall subsection \ref{compepsilon}.
We will show, for all $\Gamma\in E(\pi')$ such that $j(\Gamma)=\Gamma$, that  $d_{\Gamma}\not\in\mathcal B_{\pi}$ or $d'_{\Gamma}\not\in\mathcal B_{\pi'}$, hence $\ep_{\Gamma}=1$.
Recall the $\langle ij\rangle$-orbits in $E(\pi')$ given in the proof of Lemma \ref{TS}.

For all $1\le u\le (s-1)/2$, one has that $d_{\Gamma_u}=\varpi_u+\varpi_{s-u}\not\in \mathcal B_{\pi}$ since $u$ and $s-u$ are of different parity.
Hence $\ep_{\Gamma_u}=1$.\par\noindent
For $s\le v\le n-2$, (with the restriction that $v\le n-3$ if $n$ even and $\g$ of type ${\rm D}_n$ with $s+2\ell=n-1$) one has that $d_{\Gamma_v}=\varpi_v\not\in\mathcal B_{\pi}$ if $v$ is odd and $d'_{\Gamma_v}=\varpi'_v\not\in\mathcal B_{\pi'}$ if $v$ is even. Hence $\ep_{\Gamma_v}=1$.\par\noindent
Now if $\g$ is of type ${\rm D}_n$, $s+2\ell=n-1$  and $n$ even, one has that $d_{\Gamma_{n-1}}=\varpi_{n-2}+\varpi_{n-1}\not\in\mathcal B_{\pi}$ and $d_{\Gamma_n}=\varpi_n\not\in\mathcal B_{\pi}$.  Hence $\ep_{\Gamma_{n-1}}=
\ep_{\Gamma_n}=1$.\par\noindent
If  $\g$ is of type ${\rm D}_n$, $s+2\ell=n$  and $n$ odd, then $d_{\Gamma_{n-1}}=\varpi_{n-1}+\varpi_n\in\mathcal B_{\pi}$,
but $d'_{\Gamma_{n-1}}=\varpi'_{n-1}\not\in\mathcal B_{\pi'}$. Hence $\ep_{\Gamma_{n-1}}=1$.\par\noindent
If $\g$ is of type ${\rm D}_n$, $s+2\ell\le n-2$, then $d_{\Gamma_{n-1}}=\varpi_{n-1}+\varpi_n$,
and $d'_{\Gamma_{n-1}}=\varpi'_{n-1}+\varpi'_n$. One of them does not belong to $\mathcal B_{\pi}$, resp. $\mathcal B_{\pi'}$, since  $\g'$  is of type ${\rm D}_{n-s-2\ell}$, and since $n$ and $n-s-2\ell$ are of different parity. Hence $\ep_{\Gamma_{n-1}}=1$.\par\noindent
Finally assume that $\g$ is of type ${\rm B}_n$ and take $v\in\{n-1,\,n\}$. Then $d_{\Gamma_v}=\varpi_v\not\in\mathcal B_{\pi}$ if $n$ is even.
 If now $n$ is odd then $d_{\Gamma_n}=\varpi_n\not\in\mathcal B_{\pi}$ while $d_{\Gamma_{n-1}}=\varpi_{n-1}\in\mathcal B_{\pi}$. But since $n$ is odd, $\alpha_{n-1}\in\pi'$ and $d'_{\Gamma_{n-1}}=\varpi'_{n-1}\not\in\mathcal B_{\pi'}$. Hence $\ep_{\Gamma_v}=1$. This completes the proof.
\end{proof}

\subsection{A Weierstrass section.}

Summarizing the above results, we obtain by Remark \ref{rqeqbounds} of subsection \ref{REG} the following Theorem.
\begin{thm}\label{ThmFC}

Let $\g$ be a complex simple Lie algebra of type ${\rm B}_n$, resp. ${\rm D}_n$ with $n\ge 2$, resp. $n\ge 4$, and let $\p=\n^-\oplus\h\oplus\n^+_{\pi'}$ be a parabolic subalgebra associated to $\pi'=\pi\setminus\{\alpha_s,\,\alpha_{s+2},\,\ldots,\,\alpha_{s+2\ell}\}$ where $s,\,\ell\in\mathbb N^*$ and $1\le s\le n-2\ell$, $s$ odd. Then there exists a Weierstrass section  for coadjoint action of $\p_{\Lambda}$.

\end{thm}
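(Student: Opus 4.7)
The plan is to apply the machinery of Section \ref{CAP} directly: construct an adapted pair $(h,y)$ for $\p_{\Lambda}$ via Proposition \ref{propAP} using only the Kostant cascades of $\g$ and of the Levi $\g'$, then check that the lower and upper bounds in \eqref{ch} already coincide so that by Remark \ref{rqeqbounds} of subsection \ref{REG} one obtains the desired Weierstrass section $y+\g_T$. The whole point is that, since $s$ is odd and $\ell\ge 1$, we have $\h_{\Lambda}=\h'$ and neither vertex of the fork in type ${\rm D}$ is touched in a symmetry-breaking way, which lets us get away with the simplest possible choice $T^{*}=\emptyset$.

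First I would define $S=S^{+}\sqcup S^{-}$ and $T=T^{+}\sqcup T^{-}$ exactly as in subsection \ref{APodd}: $S^{+}$ is the ``cascade minus simple roots'' set $\beta_{\pi}^{0}$ (plus the extra root $\alpha_n$ when $\g$ is of type ${\rm D}_n$ with $n$ even and $s+2\ell\le n-2$), $S^{-}=-\beta_{\pi'}^{0}$ (plus $-\alpha_n$ in the symmetric odd-$n$ situation), and $T^{\pm}$ is the remaining part of the cascade, so that $\beta_{\pi}=S^{+}\sqcup T^{+}$ and $-\beta_{\pi'}=S^{-}\sqcup T^{-}$. For each $\gamma\in S^{+}$, resp.\ $\gamma\in S^{-}$, take $\Gamma_{\gamma}$ to be the largest Heisenberg set $H_{\gamma}\subset\Delta^{+}$, resp.\ $-H_{-\gamma}\subset\Delta^{-}_{\pi'}$, and set $T^{*}=\emptyset$. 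Finally let $y=\sum_{\gamma\in S}x_{\gamma}$ and let $h\in\h_{\Lambda}=\h'$ be the unique element (assuming the basis claim below) satisfying $\gamma(h)=-1$ for every $\gamma\in S$.

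Next I would run through the six hypotheses of Proposition \ref{propAP}. Parts (ii), (iii), (iv) are immediate consequences of Lemma \ref{HS}(i),(ii) applied separately to $\beta_{\pi}$ and $\beta_{\pi'}$, since the Heisenberg sets partition $\Delta^{+}$, resp.\ $\Delta^{-}_{\pi'}$, and any sum of two elements of a largest Heisenberg set that lies in $\Delta$ equals its centre. Condition (v) is vacuous. Condition (vi) reduces to the equality $|T|=\ind\p_{\Lambda}=|E(\pi')|$, which is a routine count of the $\langle ij\rangle$-orbits in $\pi$ in each of the four subcases (${\rm B}_n$; ${\rm D}_n$ with $s+2\ell\le n-2$; ${\rm D}_n$ with $n$ odd and $s+2\ell=n$; ${\rm D}_n$ with $n$ even and $s+2\ell=n-1$) together with the explicit formulas for $|T^{\pm}|$. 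Condition (i), that $S_{|\h_{\Lambda}}$ be a basis for $\h_{\Lambda}^{*}$, is the only real computation: I would order the elements of $S$ as the $\beta_i$ followed by the $-\beta'_i$ followed by the $-\beta''_j$, and pair them against the ordered basis of $\h'$ given by $\alpha_{2i}^{\vee}$ (for the $\beta_i$-block), the sequence $(h'_j)$ of Lemma \ref{KCA} (for the $-\beta'_j$-block), and $\alpha_{s+2\ell+2k}^{\vee}$ (for the $-\beta''_k$-block); the resulting matrix is block lower triangular, each diagonal block is itself lower triangular by Lemma \ref{KCA}, and its determinant is a nonzero explicit product of $\pm 1$'s and at most one factor of $2$ coming from the endpoint root $\alpha_n$. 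Combining, Proposition \ref{propAP} delivers the adapted pair $(h,y)$ for $\p_{\Lambda}$.

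Finally I would establish that $\varepsilon_{\Gamma}=1$ for every $\Gamma\in E(\pi')$ in the sense of \eqref{epsilon}, so that the two bounds in \eqref{ch} coincide and $Y(\p_{\Lambda})$ is polynomial. Running through the orbits $\Gamma_u=\{\alpha_u,\alpha_{s-u}\}$ ($1\le u\le (s-1)/2$), $\Gamma_v=\{\alpha_v\}$ for $s\le v$ in the middle, and the exceptional orbits involving $\alpha_{n-1},\alpha_n$: for $\Gamma_u$ one has $d_{\Gamma_u}=\varpi_u+\varpi_{s-u}\notin\mathcal B_{\pi}$ because $u$ and $s-u$ have different parities (since $s$ is odd); for the singleton middle orbits $\Gamma_v$ either $d_{\Gamma_v}\notin\mathcal B_{\pi}$ (odd $v$) or $d'_{\Gamma_v}\notin\mathcal B_{\pi'}$ (even $v$); and for the ${\rm D}_n$ end-orbit involving $\{\alpha_{n-1},\alpha_n\}$ a parity argument on $n$ versus $n-s-2\ell$ (using that the residual factor of $\g'$ is of type ${\rm D}_{n-s-2\ell}$) shows that at least one of $d_{\Gamma_{n-1}}=\varpi_{n-1}+\varpi_n$ or $d'_{\Gamma_{n-1}}=\varpi'_{n-1}+\varpi'_n$ fails to lie in the appropriate $\mathcal B$. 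The genuinely case-splitting combinatorics of $\varepsilon_{\Gamma}$, particularly the ${\rm D}_n$ end-orbits with $s+2\ell\in\{n-1,n\}$ on the one hand and $s+2\ell\le n-2$ on the other, is where I expect the main bookkeeping obstacle; it is not conceptually hard but demands the systematic parity check just sketched. Once all $\varepsilon_{\Gamma}=1$, Remark \ref{rqeqbounds} of subsection \ref{REG} finishes the proof: $y+\g_T$ is a Weierstrass section for the coadjoint action of $\p_{\Lambda}$.
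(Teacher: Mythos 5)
Your proposal follows essentially the same route as the paper: the same sets $S^{\pm}$, $T^{\pm}$, $T^{*}=\emptyset$ built from the Kostant cascades of $\g$ and $\g'$, the same largest Heisenberg sets $H_{\gamma}$, the same block lower triangular determinant argument for condition (i), the same orbit count for condition (vi), and the same parity check showing $\varepsilon_{\Gamma}=1$ for all $\Gamma\in E(\pi')$ before invoking Remark \ref{rqeqbounds}. The argument is correct and matches the paper's proof in Section \ref{SC} step for step.
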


\subsection{Weights and Degrees.}

For completeness we give below the weights and degrees of a set of homogeneous and $\h$-weight algebraically independent generators of $Y(\p_{\Lambda})$.
Since both bounds in \eqref{ch} of Sect. \ref{bounds} coincide then, for all $\Gamma\in E(\pi')$, each homogeneous and $\h$-weight generator has $\delta_{\Gamma}$ as a weight given by \eqref{poids} of Sect. \ref{bounds} and a degree $\partial_{\Gamma}$  given by \eqref{degre} of Sect. \ref{bounds} (since here for all $\Gamma\in E(\pi')$, we have that $j(\Gamma)=\Gamma$). 

Below  are weights and degrees of a set of homogeneous and $\h$-weight algebraically independent generators of $Y(\p_{\Lambda})$, each of them corresponding to an $\langle ij\rangle$-orbit $\Gamma_r$ in $E(\pi')$.\smallskip

Assume  that $\g$ is of type ${\rm B}_n$ and that $s+2\ell<n$  : \medskip

\begin{tabular}{|c|c|c|}
\hline
$\langle ij\rangle$-orbit in $E(\pi')$& Weight & Degree\\
\hline
$\Gamma_u=\{\alpha_u,\,\alpha_{s-u}\}$, &$-2\varpi_s$&$s+1+2u$\\
\;$1\le u\le (s-1)/2$&&\\
\hline
$\Gamma_v=\{\alpha_v\}$,&$-2\varpi_v$&$v+1$\\
$v=s+2k,\,0\le k\le\ell$ &&\\
\hline
$\Gamma_v=\{\alpha_v\}$,& $-\varpi_{v-1}-\varpi_{v+1}$&$v+1$\\
$v=s+2k-1,\,1\le k\le\ell$&& \\
\hline
$\Gamma_v=\{\alpha_v\}$,&$-2\varpi_{s+2\ell}$&$2v+1-s-2\ell$\\
$s+2\ell+1\le v\le n-1$ && \\
\hline
$\Gamma_n=\{\alpha_n\}$ & $-\varpi_{s+2\ell}$ & $n-\ell+(1-s)/2$\\
\hline
\end{tabular}
\medskip

Assume that  $\g$ is of type ${\rm B}_n$ and that $s+2\ell=n$ (hence $n$ is odd)  :\medskip

\begin{tabular}{|c|c|c|}
\hline
$\langle ij\rangle$-orbit in $E(\pi')$& Weight & Degree\\
\hline
$\Gamma_u=\{\alpha_u,\,\alpha_{s-u}\}$,&$-2\varpi_s$&$s+1+2u$\\
$1\le u\le (s-1)/2$&&\\
\hline
$\Gamma_v=\{\alpha_v\}$,\; & $-2\varpi_v$&$v+1$\\
$v=s+2k,\,0\le k\le\ell-1$&&\\
\hline
$\Gamma_v=\{\alpha_v\}$,\; & $-\varpi_{v-1}-\varpi_{v+1}$&$v+1$\\
$v=s+2k-1,\,1\le k\le\ell-1$&&\\
\hline
$\Gamma_{n-1}=\{\alpha_{n-1}\}$ & $-\varpi_{n-2}-2\varpi_n$ & $n$\\
\hline
$\Gamma_n=\{\alpha_n\}$ & $-2\varpi_n$ & $(n+1)/2$\\
\hline
\end{tabular}
\medskip

Assume that $\g$ is of type ${\rm D}_n$ and that $s+2\ell\le n-2$ :\medskip

\begin{tabular}{|c|c|c|}
\hline
$\langle ij\rangle$-orbit in $E(\pi')$& Weight & Degree\\
\hline
$\Gamma_u=\{\alpha_u,\,\alpha_{s-u}\}$,\;&$-2\varpi_s$&$s+1+2u$\\
$1\le u\le (s-1)/2$&&\\
\hline
$\Gamma_v=\{\alpha_v\}$,& $-2\varpi_v$&$v+1$\\
$v=s+2k,\,0\le k\le\ell$ &&\\
\hline
$\Gamma_v=\{\alpha_v\}$,\;& $-\varpi_{v-1}-\varpi_{v+1}$&$v+1$\\
$v=s+2k-1,\,1\le k\le\ell$ &&\\
\hline
$\Gamma_v=\{\alpha_v\}$,\; & $-2\varpi_{s+2\ell}$&$2v+1-s-2\ell$\\
$s+2\ell+1\le v\le n-2$&&\\
\hline
$\Gamma_{n-1}=\{\alpha_{n-1},\,\alpha_n\}$ & $-2\varpi_{s+2\ell}$ & $2n-s-2\ell-1$\\
\hline
\end{tabular}
\medskip

Assume that $\g$ is of type ${\rm D}_n$ and that $s+2\ell=n$ (hence $n$ is odd) :\medskip

\begin{tabular}{|c|c|c|}
\hline
$\langle ij\rangle$-orbit in $E(\pi')$& Weight & Degree\\
\hline
$\Gamma_u=\{\alpha_u,\,\alpha_{s-u}\}$,&$-2\varpi_s$&$s+1+2u$\\
$1\le u\le (s-1)/2$&&\\
\hline
$\Gamma_v=\{\alpha_v\}$,& $-2\varpi_v$&$v+1$\\
$v=s+2k,\,0\le k\le\ell-1$&&\\
\hline
$\Gamma_v=\{\alpha_v\}$,& $-\varpi_{v-1}-\varpi_{v+1}$&$v+1$\\
$v=s+2k-1,\,1\le k\le\ell-1$&&\\
\hline
$\Gamma_{n-1}=\{\alpha_{n-1},\,\alpha_n\}$ & $-\varpi_{n-2}-2\varpi_n$ & $n$\\
\hline
\end{tabular}
\medskip

Assume that $\g$ is of type ${\rm D}_n$ and that $s+2\ell=n-1$. Hence $n$ is even 
and  we assume that $\pi'=\pi\setminus\{\alpha_s,\,\alpha_{s+2},\ldots,\alpha_{n-3},\,\alpha_n\}$ :\medskip

\begin{tabular}{|c|c|c|}
\hline
$\langle ij\rangle$-orbit in $E(\pi')$& Weight & Degree\\
\hline
$\Gamma_u=\{\alpha_u,\,\alpha_{s-u}\}$,&$-2\varpi_s$&$s+1+2u$\\
$1\le u\le (s-1)/2$&&\\
\hline
$\Gamma_v=\{\alpha_v\}$,& $-2\varpi_v$&$v+1$\\
$v=s+2k,\,0\le k\le\ell-1$&&\\
\hline
$\Gamma_v=\{\alpha_v\}$,& $-\varpi_{v-1}-\varpi_{v+1}$&$v+1$\\
$v=s+2k-1,\,1\le k\le\ell-1$ &&\\
\hline
$\Gamma_{n-1}=\{\alpha_{n-1},\,\alpha_{n-2}\}$ & $-2(\varpi_{n-3}+\varpi_n)$ & $3n/2$\\
\hline
$\Gamma_n=\{\alpha_n\}$ & $-2\varpi_n$ & $n/2$\\
\hline
\end{tabular}
\subsection{}
\begin{Rq}\rm
Assume that the simple Lie algebra $\g$ is of type ${\rm B}_n$ or ${\rm D}_n$ and that $\pi'=\pi\setminus\{\alpha_s,\,\alpha_{s+4}\}$, with $s$ odd and consider the truncated parabolic subalgebra $\p_{\Lambda}$ associated  to $\pi'$.
In this case the lower and upper bounds for ${\ch}\,(Y(\p_{\Lambda}))$ in (\ref{ch}) of Sect. \ref{bounds}  do not coincide in general and then we do not know for the moment whether polynomiality of $Y(\p_{\Lambda})$ holds or not. However the adapted pair that we have constructed in subsection  \ref{APodd}  using the set $S=\beta^0_{\pi}\cup(-\beta^0_{\pi'})$ (at least for type ${\rm B}_n$) does no more work in this case. Indeed one may notice that for all $\beta\in\beta^0_{\pi}$,
and for all $\beta'\in\beta^0_{\pi'}$, one has $\beta(\alpha_{s+2}^\vee)=\beta'(\alpha_{s+2}^\vee)=0$ while $\alpha_{s+2}^\vee\in\h_{\Lambda}$. It follows that the restriction of $\beta^0_{\pi}\cup(-\beta^0_{\pi'})$ to $\h_{\Lambda}$ cannot give a basis for $\h_{\Lambda}^*$.
\end{Rq}

\section{Cases \ref{cas3} and \ref{cas4} for type B or D.}\label{FC}

Recall the notation of subsection \ref{Cases} and Sect. \ref{not}.
In this Section the  Lie algebra $\g$ is simple of type ${\rm B}_n$, $n\ge 4$, resp. ${\rm D}_n$, $n\ge 6$, and we consider the parabolic subalgebra $\p=\p_{s,\,1}=\p_{\pi'}^-$ of $\g$ associated to the subset $\pi'=\pi\setminus\{\alpha_s,\,\alpha_{s+2}\}$
of simple roots, with $s$ an {\it even} integer, $2\le s\le n-2$, resp. $2\le s\le n-4$. We are then in the cases \ref{cas3} and \ref{cas4} of subsection \ref{Mainres}.

The Levi subalgebra $\g'$  of $\p$ is isomorphic to the product $\s\mathfrak l_s\times\s\mathfrak l_2\times\s\mathfrak o_m$, with $m\in\mathbb N^*$, and  $m\ge 4$ if $\g$ is of type ${\rm D}_n$. More precisely if $\g$ is of type ${\rm B}_n$ one has that $m=2n-2s-3$, and when $\g$ is of type ${\rm D}_n$ one has that $m=2n-2s-4$. We adopt the convention that $\s\mathfrak o_1=\{0\}$, $\s\mathfrak o_3=\s\mathfrak l_2$, $\s\mathfrak o_4=\s\mathfrak l_2\times\s\mathfrak l_2$ and $\s\mathfrak o_6=\s\mathfrak l_4$.

In these cases the lower and upper bounds given by \eqref{ch} of Sect. \ref{bounds} do not coincide, hence we cannot conclude with this criterion that the algebra  $Sy(\p)=Y(\p_{\Lambda})$ is or not polynomial. However we will construct an adapted pair for the truncated parabolic subalgebra $\p_{\Lambda}$ associated to $\p$. We will then prove that the improved upper bound defined in Sect. \ref{IUB} is equal to the lower bound (namely that equality \eqref{eqIUB} of Sect. \ref{IUB} holds). This implies by Remark \ref{rqeqIUB} of subsection \ref{REG} that there is a Weierstrass section for coadjoint action of $\p_{\Lambda}$ and then that
the algebra of symmetric invariants $Y(\p_{\Lambda})=Sy(\p)$ is a polynomial algebra over $\Bbbk$ for which  the weights and degrees of homogeneous and $\h$-weight generators may also
be computed.

We will still use Proposition \ref{propAP} but here the set $S$ cannot be taken to contain $\beta^0_{\pi}\cup(-\beta^0_{\pi'})$ as in Sect. \ref{SC}. Indeed assume that $S$ contains the elements $\beta_1,\,\ldots,\,\beta_{s/2}$ of the Kostant cascade of $\g$. Then
the semisimple element $h$ of the adapted pair should verify both equalities $\varpi_s(h)=((\ep_1+\ep_2)+\ldots+(\ep_{s-1}+\ep_s))(h)=(-1)\times s/2$
and $\varpi_s(h)=0$ by definition of $\h_{\Lambda}$ (see Sect. \ref{not}) and since $h\in\h_{\Lambda}$ and $-2\varpi_s\in\Lambda(\p)$ by \eqref{ch} of Sect. \ref{bounds}. Hence we obtain a contradiction. Also  for each $\gamma\in S$, a more complicated Heisenberg set $\Gamma_{\gamma}$ with centre $\gamma$ than the set $H_{\gamma}$ used in previous section will be taken in general. We will also take $T^*\neq\emptyset$.

\subsection{Condition {\it (i)} of Proposition \ref{propAP}.}\label{APP}

For type  ${\rm B}_n$, we set 
$$\begin{array}{cl}
S^+=&\bigl\{\ep_s,\,\beta_i=\ep_{2i-1}+\ep_{2i},\,\ep_{s-1}+\ep_{s+1},\,\ep_{2j}+\ep_{2j+1};\\
 &1\le i\le s/2-1,\,s/2+1\le j\le [(n-1)/2]\bigr\}\end{array}$$
 and 
 $$\begin{array}{cl}
S^-=&\bigl\{\ep_{s-i}-\ep_i,\,-\beta''_{j-(s+2)/2}=-\ep_{2j-1}-\ep_{2j};\\
 &1\le i\le s/2-1,\,s/2+2\le j\le [n/2]\bigr\}\end{array}$$
 
 For type  ${\rm D}_n$, we set 
$$\begin{array}{cl}
S^+=&\bigl\{\ep_s-\ep_n,\,\ep_s+\ep_n,\,\beta_i=\ep_{2i-1}+\ep_{2i},\,\ep_{s-1}+\ep_{s+1},\,\ep_{2j}+\ep_{2j+1};\\
 &1\le i\le s/2-1,\,s/2+1\le j\le [(n-2)/2]\bigr\}\end{array}$$
 and 
 $$\begin{array}{cl}
S^-=&\bigl\{\ep_{s-i}-\ep_i,\,-\beta''_{j-(s+2)/2}=-\ep_{2j-1}-\ep_{2j};\\
& 1\le i\le s/2-1,\,s/2+2\le j\le [(n-1)/2]\bigr\}\end{array}$$
 
 Remark that the above sets $S^\pm$ contain the same elements as those defined in \cite{FL1} or in \cite{F1} for maximal parabolic
 subalgebras, except for one of them which is missing, namely the element $-\ep_{s+1}-\ep_{s+2}$, since it does no more belong to $\Delta^-_{\pi'}$.
 
 As we already noticed in Sect. \ref{not} for type ${\rm B}_n$, and also for type ${\rm D}_n$ (since $s\le n-4$) we have that $\h_{\Lambda}=\h'$. 
 As in \cite[Lem. 7.1]{FL1}, we prove the following lemma.
 
 \begin{lm}
 
 Set $S=S^+\sqcup S^-$ as above. Then $S_{\mid\h_{\Lambda}}$ is a basis for $\h_{\Lambda}^*$.

 \end{lm}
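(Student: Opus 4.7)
Since $\h_{\Lambda}=\h'$ has dimension $|\pi'|=n-2$, and a direct tally of $|S^+|$ and $|S^-|$ in each of types B and D yields $|S|=n-2$, the lemma reduces to showing that the square matrix $M=\big(\gamma(\alpha_j^\vee)\big)_{\gamma\in S,\,\alpha_j\in\pi'}$ is non-singular. The plan is to order rows and columns so that $M$ becomes block-lower-triangular with two easily handled diagonal blocks, in the spirit of Lemma \ref{baseFC}.

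As a preliminary reduction in type D, I would replace the pair $\ep_s-\ep_n$, $\ep_s+\ep_n$ in $S^+$ by their sum $2\ep_s$ and difference $-2\ep_n$; this row operation multiplies $\det M$ by $\pm 2$ and disentangles the supports. After this reduction the row set $\widetilde S$ splits as $\widetilde S_L\sqcup \widetilde S_R$, where $\widetilde S_L=\{\beta_i,\ep_{s-i}-\ep_i:1\le i\le s/2-1\}\cup\{\ep_s\}$ (with $\ep_s$ reinterpreted as $2\ep_s$ in type D) has cardinality $s-1$ and restricts to $\h'$ with support in $\{\alpha_1^\vee,\ldots,\alpha_{s-1}^\vee\}$, and $\widetilde S_R$ consists of the bridging root $\ep_{s-1}+\ep_{s+1}$, the tail roots $\ep_{2j}+\ep_{2j+1}$ and $-\ep_{2j-1}-\ep_{2j}$, together with $2\ep_n$ in type D. Ordering $\widetilde S_L$ first and the coroots with $j\le s-1$ first makes $M$ block-triangular, so it is enough to show that the two diagonal blocks $A_L$ and $A_R$ are invertible.

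For $A_L$, I would argue that $\widetilde S_L$ together with $\mathbf 1=\ep_1+\cdots+\ep_s$ is a basis of $\mathbb R^s$, equivalently that the restrictions to the $A_{s-1}$-Cartan are linearly independent. Indeed, $\ep_{s-1}=\mathbf 1-\ep_s-\sum_{i=1}^{s/2-1}\beta_i$; then $\ep_1$ is recovered from $\ep_{s-1}-\ep_1$, $\ep_2$ from $\beta_1$, $\ep_{s-2}$ from $\ep_{s-2}-\ep_2$, and so on inductively, alternately using one $\ep_{s-i}-\ep_i$ and one $\beta_i$ to work inward from both ends of $\{1,\ldots,s\}$. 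For $A_R$, the coroots with $j\ge s+1$ split as odd-indexed $\alpha_{s+1}^\vee,\alpha_{s+3}^\vee,\ldots$ and even-indexed $\alpha_{s+4}^\vee,\alpha_{s+6}^\vee,\ldots$ (the absent $\alpha_{s+2}^\vee$ conveniently starting the even chain one step later); the ``plus chain'' $\ep_{s-1}+\ep_{s+1},\ep_{s+2}+\ep_{s+3},\ldots$ pairs bijectively with the former as a bidiagonal $\pm 1$ submatrix, the ``minus chain'' $-\ep_{s+3}-\ep_{s+4},-\ep_{s+5}-\ep_{s+6},\ldots$ pairs bijectively with the latter likewise, and in type D the row $2\ep_n$ pairs with $\alpha_n^\vee$ (together with $\alpha_{n-1}^\vee$) to complete the diagonal. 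Hence $\det M\ne 0$.

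The main obstacle I expect is purely bookkeeping at the endpoints of the tail chains: the last element of each chain and its paired coroot depend on the parity of $n$ and on whether we are in type B or D, and in type D one must carefully align $2\ep_n$ against $\alpha_n^\vee=\ep_{n-1}+\ep_n$ with the parity of the final minus-chain element. No conceptual difficulty arises beyond this enumeration by type and parity.
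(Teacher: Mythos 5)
Your plan is correct and takes essentially the same route as the paper's own proof: the paper likewise replaces the type ${\rm D}$ pair $\ep_s\pm\ep_n$ by $\ep_s$ and $\ep_n$, orders $S$ and a basis of $\h'$ so that $(s'_i(h_j))$ becomes block lower triangular, and verifies the diagonal blocks are invertible (a block for the $\beta_i$, a block for $\ep_s$ and the $\ep_{s-i}-\ep_i$ handled via the cascade Lemma \ref{KCA}, the $1\times 1$ block for the bridging root, and a tail block ending in an invertible $2\times 2$ matrix in type ${\rm D}$). Your spanning argument modulo $\ep_1+\cdots+\ep_s$ for the ${\rm A}_{s-1}$ part and the odd/even coroot pairing for the tail are only minor reorganizations of the same computations.
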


\begin{proof}

The proof is quite similar to that of \cite[Lem. 7.1]{FL1}. We give it below for the reader's convenience. First observe that $\lvert S\rvert=n-2$.
The elements of $S$ will be denoted by $s_i$, with $1\le i\le n-2$.
When $\g$ is of type ${\rm B}_n$, we set $s_{n-3}=\ep_s$ and $s_{n-2}=\ep_{n-1}+\ep_n$ if $n$ is odd, resp. $s_{n-2}=-\ep_{n-1}-\ep_n$ if $n$ is even.
When $\g$ is of type ${\rm D}_n$, we set $s_{n-3}=\ep_s-\ep_n$ and $s_{n-2}=\ep_s+\ep_n$.
Then we set $s'_i=s_i$ for all $1\le i\le n-2$ if $\g$ is of type ${\rm B}_n$. If $\g$ is of type ${\rm D}_n$, we set $s'_i=s_i$ for all $1\le i\le n-4$, $s'_{n-3}=\ep_s$ and $s'_{n-2}=\ep_n$.
It suffices to verify that, if $\{h_j\}_{1\le j\le n-2}$ is a basis of $\h_{\Lambda}=\h'$, then $\det(s'_i(h_j))_{1\le i,\,j\le n-2}\neq 0$.

To prove this, we order the basis $\{h_j\}_{1\le j\le n-2}$ of $\h_{\Lambda}$ as 
$$\begin{array}{cc}\bigl\{\alpha^\vee_{2i},\,1\le i\le s/2-1,\\
\alpha_{s-1}^\vee,\,\alpha^\vee_{2j-1},\,\alpha^\vee_{s-2j-1},\,1\le j\le [s/4],\\
\alpha^\vee_k,\,\,s+1\le k\le n,\,k\neq s+2\bigr\}\end{array}$$
without repetitions. The elements $s'_i$, $1\le i\le n-2$, are ordered as
$$\begin{array}{cc}\bigl\{\ep_{2i-1}+\ep_{2i},\, 1\le i\le s/2-1,\\
\ep_s,\,\ep_{s-j}-\ep_j,\,1\le j\le s/2-1,\,\ep_{s-1}+\ep_{s+1},\\
\ep_{2k}+\ep_{2k+1},\,-\ep_{2k+1}-\ep_{2k+2},\,s/2+1\le k\le [(n-3)/2],\\
(-1)^n(\ep_{n-2}+\ep_{n-1}),\,s'_{n-2}\bigr\}\end{array}$$
without repetitions.

Then one verifies that $(s'_i(h_j))_{1\le i,\,j\le n-2}=\begin{pmatrix}A&0&0&0\\
*&B&0&0\\
*&*&C&0\\
*&*&*&D\end{pmatrix}$
with $A$, resp. $B$, a $(s/2-1)\times(s/2-1)$, resp. $s/2\times s/2$, lower triangular matrix with $1$, resp. $-1$, on its diagonal.

Moreover $C=(1)$ and $D=\begin{pmatrix} D'&0\\
*&D''\end{pmatrix}$ with $D'$ a $(n-s-4)\times(n-s-4)$ lower triangular matrix with alternating $1$ and $-1$ on its diagonal, and $D''$ an invertible $2\times 2$ matrix.
\end{proof}

\subsection{Conditions {\it (ii)}, {\it (iii)} and {\it (vi)} of Proposition \ref{propAP}.}\label{T}

To each $\gamma\in S$, we need now to associate a Heisenberg set $\Gamma_{\gamma}$ with centre $\gamma$.

Recall that $\beta_i:=\ep_{2i-1}+\ep_{2i}$, for all $1\le i\le s/2-1$, is a positive root which belongs to the Kostant cascade of $\g$.
We then set, for all $1\le i\le s/2-1$, $\Gamma_{\beta_i}=H_{\beta_i}$ where $H_{\beta_i}$ is the largest Heisenberg set with centre $\beta_i$ included in $\Delta^+$ as defined in \ref{HS}.

For $\g$ of type ${\rm B}_n$ we set $$\begin{array}{cl}\Gamma_{\ep_{s-1}+\ep_{s+1}}=&\{\ep_{s-1}+\ep_{s+1},\,\ep_{s-1}\pm\ep_i,\,\ep_{s+1}\mp\ep_i\,\;;\; s+2\le i\le n,\\
&\ep_{s-1},\,\ep_{s+1},\,\ep_{s-1}-\ep_s,\,\ep_s+\ep_{s+1}\}.\end{array}$$  For $\g$ of type ${\rm D}_n$, $\Gamma_{\ep_{s-1}+\ep_{s+1}}$ is taken to be the same set as above but without $\ep_{s-1}$ and $\ep_{s+1}$ which are not roots in this type.

For $s/2+1\le j\le [(n-1)/2]$ for type ${\rm B}_n$, resp. $s/2+1\le j\le [(n-2)/2]$ for type ${\rm D}_n$, we set $$\begin{array}{cl}\Gamma_{\ep_{2j}+\ep_{2j+1}}=&\{\ep_{2j}+\ep_{2j+1},\,\ep_{2j},\,\ep_{2j+1},\,\\
&\ep_{2j}\pm\ep_k,\,\ep_{2j+1}\mp\ep_k\,;\,\,2j+2\le k\le n\},\end{array}$$ resp. the same set as above but without $\ep_{2j}$ and $\ep_{2j+1}$.

For all $1\le i\le s/2-1$, we set $$\Gamma_{\ep_{s-i}-\ep_i}=\{\ep_{s-i}-\ep_i,\,\ep_{s-i}-\ep_j,\,\ep_j-\ep_i\,;\, i+1\le j\le s-i-1\}.$$

For all $s/2+2\le j\le [n/2]$ for type ${\rm B}_n$, resp. $s/2+2\le j\le [(n-1)/2]$ for type ${\rm D}_n$, we set
$\Gamma_{-\ep_{2j-1}-\ep_{2j}}=-H_{\ep_{2j-1}+\ep_{2j}}$ where $H_{\ep_{2j-1}+\ep_{2j}}$ is the largest Heisenberg set with centre $\beta''_{j-(s+2)/2}:=\ep_{2j-1}+\ep_{2j}\in\beta_{\pi'}$ included in $\Delta^+_{\pi'}$, as defined in \ref{HS}.

Finally for $\g$ of type ${\rm B}_n$, we set $\Gamma_{\ep_s}=\{\ep_s\}$
and for $\g$ of type ${\rm D}_n$, we set $\Gamma_{\ep_s+\ep_n}=\{\ep_s+\ep_n\}$ and $\Gamma_{\ep_s-\ep_n}=\{\ep_s-\ep_n\}$.

By construction all the above sets $\Gamma_{\gamma}$, $\gamma\in S$, are Heisenberg sets with centre $\gamma$ and they are pairwise disjoint.

Moreover the above sets $\Gamma_{\gamma}$, $\gamma\in S$, are chosen to be the same as in \cite{F1} (for type ${\rm B}_n$), except for $\Gamma_{\ep_{s-1}+\ep_{s+1}}$ where here the roots $\ep_{s-1}-\ep_s$ and $\ep_s+\ep_{s+1}$ are added.
However  the proofs of \cite[Lem. 14 and 15]{F1}, themselves based on Lemma \ref{HS} $ii)$ and $iii)$, can still be applied to show that conditions $(ii)$ and $(iii)$ of Proposition \ref{propAP} are satisfied. 

Now for the set $T$ we take
$$\begin{array}{cl}
T=&\{\ep_{s-1}+\ep_s,\,\ep_{s-1}-\ep_{s+1},\,\ep_s+\ep_{s+2},\\
&\ep_{2i-1}-\ep_{2i},\,\ep_{s+2j}-\ep_{s+2j+1},\,-\ep_{s+2k-1}+\ep_{s+2k}\;;\\
&1\le i\le s/2-1,\,1\le j\le [(n-s-1)/2],\,1\le k\le [(n-s)/2]\}.\end{array}$$

One checks that $T\subset\Delta^+\sqcup\Delta^-_{\pi'}$ and that $T$ is disjoint from $\Gamma=\bigsqcup_{\gamma\in S}\Gamma_{\gamma}$.
Note also that this set $T$ has the same elements as the set $T$ in \cite{F1},  except that $\alpha_{s-1}=\ep_{s-1}-\ep_s$ now belongs to $\Gamma_{\ep_{s-1}+\ep_{s+1}}$, and  is replaced  by $\ep_s+\ep_{s+2}$. We check below that condition $(vi)$ of Proposition \ref{propAP} is satisfied.

\begin{lm}

We have that $\lvert T\rvert=\ind\p_{\Lambda}.$

\end{lm}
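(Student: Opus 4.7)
The plan is to appeal to the formula $\operatorname{ind}\p_{\Lambda}=\lvert E(\pi')\rvert$ from \eqref{index}, and to show that both $\lvert T\rvert$ and $\lvert E(\pi')\rvert$ equal $n-s/2+1$ in each of cases \ref{cas3} and \ref{cas4}. I would first carry out a direct count of $\lvert T\rvert$ from its definition as the disjoint union of six explicit subfamilies. The first three contribute $3$ elements. The family $\{\ep_{2i-1}-\ep_{2i}\;;\;1\le i\le s/2-1\}$ contributes $s/2-1$. Since $s$ is even, $n-s$ has the same parity as $n$: in both parities one checks $[(n-s-1)/2]+[(n-s)/2]=n-s-1$, so the last two families contribute $n-s-1$ elements together. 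Summing gives $\lvert T\rvert=3+(s/2-1)+(n-s-1)=n-s/2+1$, uniformly in type and in the parity of $n$.

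Next I would compute $\lvert E(\pi')\rvert$. The connected components of $\pi'$ are $\pi'_1=\{\alpha_1,\ldots,\alpha_{s-1}\}$ of type $\mathrm{A}_{s-1}$, the singleton $\{\alpha_{s+1}\}$ of type $\mathrm{A}_1$, and $\pi'_2=\{\alpha_{s+3},\ldots,\alpha_n\}$ of type $\mathrm{B}_{n-s-2}$ or $\mathrm{D}_{n-s-2}$. The involution $i$ on $\pi'_1$ sends $\alpha_k\mapsto\alpha_{s-k}$; it fixes $\alpha_{s+1}$; and on $\pi'_2$ it equals $-w'_0$. The involution $j$ is the identity on $\pi$ in type $\mathrm{B}_n$ and in type $\mathrm{D}_n$ with $n$ even, and swaps $\alpha_{n-1},\alpha_n$ in type $\mathrm{D}_n$ with $n$ odd. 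Since $s\le n-2$ in type $\mathrm{B}$ and $s\le n-4$ in type $\mathrm{D}$, both $\alpha_s$ and $\alpha_{s+2}$ are fixed by $j$, so by the extension rule for $i$ they are each fixed by $i$ as well.

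I would then enumerate the $\langle ij\rangle$-orbits. From $\pi'_1$ one gets the $(s/2-1)$ pairs $\{\alpha_k,\alpha_{s-k}\}$ with $1\le k<s/2$ plus the fixed point $\{\alpha_{s/2}\}$, giving $s/2$ orbits. The singleton components give $\{\alpha_{s+1}\}$, and the deleted roots give $\{\alpha_s\}$ and $\{\alpha_{s+2}\}$, contributing $3$ more orbits. For $\pi'_2$ the key observation is that $i$ is trivial on it in type $\mathrm{B}_{n-s-2}$, in type $\mathrm{D}_{n-s-2}$ with $n-s-2$ even (corresponding to $n$ even in case \ref{cas4}), and even when $n-s-2$ is odd (case $n$ odd in case \ref{cas4}), where $i$ swaps $\alpha_{n-1},\alpha_n$, so does $j$, hence $ij$ acts trivially on $\pi'_2$ in all subcases. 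Thus $\pi'_2$ contributes exactly $n-s-2$ singleton orbits. Summing: $\lvert E(\pi')\rvert=s/2+1+2+(n-s-2)=n-s/2+1$, matching $\lvert T\rvert$.

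The main obstacle is the type $\mathrm{D}_n$ case with $n$ odd (part of case \ref{cas4}): one must recognize that $-w'_0$ on the component $\mathrm{D}_{n-s-2}$ of odd rank acts nontrivially by swapping the two spin simple roots $\alpha_{n-1},\alpha_n$, and that $j$ does the same on $\pi$, so that their composition $ij$ fixes each of $\alpha_{n-1},\alpha_n$ individually. This is what forces the orbits $\{\alpha_{n-1}\}$ and $\{\alpha_n\}$ to be singletons rather than being merged into one orbit of size two, which is precisely what is needed for the count to come out equal to $\lvert T\rvert$.
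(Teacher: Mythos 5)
Your proof is correct and follows essentially the same route as the paper: a direct count giving $\lvert T\rvert=n-s/2+1$, followed by an enumeration of the $\langle ij\rangle$-orbits component by component using $\ind\p_{\Lambda}=\lvert E(\pi')\rvert$, with the key parity observation (that $n$ and $n-s-2$ have the same parity, so $ij$ is trivial on the component of type ${\rm B}_{n-s-2}$ or ${\rm D}_{n-s-2}$ and in particular $\{\alpha_{n-1}\}$ and $\{\alpha_n\}$ are separate singleton orbits) being exactly the one the paper invokes. You merely spell out the arithmetic $[(n-s-1)/2]+[(n-s)/2]=n-s-1$ that the paper leaves as "one checks".
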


\begin{proof}

One checks that $\lvert T\rvert=n-s/2+1$.
Recall  \eqref{index} of Sect. \ref{bounds}, that $\ind\p_{\Lambda}=\lvert E(\pi')\rvert$ where $E(\pi')$ is the set of $\langle ij\rangle$-orbits in $\pi$.

Denote by $\pi'_1$, $\pi'_2$, $\pi'_3$ the three irreducible components of $\pi'$. Then $\pi'_1$ is of type ${\rm A}_{s-1}$, $\pi'_2$ is of type ${\rm A}_1$ and $\pi'_3$ is of type ${\rm B}_{n-s-2}$, resp. ${\rm D}_{n-s-2}$ if $\g$ is of type ${\rm B}_n$, resp. ${\rm D}_n$.

Then $i_{\mid\pi'_1}$ exchanges $\alpha_t$ and $\alpha_{s-t}$ for all $1\le t\le s/2-1$ and fixes $\alpha_{s/2}$, $i_{\mid\pi'_2}=Id_{\pi'_2}$ and $(ij)_{\mid\pi'_3}=Id_{\pi'_3}$ since $n$ and $n-s-2$ are of the same parity (and $n-s-2\ge 2$ if $\g$ is of type ${\rm D}_n$). Moreover for all $\alpha\in\pi\setminus\pi'$, $i(\alpha)=j(\alpha)=\alpha$.
Then the set $E(\pi')$ of $\langle ij\rangle$-orbits in $\pi$ is
$$E(\pi')=\{\{\alpha_t,\,\alpha_{s-t}\},\,\{\alpha_{s/2}\},\,\{\alpha_u\}\; ; \; 1\le t\le s/2-1,\,s\le u\le n\}.$$
They are $n-s/2+1$ in number. Hence the lemma.
\end{proof}

\subsection{Condition {\it (iv)} and {\it (v)} of Proposition \ref{propAP}.}\label{condition(v)caseeven}

If $\g$ is of type ${\rm B}_n$, we take :
$$T^*=\{\ep_s-\ep_i,\,\ep_s+\ep_j,\,(-1)^n\ep_n\;;\; 1\le i\le n,\; i\neq s,\,s+3\le j\le n\}.$$ 
If $\g$ is of type ${\rm D}_n$, we take :
 $$T^*=\{\ep_s-\ep_i,\,\ep_s+\ep_j,\,(-1)^{n-1}\alpha_n\;;\; 1\le i\le n-1,\; i\neq s,\,s+3\le j\le n-1\}.$$ 
In type ${\rm B}_n$, note that this set $T^*$ is  the same as $T^*$ in \cite{F1}, except that two elements here are missing :
$\ep_s+\ep_{s+1}$ which now belongs to $\Gamma_{\ep_{s-1}+\ep_{s+1}}$ and $\ep_s+\ep_{s+2}$ which now belongs to $T$.
By construction $T^*$ is disjoint from $\Gamma\sqcup T$.

Denote by $\Delta_{\tilde\pi'}^-$ the set of negative roots in the case when $\tilde\pi'=\pi\setminus\{\alpha_s\}$ (that is, the set of negative roots for the parabolic subalgebra $\p_{\tilde\pi'}$ as considered in \cite{F1}) 
and recall that we denote by $\Delta_{\pi'}^-$ the set of negative roots for $\p_{\pi'}$ in our present case when $\pi'=\pi\setminus\{\alpha_s,\,\alpha_{s+2}\}$. Then one has that $\Delta_{\tilde\pi'}^-=\Delta_{\pi'}^-\sqcup -H_{\ep_{s+1}+\ep_{s+2}}$, where $H_{\ep_{s+1}+\ep_{s+2}}$ is the largest Heisenberg set with centre $\ep_{s+1}+\ep_{s+2}$ which is included in $\Delta^+_{\tilde\pi'}$ as defined in \ref{HS}.
By a similar proof as in \cite[Lem. 13]{F1} and using Lemma \ref{HS} $i)$, one checks that $\Delta^+\sqcup\Delta^-_{\pi'}=\Gamma\sqcup T\sqcup T^*$. Hence condition $(iv)$ of Proposition \ref{propAP} is satisfied. It remains to verify condition $(v)$ of Proposition \ref{propAP}.
The proofs of \cite[Lem. 16, 17, 18, 19]{F1} can still be applied in type ${\rm B}_n$. In type ${\rm D}_n$ they have to be adapted. For completeness, we give a proof below. Set $y=\sum_{\gamma\in S}x_{\gamma}$.

\begin{lm}

Let $\gamma\in T^*$. Then $\g_{\gamma}\subset ad\,\p_{\Lambda}(y)+\g_T$.

\end{lm}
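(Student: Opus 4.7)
The plan is a case-by-case analysis on $\gamma\in T^*$, following \cite[Lem. 16--19]{F1} and adapting where needed. Split $T^*$ according to the shape of $\gamma$: (a) $\ep_s-\ep_i$ with $i<s$; (b) $\ep_s-\ep_i$ with $i>s$; (c) $\ep_s+\ep_j$ with $j\ge s+3$; and (d) the exceptional root. For each $\gamma$ I would write $\gamma=\alpha+\beta$ with $\alpha\in S$ and $\beta\in\Delta^-\sqcup\Delta^+_{\pi'}$, so that $x_\beta\in\p_\Lambda$, and expand
\[
ad\,x_\beta(y)=\sum_{\alpha'\in S}[x_\beta,x_{\alpha'}].
\]
The term $[x_\beta,x_\alpha]$ is then a nonzero multiple of $x_\gamma$, and the task reduces to showing that every other bracket lies either in $\g_T$ or in the span of those $x_{\gamma'}$, $\gamma'\in T^*$, that have already been treated. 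This dictates an ordering on $T^*$ in which each case only invokes earlier ones.

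When a single bracket $ad\,x_\beta(y)$ produces two or three elements of $T^*$ simultaneously, which is forced for $\gamma$ close to the pivots $\ep_s$ and $\ep_{s-1}+\ep_{s+1}$, I would apply the Lemma and Proposition of subsection \ref{condition(v)}: three triples $(\gamma_i,\gamma'_i)$ satisfying the hypotheses of that Lemma generate a $3\times 3$ system that isolates each $x_{\gamma_i+\gamma'_i}$, provided the residual vectors $X,X',X''$ lie in $ad\,\p_\Lambda(y)+\g_T$. The latter inclusion is verified by direct inspection, since in our brackets $X,X',X''$ are manifestly combinations of $x_\tau$, $\tau\in T$.

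The principal technical obstacle will be the passage from type $B_n$ to type $D_n$. In $B_n$, $\ep_s\in S$ and the unit roots $\pm\ep_i$ are available, so $\beta$ is literally $\mp\ep_i$ and \cite[Lem. 16--19]{F1} transfers almost verbatim; the only bookkeeping changes are that $\ep_s+\ep_{s+2}$ has moved from $T^*$ into $T$ and $\ep_{s-1}-\ep_s$ from $T$ into $\Gamma_{\ep_{s-1}+\ep_{s+1}}$, forcing minor substitutions in the residual-term analysis. In $D_n$, the short root $\ep_s$ is replaced by the pair $\ep_s\pm\ep_n\in S^+$ and no $\pm\ep_i$ is a root; one substitutes $\beta=\mp\ep_n\mp\ep_i$ or $\pm\ep_n\mp\ep_i$, which couples the $\ep_s\pm\ep_n$ brackets and links cases (b), (c) and the exceptional root $\pm\alpha_n$ together. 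Consequently these cases must be recovered through simultaneous triples, rather than individual brackets, and that coupling is the main combinatorial cost of the $D_n$ adaptation.
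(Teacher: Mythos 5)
Your plan coincides with the paper's proof: in type ${\rm B}_n$ the argument is a direct transfer of \cite[Lem. 16--19]{F1} via explicit brackets $ad\,x_{\beta}(y)$ (with exactly the bookkeeping you describe for $\ep_s+\ep_{s+2}$ and $\ep_{s-1}-\ep_s$), and in type ${\rm D}_n$ the absence of the roots $\pm\ep_i$ forces precisely the coupled triples resolved by the Lemma and Proposition of subsection \ref{condition(v)}. The one imprecision is your claim that the residual vectors $X,\,X',\,X''$ are combinations of $x_{\tau}$, $\tau\in T$: in most cases they are sums such as $x_{\ep_{i+1}-\ep_n}+x_{\ep_{i+1}+\ep_n}$, which lie in $ad\,\p_{\Lambda}(y)$ via a further bracket (e.g. $ad\,(x_{\ep_{i+1}-\ep_s}-x_{-\ep_{s-i-2}-\ep_s})(y)$) rather than in $\g_T$; only the exceptional root at the end actually uses $x_{-\alpha_{n-1}}\in\g_T$.
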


\begin{proof}
Recall (Sect. \ref{not}) that $\p_{\Lambda}=\n^-\oplus\h'\oplus\n^+_{\pi'}$ and that we have chosen, for each $\alpha\in\Delta$, a nonzero root vector $x_{\alpha}\in\g_{\alpha}$.
Given $\gamma,\,\delta\in\Delta^\pm$ such that $\gamma+\delta\in\Delta^\pm$, one has that $ad\,x_{\gamma}(x_{\delta})=[x_{\gamma},\,x_{\delta}]\in\g_{\gamma+\delta}\setminus\{0\}$ by say \cite[1.10.7]{D}, then it is a nonzero multiple of $x_{\gamma+\delta}$.

Assume that $\g$ is of type ${\rm B}_n$ and rescale if necessary the nonzero root vectors $x_{\gamma}$, $\gamma\in\Delta\setminus S$.\par\noindent
Let $s+3\le j\le n-1$ and $j$ odd. Then 
one has that
$$\begin{cases} ad\,x_{\ep_j}(y)=x_{\ep_s+\ep_j}+x_{-\ep_{j+1}}\\
ad\,x_{-\ep_s-\ep_{j+1}}(y)=x_{-\ep_{j+1}}.\\
\end{cases}$$ Hence 
$x_{\ep_s+\ep_j}=ad\,(x_{\ep_j}-x_{-\ep_s-\ep_{j+1}})(y)\in ad\,\p_{\Lambda}(y)$.
 If $j=n$ is odd, then $x_{\ep_s+\ep_n}=ad\,x_{\ep_n}(y)\in ad\,\p_{\Lambda}(y)$.
Let $s+4\le j\le n$ and $j$ even. Then $x_{\ep_s+\ep_j}=ad\,(x_{\ep_j}-x_{-\ep_s-\ep_{j-1}})(y)\in ad\,\p_{\Lambda}(y).$\par\noindent
Let $1\le i\le s-3$ and $i$ odd, or $s+2\le i\le n-1$ and $i$ even.
Then
$$\begin{cases}x_{\ep_s-\ep_i}=ad\,(x_{-\ep_i}-x_{\ep_{i+1}-\ep_s}+x_{-\ep_{s-i-2}-\ep_s})(y)&{\rm if}\;i\le s/2-2\\
x_{\ep_s-\ep_i}=ad\,(x_{-\ep_i}-x_{\ep_{i+1}-\ep_s})(y)&{\rm otherwise}.\\
\end{cases}$$
Hence
$x_{\ep_s-\ep_i}\in ad\,\p_{\Lambda}(y).$
Let $2\le i\le s-2$ and $i$ even, or $s+3\le i\le n$ and $i$ odd. Then
$$\begin{cases} x_{\ep_s-\ep_i}=ad\,(x_{-\ep_i}-x_{\ep_{i-1}-\ep_s}+x_{-\ep_{s-i+2}-\ep_s})(y)&{\rm if}\;4\le i\le s/2\\
x_{\ep_s-\ep_i}=ad\,(x_{-\ep_2}-x_{\ep_{1}-\ep_s}+x_{-\ep_{s}-\ep_{s+1}})(y)&{\rm if}\;i=2\\
x_{\ep_s-\ep_i}=ad\,(x_{-\ep_i}-x_{\ep_{i-1}-\ep_s})(y)&{\rm otherwise}.\\
\end{cases}$$
Hence
$x_{\ep_s-\ep_i}\in ad\,\p_{\Lambda}(y).$
If $i=s-1$ then
$$x_{\ep_s-\ep_{s-1}}=ad\,(x_{-\ep_{s-1}}-x_{\ep_{s+1}-\ep_s})(y)\in ad\,\p_{\Lambda}(y).$$
If $i=s+1$ then
$$x_{\ep_s-\ep_{s+1}}=ad\,(x_{-\ep_{s+1}}-x_{\ep_{s-1}-\ep_s})(y)\in ad\,\p_{\Lambda}(y).$$
If $i=n$ is even, then
$$x_{\ep_s-\ep_n}=ad\,x_{-\ep_n}(y)\in ad\,\p_{\Lambda}(y).$$
Finally, if $n$ is odd, then $x_{-\ep_n}=ad\,x_{-\ep_s-\ep_n}(y)\in ad\,\p_{\Lambda}(y)$ and if $n$ is even, then
$x_{\ep_n}=ad\,x_{-\ep_s+\ep_n}(y)\in ad\,\p_{\Lambda}(y)$.
Hence the lemma for $\g$ of type ${\rm B}_n$.\smallskip

Assume now that $\g$ is of type ${\rm D}_n$.\par\noindent
Let $s+3\le j\le n-2$ and $j$ odd. 
One may apply Lemma \ref{condition(v)} with $\gamma_1=\ep_s+\ep_n\in S$, $\gamma'_1=\ep_j-\ep_n\not\in S$, $\gamma_2=-\ep_j-\ep_{j+1}\in S$,
$\gamma'_2=\ep_j+\ep_n\not\in S$, $\gamma_3=\ep_s-\ep_n\in S$, $\gamma'_3=-\ep_s-\ep_{j+1}\not\in S$.
Then up to rescaling the nonzero root vectors $x_{\ep_j-\ep_n}$, $x_{-\ep_s-\ep_{j+1}}$, $x_{\ep_j+\ep_n}$ in $\p_{\Lambda}$ and 
$x_{-\ep_{j+1}-\ep_n}$, $x_{\ep_n-\ep_{j+1}}$, $x_{\ep_s+\ep_j}$ in $\p_{\Lambda}^*$, one has that, by Lemma \ref{condition(v)}
$$\begin{cases}[x_{\ep_j-\ep_n},\,x_{\ep_s+\ep_n}]=[x_{\ep_j+\ep_n},\,x_{\ep_s-\ep_n}]=x_{\ep_s+\ep_j}\\
[x_{\ep_j-\ep_n},\,x_{-\ep_j-\ep_{j+1}}]=[x_{-\ep_s-\ep_{j+1}},\,x_{\ep_s-\ep_n}]=x_{-\ep_{j+1}-\ep_n}\\
[x_{-\ep_s-\ep_{j+1}},\,x_{\ep_s+\ep_n}]=[x_{\ep_j+\ep_n},\,x_{-\ep_j-\ep_{j+1}}]=x_{\ep_n-\ep_{j+1}}.\end{cases}$$

It follows that 
$$\begin{cases} x_{\ep_s+\ep_j}+x_{-\ep_{j+1}-\ep_n}=ad\,x_{\ep_j-\ep_n}(y)\in ad\,\p_{\Lambda}(y)\\
x_{-\ep_{j+1}-\ep_n}+x_{\ep_n-\ep_{j+1}}=ad\,x_{-\ep_s-\ep_{j+1}}(y)\in ad\,\p_{\Lambda}(y)\\
x_{\ep_n-\ep_{j+1}}+x_{\ep_s+\ep_j}=ad\,x_{\ep_j+\ep_n}(y)\in ad\,\p_{\Lambda}(y)\\
\end{cases}$$
Hence $x_{\ep_s+\ep_j}\in ad\,\p_{\Lambda}(y)$.\par\noindent
Now if $j=n-1$ is odd, then $x_{\ep_s+\ep_{n-1}}=ad\,x_{\ep_{n-1}-\ep_n}(y)\in ad\,\p_{\Lambda}(y)$.
Let $s+4\le j\le n-1$ and $j$ even. Then similarly as above (by Lemma and Prop. \ref{condition(v)}), one has that
$$\begin{cases}x_{\ep_s+\ep_j}+x_{-\ep_{j-1}-\ep_n}=ad\,x_{\ep_j-\ep_n}(y)\in ad\,\p_{\Lambda}(y)\\
x_{-\ep_{j-1}-\ep_n}+x_{\ep_n-\ep_{j-1}}=ad\,x_{-\ep_s-\ep_{j-1}}(y)\in ad\,\p_{\Lambda}(y)\\
x_{\ep_n-\ep_{j-1}}+x_{\ep_s+\ep_j}=ad\,x_{\ep_j+\ep_n}(y)\in ad\,\p_{\Lambda}(y)\\
\end{cases}$$
Hence $x_{\ep_s+\ep_j}\in ad\,\p_{\Lambda}(y)$.\par\noindent
Let $1\le i\le s-3$ and $i$ odd, or $s+2\le i\le n-2$ and $i$ even.
Again, up to rescaling some nonzero root vectors, Lemma and Prop.  \ref{condition(v)} imply  that
$$\begin{cases}x_{\ep_s-\ep_i}+x_{\ep_{i+1}-\ep_n}=ad\,x_{-\ep_i-\ep_n}(y)\in ad\,\p_{\Lambda}(y)\\
x_{\ep_{i+1}+\ep_n}+x_{\ep_s-\ep_i}=ad\,x_{\ep_n-\ep_i}(y)\in ad\,\p_{\Lambda}(y)\\
x_{\ep_{i+1}-\ep_n}+x_{\ep_{i+1}+\ep_n}\in ad\,\p_{\Lambda}(y)\\
\end{cases}$$ since
$$\begin{cases}
x_{\ep_{i+1}-\ep_n}+x_{\ep_{i+1}+\ep_n}=ad\,(x_{\ep_{i+1}-\ep_s}-x_{-\ep_{s-i-2}-\ep_s})(y)&{\rm if}\; i\le s/2-2\\
x_{\ep_{i+1}-\ep_n}+x_{\ep_{i+1}+\ep_n}=ad\,x_{\ep_{i+1}-\ep_s}(y)&{\rm otherwise}.\\
\end{cases}$$
Hence $x_{\ep_s-\ep_i}\in ad\,\p_{\Lambda}(y)$.
For $i=n-1$ even, one has that
$x_{\ep_s-\ep_{n-1}}=ad\,x_{\ep_n-\ep_{n-1}}(y)\in ad\,\p_{\Lambda}(y)$.
For $2\le i\le s-2$ and $i$ even, or  $s+3\le i\le n-1$ and $i$ odd, a similar computation shows that one also has that $x_{\ep_s-\ep_i}\in ad\,\p_{\Lambda}(y)$ in these cases.
Let $i=s-1$. Then Lemma \ref{condition(v)} implies that
$$\begin{cases} x_{\ep_s-\ep_{s-1}}+x_{\ep_{s+1}-\ep_n}=ad\,x_{-\ep_{s-1}-\ep_n}(y)\\
x_{\ep_{s+1}-\ep_n}+x_{\ep_{s+1}+\ep_n}=ad\,x_{\ep_{s+1}-\ep_s}(y)\\
x_{\ep_{s+1}+\ep_n}+x_{\ep_s-\ep_{s-1}}=ad\,x_{\ep_n-\ep_{s-1}}(y).\\
\end{cases}$$
Hence $x_{\ep_s-\ep_{s-1}}\in ad\,\p_{\Lambda}(y)$.
A similar computation shows that $x_{\ep_s-\ep_{s+1}}\in ad\,\p_{\Lambda}(y)$.\par\noindent
Finally assume that $n$ is even. Then $ad\,x_{-\ep_s-\ep_{n-1}}(y)=x_{-\alpha_n}+x_{-\alpha_{n-1}}\in ad\,\p_{\Lambda}(y)$
and $x_{-\alpha_{n-1}}\in\g_T$. Thus $x_{-\alpha_n}\in ad\,\p_{\Lambda}(y)+\g_T$.
If  $n$ is odd, then $ad\,x_{\ep_{n-1}-\ep_s}(y)=x_{\alpha_n}+x_{\alpha_{n-1}}\in ad\,\p_{\Lambda}(y)$
and $x_{\alpha_{n-1}}\in\g_T$. Thus $x_{\alpha_n}\in ad\,\p_{\Lambda}(y)+\g_T$.
The proof is complete.
\end{proof}

\subsection{}\label{CSC}

All conditions of Proposition \ref{propAP} are satisfied. Thus one has the following corollary.

\begin{cor} Keep the above notation.
One has that
 $$ad\,\p_{\Lambda}(y)\oplus\g_T=\p_{\Lambda}^*$$
with $\dim(\g_T)=\ind(\p_{\Lambda})$
that is, $y$ is regular in $\p_{\Lambda}^*$.
Moreover, by Lemma \ref{APP}, there exists a uniquely defined element $h\in\h_{\Lambda}$ such that $\gamma(h)=-1$ for all $\gamma\in S$. Then $(h,\,y)$ is an adapted pair for $\p_{\Lambda}$.
\end{cor}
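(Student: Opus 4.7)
The plan is to invoke Proposition \ref{propAP} directly: everything the Corollary asserts is contained in the conclusion of that proposition once hypotheses (i)--(vi) are verified for the sets $S = S^+\sqcup S^-$, $T$, $T^*$ and the Heisenberg sets $\Gamma_{\gamma}$ constructed in Subsections \ref{APP}, \ref{T}, and \ref{condition(v)caseeven}. Most of the bookkeeping has already been performed in the preceding lemmas, so my proof will be essentially to collate the ingredients and assemble the adapted pair at the end.

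Specifically, condition (i) --- that $S_{\mid\h_{\Lambda}}$ is a basis of $\h_{\Lambda}^*$ --- is the Lemma of Subsection \ref{APP}, whose proof reduces the determinant of the evaluation matrix to a product of lower-triangular block determinants via Lemma \ref{KCA}. Conditions (ii) and (iii) are Heisenberg-type identities for sums of roots lying in $\Gamma_{\gamma}^0$, and follow as in \cite[Lem. 14, 15]{F1} from parts ii) and iii) of Lemma \ref{HS}; the two extra roots $\ep_{s-1}-\ep_s$ and $\ep_s+\ep_{s+1}$ that are appended to $\Gamma_{\ep_{s-1}+\ep_{s+1}}$ are not centres, so they do not disturb the argument. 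Condition (vi) is a direct count: the Lemma of Subsection \ref{T} gives $\lvert T\rvert = n - s/2 + 1$, which matches $\lvert E(\pi')\rvert$ via the explicit list of $\langle ij\rangle$-orbits $\{\alpha_t,\alpha_{s-t}\}$ for $1\le t\le s/2-1$, the singleton $\{\alpha_{s/2}\}$, and the singletons $\{\alpha_u\}$ for $s\le u\le n$.

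Condition (iv) --- the exact partition $\Delta^+\sqcup\Delta^-_{\pi'} = \bigsqcup_{\gamma\in S}\Gamma_{\gamma} \sqcup T\sqcup T^*$ --- is obtained by adapting the partition of \cite[Lem. 13]{F1} for $\tilde\pi' = \pi\setminus\{\alpha_s\}$: one removes from $\Delta^-_{\tilde\pi'}$ the negatives of the lost Heisenberg set $H_{\ep_{s+1}+\ep_{s+2}}$ and re-packages what remains via Lemma \ref{HS} i). The subtle point, and in my view the main obstacle, is condition (v), treated in the Lemma of Subsection \ref{condition(v)caseeven}. In type ${\rm B}_n$ it is an almost-verbatim reproduction of \cite[Lem. 16--19]{F1}, written out as explicit bracket identities $x_\alpha = ad\,(\cdots)(y)$. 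In type ${\rm D}_n$ the genuine complication is that both $\ep_s-\ep_n$ and $\ep_s+\ep_n$ belong to $S$, so a single bracket $ad\,x_\beta(y)$ produces $x_{\ep_s\pm\ep_i}$ entangled with $x_{\ep_{i\pm 1}\pm\ep_n}$. I would resolve this by applying Lemma and Proposition \ref{condition(v)} with three carefully chosen triples $(\gamma_i,\gamma'_i)$ that yield a $3\times 3$ linear system among the entangled root vectors; inverting this system places each $x_{\ep_s\pm\ep_i}$ individually in $ad\,\p_{\Lambda}(y)$. Cases by parity of $i$ and $j$, together with the boundary values $i\in\{s-1,\,s+1,\,n-1\}$ and the spin roots $\pm\alpha_n$ (which come from a single bracket such as $ad\,x_{-\ep_s-\ep_{n-1}}(y)$ or $ad\,x_{\ep_{n-1}-\ep_s}(y)$ according to the parity of $n$, modulo a vector already lying in $\g_T$), exhaust the verification.

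Having checked (i)--(vi), Proposition \ref{propAP} yields the decomposition $ad\,\p_{\Lambda}(y)\oplus\g_T = \p_{\Lambda}^*$ and, combined with (vi), gives $\dim\g_T = \ind\p_{\Lambda}$, so that $y$ is regular. Finally, condition (i) produces a unique $h\in\h_{\Lambda}$ with $\gamma(h) = -1$ for all $\gamma\in S$; this $h$ then satisfies $ad\,h(y) = -y$ by linearity, so $(h,y)$ is an adapted pair for $\p_{\Lambda}$, completing the proof.
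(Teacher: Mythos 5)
Your proposal is correct and follows exactly the paper's own route: the Corollary is obtained by collating the Lemmas of Subsections \ref{APP}, \ref{T} and \ref{condition(v)caseeven} (which verify conditions $(i)$--$(vi)$) and then invoking Proposition \ref{propAP}, whose conclusion delivers the direct sum decomposition, the regularity of $y$, and the adapted pair $(h,\,y)$. No discrepancies to report.
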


\subsection{The semisimple element of the adapted pair.}\label{ssel}

By direct computation, one may give the expansion of the semisimple element $h$ of the adapted pair for $\p_{\Lambda}$ obtained in Corollary \ref{CSC}.

\begin{lm}

In terms of the elements $\ep_i$, $1\le i\le n$, the semisimple element $h\in\h_{\Lambda}$ of the adapted pair $(h,\,y)$ obtained
in Corollary \ref{CSC} has the following expansion. Set $u=0$ in type ${\rm D}_n$, resp. $u=1$ in type ${\rm B}_n$.
$$\begin{array}{ll}
h=&\sum_{k=1}^{[s/4]}(s/2+2k-1)\ep_{2k-1}+\sum_{k=[s/4]+1}^{s/2-1}(3s/2-2k)\ep_{2k-1}\\
&-\sum_{k=1}^{[s/4]} (s/2+2k)\ep_{2k}
-\sum_{k=[s/4]+1}^{s/2-1}(3s/2+1-2k)\ep_{2k}\\
&+(s/2)\ep_{s-1}-\ep_s-(s/2+1)\ep_{s+1}\\
&+\sum_{k=1}^{[(n-s-1+u)/2]}(2k-1+s/2)\ep_{s+2k}\\
&-\sum_{k=2}^{[(n-s+u)/2]}(2k-2+s/2)\ep_{s+2k-1}.\\
\end{array}$$
In terms of the coroots $\alpha_k^\vee$, $1\le k\le n$, $k\not\in\{ s,\,s+2\}$, the element $h$ has the 
following expansion.
Set 
$$\begin{array}{ll}
H=&-\sum_{k=1}^{s/2-1}k\alpha_{2k}^\vee+\sum_{k=1}^{[s/4]}(s/2+k)\alpha_{2k-1}^\vee\\
&+\sum_{k=[s/4]+1}^{s/2}(3s/2+1-3k)\alpha_{2k-1}^\vee\\
&+\sum_{k=s/2+1}^{[(n-2+u)/2]}(k-1-s/2)\alpha_{2k}^\vee\\
&-\sum_{k=s/2+1}^{[(n-1+u)/2]}k\alpha_{2k-1}^\vee.\\
\end{array}$$
Then 
$$\begin{cases}
h=H+(n-s-2)/4\alpha_n^\vee&\;{\rm if }\;\g\;{\rm is\; of\; type}\; {\rm B}_n\;{\rm with}\; n\;{\rm even}\\
h=H-(n+1)/4\alpha_n^\vee&\;{\rm if }\;\g\;{\rm is\; of\; type}\; {\rm B}_n\;{\rm with}\; n\;{\rm odd}\\
h=H-n/4(\alpha_{n-1}^\vee+\alpha_n^\vee)&\;{\rm if }\;\g\;{\rm is\; of\; type}\; {\rm D}_n\;{\rm with}\; n\;{\rm even}\\
h=H+(n-s-3)/4(\alpha_{n-1}^\vee+\alpha_n^\vee)&\;{\rm if }\;\g\;{\rm is\; of\; type}\; {\rm D}_n\;{\rm with}\; n\;{\rm odd}.\\
\end{cases}$$

\end{lm}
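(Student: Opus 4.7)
The proof is by direct solution of the linear system defining $h$. By Lemma~\ref{APP} and Corollary~\ref{CSC}, $h$ is the unique element of $\h_{\Lambda}$ satisfying $\gamma(h)=-1$ for every $\gamma\in S$. Since $s+2\le n-2$ (with $s\le n-4$ forced in type D), at least one of $\alpha_{n-1},\alpha_n$ lies in $\pi'$, hence $\h_{\Lambda}=\h'$ by the discussion of Sect.~\ref{not}.

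First I would determine the $\ep_i$-expansion $h=\sum_{i=1}^n a_i\ep_i$, after identifying $\h$ with $\h^*$ through the Killing form. Membership of $h$ in $\h'=\mathrm{span}\{\alpha_k^\vee:k\ne s,s+2\}$ is equivalent to $\varpi_s(h)=\varpi_{s+2}(h)=0$, since $\varpi_k(\alpha_l^\vee)=\delta_{kl}$; written in coordinates (and using $\varpi_k=\ep_1+\cdots+\ep_k$ for $k\le n-2$, with an obvious rescaling when $s+2\in\{n-1,n\}$ in type B) this reads
$$a_1+\cdots+a_s=0,\qquad a_{s+1}+a_{s+2}=0.$$
The remaining $n-2$ equations $\gamma(h)=-1$ for $\gamma\in S$ are pair-sum or pair-difference relations among the $a_i$: from $S^+$ one gets $a_{2i-1}+a_{2i}=-1$ for $1\le i\le s/2-1$, $a_{s-1}+a_{s+1}=-1$, $a_s=-1$ (type B) or $a_s\pm a_n=-1$ (type D, yielding $a_s=-1,\,a_n=0$), and $a_{2j}+a_{2j+1}=-1$ for $s/2+1\le j$; from $S^-$ one gets $a_{s-i}-a_i=-1$ for $1\le i\le s/2-1$ and $a_{2j-1}+a_{2j}=1$ for $s/2+2\le j$. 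The resulting system decouples into two independent blocks: one on $\{a_1,\ldots,a_{s-1}\}$, determined by $a_s=-1$, the two families $a_{2i-1}+a_{2i}=-1,\,a_{s-i}-a_i=-1$, and the membership sum $a_1+\cdots+a_{s-1}=1$; the other on $\{a_{s+2},\ldots,a_n\}$, determined by $a_{s+1}=-1-a_{s-1}$, $a_{s+2}=-a_{s+1}$, and the two-term recursions with alternating sign $\pm 1$. A short induction on $i$ yields the displayed $\ep_i$-expansion.

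Finally the conversion to the coroot basis uses $\alpha_k^\vee=\ep_k-\ep_{k+1}$ for $k<n$ together with $\alpha_n^\vee=2\ep_n$ (type B) or $\alpha_n^\vee=\ep_{n-1}+\ep_n$ (type D). Writing $h=\sum_{k\ne s,s+2}c_k\alpha_k^\vee+\lambda\,\alpha_n^\vee$ (plus possibly an $\alpha_{n-1}^\vee$ term in type D), one has $c_k=\sum_{i=1}^k a_i$ for $k\le n-2$, while the top coefficient(s) are fixed by the values of $a_{n-1}$ and $a_n$; the required vanishing $c_s=c_{s+2}=0$ is automatic from the membership conditions. Plugging in the explicit $a_i$ and telescoping the partial sums gives the stated formula for $H$, and the coefficient of $\alpha_n^\vee$ (type B) or of $\alpha_{n-1}^\vee+\alpha_n^\vee=2\ep_{n-1}$ (type D) is read off from $\tfrac12\sum_{i=1}^n a_i$, resp.\ from $\tfrac12(c_{n-2}+a_{n-1})$, whose value depends on the parity of $n$ and produces the four sub-cases of the statement. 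The only delicate step is this last parity bookkeeping; no conceptual obstacle arises.
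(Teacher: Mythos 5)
Your proposal is correct and takes the same route as the paper: the paper justifies this lemma only by the phrase ``by direct computation'', and your outline --- solving the linear system $\gamma(h)=-1$ for $\gamma\in S$ together with the membership conditions $\varpi_s(h)=\varpi_{s+2}(h)=0$, observing that it decouples into the block on $a_1,\dots,a_{s-1}$ and the alternating two-term chain on $a_{s+1},\dots,a_n$, and then converting to the coroot basis via $c_k=\sum_{i\le k}a_i$ and the correct expressions for $\alpha_n^\vee$ in types B and D --- is precisely that computation, correctly set up. One harmless slip: in type ${\rm B}_n$ one only has $s+2\le n$ (the subcase $s+2=n$ does occur), but $\h_{\Lambda}=\h'$ holds there regardless since $w_0=-{\rm Id}$ in type ${\rm B}$, so nothing in the argument is affected.
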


\subsection{Computation of the improved upper bound and the lower bound.}\label{comp}

Recall the notation of Sect. \ref{bounds} and \ref{IUB}. One obtains the following Lemma.

\begin{lm}

If $\g$ is of type ${\rm B}_n$, resp. ${\rm D}_n$, then

 $$\prod_{\Gamma\in E(\pi')}(1-e^{\delta_{\Gamma}})^{-1}=\prod_{\gamma\in T}(1-e^{-(\gamma+s(\gamma))})^{-1}$$
More precisely one has the following.
 \begin{enumerate}
\item[{ (i)}] If $\g$ is of type ${\rm B}_n$ and $s+2<n$, then
$$\begin{array}{cl}{\ch}\,(Y(\p_{\Lambda}))=&(1-e^{-2\varpi_s})^{-s/2}(1-e^{-\varpi_s})^{-1}(1-e^{-2\varpi_{s+2}})^{-(n-s-2)}\\
 &(1-e^{-\varpi_{s+2}})^{-1}(1-e^{-(\varpi_s+\varpi_{s+2})})^{-1}.\\
 \end{array}$$
\item[{ (ii)}] If $\g$ is of type ${\rm B}_n$ and $n=s+2$, then
$$\begin{array}{cl}{\ch}\,(Y(\p_{\Lambda}))=&(1-e^{-2\varpi_s})^{-s/2}(1-e^{-\varpi_s})^{-1}\\
 &(1-e^{-2\varpi_{s+2}})^{-1}(1-e^{-(\varpi_s+2\varpi_{s+2})})^{-1}.\end{array}$$
\item[{ (iii)}]  If $\g$ is of type ${\rm D}_n$, then
 $$\begin{array}{cl}{\ch}\,(Y(\p_{\Lambda}))=&(1-e^{-2\varpi_s})^{-s/2}(1-e^{-\varpi_s})^{-1}(1-e^{-2\varpi_{s+2}})^{-(n-s-3)}\\
 &(1-e^{-\varpi_{s+2}})^{-2}(1-e^{-(\varpi_s+\varpi_{s+2})})^{-1}.\\
 \end{array}$$
 \end{enumerate}
\end{lm}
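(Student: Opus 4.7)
My plan is to match the two products factor by factor. By Sect.~\ref{IUB} both sides have exactly $\lvert E(\pi')\rvert=\lvert T\rvert=n-s/2+1$ factors, and the inequality \eqref{Improv} already forces the lower bound in \eqref{ch} to be dominated by the improved upper bound; hence it suffices to check that the multiset $\{-\delta_\Gamma\mid\Gamma\in E(\pi')\}$ coincides with $\{\gamma+s(\gamma)\mid\gamma\in T\}$. Once this matching is established, equality propagates through \eqref{ch}--\eqref{Improv} and reading off the factors gives the three explicit formulas (i), (ii), (iii).

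First I would compute each $\delta_\Gamma$ from \eqref{poids}. The orbits, as listed in the proof of the preceding lemma, are the two-element orbits $\Gamma_t=\{\alpha_t,\alpha_{s-t}\}$ for $1\le t\le s/2-1$, the fixed orbit $\{\alpha_{s/2}\}$, and the singletons $\{\alpha_u\}$ for $s\le u\le n$ (with the final two orbits $\{\alpha_{n-1}\}$ and $\{\alpha_n\}$ remaining distinct in type ${\rm D}_n$, thanks to the parity argument used to prove $(ij)_{\mid\pi'_3}=\mathrm{Id}$). Since $\pi\setminus\pi'=\{\alpha_s,\alpha_{s+2}\}$, every difference $\varpi_\alpha-\varpi'_\alpha$ for $\alpha\in\pi'$ lies in $\Bbbk\varpi_s+\Bbbk\varpi_{s+2}$, so every $\delta_\Gamma$ collapses into that plane. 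A direct calculation gives $\delta_{\Gamma_t}=-2\varpi_s$ for each of the $s/2-1$ two-element orbits, $\delta_{\{\alpha_{s/2}\}}=-\varpi_s$, $\delta_{\{\alpha_u\}}=-2\varpi_{s+2}$ at interior $u$, and the tail values $-\varpi_{s+2}$ or $-(\varpi_s+\varpi_{s+2})$ (respectively $-(\varpi_s+2\varpi_{s+2})$ in the boundary case (ii) where $n=s+2$).

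Second, for each $\gamma\in T$ I would compute $s(\gamma)\in\mathbb{Q}S$. By definition $\gamma+s(\gamma)$ must vanish on $\h_\Lambda=\h'$, i.e.\ lie in $\Bbbk\varpi_s+\Bbbk\varpi_{s+2}$, and this uniquely determines $s(\gamma)$ because $S_{\mid\h_\Lambda}$ is a basis for $\h_\Lambda^*$ (Lemma in \ref{APP}). Concretely, the integer $\lvert s(\gamma)\rvert$ can be read off from the explicit semisimple element $h$ of Lemma~\ref{ssel} as the eigenvalue of $ad\,h$ on $x_\gamma$. The elements of $T$ split into three groups: the ``left'' short roots $\varepsilon_{2i-1}-\varepsilon_{2i}$, $1\le i\le s/2-1$, which each give $-(\gamma+s(\gamma))=-2\varpi_s$; the ``right'' roots $\pm(\varepsilon_{s+2k-1}-\varepsilon_{s+2k})$ and $\varepsilon_{s+2j}-\varepsilon_{s+2j+1}$, which each give $-2\varpi_{s+2}$; and three ``bridge'' roots $\varepsilon_{s-1}+\varepsilon_s$, $\varepsilon_{s-1}-\varepsilon_{s+1}$, $\varepsilon_s+\varepsilon_{s+2}$ giving respectively $-\varpi_s$, $-(\varpi_s+\varpi_{s+2})$, $-\varpi_{s+2}$, with boundary adjustments in case (ii).

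Matching these two lists entry by entry yields the claimed equality and the formulas (i)--(iii). The main obstacle is the careful bookkeeping at the two boundaries: the ``middle'' one at $\alpha_s,\alpha_{s+2}$, where the non-standard Heisenberg set $\Gamma_{\varepsilon_{s-1}+\varepsilon_{s+1}}$ (which is enlarged compared to the maximal-parabolic case of \cite{F1}) intervenes, and the ``upper'' one near $\alpha_n$, whose behaviour depends on the type (${\rm B}_n$ versus ${\rm D}_n$) and the parity of $n$, and which also governs whether $(-1)^n\varepsilon_n$ or $(-1)^{n-1}\alpha_n$ appears in $T^*$. Once these boundary computations are made explicit, the three formulas emerge by counting multiplicities of each weight, and the desired equality between the lower bound and the improved upper bound follows.
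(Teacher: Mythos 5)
Your overall strategy is exactly the paper's: compute the multiset $\{\delta_\Gamma\mid\Gamma\in E(\pi')\}$ from \eqref{poids}, compute $\gamma+s(\gamma)$ for each $\gamma\in T$, and match the two multisets so that the lower bound of \eqref{ch} and the improved upper bound of \eqref{Improv} squeeze ${\ch}\,(Y(\p_{\Lambda}))$ to the common value. However, the explicit assignment you state for the ``bridge'' roots is wrong, and with the values as written the matching fails. Concretely, for $\gamma=\ep_{s-1}-\ep_{s+1}$ one has $s(\gamma)=2(\beta_1+\cdots+\beta_{s/2-1})+(\ep_{s-1}+\ep_{s+1})+2\ep_s$ (in type ${\rm B}_n$; with $2\ep_s$ replaced by $(\ep_s+\ep_n)+(\ep_s-\ep_n)$ in type ${\rm D}_n$), so $\gamma+s(\gamma)=2\varpi_s$, not $\varpi_s+\varpi_{s+2}$ as you claim. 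The root of $T$ that actually produces $\varpi_s+\varpi_{s+2}$ is $\ep_{s+2}-\ep_{s+1}=-\alpha_{s+1}$, i.e.\ the $k=1$ member of the family $-\ep_{s+2k-1}+\ep_{s+2k}$, which you have lumped into the ``right'' group and assigned the value $-2\varpi_{s+2}$.

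This is not a harmless relabelling: with your assignment the weight $-2\varpi_s$ occurs only $s/2-1$ times on the $T$-side (versus $s/2$ times among the $\delta_\Gamma$, coming from the $s/2-1$ orbits $\{\alpha_t,\alpha_{s-t}\}$ together with $\Gamma_s=\{\alpha_s\}$), while $-2\varpi_{s+2}$ occurs $n-s-1$ times instead of the required $n-s-2$. So the two products would not be equal and the stated formulas (i)--(iii) would not follow. The fix is simply to carry out the computation of $s(\gamma)$ that your own plan prescribes: $\ep_{s-1}-\ep_{s+1}$ joins the $\ep_{2i-1}-\ep_{2i}$ family in contributing $-2\varpi_s$, and $\ep_{s+2}-\ep_{s+1}$ must be separated from the $k\ge 2$ cases and assigned $-(\varpi_s+\varpi_{s+2})$ (respectively $-(\varpi_s+2\varpi_{s+2})$ when $n=s+2$ in type ${\rm B}_n$). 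You should also make explicit the one further exception at the upper boundary in type ${\rm D}_n$, where the extremal index ($j=(n-s-1)/2$ for $n$ odd, $k=(n-s)/2$ for $n$ even) yields $-\varpi_{s+2}$ rather than $-2\varpi_{s+2}$, accounting for the exponent $-2$ on $(1-e^{-\varpi_{s+2}})$ in (iii); your ``boundary adjustments'' remark gestures at this but does not pin it down.
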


\begin{proof}

Recall the set $E(\pi')$ given in the proof of Lemma \ref{T} and set for all $1\le t\le s/2-1$,
$\Gamma_t=\{\alpha_t,\,\alpha_{s-t}\}$, $\Gamma_{s/2}=\{\alpha_{s/2}\}$ and $\Gamma_u=\{\alpha_u\}$ for all
$s\le u\le n$.
Observe that  $j(\Gamma)=\Gamma$ (and then $i(\Gamma\cap\pi')=j(\Gamma)\cap\pi'=\Gamma\cap\pi'$) for all $\Gamma\in E(\pi')$, except in type ${\rm D}_n$, with $n$ odd, for $\Gamma=\{\alpha_{n-1}\}$ or $\Gamma=\{\alpha_n\}$. Recall for all $\Gamma\in E(\pi')$ the weight $\delta_{\Gamma}$ defined in \eqref{poids} of Sect. \ref{bounds}. 
One checks that : 
$$\forall\,1\le t\le s/2-1,\,\delta_{\Gamma_t}=2(\varpi'_t-\varpi_t+\varpi'_{s-t}-\varpi_{s-t})=-2\varpi_s.$$
Moreover $\delta_{\Gamma_{s/2}}=2(\varpi'_{s/2}-\varpi_{s/2})=-\varpi_s,\,\delta_{\Gamma_s}=-2\varpi_s$
 and $\delta_{\Gamma_{s+2}}=-2\varpi_{s+2}.$
Finally $\delta_{\Gamma_{s+1}}=2(\varpi'_{s+1}-\varpi_{s+1})=-(\varpi_s+\varpi_{s+2}),$ for type ${\rm D}_n$
and for type ${\rm B}_n$ if $s+2<n$.
For type ${\rm B}_n$ with $s+2=n$ one checks that 
$\delta_{\Gamma_{s+1}}=-(\varpi_s+2\varpi_{s+2})$.\par\noindent
If $s+3\le u\le n-1$ for type ${\rm B}_n$, resp. $s+3\le u\le n-2$ for type ${\rm D}_n$, then one checks that
$\delta_{\Gamma_{u}}=-2\varpi_{s+2}$.
If $u=n$ for $\g$ of type ${\rm B}_n$  (and $s+2<n$), resp. $u=n-1$ or $u=n$ for $\g$ of type ${\rm D}_n$, then
one checks that $\delta_{\Gamma_{u}}=-\varpi_{s+2}$.
Thus $\prod_{\Gamma\in E(\pi')}(1-e^{\delta_{\Gamma}})^{-1}$ is equal to the right hand side of $(i)$, $(ii)$ or $(iii)$.\par\noindent
It remains to check that $\prod_{\Gamma\in E(\pi')}(1-e^{\delta_{\Gamma}})^{-1}=\prod_{\gamma\in T}(1-e^{-(\gamma+s(\gamma))})^{-1}$.
Recall the set $T$ given before Lemma \ref{T}.\par\noindent
For $\gamma=\ep_{s-1}+\ep_s$, one checks that $$s(\gamma)=(\ep_1+\ep_2)+\ldots+(\ep_{s-3}+\ep_{s-2})$$
so that $\gamma+s(\gamma)=\varpi_s$.\par\noindent
For $\gamma=\ep_{s-1}-\ep_{s+1}$, one checks that, if $\g$ is of type ${\rm B}_n$, $$s(\gamma)=2((\ep_1+\ep_2)+\ldots+(\ep_{s-3}+\ep_{s-2}))+(\ep_{s-1}+\ep_{s+1})+2\ep_s$$ and  if $\g$ of type ${\rm D}_n$, then
$$s(\gamma)=2((\ep_1+\ep_2)+\ldots+(\ep_{s-3}+\ep_{s-2}))+(\ep_{s-1}+\ep_{s+1})+(\ep_s+\ep_n)+(\ep_s-\ep_n)$$ and for both types that $\gamma+s(\gamma)=2\varpi_s$.\par\noindent
For $\gamma=\ep_s+\ep_{s+2}$, one checks that $$s(\gamma)=(\ep_1+\ep_2)+\ldots+(\ep_{s-3}+\ep_{s-2})+(\ep_{s-1}+\ep_{s+1})$$ so that $\gamma+s(\gamma)=\varpi_{s+2}$ for $\g$ of type ${\rm D}_n$ or $\g$ of type ${\rm B}_n$ with $s+2<n$,
and that $\gamma+s(\gamma)=2\varpi_{s+2}$ for $\g$ of type ${\rm B}_n$ with $s+2=n$.\par\noindent
Let $1\le i\le s/2-1$ and set $\gamma=\ep_{2i-1}-\ep_{2i}$. As in \cite[Proof of Lem. 7.9]{FL1}, one checks that :\par\noindent
-- If $s\le 4i-2$, then 
$$\begin{array}{cc} s(\ep_{2i-1}-\ep_{2i})=&\displaystyle 2\sum_{j=1}^{s-2i}(\ep_{s-j}-\ep_j)+4\sum_{j=1}^{s/2-i}(\ep_{2j-1}+\ep_{2j})\\
&\displaystyle+2\sum_{j=s/2-i+1}^{i-1}(\ep_{2j-1}+\ep_{2j})+(\ep_{2i-1}+\ep_{2i})+2\ep_s\\
\end{array}$$
in type ${\rm B}_n$ and the same as above in type ${\rm D}_n$ but with $2\ep_s$ replaced by $(\ep_s+\ep_n)+(\ep_s-\ep_n)$.\par\noindent
-- If $s>4i-2$, then
$$\begin{array}{cc} s(\ep_{2i-1}-\ep_{2i})=&\displaystyle 2\sum_{j=1}^{2i-1}(\ep_{s-j}-\ep_j)+4\sum_{j=1}^{i-1}(\ep_{2j-1}+\ep_{2j})\\
&\displaystyle+2\sum_{j=i+1}^{s/2-i}(\ep_{2j-1}+\ep_{2j})+3(\ep_{2i-1}+\ep_{2i})+2\ep_s\\
\end{array}$$
in type ${\rm B}_n$ and the same as above in type ${\rm D}_n$ but with $2\ep_s$ replaced by $(\ep_s+\ep_n)+(\ep_s-\ep_n)$.
In both cases one obtains that $\gamma+s(\gamma)=2\varpi_s$.\par\noindent
Let $1\le j\le [(n-s-1)/2]$ and set $\gamma=\ep_{s+2j}-\ep_{s+2j+1}$. One checks that :
$$\begin{array}{ll} s(\gamma)&=2((\ep_1+\ep_2)+\ldots+(\ep_{s-3}+\ep_{s-2}))+2(\ep_{s-1}+\ep_{s+1})\\
&-2\sum_{k=2}^j(\ep_{s+2k-1}+\ep_{s+2k})
+2\sum_{k=2}^j((\ep_{s+2k-2}+\ep_{s+2k-1})\\
&+(\ep_{s+2j}+\ep_{s+2j+1})+2\ep_s\\
\end{array}$$
in type ${\rm B}_n$, resp. in type ${\rm D}_n$ with $n$ even (with $2\ep_s$ replaced by $(\ep_s-\ep_n)+(\ep_s-\ep_n)$), so that
$\gamma+s(\gamma)=2\varpi_{s+2}$.\par\noindent
In  type ${\rm D}_n$ with $n$ odd, for all $1\le j\le [(n-s-1)/2]-1$, one also obtains that $\gamma+s(\gamma)=2\varpi_{s+2}$.
If $\g$ is of type ${\rm D}_n$, with $n$  odd, then for $\gamma=\ep_{s+2j}-\ep_{s+2j+1}$ with $j=[(n-s-1)/2]=(n-s-1)/2$, one has that
$$\begin{array}{ll}
s(\gamma)&=(\ep_1+\ep_2)+\ldots+(\ep_{s-3}+\ep_{s-2})+(\ep_{s-1}+\ep_{s+1})\\
&+(\ep_{s+2}+\ep_{s+3})+\ldots(\ep_{n-3}+\ep_{n-2})\\
&-((\ep_{s+3}+\ep_{s+4})+\ldots+(\ep_{n-2}+\ep_{n-1}))+(\ep_s+\ep_n)\\
\end{array}$$
so that $\gamma+s(\gamma)=\varpi_{s+2}$.\par\noindent
Let $2\le k\le [(n-s)/2]$ and set $\gamma=-\ep_{s+2k-1}+\ep_{s+2k}$.
One checks that
$$\begin{array}{ll} s(\gamma)&=2((\ep_1+\ep_2)+\ldots+(\ep_{s-3}+\ep_{s-2}))+2(\ep_{s-1}+\ep_{s+1})\\
&-2\sum_{\ell=3}^k(\ep_{s+2\ell-3}+\ep_{s+2\ell-2})
+2\sum_{\ell=2}^k((\ep_{s+2\ell-2}+\ep_{s+2\ell-1})\\
&-(\ep_{s+2k-1}+\ep_{s+2k})+2\ep_s\\
\end{array}$$
in type ${\rm B}_n$, resp. in type ${\rm D}_n$ with $n$ odd (with $2\ep_s$ replaced by $(\ep_s-\ep_n)+(\ep_s-\ep_n)$), so that $\gamma+s(\gamma)=2\varpi_{s+2}$.
If  $\g$ is of type ${\rm D}_n$, with $n$ even, then for all $2\le k\le [(n-s)/2]-1$, one also obtains   that
$\gamma+s(\gamma)=2\varpi_{s+2}$.\par\noindent
Now for $\g$  of type ${\rm D}_n$, with $n$ even and for $\gamma=-\ep_{s+2k-1}+\ep_{s+2k}$, with $k=[(n-s)/2]=(n-s)/2$, one has that
$$\begin{array}{ll}
s(\gamma)&=(\ep_1+\ep_2)+\ldots+(\ep_{s-3}+\ep_{s-2})+(\ep_{s-1}+\ep_{s+1})\\
&+(\ep_{s+2}+\ep_{s+3})+\ldots(\ep_{n-2}+\ep_{n-1})\\
&-((\ep_{s+3}+\ep_{s+4})+\ldots+(\ep_{n-3}+\ep_{n-2}))+(\ep_s-\ep_n)\\
\end{array}$$
so that $\gamma+s(\gamma)=\varpi_{s+2}$.\par\noindent
Finally set $\gamma=\ep_{s+2}-\ep_{s+1}=-\alpha_{s+1}\in T$.
Then one has that
$$s(\gamma)=2((\ep_1+\ep_2)+\ldots+(\ep_{s-3}+\ep_{s-2}))+2(\ep_{s-1}+\ep_{s+1})+2\ep_s$$
so that $\gamma+s(\gamma)=\varpi_s+\varpi_{s+2}$ if $s+2<n$ and
if $s+2=n$ (necessarily in type ${\rm B}_n$) then 
$\gamma+s(\gamma)=\varpi_s+2\varpi_{s+2}$.\par\noindent
It follows that the lower and the improved upper bounds for ${\rm ch}\,(Y(\p_{\Lambda}))$ coincide, then equalities in $(i)$, $(ii)$ and $(iii)$ hold.\end{proof}

\subsection{A Weierstrass section}

By Sect. \ref{IUB} we deduce from Corollary \ref{CSC} and Lemma \ref{comp} that $y+\g_T$ is a Weierstrass section  for coadjoint action of $\p_{\Lambda}$. On can then write the following theorem.

\begin{thm}\label{eigen}

Let $\g$ be a complex simple Lie algebra of type ${\rm B}_n$, resp. ${\rm D}_n$, with $n\ge 4$, resp. $n\ge 6$ and let $\p=\n^-\oplus\h\oplus\n^+_{\pi'}$ be a parabolic subalgebra associated with $\pi'=\pi\setminus\{\alpha_s,\,\alpha_{s+2}\}$, $s$ even,
$2\le s\le n-2$, resp. $2\le s\le n-4$. Then there exists a Weierstrass section $y+\g_T$ for coadjoint action of the canonical truncation $\p_{\Lambda}$ of $\p$.
It follows that the algebra of symmetric invariants $Y(\p_{\Lambda})=Sy(\p)$ is a polynomial algebra over $\Bbbk$ on $n-s/2+1$ algebraically independent homogeneous and $\h$-weight generators. \end{thm}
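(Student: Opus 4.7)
The plan is to assemble the two ingredients already established in this section: the existence of an adapted pair (Corollary \ref{CSC}) and the coincidence of the lower bound with the improved upper bound for the formal character (Lemma \ref{comp}). The criterion recalled in Sect. \ref{IUB}, and repackaged in Remark \ref{rqeqIUB} of subsection \ref{REG}, then delivers the Weierstrass section essentially for free. No new input is needed beyond combining these results and reading off the number of generators.

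More precisely, I would first note that by Corollary \ref{CSC} the element $y = \sum_{\gamma \in S} x_\gamma \in \p_\Lambda^*$ is regular, $\p_\Lambda^* = ad\,\p_\Lambda(y) \oplus \g_T$ with $|T| = \ind\p_\Lambda$, and the uniquely defined $h \in \h_\Lambda$ with $\gamma(h) = -1$ for all $\gamma \in S$ completes an adapted pair $(h,y)$ for $\p_\Lambda$. Since $T \subset \Delta^+ \sqcup \Delta^-_{\pi'}$ consists of roots, the complement $\g_T$ is automatically $ad\,h$-stable. Applying \cite[Lem. 6.11]{J6bis}, as recalled in Sect. \ref{IUB}, then produces the improved upper bound $\ch\,(Y(\p_\Lambda)) \le \prod_{\gamma \in T}(1 - e^{-(\gamma + s(\gamma))})^{-1}$, where each $s(\gamma) \in \mathbb{Q}S$ is the unique element making $\gamma + s(\gamma)$ vanish on $\h_\Lambda$.

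Next, Lemma \ref{comp} shows that this improved upper bound equals the lower bound $\prod_{\Gamma \in E(\pi')}(1 - e^{\delta_\Gamma})^{-1}$. Sandwiching with \eqref{ch} of Sect. \ref{bounds}, the three expressions coincide, equality holds in the improved upper bound, and by \cite[Lem. 6.11]{J6bis} the restriction map induces an algebra isomorphism $Y(\p_\Lambda) \isomto \Bbbk[y + \g_T]$. This is exactly the definition of a Weierstrass section for the coadjoint action of $\p_\Lambda$. Since $\Bbbk[y + \g_T] \cong S(\g_T^*)$, the algebra $Y(\p_\Lambda) = Sy(\p)$ is polynomial on $|T| = \ind\p_\Lambda = n - s/2 + 1$ algebraically independent homogeneous $\h$-weight generators, the count being the cardinality of $E(\pi')$ computed in the proof of Lemma \ref{T}.

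The main obstacle is not in this closing synthesis but in the two preparatory inputs. Constructing the adapted pair was delicate because the naive choice $S \supset \beta_\pi^0 \cup (-\beta_{\pi'}^0)$ fails (the resulting $h$ would violate $\varpi_s(h) = 0$ forced by $h \in \h_\Lambda$), which is why $S$ had to be modified near $\alpha_s, \alpha_{s+2}$ and enlarged Heisenberg sets together with a nonempty $T^*$ were used, with condition $(v)$ of Proposition \ref{propAP} verified by direct computation via Lemma \ref{condition(v)caseeven}. Verifying that the lower and improved upper bounds actually match (Lemma \ref{comp}) is the combinatorial crux, since here some $\varepsilon_\Gamma$ equal $1/2$ and the ordinary upper bound of \eqref{ch} is strictly larger than the lower bound; only the finer improved upper bound obtained from the adapted pair permits polynomiality to be established in these cases.
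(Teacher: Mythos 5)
Your proposal is correct and follows exactly the paper's own route: the theorem is obtained by combining Corollary \ref{CSC} (the adapted pair and the decomposition $ad\,\p_{\Lambda}(y)\oplus\g_T=\p_{\Lambda}^*$) with Lemma \ref{comp} (equality of the lower bound and the improved upper bound), and then invoking the criterion of Sect. \ref{IUB} as restated in Remark \ref{rqeqIUB}, with the generator count $n-s/2+1=\lvert T\rvert=\ind\p_{\Lambda}$ coming from Lemma \ref{T}. Your closing remarks on where the real work lies (the modified set $S$, the nonempty $T^*$, and the necessity of the improved upper bound because some $\varepsilon_{\Gamma}=1/2$) accurately reflect the paper's discussion.
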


\subsection{Weights and Degrees.}

One may associate with each $\gamma\in T$ an homogeneous generator $p_{\gamma}\in Y(\p_{\Lambda})$ so that the set $\{p_{\gamma};\;\gamma\in T\}$ is a set of algebraically independent and $\h$-weight generators of the polynomial algebra $Y(\p_{\Lambda})$.
By what we said in Sect. \ref{IUB}, for each $\gamma\in T$, the weight of $p_{\gamma}$ is $wt(p_{\gamma})=-(\gamma+s(\gamma))$ and the degree of $p_{\gamma}$ is $\deg(p_{\gamma})=1+\lvert s(\gamma)\rvert$ (also equal to the eigenvalue plus one of $x_{\gamma}$ with respect to $ad\,h$, where recall $h$ is the semisimple element of the adapted pair for $\p_{\Lambda}$ that we have constructed, see \ref{ssel}). It suffices then to use the proof of Lemma \ref{comp} to compute all these weights and degrees, since there all the $s(\gamma)$, for $\gamma\in T$, have been computed.
Set, for all $1\le i\le s/2-1$, $\gamma_i=\ep_{2i-1}-\ep_{2i}$.\smallskip

Assume first that $\g$ is simple of type ${\rm B}_n$, with $s+2<n$ or $\g$ is of type ${\rm D}_n$, where recall $s+2\le n-2$.\smallskip

\begin{tabular}{|c|c|c|}
\hline
$\gamma\in T$&$wt(p_{\gamma})$&$\deg(p_{\gamma})$\\
\hline
$\gamma_i$, $1\le i\le[s/4]$&$-2\varpi_s$&$s+4i$\\
\hline
$\gamma_i$, $[s/4]+1\le i\le s/2-1$&$-2\varpi_s$&$3s-4i+2$\\
\hline
$\gamma=\ep_{s-1}-\ep_{s+1}$&$-2\varpi_s$&$s+2$\\
\hline
$\gamma=\ep_{s-1}+\ep_s$&$-\varpi_s$&$s/2$\\
\hline
$\gamma=\ep_s+\ep_{s+2}$&$-\varpi_{s+2}$&$s/2+1$\\
\hline
$\gamma=(-1)^n(\ep_n-\ep_{n-1})$, $\g$ of type $D_n$&$-\varpi_{s+2}$&$n-s/2-1$ \\
$\gamma=(-1)^n(\ep_n-\ep_{n-1})$, $\g$ of type $B_n$&$-2\varpi_{s+2}$&$2n-s-2$\\
\hline
$\ep_{s+2j}-\ep_{s+2j+1}$, $1\le j\le[(n-s-2)/2]$&$-2\varpi_{s+2}$&$s+4j$\\
\hline
$\ep_{s+2k}-\ep_{s+2k-1}$, $2\le k\le [(n-s-1)/2]$&$-2\varpi_{s+2}$&$s+4k-2$\\
\hline
$\gamma=\ep_{s+2}-\ep_{s+1}$&$-(\varpi_s+\varpi_{s+2})$&$s+3$\\
\hline
\end{tabular}
\medskip

Assume now that $\g$ is of type ${\rm B}_n$, with $n=s+2$.\smallskip

\begin{tabular}{|c|c|c|}
\hline
$\gamma\in T$&$wt(p_{\gamma})$&$\deg(p_{\gamma})$\\
\hline
$\gamma_i$, $1\le i\le[s/4]$&$-2\varpi_s$&$s+4i$\\
\hline
$\gamma_i$, $[s/4]+1\le i\le s/2-1$&$-2\varpi_s$&$3s-4i+2$\\
\hline
$\gamma=\ep_{s-1}-\ep_{s+1}$&$-2\varpi_s$&$s+2$\\
\hline
$\gamma=\ep_{s-1}+\ep_s$&$-\varpi_s$&$s/2$\\
\hline
$\gamma=\ep_s+\ep_{s+2}$&$-\varpi_{s+2}$&$s/2+1$\\
\hline
$\gamma=\ep_{s+2}-\ep_{s+1}$&$-(\varpi_s+2\varpi_{s+2})$&$s+3=n+1$\\
\hline
\end{tabular}

\subsection{}\label{notwork}
\begin{Rq}\rm
Assume that $\g$ is simple of type ${\rm B}_n$  with $n\ge 6$ and that $\pi'=\pi\setminus\{\alpha_s,\,\alpha_{s+2},\,\alpha_{s+4}\}$
with $s$ an even integer and consider the parabolic subalgebra $\p=\n^-\oplus\h\oplus\n^+_{\pi'}$ associated to $\pi'$. Then one may easily check as in the proof of Lemma \ref{T}  that $\ind\p_{\Lambda}=n-s/2+1$. Take  the same set $S^+$ as this chosen for the case $\pi'=\pi\setminus\{\alpha_s,\,\alpha_{s+2}\}$ in subsection \ref{APP} and the same set $S^-$ but without the element $-\ep_{s+3}-\ep_{s+4}$ which does no more belong to $\Delta^-_{\pi'}$. Then restriction to $\h'=\h_{\Lambda}$ of $S=S^+\sqcup S^-$ is still a basis for $\h_{\Lambda}^*$. Take also the same sets $T$ and $T^*$ as in subsections \ref{T} and \ref{condition(v)caseeven}, which still lie in $\Delta^+\sqcup\Delta^-_{\pi'}$. Unfortunately condition $(v)$ of Proposition \ref{propAP} is no more satisfied since $x_{\ep_s+\ep_{s+3}}$ and $x_{\ep_s+\ep_{s+4}}$ in $T^*$ do no more belong to $ad\,\p_{\Lambda}(y)+\g_T$. Thus our construction cannot be generalized to the more general case of $\p_{s,\,\ell}$ with $s$ even and $\ell\ge 2$.
\end{Rq}

\section{Case \ref{cas5} for type C}\label{TC}

In this Section, we consider a parabolic subalgebra $\p=\p_{s,\,\ell}=\p^-_{\pi'}=\n^-\oplus\h\oplus\n^+_{\pi'}$
associated to the subset
$\pi'=\pi\setminus\{\alpha_s,\,\alpha_{s+2},\ldots,\alpha_{s+2\ell}\}$ with $\ell\in\mathbb N$ and $s$ an even or an odd integer,
$1\le s\le n-2\ell$, in a simple Lie algebra $\g$ of type ${\rm C}_n$, with $n\ge 3$. Hence we are in the case \ref{cas5} of subsection \ref{Mainres}.

If $\ell=0$, such a parabolic subalgebra is maximal and this case was already treated in \cite{FL}.
Thus we will assume that $\ell\ge 1$.
By subsection \ref{equalitybounds}, the lower and upper bounds for ${\ch}\,(Y(\p_{\Lambda}))$ in \eqref{ch} of Sect. \ref{bounds} always coincide and then $Y(\p_{\Lambda})$ is a polynomial algebra. However Weierstrass sections were not yet exhibited.
As we said in Remark \ref{rqeqbounds} of subsection \ref{REG}, it suffices to construct an adapted pair to obtain a Weierstrass section for coadjoint action of $\p_{\Lambda}$ in the present case.
Our construction generalizes the construction of an adapted pair in case of a maximal parabolic subalgebra in type C
(see \cite[Sect. 6]{FL}).

\subsection{The Kostant cascades.}

The Kostant cascade $\beta_{\pi}$ for $\g$ simple of type ${\rm C}_n$ is given by $$\beta_{\pi}=\{\beta_i=2\varepsilon_i\,;\, 1\le i\le n\}.$$ 
The Kostant cascade $\beta_{\pi'}$ of $\g'$ is given by 
$$\begin{array}{lc}
\beta_{\pi'}=&\{\beta'_i=\ep_i-\ep_{s+1-i},\,\alpha_{s+2j-1},\,\beta''_k=2\ep_{s+2\ell+k}\,;\\
&1\le i\le[s/2],\,1\le j\le \ell,\,1\le k\le n-s-2\ell\}.\end{array}$$
We want to construct an adapted pair for $\p_{\Lambda}=\n^-\oplus\h'\oplus\n^+_{\pi'}$. (Recall that here $\h_{\Lambda}=\h'$). For this purpose we will use Prop. \ref{propAP}. First we give a set $S=S^+\sqcup S^-\subset\Delta^+\sqcup\Delta^-_{\pi'}$ such that $S_{\mid\h_{\Lambda}}$ is a basis for $\h_{\Lambda}^*$. 

\subsection{Condition {\it (i)} of Proposition \ref{propAP}.}

Since, for all $1\le k\le n-s-2\ell$, one has that $\beta''_k=\beta_{s+2\ell+k}$, we will not be able to take $S=\beta^0_{\pi}\cup(-\beta^0_{\pi'})$ as we did for type ${\rm B}_n$ in Sect. \ref{SC}.

Instead we will take elements which are a kind of deformation of roots of the Kostant cascade, by 
setting $\gamma_i=\beta_i-\alpha_i=\ep_i+\ep_{i+1}$ for all $1\le i\le n-1$.
Assume first that $s$ is odd.
We set 
$$S^+=\{\gamma_{2i-1}=\ep_{2i-1}+\ep_{2i}\,;\, 1\le i\le [n/2]\}$$
and
$$\begin{array}{lc}
S^-=&\{-\beta'_i=\ep_{s+1-i}-\ep_i,\,-\gamma_{2j}=-(\ep_{2j}+\ep_{2j+1})\,;\\
& 1\le i\le (s-1)/2,\,(s+2\ell+1)/2\le j\le[(n-1)/2]\}.
\end{array}$$
Assume now that $s$ is even and set $t:=[s/4]$.
We set 
$$S^+=\{\beta_{2t+1},\,\gamma_{2i-1},\,\gamma_{2j}\,;\, 1\le i\le t,\,t+1\le j\le [(n-1)/2]\}$$
and
$$\begin{array}{lc}
S^-=&\{-\beta'_i=\ep_{s+1-i}-\ep_i,\,-\gamma_{2j+1}=-(\ep_{2j+1}+\ep_{2j+2})\,;\\
& 1\le i\le (s-2)/2,\,(s+2\ell)/2\le j\le[(n-2)/2]\}.
\end{array}$$
In both cases for $S=S^+\sqcup S^-$, one easily checks  that $\lvert S\rvert=n-\ell-1=\dim\h'=\dim\h_{\Lambda}$.
The following lemma establishes condition $(i)$ of Proposition \ref{propAP}.

\begin{lm}\label{BaseC}

$S_{\mid\h_{\Lambda}}$ is a basis for $\h_{\Lambda}^*$.

\end{lm}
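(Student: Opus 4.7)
My plan is to mimic the strategy of Lemma \ref{baseFC}: one orders the elements of $S$ and chooses a convenient ordered basis $\{h_v\}$ of $\h'$ consisting of coroots $\alpha_k^\vee$ with $\alpha_k\in\pi'$, so that the square matrix $(s_u(h_v))_{1\le u,v\le n-\ell-1}$ becomes block lower triangular with invertible diagonal blocks. Since $\lvert S\rvert=n-\ell-1=\dim\h'$, this suffices to conclude.

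For the case $s$ odd, I would order $S^+=\{\gamma_1,\gamma_3,\dots,\gamma_{2[n/2]-1}\}$, then $-\beta'_1,\dots,-\beta'_{(s-1)/2}$, then the remaining negatives $-\gamma_{2j}$ listed by increasing $j$. For the basis of $\h'$ I would take first $\alpha_2^\vee,\alpha_4^\vee,\dots,\alpha_{2[n/2]}^\vee$ (these coroots all lie in $\pi'^\vee$ since the excluded simple roots have odd indices), then the ordered family $\{h'_j\}_{1\le j\le (s-1)/2}\subset{\pi'_1}^\vee$ of subsection \ref{KCA}, and finally $\alpha_{s+2\ell+2}^\vee,\alpha_{s+2\ell+4}^\vee,\dots$ (the odd indices past the deleted range, which do belong to $\pi'$). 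A direct computation using $\ep_r(\alpha_k^\vee)=\delta_{r,k}-\delta_{r,k+1}$ for $1\le k<n$ and $\ep_r(\alpha_n^\vee)=\delta_{r,n}$ shows that $\gamma_{2j-1}(\alpha_{2i}^\vee)=\delta_{j,i}-\delta_{j,i+1}$, so the first diagonal block is lower (bi)triangular with $1$'s on the diagonal; the second diagonal block coincides (up to sign convention) with the matrix $A$ of Lemma from subsection \ref{KCA}, hence is lower triangular with $-1$'s on the diagonal; and pairing $-\gamma_{2j}$ with $\alpha_{2j+1}^\vee$ yields $-\gamma_{2j}(\alpha_{2j+1}^\vee)=-1$ while all other entries in the third block vanish.

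The upper-triangular blocks that must vanish can all be checked by the same kind of direct computation: $\gamma_{2j-1}(\alpha_{2k-1}^\vee)=\delta_{j,k}-\delta_{j,k}=0$ and analogously $\gamma_{2j-1}(\alpha_{2j'+1}^\vee)=0$, showing that the first row-block pairs trivially with all odd-indexed coroots; and $-\beta'_i(\alpha_{2j+1}^\vee)=0$ whenever $2j+1\ge s+2\ell+2$ since $-\beta'_i$ involves only $\ep_r$ with $r\le s-1$. Altogether the matrix is block lower triangular with invertible diagonal blocks of determinant $\pm 1$, hence invertible.

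The case $s$ even is handled similarly but requires more bookkeeping because $\beta_{2t+1}=2\ep_{2t+1}$ appears in $S^+$ and the parity pattern of the remaining $\gamma$'s in $S^+$ and $S^-$ is shifted. The plan is again to place the ``$\gamma$-type'' positive elements of $S^+$ opposite a suitable collection of coroots (mixing $\alpha_{2i}^\vee$ for $i<s/2$ and $\alpha_{2j+1}^\vee$ or $\alpha_{2j}^\vee$ past $s+2\ell$ with the correct parity), to place $\beta_{2t+1}$ opposite a coroot on which it evaluates non-trivially (for instance $\alpha_{2t+1}^\vee$, giving $\beta_{2t+1}(\alpha_{2t+1}^\vee)=2$, which is available since $2t+1<s$ for $s\ge 2$), and to recover the $-\beta'_i$ block from Lemma of subsection \ref{KCA}. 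The main obstacle I foresee is precisely this combinatorial bookkeeping, namely verifying that the coroots used in the three diagonal blocks belong to $\pi'^\vee$ (i.e.\ avoid $\{\alpha_{s+2k}^\vee\mid 0\le k\le\ell\}$), are pairwise distinct and together exhaust a basis of $\h'$, and that the off-diagonal blocks above the main block-diagonal vanish; once the orderings are fixed, each vanishing is again a direct $\ep_r(\alpha_k^\vee)$ computation.
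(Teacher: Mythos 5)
Your plan is correct and is essentially the paper's own argument: one orders $S$ and a coroot basis of $\h_{\Lambda}=\h'$ so that $(s_u(h_v))$ becomes block lower triangular, with the $\gamma$-blocks lower (bi)diagonal with $\pm 1$ on the diagonal (the paper phrases the same computation via $\gamma_i=\varpi_{i+1}-\varpi_{i-1}$), the $-\beta'$-block supplied by the Lemma of subsection \ref{KCA}, and, for $s$ even, $\beta_{2t+1}$ paired with $\alpha_{2t+1}^\vee$ to give the diagonal entry $2$ --- exactly the blocks $A$, $B$, $C$ (and $D$, $2$) of the paper's proof. The only inaccuracy is the claim that all remaining entries of the third block vanish: one has $-\gamma_{2j}(\alpha_{2j-1}^\vee)=1$, so that block is lower bidiagonal rather than diagonal, but it is still lower triangular with $-1$ on the diagonal and the conclusion is unaffected.
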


\begin{proof}

Assume first that $s$ is odd.
Then we order the elements $s_u$ of $S$ as follows :
$$\begin{array}{cc}
s_i=\gamma_{2i-1},\,1\le i\le [n/2]\\
s_{[n/2]+j}=-\gamma_{s+2\ell+2j-1},\,1\le j\le [(n-1)/2]-(s+2\ell-1)/2\\
s_{n-\ell-1-(s-1)/2+k}=-\beta'_k,\,1\le k\le (s-1)/2\\
\end{array}$$
and order the elements $h_v$ of a basis of $\h'$ as follows :
$$\begin{array}{cc}
h_i=\alpha^\vee_{2i},\,1\le i\le[n/2]\\
h_{[n/2]+j}=\alpha^\vee_{s+2\ell+2j},\,1\le j\le [(n-1)/2]-(s+2\ell-1)/2\\
h_{n-\ell-1-(s-1)/2+2k-1}=h'_{2k-1}=\alpha^\vee_{2k-1},\,h_{n-\ell-1-(s-1)/2+2k}=h'_{2k}=\alpha^\vee_{s-2k},\\
\,1\le k\le [(s+1)/4]\\
\end{array}$$
without repetitions for the $h'_j$'s.\par\noindent
Set $$\begin{array}{cc}
A=(s_u(h_v))_{1\le u,\,v\le [n/2]},\\
B=(s_{[n/2]+u}(h_{[n/2]+v}))_{1\le u,\,v\le [(n-1)/2]-(s+2\ell-1)/2},\\
C=(-\beta'_i(h'_j))_{1\le i,\,j\le (s-1)/2}.\end{array}$$
By observing that $\gamma_i=\varpi_{i+1}-\varpi_{i-1}$ for all $1\le i\le n-1$ (with $\varpi_0=0$) one obtains that
$A$, resp. $B$, is a lower triangular matrix with 1, resp. $-1$, on its diagonal. Moreover $C$ is a lower triangular matrix with $-1$ on its diagonal  by Lemma \ref{KCA}.
Then one obtains that the matrix $$(s_u(h_v))_{1\le u,\,v\le n-\ell-1}=\begin{pmatrix}
A&0&0\\
*&B&0\\
*&*&C\\
\end{pmatrix}$$
is such that $\det(s_u(h_v))_{1\le u,\,v\le n-\ell-1}\neq 0$.\par\noindent
Assume now that $s$ is even. Recall that $t=[s/4]$.
We order the elements $s_u$ of $S$ as follows :
$$\begin{array}{cc}
s_i=\gamma_{2i-1},\,1\le i\le t,\\
s_{t+1}=\beta_{2t+1}=2\ep_{2t+1},\\
s_{t+1+j}=\gamma_{2(t+j)},\,1\le j\le[(n-1)/2]-t,\\
s_{[(n+1)/2]+k}=-\gamma_{s+2\ell+2k-1},\,1\le k\le [(n-2)/2]-(s+2\ell-2)/2,\\
s_{n-\ell-1-(s-2)/2+r}=-\beta'_r,\,1\le r\le (s-2)/2.\\
\end{array}$$
We order the elements $h_v$ of a basis of $\h'$ as follows :
$$\begin{array}{cc}
h_i=\alpha^\vee_{2i},\,1\le i\le t,\\
h_{t+1}=\alpha^\vee_{2t+1},\\
h_{t+1+j}=\alpha^\vee_{2(t+j)+1},\,1\le j\le[(n-1)/2]-t,\\
h_{[(n+1)/2]+k}=\alpha^\vee_{s+2\ell+2k},\,1\le k\le [(n-2)/2]-(s+2\ell-2)/2,\\
h_{n-\ell-1-(s-2)/2+2r-1}=h'_{2r-1}=\alpha^\vee_{2r-1},\,h_{n-\ell-1-(s-2)/2+2r}=h'_{2r}=\alpha^\vee_{s-2r},\\
1\le r\le t\\
\end{array}$$
without repetitions. More precisely : if $t=(s-2)/4$ then $h_t=\alpha_{2t}^\vee=\alpha_{s/2-1}^\vee\neq h'_{2t}=\alpha^\vee_{s/2+1}$, then both are taken and if $t=s/4$ then $h'_{2t}=\alpha^\vee_{s/2}=h_t$
then one takes $h_t$ but not $h'_{2t}$.
Then by the above  one obtains that the matrix $$(s_u(h_v))_{1\le u,\,v\le n-\ell-1}=\begin{pmatrix}
A&0&0&0&0\\
*&2&0&0&0\\
*&*&B&0&0\\
*&*&*&C&0\\
*&*&*&*&D\\
\end{pmatrix}$$
with $$\begin{array}{cc}
A=(s_u(h_v))_{1\le u,\,v\le t},\\
B=(s_{t+1+u}(h_{t+1+v}))_{1\le u,\,v\le[(n-1)/2]-t},\\
C=(s_{[(n+1)/2]+u}(h_{[n+1)/2]+v}))_{1\le u,\,v\le  [(n-2)/2]-(s+2\ell-2)/2},\\
D=(-\beta'_i(h'_j))_{1\le i,\,j\le (s-2)/2}\end{array}$$
is such that $\det(s_u(h_v))_{1\le u,\,v\le n-\ell-1}\neq 0$.
Indeed the above matrices are lower triangular matrices 
with 1 (for $A$ and $B$), resp. $-1$ (for $C$ and $D$), on their diagonal.
\end{proof}

\subsection{Conditions {\it (ii)} and {\it (iii)} of Proposition \ref{propAP}.}\label{condition(ii)typeC}

For each $\gamma\in S^+$, resp. $\gamma\in S^-$, we will take a Heisenberg set $\Gamma_{\gamma}$ with centre $\gamma$ included in $\Delta^+$, resp. $\Delta^-_{\pi'}$ such that $\lvert(\Delta^+\sqcup\Delta^-_{\pi'})\setminus( \bigsqcup_{\gamma\in S}\Gamma_{\gamma})\rvert=\ind\p_{\Lambda}$ (here we will take $T^*=\emptyset$ in Prop.
\ref{propAP}) and such that conditions $(ii)$ and $(iii)$ of Prop. \ref{propAP} are satisfied.

For this purpose we use (see \ref{HS}) the largest Heisenberg set $H_{\beta_i}$ with centre $\beta_i\in\beta_{\pi}$ included in $\Delta^+$, and $-H_{\beta'_i}$, resp. $-H_{\beta''_i}$ where $H_{\beta'_i}$, resp. $H_{\beta''_i}$, is the largest Heisenberg set in $\Delta^+_{\pi'}$ with centre $\beta'_i$, resp. $\beta''_i$, belonging to the Kostant cascade $\beta_{\pi'}$ of $\g'$.

For each $\beta_i\in\beta_{\pi}$, set $H_{\beta_i}^0=H_{\beta_i}\setminus\{\beta_i\}$ and each $\beta''_i\in\beta_{\pi'}$, $H^0_{-\beta''_i}=-H_{\beta''_i}\setminus\{-\beta''_i\}$.
As in \cite[Sect. 6]{FL}, for each $\gamma\in S^+\cap\beta_{\pi}$, we set $\Gamma_{\gamma}=H_{\gamma}$ and for each $\gamma\in S^-\cap(-\beta_{\pi'})$, we set $\Gamma_{\gamma}=-H_{-\gamma}$.
Moreover for the roots $\gamma_i=\beta_i-\alpha_i\in S^+$, we set $\Gamma_{\gamma_i}=H^0_{\beta_i}\sqcup H_{\beta_{i+1}}$ and for the roots $-\gamma''_i=-\gamma_{s+2\ell+i}=-(\beta''_i-\alpha_{s+2\ell+i})\in S^-$, we set $\Gamma_{-\gamma''_i}=H^0_{-\beta''_i}\sqcup(-H_{\beta''_{i+1}})$.\par\noindent
As in  \cite[Sect. 6]{FL} one easily checks  that, for each $\gamma\in S$, $\Gamma_{\gamma}$ is a Heisenberg set with centre $\gamma$ and these sets $\Gamma_{\gamma}$, $\gamma\in S$, are pairwise disjoint by Lemma \ref{HS} $i)$.
Below we will show that conditions $(ii)$ and $(iii)$ of Prop. \ref{propAP} are satisfied.
Recall that, for each $\gamma\in S$, we set  $\Gamma^0_{\gamma}=\Gamma_{\gamma}\setminus\{\gamma\}$ and $O^\pm=\bigsqcup_{\gamma\in S^\pm}\Gamma^0_{\gamma}$.

\begin{lm}
Let $\gamma\in S^+$ and $\alpha\in\Gamma^0_{\gamma}$ such that there exists $\beta\in O^+$ with $\alpha+\beta\in S$. Then $\beta\in\Gamma^0_{\gamma}$ and $\alpha+\beta=\gamma$.

\end{lm}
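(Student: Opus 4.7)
The plan is to combine Lemma \ref{HS} (which governs sums of positive roots via the Kostant cascade) with the explicit parity pattern of the indices defining $S^+$, exploiting the pairwise disjointness of the Heisenberg sets $\Gamma_{\gamma'}$ for $\gamma' \in S^+$ already recorded in \ref{condition(ii)typeC}. Since $\alpha,\beta \in \Delta^+$, the element $\alpha+\beta \in S$ must lie in $S^+$, hence equals either $\beta_{2t+1}$ (in the $s$ even case) or some $\gamma_k = \ep_k + \ep_{k+1}$. Both lie in a single cascade block: $\beta_{2t+1} \in H_{\beta_{2t+1}}$ and $\gamma_k \in H_{\beta_k}$ since $(\gamma_k,\beta_k)>0$.

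First I will record, by Lemma \ref{HS} $i)$, that $\alpha$ and $\beta$ each belong to a unique cascade piece $H_{\beta_K}$ and $H_{\beta_L}$ respectively. The condition $\alpha \in \Gamma^0_\gamma$ pins down $K$: if $\gamma = \beta_{2t+1}$ then $K = 2t+1$; if $\gamma = \gamma_i$ then $K \in \{i, i+1\}$ according to whether $\alpha \in H^0_{\beta_i}\setminus\{\gamma_i\}$ or $\alpha \in H_{\beta_{i+1}}$. Because the cascade in type ${\rm C}$ is totally ordered, Lemma \ref{HS} $iii)$ then gives $M \in \{K,L\}$, where $\alpha+\beta \in H_{\beta_M}$. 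Knowing that $\alpha+\beta \in S^+$ fixes $M$ to be the index attached to a specific element of $S^+$; crossed with $M \in \{K,L\}$ and the permitted values of $K$, this will pin down the equality $\alpha+\beta = \gamma$. The conclusion $\beta \in \Gamma^0_\gamma$ is then immediate from the Heisenberg property of $\Gamma_\gamma$: there is a unique $\alpha' \in \Gamma^0_\gamma$ with $\alpha + \alpha' = \gamma$, and $\beta$ must be that partner.

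The main obstacle is ruling out the spurious cases where $\alpha+\beta$ would be an element of $S^+$ distinct from $\gamma$. This relies on two inspections. First, the indices of the $\gamma_k \in S^+$ never contain two consecutive integers, and in the $s$ even case both $\gamma_{2t}$ and $\gamma_{2t+1}$ are absent from $S^+$ (being replaced there by $\beta_{2t+1}$); this prevents $\alpha+\beta$ from being $\gamma_{i\pm 1}$ when $\gamma = \gamma_i$, or $\gamma_{2t}, \gamma_{2t+1}$ when $\gamma = \beta_{2t+1}$. Second, in the remaining attempts $\alpha+\beta = \gamma_L$ with $L$ outside the admissible range forced by $K$, a direct $\ep$-coordinate computation in type ${\rm C}_n$ shows that $\beta = \gamma_L - \alpha$ would have more than two nonzero coordinates and hence could not be a root. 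Together these exclude every alternative to $\alpha + \beta = \gamma$.
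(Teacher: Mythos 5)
Your proposed argument is correct, but it takes a noticeably more computational route than the paper's. The paper also starts from Lemma \ref{HS}: writing $\alpha+\beta=\gamma_i\in S^+$ (or $\alpha+\beta\in S^+\cap\beta_{\pi}$, which part $ii)$ dispatches at once), it notes $\gamma_i\in H^0_{\beta_i}$, so part $iii)$ forces $\alpha\in H_{\beta_i}$ or $\beta\in H_{\beta_i}$, and neither can equal $\beta_i$ since the other summand would then be the negative root $-\alpha_i$. The decisive observation, which your plan does not use, is that $H^0_{\beta_i}\subset\Gamma_{\gamma_i}$ \emph{precisely because} $\gamma_i\in S^+$; the pairwise disjointness of the sets $\Gamma_{\delta}$, $\delta\in S$, then instantly identifies $\Gamma_{\gamma}$ (or the $\Gamma_{\gamma'}$ containing $\beta$, whence $\gamma=\gamma'$) with $\Gamma_{\gamma_i}$, and no further inspection of $S^+$ is needed. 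Your route instead tracks the cascade indices $K,L,M$ and eliminates the alternatives $\alpha+\beta=\gamma_m\neq\gamma$ by combining the absence of consecutive indices in $S^+$ with the support computation showing $\gamma_m-\alpha\notin\Delta$ when $m+1<K$. I checked that your two inspections do close up all residual cases, including the borderline $m=K-1$, where $\gamma_m-\alpha$ \emph{is} a root (e.g.\ $\ep_m-\ep_K$ or $\ep_m\mp\ep_j$) and it is the consecutive-index observation, not the coordinate count, that saves you — so your description of which tool handles which case should be stated more carefully. Two further small points: you never explicitly exclude $\alpha+\beta=\beta_{2t+1}$ when $\gamma=\gamma_i$ (it follows from your own index-matching, since Lemma \ref{HS} $ii)$ forces $K=2t+1$, hence $i\in\{2t,2t+1\}$, and $\gamma_{2t},\gamma_{2t+1}\notin S^+$); and Lemma \ref{HS} $iii)$ actually gives the stronger $M=\min(K,L)$, though $M\in\{K,L\}$ suffices for what you do. In sum, your approach is sound but buys nothing over the paper's one-line disjointness argument and costs an explicit type-${\rm C}$ coordinate check that the paper avoids entirely.
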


\begin{proof}

Assume first that $\alpha+\beta\in S^+\cap\beta_{\pi}$.
Then the assertion follows from Lemma \ref{HS} $ii)$.
Assume now that $\alpha+\beta=\gamma_i=\beta_i-\alpha_i\in S^+$. We will show that $\gamma=\gamma_i$.
We have that $\gamma_i\in H^0_{\beta_i}$ then by Lemma \ref{HS} $iii)$, one has that $\alpha\in H_{\beta_i}$ or $\beta\in H_{\beta_i}$.
Suppose that $\alpha\in H_{\beta_i}$. Then $\alpha\neq \beta_i$ since $\beta$ is a positive root and
we have $\alpha\in H^0_{\beta_i}\subset\Gamma_{\gamma_i}$. Since the Heisenberg sets $\Gamma_{\gamma}$, $\gamma\in S$, are pairwise disjoint, one deduces that $\gamma=\gamma_i$. Since $\Gamma_{\gamma}$ is a Heisenberg set, it follows that $\beta\in\Gamma^0_{\gamma}$. 
Now if $\beta\in H_{\beta_i}$ then for the same reason as before $\beta\in H^0_{\beta_i}\subset\Gamma_{\gamma_i}$.
But $\beta\in O^+$ then there exists $\gamma'\in S^+$ such that $\beta\in\Gamma^0_{\gamma'}$. As before one deduces that $\gamma'=\gamma_i$ and then that $\alpha\in\Gamma^0_{\gamma'}$ hence $\gamma=\gamma'=\gamma_i$.
Since all roots in $S^+$ are of the above form, we are done.
\end{proof}

Condition $(iii)$ of Prop. \ref{propAP} follows similarly. 

\subsection{Condition {\it (vi)} of Proposition \ref{propAP}.}

Here we will show that condition $(vi)$ is satisfied, with $T=(\Delta^+\sqcup\Delta^-_{\pi'})\setminus\bigsqcup_{\gamma\in S}\Gamma_{\gamma}$.
Set $T^+=\Delta^+\setminus\bigsqcup_{\gamma\in S^+}\Gamma_{\gamma}$ and $T^-=\Delta^-_{\pi'}\setminus\bigsqcup_{\gamma\in S^-}\Gamma_{\gamma}$. Then $T=T^+\sqcup T^-$.
Recall  Lemma \ref{HS} $i)$ that $\Delta^+=\bigsqcup_{\beta\in\beta_{\pi}}H_{\beta}$ and $\Delta^-_{\pi'}=\bigsqcup_{\beta\in\beta_{\pi'}}(-H_{\beta})$. \par\noindent
Assume first that $s$ is odd.
Then $$T^+=\{\beta_{2i-1}\mid 1\le i\le [(n+1)/2]\}$$ and 
$$T^-=\{-\alpha_{s+2i-1},\,-\beta_{s+2\ell+2j-1},\,\mid 1\le i\le \ell,\,1\le j\le[(n+1-s)/2]-\ell\}.$$
Assume that $s$ is even.
Then $$T^+=\{\beta_{2i-1},\,\beta_{2j}\mid 1\le i\le t,\,t+1\le j\le [n/2]\}$$
and
$$T^-=\{-\alpha_{s/2},\,-\alpha_{s+2i-1},\,-\beta_{s+2\ell+2j-1}\mid 1\le i\le \ell,\,1\le j\le [(n+1-s)/2]-\ell\}.$$
Below we establish condition $(vi)$ of Prop. \ref{propAP}.

\begin{lm}

One has that $\lvert T\rvert=\ind\p_{\Lambda}$.

\end{lm}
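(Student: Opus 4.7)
The plan is to compute both sides of the equality $\lvert T\rvert=\ind\p_{\Lambda}$ directly, using the formula $\ind\p_{\Lambda}=\lvert E(\pi')\rvert$ from \eqref{index} of Sect. \ref{bounds}, and to check a case split according to the parity of $s$.

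First I would describe the involutions $i$ and $j$ of $\pi$ in our type C setting. Since $\g$ is of type ${\rm C}_n$ one has $w_0=-\Id$, so $j=\Id_{\pi}$. The connected components of $\pi'$ are the block $\pi'_1=\{\alpha_1,\ldots,\alpha_{s-1}\}$ of type ${\rm A}_{s-1}$ (if $s\ge 2$), the $\ell$ singleton blocks $\{\alpha_{s+1}\},\{\alpha_{s+3}\},\ldots,\{\alpha_{s+2\ell-1}\}$ of type ${\rm A}_1$, and the block $\{\alpha_{s+2\ell+1},\ldots,\alpha_n\}$ of type ${\rm C}_{n-s-2\ell}$. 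On the two blocks of type $\mathrm C$ or $\mathrm A_1$ the longest element acts as $-\Id$, hence $i$ restricts to the identity there; on $\pi'_1$ the map $i$ exchanges $\alpha_k$ with $\alpha_{s-k}$ (fixing $\alpha_{s/2}$ when $s$ is even). Finally, for $\alpha\in\pi\setminus\pi'=\{\alpha_s,\alpha_{s+2},\ldots,\alpha_{s+2\ell}\}$ one has $j(\alpha)=\alpha\notin\pi'$, whence $i(\alpha)=j(\alpha)=\alpha$.

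Next I would enumerate the $\langle ij\rangle$-orbits in $\pi$. When $s$ is odd, the orbits are the pairs $\{\alpha_k,\alpha_{s-k}\}$ for $1\le k\le (s-1)/2$ together with the singletons $\{\alpha_v\}$ for $s\le v\le n$; this gives
$$\lvert E(\pi')\rvert=\frac{s-1}{2}+(n-s+1)=n-\frac{s-1}{2}.$$
When $s$ is even, the orbits are the pairs $\{\alpha_k,\alpha_{s-k}\}$ for $1\le k\le s/2-1$, the singleton $\{\alpha_{s/2}\}$ and the singletons $\{\alpha_v\}$ for $s\le v\le n$, giving
$$\lvert E(\pi')\rvert=\Bigl(\frac{s}{2}-1\Bigr)+1+(n-s+1)=n-\frac{s}{2}+1.$$

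Finally I would compute $\lvert T\rvert=\lvert T^+\rvert+\lvert T^-\rvert$ from the explicit descriptions given just before the statement. For $s$ odd, $\lvert T^+\rvert=[(n+1)/2]$ and $\lvert T^-\rvert=\ell+([(n+1-s)/2]-\ell)=[(n+1-s)/2]$; a short check treating $n$ even and $n$ odd separately (using that $s$ is odd, so $n+1-s$ has the opposite parity to $n+1$) yields $\lvert T\rvert=n-(s-1)/2$. For $s$ even, $\lvert T^+\rvert=t+([n/2]-t)=[n/2]$ and $\lvert T^-\rvert=1+\ell+([(n+1-s)/2]-\ell)=1+[(n+1-s)/2]$, and again a parity check gives $\lvert T\rvert=n-s/2+1$. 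Comparing with the counts of $\lvert E(\pi')\rvert$ above establishes the lemma. The only subtlety—really more of a bookkeeping point than a genuine obstacle—is making sure the two floor functions combine correctly in each parity case; no deeper argument is required.
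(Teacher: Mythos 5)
Your proposal is correct and follows essentially the same route as the paper: both use $\ind\p_{\Lambda}=\lvert E(\pi')\rvert$, enumerate the $\langle ij\rangle$-orbits according to the parity of $s$ (pairs $\{\alpha_k,\alpha_{s-k}\}$ in the ${\rm A}_{s-1}$ block, plus singletons), and compare with a direct count of $T^+\sqcup T^-$. You merely spell out the floor-function bookkeeping that the paper compresses into "one checks that $\lvert T\rvert=\lvert E(\pi')\rvert$".
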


\begin{proof}

Recall that $\ind\p_{\Lambda}$ is equal to the number $\lvert E(\pi')\rvert$ of $\langle ij\rangle$-orbits in $\pi$ (see Sect. \ref{bounds}).
Here, since $j=Id_{\pi}$, the set $E(\pi')$ of $\langle ij\rangle$-orbits in $\pi$ is the following.
If $s$ is odd, then $$E(\pi')=\{\{\alpha_i,\,\alpha_{s-i}\},\,\{\alpha_{s-1+j}\}\mid 1\le i\le (s-1)/2,\,1\le j\le n-s+1\}.$$
If $s$ is even, then $$E(\pi')=\{\{\alpha_i,\,\alpha_{s-i}\},\,\{\alpha_{s/2}\},\,\{\alpha_{s-1+j}\}\mid 1\le i\le (s-2)/2,\,1\le j\le n-s+1\}.$$
One checks that $\lvert T\rvert=\lvert E(\pi')\rvert$. Hence the lemma.
\end{proof}

Finally if we set $T^*=\emptyset$, then by construction condition $(iv)$ of Prop. \ref{propAP} is also satisfied and condition $(v)$ is empty.

\subsection{A Weierstrass section.}\label{WSC}

By the above, all conditions of Prop. \ref{propAP} are satisfied. Set $y=\sum_{\gamma\in S}x_{\gamma}$.
Since $S_{\mid\h_{\Lambda}}$ is a basis for $\h_{\Lambda}^*$ there exists a unique $h\in\h_{\Lambda}$ such that for all $\gamma\in S$, $\gamma(h)=-1$. Then by Prop. \ref{propAP} $(h,\,y)$ is an adapted pair for $\p_{\Lambda}$. 
Moreover by subsection \ref{equalitybounds}, for all $\Gamma\in E(\pi')$, $\varepsilon_{\Gamma}=1$. Then by Remark \ref{rqeqbounds} of subsection \ref{REG}, one deduces that
 $y+\g_T$ is a Weierstrass section  for coadjoint action of  $\p_{\Lambda}$.
Summarizing we obtain  the following theorem.

\begin{thm}\label{ThmTC}

Let $\g$ be a complex simple Lie algebra of type ${\rm C}_n$ ($n\ge 3$) and let $\p=\n^-\oplus\h\oplus\n^+_{\pi'}$ be
a parabolic subalgebra of $\g$ associated to $\pi'=\pi\setminus\{\alpha_s,\,\alpha_{s+2},\ldots,\,\alpha_{s+2\ell}\}$ ($s,\,\ell\in\mathbb N^*$)
and $1\le s\le n-2\ell$. Then $y+\g_T$ is a Weierstrass section  for coadjoint action of the canonical truncation $\p_{\Lambda}$ of $\p$.

\end{thm}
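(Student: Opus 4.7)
The plan is to assemble the pieces established in Sect.~\ref{TC} by applying Proposition~\ref{propAP} to the triple $(S,T,T^{*})$ with $T^{*}=\emptyset$, and then to promote the resulting adapted pair into a Weierstrass section via the coincidence of the bounds from Sect.~\ref{bounds}.

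First I would systematically verify the six hypotheses of Prop.~\ref{propAP}. Taking $T^{*}=\emptyset$ makes condition $(v)$ vacuous, and condition $(iv)$ reduces to $\Delta^{+}\sqcup\Delta^{-}_{\pi'}=\bigsqcup_{\gamma\in S}\Gamma_{\gamma}\sqcup T$, which holds by the very definition $T=(\Delta^{+}\sqcup\Delta^{-}_{\pi'})\setminus\bigsqcup_{\gamma\in S}\Gamma_{\gamma}$ combined with Lemma~\ref{HS}~$i)$ applied to both cascades $\beta_{\pi}$ and $\beta_{\pi'}$. Condition $(i)$ is Lemma~\ref{BaseC}, whose proof rests on the identity $\gamma_{i}=\varpi_{i+1}-\varpi_{i-1}$ (with $\varpi_{0}=0$), so that a suitable ordering of $S$ and of a basis of $\h'=\h_{\Lambda}$ renders the pairing matrix block lower triangular with nonzero diagonal, the A-type block being controlled by Lemma~\ref{KCA}. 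Conditions $(ii)$ and $(iii)$ then follow from Lemma~\ref{HS}~$ii),\,iii)$ together with the fact that the Heisenberg sets $\Gamma_{\gamma}$ built in subsection~\ref{condition(ii)typeC}, each equal either to $\pm H_{\beta}$ for some $\beta$ in the respective cascade or to a glued form $H^{0}_{\beta_{i}}\sqcup H_{\beta_{i+1}}$, are pairwise disjoint. Condition $(vi)$ is a direct cardinality match: since $j=\mathrm{Id}_{\pi}$ in type~${\rm C}_{n}$, the set $E(\pi')$ of $\langle ij\rangle$-orbits in $\pi$ is easily listed and checked against $\lvert T\rvert$.

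Once all six conditions are verified, Prop.~\ref{propAP} produces a uniquely determined $h\in\h_{\Lambda}$ with $\gamma(h)=-1$ for all $\gamma\in S$, and $(h,y)$ with $y=\sum_{\gamma\in S}x_{\gamma}$ is an adapted pair for $\p_{\Lambda}$, with $\g_{T}$ an $ad\,h$-stable complement of $ad\,\p_{\Lambda}(y)$ in $\p_{\Lambda}^{*}$. To upgrade this adapted pair into the desired Weierstrass section I would appeal to Remark~\ref{rqeqbounds} of subsection~\ref{REG}: as recalled in subsection~\ref{equalitybounds}, for $\g$ of type~${\rm C}_{n}$ one has $\varepsilon_{\Gamma}=1$ for every $\Gamma\in E(\pi')$, hence the two bounds in \eqref{ch} coincide, $Y(\p_{\Lambda})$ is polynomial, and an adapted pair automatically provides the Weierstrass section $y+\g_{T}$.

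The main obstacle, already overcome in the preparatory subsections, is the construction of $S$ itself. Unlike the ${\rm B}_{n}$/${\rm D}_{n}$ setting of Sect.~\ref{SC}, one cannot take $S$ to contain $\beta^{0}_{\pi}\cup(-\beta^{0}_{\pi'})$, because the type-C identity $\beta''_{k}=2\varepsilon_{s+2\ell+k}=\beta_{s+2\ell+k}$ creates repetitions between the two cascades and would collapse $S_{\mid\h_{\Lambda}}$ away from being a basis. The remedy is to replace the offending $\beta_{i}$'s by the deformed roots $\gamma_{i}=\beta_{i}-\alpha_{i}=\varepsilon_{i}+\varepsilon_{i+1}$, whose dual pairing matrix is triangular and which still carry a Heisenberg set of centre $\gamma_{i}$ via the concatenation $H^{0}_{\beta_{i}}\sqcup H_{\beta_{i+1}}$. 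Checking that this composite really is a Heisenberg set and that the family stays pairwise disjoint is the most delicate technical point, but it proceeds exactly as in the maximal-parabolic argument of \cite[Sect.~6]{FL}, of which the present case is the natural generalization.
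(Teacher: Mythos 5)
Your proposal matches the paper's own proof of Theorem \ref{ThmTC} essentially step for step: the same choice of deformed roots $\gamma_i=\beta_i-\alpha_i$, the same glued Heisenberg sets $H^0_{\beta_i}\sqcup H_{\beta_{i+1}}$, the verification of conditions $(i)$--$(vi)$ of Proposition \ref{propAP} with $T^*=\emptyset$ via Lemmas \ref{BaseC}, \ref{HS} and the cardinality count of $E(\pi')$, and the passage from the adapted pair to the Weierstrass section through $\varepsilon_{\Gamma}=1$ for all $\Gamma\in E(\pi')$ in type C. The argument is correct and takes the same route as the paper.
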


\subsection{Weights and degrees.}

Here both bounds (see \eqref{ch} in Sect. \ref{bounds}) for ${\ch}(Y(\p_{\Lambda}))$ coincide and then $Y(\p_{\Lambda})$ is a polynomial algebra whose homogeneous and $\h$-weight generators have weights and degrees which can be easily computed.
To each $\Gamma\in E(\pi')$ is associated an homogeneous and $\h$-weight  generator of $Y(\p_{\Lambda})$ which has weight $\delta_{\Gamma}$ given by (\ref{poids}) of Sect. \ref{bounds} and a degree $\partial_{\Gamma}$ given by (\ref{degre}) of Sect. \ref{bounds}.

Below  we give for completeness weights and degrees of a set of homogeneous and $\h$-weight algebraically independent generators of $Y(\p_{\Lambda})$, each of them corresponding to an $\langle ij\rangle$-orbit $\Gamma_r$ in $E(\pi')$.\smallskip

Assume first that $s$ is odd  :\medskip

\begin{tabular}{|c|c|c|}
\hline
$\langle ij\rangle$-orbit in $E(\pi')$& Weight & Degree\\
\hline
$\Gamma_u=\{\alpha_u,\,\alpha_{s-u}\},\,1\le u\le (s-1)/2$&$-2\varpi_s$&$s+2u$\\
\hline
$\Gamma_v=\{\alpha_v\},\;v=s+2k,\,0\le k\le\ell$ & $-2\varpi_v$&$v$\\
\hline
$\Gamma_v=\{\alpha_v\},\;v=s+2k-1,\,1\le k\le\ell$ & $-\varpi_{v-1}-\varpi_{v+1}$&$v+1$\\
\hline
$\Gamma_v=\{\alpha_v\},\;s+2\ell+1\le v\le n$ & $-2\varpi_{s+2\ell}$&$2v-s-2\ell$\\
\hline

\end{tabular}
\bigskip

Assume now that $s$ is even :\medskip

\begin{tabular}{|c|c|c|}
\hline
$\langle ij\rangle$-orbit in $E(\pi')$& Weight & Degree\\
\hline
$\Gamma_u=\{\alpha_u,\,\alpha_{s-u}\},\;1\le u\le (s-2)/2$&$-2\varpi_s$&$s+2u$\\
\hline
$\Gamma_{s/2}=\{\alpha_{s/2}\}$&$-\varpi_s$&$s$\\
\hline
$\Gamma_v=\{\alpha_v\},\;v=s+2k,\,0\le k\le\ell$ & $-2\varpi_v$&$v$\\
\hline
$\Gamma_v=\{\alpha_v\},\;v=s+2k-1,\,1\le k\le\ell$ & $-\varpi_{v-1}-\varpi_{v+1}$&$v+1$\\
\hline
$\Gamma_v=\{\alpha_v\},\;s+2\ell+1\le v\le n$ & $-2\varpi_{s+2\ell}$&$2v-s-2\ell$\\
\hline

\end{tabular}

\section{Case \ref{cas6} for type D.}

In this Section we consider the Lie parabolic subalgebra $\p=\p_{\ell}$ of the simple Lie algebra $\g$ of type ${\rm D}_n$, with $n\ge 4$, $n$ even and $\ell\in\mathbb N$, $0\le \ell\le(n-2)/2$, associated with the subset $\pi'=\pi\setminus\{\alpha_{n-1-2k},\,\alpha_n\mid 0\le k\le\ell\}$. This is the case \ref{cas6} of subsection \ref{Mainres}. Recall \ref{KcasBD} the Kostant cascade $\beta_{\pi}$ for $\g$ of type ${\rm D}_n$.
Recall \ref{KCA}  the Kostant cascade $\beta_{\pi_1'}\subset\beta_{\pi'}$ for the simple Lie subalgebra $\g_{\pi'_1}$ of the Levi subalgebra $\g'$ of $\p$ of type ${\rm A}_{n-2-2\ell}$ if $\ell<(n-2)/2$.  One has
$$\beta_{\pi_1'}=\{\beta'_i=\ep_i-\ep_{n-2\ell-i}\mid 1\le i\le [(n-1-2\ell)/2]\}$$

We denote (as in \ref{KcasBD}) $\beta^0_{\pi}=\beta_{\pi}\setminus(\beta_{\pi}\cap\pi)$ and $\beta^0_{\pi'}=\beta_{\pi'}\setminus(\beta_{\pi'}\cap\pi')$.

Then we have that $\beta^0_{\pi}=\{\beta_i=\ep_{2i-1}+\ep_{2i}\mid 1\le i\le (n-2)/2\}$ and $\beta^0_{\pi'}=\beta_{\pi'_1}$ since $n$ is even.

We set $S^+=\beta^0_{\pi}=\{\beta_i\mid 1\le i\le (n-2)/2\}$ and $S^-=-\beta^0_{\pi'}=\{-\beta'_i\mid 1\le i\le (n-2-2\ell)/2\}$.

For all $\gamma\in S^+$, we set $\Gamma_{\gamma}=H_{\gamma}$ the largest Heisenberg set with centre $\gamma$ which is included in $\Delta^+$ as defined in subsection \ref{HS} and for all $\gamma\in S^-$, we set $\Gamma_{\gamma}=-H_{-\gamma}$ where $H_{-\gamma}$ is the largest Heisenberg set with centre $-\gamma$ which is included in $\Delta_{\pi'}^+$.
Finally we set $T^+=\beta_{\pi}\cap\pi$, $T^-=-(\beta_{\pi'}\cap\pi')$, $T=T^+\cup T^-$  and $T^*=\emptyset$.

\subsection{Conditions {\it (i)}  to {\it(v)} of Proposition \ref{propAP}}\label{basisD}

By $i)$ of Lemma \ref{HS} and since $H_{\beta}=\{\beta\}$ for all $\beta\in\beta_{\pi}\cap\pi$, condition $(iv)$ of Prop. \ref{propAP} is satisfied. Moreover conditions $(ii)$ and $(iii)$ of Prop. \ref{propAP} are satisfied by $ii)$ of Lemma \ref{HS}. Condition $(v)$ is empty since $T^*=\emptyset$. 
Condition $(i)$ follows from the following Lemma.

\begin{lm}

Set $S=S^+\cup S^-$. Then $S_{\mid\h_{\Lambda}}$ is a basis for $\h_{\Lambda}^*$.

\end{lm}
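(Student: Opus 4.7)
The plan is to mimic the proofs of Lemma \ref{baseFC} and Lemma \ref{BaseC}: count cardinalities, then choose an ordering of $S$ and an ordered basis $(h_v)$ of $\h_{\Lambda}$ so that the matrix $(s_u(h_v))$ is block lower triangular with invertible diagonal blocks. Since $n$ is even we have $w_0 = -\mathrm{Id}$, so $\h_{\Lambda} = \h'$ and $\dim\h_{\Lambda} = |\pi'| = n-2-\ell$. On the other hand $|S^+| = (n-2)/2$ and $|S^-| = (n-2-2\ell)/2$, hence $|S| = n-2-\ell$; it therefore suffices to exhibit a basis $(h_v)$ on which the restrictions of the elements of $S$ have nonzero determinant.

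Order $S$ as $s_i = \beta_i$ for $1 \le i \le (n-2)/2$, then $s_{(n-2)/2+j} = -\beta'_j$ for $1 \le j \le (n-2-2\ell)/2$. Observe that every even-indexed simple root $\alpha_{2v}$ with $1 \le v \le (n-2)/2$ belongs to $\pi'$: the deleted simple roots are $\alpha_n$ and the odd-indexed $\alpha_{n-1-2k}$, $0 \le k \le \ell$, and none of these is of the form $\alpha_{2v}$ with $2v \le n-2$. I therefore choose
$$h_v = \alpha^{\vee}_{2v} \quad (1 \le v \le (n-2)/2), \qquad h_{(n-2)/2+k} = h'_k \quad (1 \le k \le (n-2-2\ell)/2),$$
where the $h'_k$ are the coroots of $\pi_1'^{\vee}$ selected as in subsection \ref{KCA} (with $s = n-1-2\ell$, which is \emph{odd} since $n$ is even). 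Because all the $h'_k$ are coroots of odd-indexed simple roots in $\pi'_1$, they are disjoint from the $\alpha^{\vee}_{2v}$, so the $n-2-\ell$ listed coroots are distinct elements of $\h'$ and hence form a basis of $\h_{\Lambda}$.

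The matrix $M = (s_u(h_v))$ then has the block form $\bigl(\begin{smallmatrix} A & 0 \\ C & B \end{smallmatrix}\bigr)$ with $A = (\beta_u(\alpha^{\vee}_{2v}))$ and $B = (-\beta'_j(h'_k))$. A direct computation using $\alpha_j = \ep_j - \ep_{j+1}$ gives $\beta_u(\alpha^{\vee}_{2v}) = \delta_{u,v} - \delta_{u,v+1}$, so $A$ is lower triangular with $1$'s on the diagonal. By Lemma \ref{KCA}, $B$ is lower triangular with $-1$'s on the diagonal. The crucial point is that the upper-right block vanishes: for any $j$ odd and any $u$, the value $\beta_u(\alpha^{\vee}_j) = (\ep_{2u-1}+\ep_{2u}, \alpha_j)$ equals $\delta_{2u-1,j} - \delta_{2u,j+1}$, and these two contributions cancel whenever they are nonzero (both occur precisely when $u = (j+1)/2$). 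Since every $h'_k$ is an $\alpha^{\vee}_j$ with $j$ odd, this gives $\beta_u(h'_k) = 0$ for all $u, k$.

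Combining these observations yields $\det M = (-1)^{(n-2-2\ell)/2} \ne 0$, which proves that $S_{|\h_{\Lambda}}$ is a basis for $\h_{\Lambda}^*$. The only step that requires real verification (everything else being cardinality bookkeeping or an invocation of Lemma \ref{KCA}) is the vanishing of the upper-right block, which rests on the parity matching made possible by $s = n-1-2\ell$ being odd—an observation which is clean here but would fail in the odd-$n$ case, consistent with the need for the separate treatment given in later sections.
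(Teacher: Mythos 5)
Your proof is correct and follows essentially the same route as the paper: the same ordering of $S$ (the $\beta_i$ first, then the $-\beta'_j$), the same basis of $\h_{\Lambda}=\h'$ (even-indexed coroots followed by the $h'_k$ of subsection \ref{KCA}), and the same block lower triangular determinant argument with $A$ unitriangular and $B$ handled by Lemma \ref{KCA}. The only difference is that you spell out explicitly why the upper-right block $(\beta_u(h'_k))$ vanishes via the parity cancellation, which the paper leaves implicit in the identity $\beta_i=\varpi_{2i}-\varpi_{2i-2}$.
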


\begin{proof}
Here  $j=Id_{\pi}$ and then (see Sect. \ref{not}) we have that $\h'=\h_{\Lambda}$ and
we observe  that $\lvert S\rvert=n-2-\ell=\dim\h'=\dim\h_{\Lambda}$. The proof is similar to the proof of Lemma \ref{baseFC}.
We order the elements $s_u$ of $S$ as
$$\beta_1,\,\beta_2,\,\ldots,\,\beta_{(n-2)/2},\,-\beta'_1,\,-\beta'_2,\,\ldots,\,-\beta'_{(n-2-2\ell)/2}$$

and we choose the following ordered basis $(h_v)_{1\le v\le n-2-\ell}$ of $\h'$ :

$$\begin{array}{cc} h_i=\alpha_{2i}^\vee,\; 1\le i\le (n-2)/2\\
h_{(n-2)/2+2j-1}=h'_{2j-1}=\alpha_{2j-1}^\vee,\,h_{(n-2)/2+2j}=h'_{2j}=\alpha_{n-1-2\ell-2j}^\vee,\\
1\le j\le [(n-2\ell)/4]\\
\end{array}$$

without repetitions for the $h'_j$'s.

Then the matrix $(s_u(h_v))_{1\le u,\,v\le n-2-\ell}$ has the form

$$\begin{pmatrix} A&0\\
*&B\end{pmatrix}$$
Here $A=(\beta_i(h_j))_{1\le i,\,j\le(n-2)/2}$ is a $(n-2)/2\times(n-2)/2$ lower triangular matrix with $1$ on its diagonal since $\beta_i=\varpi_{2i}-\varpi_{2i-2}$. Moreover $B=(-\beta'_i(h'_j))_{1\le i,\,j\le (n-2-2\ell)/2}$ is a $(n-2-2\ell)/2\times(n-2-2\ell)/2$ lower triangular matrix with $-1$ on its diagonal, by Lemma \ref{KCA}. 

Hence $\det(s_u(h_v))_{1\le u,\,v\le n-2-\ell}\neq 0$.
\end{proof}

\subsection{Condition {\it (vi)} of Proposition \ref{propAP}}

We obtain the following Lemma.

\begin{lm}\label{TD}

We have that $\lvert T\rvert=\ind\p_{\Lambda}$.

\end{lm}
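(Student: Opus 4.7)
The plan is to compute both $|T|$ and $\ind\p_\Lambda = |E(\pi')|$ directly from the combinatorial data of Section \ref{KcasBD} and Section \ref{ind}, and show they agree.

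For $|T|$: the description of $\beta_\pi$ in subsection \ref{KcasBD} gives $|T^+| = |\beta_\pi \cap \pi| = n/2 + 1$, namely the odd-indexed simple roots $\alpha_1,\alpha_3,\ldots,\alpha_{n-1}$ together with $\alpha_n$. For $|T^-|$, recall that the Levi factor $\g'$ decomposes as ${\rm A}_{n-2-2\ell} \times {\rm A}_1^\ell$, where the ${\rm A}_1$ pieces are the singletons $\{\alpha_{n-2\ell}\},\{\alpha_{n-2\ell+2}\},\ldots,\{\alpha_{n-2}\}$ sitting between the removed odd-indexed roots. In the ${\rm A}_{n-2-2\ell}$ piece, the Kostant cascade consists of the roots $\ep_i-\ep_{n-2\ell-i}$ for $1\le i\le (n-2-2\ell)/2$; since $n-2\ell-1$ is odd (as $n$ is even), the equation $n-2\ell-i=i+1$ has no integer solution, so none of these cascade elements is a simple root. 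On the other hand, each of the $\ell$ ${\rm A}_1$ factors contributes its unique simple root to $\beta_{\pi'}\cap\pi'$. Hence $|T^-| = \ell$ and $|T| = n/2+1+\ell$.

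For $\ind\p_\Lambda = |E(\pi')|$: since $n$ is even, $w_0 = -Id$ and hence $j = Id$ on $\pi$. The involution $i$ is determined as follows: on the ${\rm A}_{n-2-2\ell}$ component it swaps $\alpha_k \leftrightarrow \alpha_{n-1-2\ell-k}$ (with no fixed point, as $n-1-2\ell$ is odd); on each ${\rm A}_1$ component it is the identity; and for $\alpha \in \pi\setminus\pi'$ the definition yields $i(\alpha)=j(\alpha)=\alpha$, since $j(\alpha)=\alpha\notin\pi'$. The $\langle ij\rangle$-orbits therefore split as $(n-2-2\ell)/2$ orbits of size two coming from the ${\rm A}_{n-2-2\ell}$ component, $\ell$ singletons from the ${\rm A}_1$ components, and $\ell+2$ singletons from $\pi\setminus\pi' = \{\alpha_{n-1-2k},\,\alpha_n\mid 0\le k\le\ell\}$. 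Summing,
$$|E(\pi')| = \frac{n-2-2\ell}{2} + \ell + (\ell+2) = \frac{n}{2}+\ell+1,$$
which matches $|T|$.

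No substantive obstacle is anticipated; the argument is purely a bookkeeping verification, the only delicate point being to check that for $n$ even no element of the cascade of the ${\rm A}_{n-2-2\ell}$ factor happens to coincide with a simple root, and that $i$ has no fixed point on this factor. Both facts follow from the parity of $n-1-2\ell$, which is the distinctive feature of this case.
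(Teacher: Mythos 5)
Your proof is correct and follows essentially the same route as the paper: enumerate the $\langle ij\rangle$-orbits component by component to get $\lvert E(\pi')\rvert=(n+2+2\ell)/2$, and count $\lvert T^+\rvert=n/2+1$ and $\lvert T^-\rvert=\ell$ from the explicit cascades. Your parity check that no cascade element of the ${\rm A}_{n-2-2\ell}$ factor is simple (so that $\beta_{\pi'}\cap\pi'$ reduces to the $\ell$ roots of the ${\rm A}_1$ factors) is exactly the point the paper handles via $\beta^0_{\pi'_1}=\beta_{\pi'_1}$ for $s=n-1-2\ell$ odd.
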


\begin{proof}

Recall \eqref{index} of Sect. \ref{bounds} that $\ind\p_{\Lambda}=\lvert E(\pi')\rvert$. Here the set $E(\pi')$ of $\langle ij\rangle$-orbits in $\pi$ is the following :

$$\begin{array}{cc}
E(\pi')=\{\Gamma_u=\{\alpha_u,\,\alpha_{n-1-2\ell-u}\},\,1\le u\le (n-2-2\ell)/2,\\
\Gamma_v=\{\alpha_v\},\,n-1-2\ell\le v\le n\}.\end{array}$$ 
Hence $\ind\p_{\Lambda}=(n+2+2\ell)/2$.

On the other hand we have that
$T^+=\beta_{\pi}\cap\pi=\{\alpha_n,\,\alpha_{2i-1},\,1\le i\le n/2\}$ by \ref{KcasBD}, and $T^-=-(\beta_{\pi'}\cap\pi')=\{-\alpha_{2i},\,(n-2\ell)/2\le i\le (n-2)/2\}$.  Hence $\lvert T^+\rvert=n/2+1$ and $\lvert T^-\rvert=\ell$. Thus $\lvert T\rvert=\ind\p_{\Lambda}$.
\end{proof}

\subsection{} All conditions of Prop. \ref{propAP} are satisfied. Thus one can deduce the following corollary.

\begin{cor} Keep the above notation and set $y=\sum_{\alpha\in S} x_{\alpha}$. Then $y$ is regular in $\p_{\Lambda}^*$ and more precisely one has that $ad\,\p_{\Lambda}(y)\oplus\g_T=\p_{\Lambda}^*$. Moreover there exists a uniquely defined $h\in\h_{\Lambda}$ such that $\alpha(h)=-1$ for all $\alpha\in S$. Thus the pair $(h,\,y)$ is an adapted pair for $\p_{\Lambda}$.

\end{cor}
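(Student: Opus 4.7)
The proof is essentially an immediate application of Proposition \ref{propAP}, once all its hypotheses have been verified in the preceding subsections. The plan is to simply assemble these verifications.

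First, I would recall the data: we have chosen disjoint subsets $S^+ \subset \Delta^+$ and $S^- \subset \Delta^-_{\pi'}$, with for each $\gamma \in S^+$ (resp. $\gamma \in S^-$) a Heisenberg set $\Gamma_\gamma = H_\gamma \subset \Delta^+$ (resp. $\Gamma_\gamma = -H_{-\gamma} \subset \Delta^-_{\pi'}$), together with $T = T^+ \sqcup T^-$ where $T^+ = \beta_\pi \cap \pi$ and $T^- = -(\beta_{\pi'} \cap \pi')$, and $T^* = \emptyset$. I would then check off, one at a time, each of the six hypotheses of Proposition \ref{propAP}: condition $(i)$ is Lemma \ref{basisD}; conditions $(ii)$ and $(iii)$ follow from part $ii)$ of Lemma \ref{HS} applied inside each $H_\gamma$; condition $(iv)$ follows from part $i)$ of Lemma \ref{HS} (together with $H_\alpha = \{\alpha\}$ for $\alpha \in \beta_\pi \cap \pi$ and for $\alpha \in \beta_{\pi'} \cap \pi'$), applied to $\Delta^+$ and to $\Delta^-_{\pi'}$ separately; condition $(v)$ is vacuous since $T^* = \emptyset$; and condition $(vi)$ is Lemma \ref{TD}.

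Next, the existence and uniqueness of $h \in \h_\Lambda$ with $\alpha(h) = -1$ for every $\alpha \in S$ follows at once from Lemma \ref{basisD}: since $S_{\mid \h_\Lambda}$ is a basis of $\h_\Lambda^*$, the linear form $\alpha \mapsto -1$ defined on $S$ extends to a unique element of $\h_\Lambda^{**} = \h_\Lambda$. (Note that $\h_\Lambda = \h'$ here since $n$ is even, as recalled in Section \ref{not}.)

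Proposition \ref{propAP} then yields simultaneously that $y = \sum_{\alpha \in S} x_\alpha$ is regular in $\p_\Lambda^*$, that $ad\,\p_\Lambda(y) \oplus \g_T = \p_\Lambda^*$, and that $(h,y)$ is an adapted pair for $\p_\Lambda$. There is no real obstacle: the entire content is in the verification of hypotheses $(i)$--$(vi)$, which has already been carried out. The only point to be careful about is ensuring that $h$ indeed lies in $\h_\Lambda$ (and not merely in $\h$), which is automatic once one notes that $S_{\mid \h_\Lambda}$ is a basis of the dual $\h_\Lambda^*$ and that $h$ is defined by its values on this basis.
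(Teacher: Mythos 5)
Your proposal is correct and follows exactly the paper's route: the paper gives no separate argument for this corollary beyond observing that conditions (i)--(v) were verified in the preceding subsection (with (iv) from Lemma of \ref{HS} part $i)$ plus $H_\beta=\{\beta\}$ for $\beta\in\beta_\pi\cap\pi$, and (v) vacuous as $T^*=\emptyset$) and condition (vi) in Lemma \ref{TD}, after which Proposition \ref{propAP} yields the regularity of $y$, the decomposition $ad\,\p_{\Lambda}(y)\oplus\g_T=\p_{\Lambda}^*$, and the adapted pair $(h,y)$ with $h$ uniquely determined by Lemma \ref{basisD}. Nothing is missing.
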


\subsection{Existence of a Weierstrass section.}\label{boundsD}

By Remark \ref{rqeqbounds} of subsection \ref{REG}, the existence of an adapted pair for $\p_{\Lambda}$ is sufficient to produce a Weierstrass section for coadjoint action of $\p_{\Lambda}$ provided one has the following Lemma.

\begin{lm}
Keep the above hypotheses and notation. One has that $\varepsilon_{\Gamma}=1$ for all $\Gamma\in E(\pi')$.

\end{lm}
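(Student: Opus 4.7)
The plan is to verify, case by case on the list of $\langle ij\rangle$-orbits given in the proof of Lemma \ref{TD}, that for each $\Gamma \in E(\pi')$ at least one of the three conditions for $\varepsilon_\Gamma = 1/2$ in \eqref{epsilon} fails. Since $n$ is even, $j = Id_\pi$ and every orbit satisfies $\Gamma = j(\Gamma)$ automatically, so the task reduces to ruling out $d_\Gamma \in \mathcal{B}_\pi$ or, when that fails, ruling out $d'_\Gamma \in \mathcal{B}_{\pi'}$.

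The main tool will be the explicit description of $\mathcal{B}_\pi$ recalled in subsection \ref{compepsilon}: in type ${\rm D}_n$ with $n$ even, the generators of $\mathcal{B}_\pi$ are $\varpi_{2k}$ for $1\le k\le (n-2)/2$, together with $2\varpi_{2k-1}$ for $1\le k\le (n-2)/2$, $2\varpi_{n-1}$, and $2\varpi_n$. Consequently every element of $\mathcal{B}_\pi$ has an even coefficient on each fundamental weight indexed by an odd integer (including $n-1$) and on $\varpi_n$. For the paired orbits $\Gamma_u = \{\alpha_u,\alpha_{n-1-2\ell-u}\}$, since $u + (n-1-2\ell-u) = n-1-2\ell$ is odd, exactly one of the two indices is odd; the weight $d_{\Gamma_u} = \varpi_u + \varpi_{n-1-2\ell-u}$ then has coefficient one on an odd-indexed fundamental weight, whence $d_{\Gamma_u} \notin \mathcal{B}_\pi$ and $\varepsilon_{\Gamma_u}=1$.

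Next, for the singleton orbits $\Gamma_v = \{\alpha_v\}$ with $n-1-2\ell \le v \le n$, I would split into three sub-cases according to the nature of $v$. If $v$ is odd, i.e.\ $v = n-1-2k$ for some $0\le k\le \ell$, then $d_{\Gamma_v} = \varpi_v$ is a single odd-indexed fundamental weight with coefficient one, so $d_{\Gamma_v} \notin \mathcal{B}_\pi$. If $v = n$, similarly $d_{\Gamma_n} = \varpi_n \notin \mathcal{B}_\pi$ because only $2\varpi_n$ is available as a generator. Finally, if $v$ is even with $n-2\ell \le v \le n-2$, then $\alpha_v$ is an isolated ${\rm A}_1$-component of $\pi'$, and one checks immediately that $d_{\Gamma_v} = \varpi_v$ does belong to $\mathcal{B}_\pi$; in that case one must instead examine $d'_{\Gamma_v} = \varpi'_v$. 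On the ${\rm A}_1$-component $\{\alpha_v\}$ the involution $i$ fixes $\alpha_v$, so the final paragraph of subsection \ref{compepsilon} yields $2\varpi'_v \in \mathcal{B}_{\pi'}$ but $\varpi'_v \notin \mathcal{B}_{\pi'}$, giving $\varepsilon_{\Gamma_v}=1$.

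Assembling the three sub-cases together with the paired-orbit analysis yields $\varepsilon_\Gamma = 1$ for every $\Gamma \in E(\pi')$. I do not expect any serious obstacle: the only delicate point is the third sub-case above, where $d_\Gamma$ alone does not force $\varepsilon_\Gamma = 1$ and one is obliged to appeal to the parallel failure of $d'_\Gamma$ to lie in $\mathcal{B}_{\pi'}$. This is precisely the situation for which the two-sided definition of $\varepsilon_\Gamma$ in \eqref{epsilon} is designed, and the isolated ${\rm A}_1$-structure of the relevant components of $\pi'$ makes the verification immediate.
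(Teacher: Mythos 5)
Your proof is correct and follows essentially the same route as the paper: parity of the indices rules out $d_{\Gamma_u}\in\mathcal B_{\pi}$ for the paired orbits and $d_{\Gamma_v}\in\mathcal B_{\pi}$ for the odd singletons and for $v=n$, while for the even singletons one falls back on $d'_{\Gamma_v}=\varpi'_v\not\in\mathcal B_{\pi'}$ because $\alpha_v$ sits in an ${\rm A}_1$-component of $\pi'$. The only cosmetic difference is that you justify the parity step via the evenness of odd-indexed coefficients across all of $\mathcal B_{\pi}$, which the paper leaves implicit.
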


\begin{proof}
Recall subsection \ref{compepsilon}  and the $\langle ij\rangle$-orbits in $E(\pi')$ described in the proof of Lemma \ref{TD}.

For $\Gamma_u=\{\alpha_u,\,\alpha_{n-1-2\ell-u}\}$ for $1\le u\le (n-2-2\ell)/2$, one has that $d_{\Gamma_u}=\varpi_u+\varpi_{n-1-2\ell-u}\not\in\mathcal B_{\pi}$ since $u$ and $n-1-2\ell-u$ are not of the same parity.

Let $n-1-2\ell\le v< n$. If $v$ is even, then $d_{\Gamma_v}=\varpi_v\in\mathcal B_{\pi}$ but $d'_{\Gamma_v}=\varpi'_v\not\in\mathcal B_{\pi'}$ since $\alpha_v$ belongs to a connected component of $\pi'$ of type ${\rm A}_1$. If $v$ is odd, then $d_{\Gamma_v}=\varpi_v\not\in\mathcal B_{\pi}$.
Finally $d_{\Gamma_n}=\varpi_n\not\in\mathcal B_{\pi}$. Hence the lemma.
\end{proof}

One can then deduce the following Theorem.

\begin{thm}
Let $\g$ be a simple Lie algebra of type ${\rm D}_n$, with $n$ even, $n\ge 4$. Let $\ell\in\mathbb N$ be such that $0\le \ell\le(n-2)/2$ and $\p_{\ell}$ be the parabolic subalgebra of $\g$ associated with the subset $\pi'=\pi\setminus\{\alpha_{n-1-2k},\,\alpha_n\mid 0\le k\le \ell\}$. 
Then there exists a Weierstrass section for coadjoint action of the canonical truncation of $\p_{\ell}$.

\end{thm}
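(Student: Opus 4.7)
The plan is to assemble the ingredients already developed in this section and apply Proposition~\ref{propAP} together with Remark~\ref{rqeqbounds} of subsection~\ref{REG}. Concretely, take the element $y=\sum_{\alpha\in S}x_{\alpha}$ built from
\[
S^+=\beta^0_{\pi}=\{\beta_i=\ep_{2i-1}+\ep_{2i}\mid 1\le i\le(n-2)/2\},\quad S^-=-\beta^0_{\pi'},
\]
with the Heisenberg sets $\Gamma_{\gamma}=H_{\gamma}$ for $\gamma\in S^+$ and $\Gamma_{\gamma}=-H_{-\gamma}$ for $\gamma\in S^-$, and with $T=T^+\sqcup T^-$, $T^*=\emptyset$ as above. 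This data is tailor-made to invoke Lemma~\ref{HS}: conditions $(ii)$ and $(iii)$ of Prop.~\ref{propAP} follow from part $ii)$ of that lemma, and since $H_\beta=\{\beta\}$ for $\beta\in\beta_\pi\cap\pi$, part $i)$ of the lemma gives condition $(iv)$. Condition $(v)$ is vacuous since $T^*=\emptyset$.

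The two remaining conditions $(i)$ and $(vi)$ have just been verified in subsection~\ref{basisD} and in Lemma~\ref{TD} respectively: the restriction $S_{\mid\h_\Lambda}$ is a basis for $\h_\Lambda^*$ (recall $\h_\Lambda=\h'$ since $n$ is even and $j=\mathrm{Id}_\pi$), and one computes $\lvert T\rvert=(n+2+2\ell)/2=\lvert E(\pi')\rvert=\ind\p_\Lambda$. Consequently, Prop.~\ref{propAP} produces the unique $h\in\h_\Lambda$ with $\gamma(h)=-1$ for all $\gamma\in S$, and $(h,y)$ is an adapted pair for $\p_\Lambda$ with $ad\,\p_\Lambda(y)\oplus\g_T=\p_\Lambda^*$.

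To upgrade this adapted pair to a Weierstrass section we use the criterion recalled in Remark~\ref{rqeqbounds}: it suffices that $\varepsilon_\Gamma=1$ for every $\Gamma\in E(\pi')$. This is precisely the content of the lemma in subsection~\ref{boundsD}, which is checked $\langle ij\rangle$-orbit by $\langle ij\rangle$-orbit: for the two-element orbits $\{\alpha_u,\alpha_{n-1-2\ell-u}\}$ one has $u\not\equiv n-1-2\ell-u\pmod 2$, so $d_\Gamma\notin\mathcal B_\pi$; for the singleton orbits $\{\alpha_v\}$ with $n-1-2\ell\le v<n$, either $v$ is odd and $d_\Gamma=\varpi_v\notin\mathcal B_\pi$, or $v$ is even, $\alpha_v$ lies in a type-${\rm A}_1$ component of $\pi'$, and $d'_\Gamma\notin\mathcal B_{\pi'}$; finally $d_{\Gamma_n}=\varpi_n\notin\mathcal B_\pi$. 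Hence both bounds in~\eqref{ch} coincide, $Y(\p_\Lambda)$ is polynomial, and by Remark~\ref{rqeqbounds} the affine subspace $y+\g_T$ (which is $ad\,h$-stable because $\g_T$ is a sum of root spaces) is a Weierstrass section for the coadjoint action of $\p_\Lambda$. Since every step reduces to lemmas already proved in this section, no step poses a genuine obstacle; the only place requiring vigilance is the parity bookkeeping in the $\varepsilon_\Gamma=1$ verification, which relies crucially on the hypothesis that $n$ is even.
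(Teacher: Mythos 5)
Your proposal is correct and follows essentially the same route as the paper: the same choice of $S^\pm=\pm\beta^0_{\pi}\,/\,{-\beta^0_{\pi'}}$, the same Heisenberg sets $H_{\gamma}$, the same $T$ with $T^*=\emptyset$, verification of the six conditions of Proposition~\ref{propAP} via Lemma~\ref{HS} together with the basis and cardinality lemmas, and the final orbit-by-orbit parity check that $\varepsilon_{\Gamma}=1$ so that Remark~\ref{rqeqbounds} upgrades the adapted pair to a Weierstrass section. No gaps.
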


\subsection{Weights and degrees}

Here both bounds (see \eqref{ch} in Sect. \ref{bounds}) for ${\ch}(Y(\p_{\Lambda}))$ coincide by Lemma \ref{boundsD} and then $Y(\p_{\Lambda})$ is a polynomial algebra whose homogeneous and $\h$-weight generators have weights and degrees which can be easily computed.
To each $\Gamma\in E(\pi')$ is associated an homogeneous and $\h$-weight  generator of $Y(\p_{\Lambda})$ which has weight $\delta_{\Gamma}$ given by (\ref{poids}) of Sect. \ref{bounds} and a degree $\partial_{\Gamma}$ given by (\ref{degre}) of Sect. \ref{bounds}.

Below  we give for completeness weights and degrees of a set of homogeneous and $\h$-weight algebraically independent generators of $Y(\p_{\Lambda})$, each of them corresponding to an $\langle ij\rangle$-orbit $\Gamma_r$ in $E(\pi')$.\smallskip

Assume first that $\ell\ge 1$  :\medskip

\begin{tabular}{|c|c|c|}
\hline
$\langle ij\rangle$-orbit in $E(\pi')$& Weight & Degree\\
\hline
$\Gamma_u=\{\alpha_u,\,\alpha_{n-1-2\ell-u}\}$&$-2\varpi_{n-1-2\ell}$&$n-2\ell+2u$\\
$1\le u\le (n-2-2\ell)/2$&&\\
\hline
$\Gamma_v=\{\alpha_v\}$& $-2\varpi_v$&$v+1$\\
$v=n-1-2k,\,1\le k\le\ell$&&\\
\hline
$\Gamma_v=\{\alpha_v\}$& $-\varpi_{v-1}-\varpi_{v+1}$&$v+1$\\
$v=n-2k,\,2\le k\le\ell$&&\\
\hline
$\Gamma_{n-2}=\{\alpha_{n-2}\}$&$-\varpi_{n-3}-\varpi_{n-1}-\varpi_n$&$n-1$\\
\hline
$\Gamma_{n-1}=\{\alpha_{n-1}\}$& $-2\varpi_{n-1}$&$n/2$\\
\hline
$\Gamma_{n}=\{\alpha_n\}$&$-2\varpi_n$&$n/2$\\
\hline

\end{tabular}
\bigskip

Finally assume that $\ell=0$, that is, $\pi'=\pi\setminus\{\alpha_{n-1},\,\alpha_n\}$ : \medskip

\begin{tabular}{|c|c|c|}
\hline
$\langle ij\rangle$-orbit in $E(\pi')$& Weight & Degree\\
\hline
$\Gamma_u=\{\alpha_u,\,\alpha_{n-1-u}\},\;1\le u\le (n-2)/2$&$-2(\varpi_{n-1}+\varpi_n)$&$n+2u$\\
\hline
$\Gamma_{n-1}=\{\alpha_{n-1}\}$&$-2\varpi_{n-1}$&$n/2$\\
\hline
$\Gamma_n=\{\alpha_n\}$&$-2\varpi_n$&$n/2$\\
\hline
\end{tabular}

\subsection{}

\begin{Rq}\rm

Assume now that $\g$ is simple of type ${\rm D}_n$ with $n$ odd and consider the parabolic subalgebra $\p=\p_{\ell}$ with $0\le\ell\le(n-2)/2$. Assume that we have found an adapted pair $(h,\,y)$ for $\p_{\Lambda}$ with $y=\sum_{\gamma\in S} x_{\gamma}$, $S\subset\Delta^+\sqcup\Delta^-_{\pi'}$ and $h\in\h_{\Lambda}$.

First assume that $\ell=0$. Then by \eqref{ch} of Sect. \ref{bounds}, $-(\varpi_{n-1}+\varpi_n)$ must be a weight of $Sy(\p)$, hence $(\varpi_{n-1}+\varpi_n)(h)=0$ by definition of the canonical truncation (see \ref{CT}). It follows that the set $S$ cannot contain $\beta_{\pi}^0$, that is cannot contain all $\beta_i$, for $1\le i\le (n-1)/2$, as in the case $n$ even. Indeed one has that $\varpi_{n-1}+\varpi_n=\ep_1+\ldots+\ep_{n-1}=\beta_1+\ldots+\beta_{(n-1)/2}$ and then otherwise we would have both $(\varpi_{n-1}+\varpi_n)(h)=0$ and $(\varpi_{n-1}+\varpi_n)(h)=(-1)\times(n-1)/2$, a contradiction.

Assume now that $\ell\ge 1$. Then by \eqref{ch} of Sect. \ref{bounds}, for all $1\le k\le\ell$, $-2\varpi_{n-1-2k}$ must be a weight of $Sy(\p)$, hence by the same argument as above, we cannot have that $S$ contains $\beta_1,\ldots,\,\beta_{(n-1-2k)/2}$. 

\end{Rq}

\section{Case \ref{cas7} for type D.}\label{case7}

Here we consider the parabolic subalgebra $\p_0$ of $\g$ of type ${\rm D}_n$, with $n\ge 5$, $n$ odd. This is the parabolic subalgebra of $\g$ associated with the subset $\pi'=\pi\setminus\{\alpha_{n-1},\,\alpha_n\}$. Then it is the case \ref{cas7} of subsection \ref{Mainres}.

We set $S=S^+\sqcup S^-$ with
$$\begin{array}{cc}
S^+=\{\beta_i=\ep_{2i-1}+\ep_{2i}\,;\;1\le i\le (n-3)/2,\;\tilde\beta_{(n-1)/2}=\ep_{n-2}+\ep_n,\\
\,\tilde\beta_{(n+1)/2}=\ep_{n-2}-\ep_n\}\end{array}$$
and
$$S^-=\{-\beta'_1=\ep_{n-1}-\ep_1,\,-\tilde\beta'_i=\ep_{n-i-1}-\ep_i\,;\,2\le i\le (n-3)/2\}.$$

For all $1\le i\le (n-3)/2$, we set $\Gamma_{\beta_i}=H_{\beta_i}$ and we set $\Gamma_{-\beta'_1}=-H_{\beta'_1}$, where $H_{\beta_i}$, resp. $H_{\beta'_1}$, is the largest Heisenberg set with centre $\beta_i\in\beta_{\pi}$, resp. $\beta'_1\in\beta_{\pi'}$, which is included in $\Delta^+$, resp. in $\Delta^+_{\pi'}$, as defined in subsection \ref{HS}.

We set $$\Gamma_{\tilde\beta_{(n-1)/2}}=\{\tilde\beta_{(n-1)/2},\,\ep_{n-2}-\ep_{n-1},\,\ep_{n-1}+\ep_n\},\;
\Gamma_{\tilde\beta_{(n+1)/2}}=\{\tilde\beta_{(n+1)/2}\}.$$

For all $2\le i\le (n-3)/2$, we set 
$$\Gamma_{-\tilde\beta'_i}=\{-\tilde\beta'_i,\,\ep_{n-i-1}-\ep_j,\,\ep_j-\ep_i\,;\;i+1\le j\le n-i-2\}.$$
We also set
$$T=\{\ep_{n-2}-\ep_2,\,\ep_{n-2}+\ep_{n-1},\,\ep_{n-1}-\ep_n,\,\ep_{2i-1}-\ep_{2i}\,;\,1\le i\le (n-3)/2\}$$
and
$$T^*=\{\ep_{n-2}-\ep_i\,;\;3\le i\le n-3\}$$
By construction for all $\gamma\in S^+$, resp. $\gamma\in S^-$, we have that $\Gamma_{\gamma}\subset\Delta^+$, resp. $\Gamma_{\gamma}\subset\Delta^-_{\pi'}$, is a Heisenberg set with centre $\gamma$ and all the sets $\Gamma_{\gamma}$, for $\gamma\in S$, together with the sets $T$ and $T^*$ are disjoint. We easily verify that condition $(iv)$ of Prop. \ref{propAP} is satisfied, using $i)$ of Lemma \ref{HS}. Conditions $(ii)$ and $(iii)$ of Prop. \ref{propAP} follow easily from $ii)$, $iii)$ and $iv)$ of Lemma \ref{HS}.

\subsection{Condition {\it (i)} of Proposition \ref{propAP}.}

This condition follows from the lemma below.

\begin{lm}
$S_{\mid\h_{\Lambda}}$ is a basis for $\h_{\Lambda}^*$.

\end{lm}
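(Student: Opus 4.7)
The plan is to exhibit an ordering of $S$ and a basis of $\h_{\Lambda}$ so that the evaluation matrix $M=(s_u(h_v))$ of size $n-1$ is block lower triangular with invertible diagonal blocks, in the spirit of Lemmas \ref{baseFC} and \ref{basisD}. Since $n$ is odd and $\alpha_{n-1},\alpha_n\notin\pi'$, we have $\h_{\Lambda}=\h'\oplus \Bbbk(\alpha_n^\vee-\alpha_{n-1}^\vee)$, and a direct count gives $\lvert S\rvert=(n-3)/2+2+1+(n-5)/2=n-1=\dim\h_{\Lambda}$.

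I would take the ordered basis of $\h_{\Lambda}$ in three blocks: first the coroots $\alpha_2^\vee,\alpha_4^\vee,\ldots,\alpha_{n-3}^\vee$, then the pair $\alpha_{n-2}^\vee,\,\alpha_n^\vee-\alpha_{n-1}^\vee$, and finally the remaining odd-index coroots $\alpha_1^\vee,\alpha_3^\vee,\ldots,\alpha_{n-4}^\vee$. Correspondingly I would order $S$ as $\beta_1,\ldots,\beta_{(n-3)/2}$, then $\tilde\beta_{(n-1)/2},\tilde\beta_{(n+1)/2}$, then $-\beta'_1,-\tilde\beta'_2,\ldots,-\tilde\beta'_{(n-3)/2}$. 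The three ``upper-right'' blocks then vanish: writing $\beta_i=\varpi_{2i}-\varpi_{2i-2}$ and noting that $\beta_i$ involves only $\varepsilon_{2i-1},\varepsilon_{2i}$ with $2i\le n-3$ gives $\beta_i(\alpha_{2k-1}^\vee)=0$ for every $k$, together with $\beta_i(\alpha_n^\vee-\alpha_{n-1}^\vee)=\beta_i(2\varepsilon_n)=0$; and since $n-2$ is odd, $\tilde\beta_{(n\pm1)/2}=\varepsilon_{n-2}\pm\varepsilon_n$ pairs trivially with every $\alpha_{2k-1}^\vee$ for $1\le k\le(n-3)/2$.

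With these zeros, $M$ takes the shape $\bigl(\begin{smallmatrix}A_1 & 0 & 0\\ \ast & A_2 & 0\\ \ast & \ast & A_3\end{smallmatrix}\bigr)$. The block $A_1$ of size $(n-3)/2$ is lower bidiagonal with entries $\beta_i(\alpha_{2j}^\vee)=\delta_{i,j}-\delta_{i-1,j}$, so $\det A_1=1$. Using $\alpha_n^\vee-\alpha_{n-1}^\vee=2\varepsilon_n$, the block $A_2$ equals $\bigl(\begin{smallmatrix}1 & 2\\1 & -2\end{smallmatrix}\bigr)$, so $\det A_2=-4$.

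The only nontrivial step, and the main obstacle, is the invertibility of the $(n-3)/2\times(n-3)/2$ block $A_3$, whose rows are the evaluations of $-\beta'_1,-\tilde\beta'_2,\ldots,-\tilde\beta'_{(n-3)/2}$ on $\alpha_1^\vee,\alpha_3^\vee,\ldots,\alpha_{n-4}^\vee$. Using $\beta'_1=\varepsilon_1-\varepsilon_{n-1}$ and $\tilde\beta'_i=\varepsilon_i-\varepsilon_{n-i-1}$, one checks on this restricted set of columns that $-\beta'_1$ has a single nonzero entry $-1$ at $\alpha_1^\vee$, while each $-\tilde\beta'_i$ for $2\le i\le(n-3)/2$ has exactly two nonzero entries $\pm1$ whose column indices ``zigzag'' between the two ends of $\{1,3,\ldots,n-4\}$. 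Since consecutive rows share exactly one column with opposite signs, the successive sums of the first $j$ rows telescope, each to a vector with a single entry $-1$; the corresponding columns, visited in the order $\alpha_1^\vee,\alpha_{n-4}^\vee,\alpha_3^\vee,\alpha_{n-6}^\vee,\alpha_5^\vee,\ldots$, alternate between the two ends of $\{\alpha_1^\vee,\alpha_3^\vee,\ldots,\alpha_{n-4}^\vee\}$ and exhaust this set without repetition. Elementary row operations of determinant $1$ therefore reduce $A_3$ to $-I$ up to a row permutation, whence $\lvert\det A_3\rvert=1$. Combining the three diagonal blocks yields $\lvert\det M\rvert=4\ne 0$, which proves that $S_{\mid\h_{\Lambda}}$ is a basis of $\h_{\Lambda}^*$.
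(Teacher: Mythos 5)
Your proof is correct and follows essentially the same strategy as the paper: both make the evaluation matrix block lower triangular and check the diagonal blocks are invertible. The only cosmetic differences are that the paper replaces the pair of rows $\ep_{n-2}\pm\ep_n$ by $\ep_{n-2}$ and $\ep_n$ and orders the odd-index coroots in the zigzag order $\alpha_1^\vee,\alpha_{n-4}^\vee,\alpha_3^\vee,\alpha_{n-6}^\vee,\ldots$ so that the last block is already lower triangular (as in Lemma \ref{KCA}), whereas you keep the natural column order and obtain the same conclusion by telescoping row operations.
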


\begin{proof}
First we observe that $\h_{\Lambda}=\h'\oplus\mathcal H^{-1}(\ep_n)$, where recall $\mathcal H:\h\longrightarrow\h^*$ is the isomorphism induced by the Killing form on $\h\times\h$ by Sect. \ref{not}. 

Then $\dim\h_{\Lambda}=\dim\h'+1=n-1$. We have that $\lvert S\rvert=n-1$.

Now set $s_i=\beta_i$  for $1\le i\le(n-3)/2$, $s_{(n-1)/2}=\ep_n$, $s_{(n+1)/2}=\ep_{n-2}$, $s_{(n+3)/2}=-\beta'_1$ and then $s_{(n+3)/2+j}=-\tilde\beta'_{j+1}$ for all $1\le j\le (n-5)/2$ and we take the elements of $S$ in this order.

For a basis $(h_j)$ of $\h_{\Lambda}$ we take, in this order :
$$\alpha_{2i}^\vee,\,1\le i\le (n-3)/2,\,\mathcal H^{-1}(\ep_n),\alpha_{n-2}^\vee,\,\alpha_{2j-1}^\vee,\,\alpha_{n-2-2j}^\vee,\,1\le j\le[(n-1)/4]$$
without repetitions.
Then it suffices to prove that $\det(s_i(h_j))_{1\le i,\,j\le n-1}\neq 0$.

We easily check that $(s_i(h_j))=\begin{pmatrix} A&0\\
*&B\\
\end{pmatrix}$ where $A$, resp. $B$, is a lower triangular matrix of size $(n+1)/2$, resp. $(n-3)/2$, with 1, resp. $-1$, on the diagonal. Hence the lemma.
\end{proof}

\subsection{Condition {\it (v)} of Proposition \ref{propAP}.}

Set $y=\sum_{\gamma\in S} x_{\gamma}$.
Condition $(v)$ of Prop. \ref{propAP} follows from the lemma below.

\begin{lm}
Let $k\in\mathbb N$ such that $3\le k\le n-3$. Then $x_{\ep_{n-2}-\ep_k}\in ad\,\p_{\Lambda}(y)+\g_T$.

\end{lm}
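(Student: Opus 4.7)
The plan is to deduce the lemma from Proposition \ref{condition(v)} applied to a carefully chosen triple $(\gamma_i,\gamma'_i)_{i=1,2,3}$. The natural way to produce $\ep_{n-2}-\ep_k$ as $\gamma+\gamma'$ with $\gamma\in S$ and $\gamma'\in\Delta^-\cup\Delta^+_{\pi'}$ is to take either $\gamma=\tilde\beta_{(n-1)/2}=\ep_{n-2}+\ep_n$ with $\gamma'=-\ep_n-\ep_k$, or $\gamma=\tilde\beta_{(n+1)/2}=\ep_{n-2}-\ep_n$ with $\gamma'=\ep_n-\ep_k$. In both cases $\gamma'$ is a negative root, hence lies in $\n^-\subset\p_\Lambda$. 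These two choices will play the roles of $(\gamma_1,\gamma'_1)$ and $(\gamma_3,\gamma'_3)$, while $\gamma_2$ will be the element $\beta_{\lceil k/2\rceil}$ of the Kostant cascade whose bracket against $\gamma'_1$ produces the only obstructing error term.

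Set $i:=\lceil k/2\rceil$. For $k$ odd (so $k=2i-1$) take $\gamma_1=\tilde\beta_{(n-1)/2}$, $\gamma'_1=-\ep_n-\ep_k$, $\gamma_2=\beta_i$, $\gamma'_2=\ep_n-\ep_{2i-1}$, $\gamma_3=\tilde\beta_{(n+1)/2}$, $\gamma'_3=\ep_{2i}-\ep_{n-2}$; the symmetric assignment with parities swapped ($\gamma'_3=\ep_{2i-1}-\ep_{n-2}$, etc.) works for $k$ even. A direct check of the five conditions of the lemma preceding Proposition \ref{condition(v)} succeeds: the cross-sums $\gamma_1+\gamma_2$ and $\gamma_2+\gamma_3$ involve four distinct $\ep$-indices and so are not roots of $D_n$; $\gamma_1+\gamma_3=2\ep_{n-2}$ is not a root; and $\gamma_1+\gamma'_1+\gamma_2=\ep_{n-2}+\ep_{2i}\in\Delta^+$.

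It then suffices to show that the error terms $X$, $X'$, $X''$ appearing in the three bracket expansions belong to $ad\,\p_\Lambda(y)+\g_T$. A crucial observation simplifies this: the coadjoint projection $\g\to\p_\Lambda^*\simeq\p_{\pi',\Lambda}$ annihilates every root vector whose root lies outside $\Delta^+\cup\Delta^-_{\pi'}$, so in particular the brackets of $x_{\gamma'_j}$ with $x_{-\tilde\beta'_m}$ that would produce roots of the form $\pm\ep_n\pm\ep_\ell$ vanish, since $\ep_n\pm\ep_\ell$ involves $\alpha_n\notin\pi'$. After this pruning, $ad\,x_{\gamma'_1}(y)$ and $ad\,x_{\gamma'_2}(y)$ reduce cleanly to the two expected terms of the system $(\Sigma)$, so $X=X'=0$. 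The third bracket $ad\,x_{\gamma'_3}(y)$ produces, besides the two target terms, at most one residual of the form $x_{\ep_{n-k\pm 1}-\ep_{n-2}}$, arising from a bracket with a suitable $x_{-\tilde\beta'_m}$ and appearing only when $k$ lies in the lower half of $[3,n-3]$.

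This residual is handled by a separate one-step computation: taking $\alpha=-\ep_{n-2}-\ep_{n-k+1}$ (or its analogue in the odd case), the only surviving contribution to $ad\,x_\alpha(y)$ comes from the bracket with $x_{\beta_{(n-k+1)/2}}$ and is a nonzero multiple of $x_{\ep_{n-k}-\ep_{n-2}}$; the other brackets either vanish identically or project to zero by the same argument as above. Proposition \ref{condition(v)} then concludes that $x_{\ep_{n-2}-\ep_k}\in ad\,\p_\Lambda(y)+\g_T$, as required. The main obstacle is the bookkeeping involved in verifying that the numerous potentially non-zero brackets in the coadjoint action all project to zero; this rests on the fact that $\alpha_n\notin\pi'$ together with the specific parity and support constraints of the simple-root expansions of the roots $\pm\ep_a\pm\ep_b$ occurring in the relevant brackets.
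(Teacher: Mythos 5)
Your proposal is correct and is essentially the paper's own proof: you use the same six roots and the same Lemma/Proposition of subsection \ref{condition(v)}, merely with the roles of $(\gamma_i,\gamma'_i)$ cyclically relabelled (the paper takes $\gamma'_1=\ep_{k+1}-\ep_{n-2}$, $\gamma'_2=\ep_n-\ep_k$, $\gamma'_3=-\ep_k-\ep_n$, so the target appears as $\gamma_2+\gamma'_2$ rather than $\gamma_1+\gamma'_1$), and you identify the same unique residual term, absorbed by the same one-step bracket of $y$. The only blemish is an index slip in that residual: the surviving term in $ad\,x_{\ep_{k+1}-\ep_{n-2}}(y)$ is $x_{\ep_{n-k-2}-\ep_{n-2}}$ (coming from $-\tilde\beta'_{k+1}$, hence present exactly when $3\le k\le(n-5)/2$) and it equals $ad\,x_{-\ep_{n-k-3}-\ep_{n-2}}(y)$ via the bracket with $x_{\beta_{(n-k-2)/2}}$, not $x_{\ep_{n-k\pm1}-\ep_{n-2}}$ via $x_{-\ep_{n-2}-\ep_{n-k+1}}$; this does not affect the validity of the method.
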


\begin{proof} Suppose first that $k$ is odd ($3\le k\le n-4$) and set $\gamma_1=\ep_{n-2}+\ep_n\in S$, $\gamma'_1=\ep_{k+1}-\ep_{n-2}\in\Delta^+_{\pi'}\setminus S$, $\gamma_2=\ep_{n-2}-\ep_n\in S$, $\gamma'_2=\ep_n-\ep_k\in\Delta^-\setminus S$, $\gamma_3=\ep_{k+1}+\ep_k\in S$, $\gamma'_3=-\ep_k-\ep_n\in\Delta^-\setminus S$.
We will show that the hypotheses of Lemma and Proposition \ref{condition(v)} are satisfied.
We have that $\gamma_1+\gamma'_1=\ep_{k+1}+\ep_n\in\Delta^+\setminus S$, $\gamma_2+\gamma'_2=\ep_{n-2}-\ep_k\in\Delta^-_{\pi'}\setminus S$,
$\gamma_3+\gamma'_3=\ep_{k+1}-\ep_n\in\Delta^+\setminus S$. Moreover $\gamma_2+\gamma'_2=\gamma_1+\gamma'_3$,
$\gamma_3+\gamma'_3=\gamma_2+\gamma'_1$, $\gamma_1+\gamma'_1+\gamma_2\in\Delta$ and $\gamma_1+\gamma_2\not\in\Delta$, $\gamma_2+\gamma_3\not\in\Delta$, $\gamma_1+\gamma_3\not\in\Delta$. Hence, by Lemma \ref{condition(v)}, up to rescaling some root vectors in a complement of $\g_S$ in $\g$, we have that

$$\begin{cases}ad\,x_{\ep_{k+1}-\ep_{n-2}}(y)=x_{\ep_{k+1}+\ep_n}+x_{\ep_{k+1}-\ep_n}+X\\
ad\,x_{\ep_n-\ep_k}(y)=x_{\ep_{k+1}+\ep_n}+x_{\ep_{n-2}-\ep_k}\\
ad\,x_{-\ep_k-\ep_n}(y)=x_{\ep_{k+1}-\ep_n}+x_{\ep_{n-2}-\ep_k}\\
\end{cases}$$
with $X=x_{\ep_{n-2-k}-\ep_{n-2}}=ad\,x_{-\ep_{n-k-3}-\ep_{n-2}}(y)\in ad\,\p_{\Lambda}(y)+\g_T$ if $3\le k\le(n-5)/2$, and $X=0$ otherwise. Hence $x_{\ep_{n-2}-\ep_k}\in ad\,\p_{\Lambda}(y)+\g_T$ for $k$ odd, $3\le k\le n-4$. A similar computation shows that 
$x_{\ep_{n-2}-\ep_k}\in ad\,\p_{\Lambda}(y)+\g_T$ for $k$ even, $4\le k\le n-3$.
\end{proof}

\subsection{Condition {\it (vi)} of Proposition \ref{propAP}.}\label{Tcase7}

It follows from lemma below.

\begin{lm}
We have that $\lvert T\rvert=\ind\p_{\Lambda}=\lvert E(\pi')\rvert$.

\end{lm}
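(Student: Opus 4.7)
The plan is to compute both sides directly and check they agree with the common value $(n+3)/2$.

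First I would compute $\lvert T\rvert$ by simply counting. The definition of $T$ in subsection \ref{Tcase7} splits into three singletons $\{\ep_{n-2}-\ep_2,\,\ep_{n-2}+\ep_{n-1},\,\ep_{n-1}-\ep_n\}$ together with the family $\{\ep_{2i-1}-\ep_{2i}\mid 1\le i\le(n-3)/2\}$. These are manifestly distinct roots in $\Delta^+\sqcup\Delta^-_{\pi'}$, so $\lvert T\rvert=3+(n-3)/2=(n+3)/2$.

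Next I would describe $E(\pi')$. Here $\pi'=\{\alpha_1,\ldots,\alpha_{n-2}\}$ is connected of type ${\rm A}_{n-2}$, and $\pi\setminus\pi'=\{\alpha_{n-1},\,\alpha_n\}$. Following \ref{ind}, the involution $j=-w_0$ on $\g$ of type ${\rm D}_n$ with $n$ odd swaps $\alpha_{n-1}$ and $\alpha_n$ and is the identity on $\alpha_1,\ldots,\alpha_{n-2}$, while on $\pi'$ the involution $i=-w'_0$ sends $\alpha_k$ to $\alpha_{n-1-k}$ for $1\le k\le n-2$. For $\alpha\in\pi\setminus\pi'$, $j(\alpha)\in\{\alpha_{n-1},\alpha_n\}\subset\pi\setminus\pi'$, so by the definition in \ref{ind} we get $i(\alpha_{n-1})=j(\alpha_{n-1})=\alpha_n$ and $i(\alpha_n)=\alpha_{n-1}$. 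Composing, $ij$ fixes $\alpha_{n-1}$ and $\alpha_n$ individually, and sends $\alpha_k\mapsto\alpha_{n-1-k}$ for $1\le k\le n-2$. Hence the $\langle ij\rangle$-orbits are
\[\{\alpha_k,\alpha_{n-1-k}\}\;(1\le k\le(n-3)/2),\quad\{\alpha_{(n-1)/2}\},\quad\{\alpha_{n-1}\},\quad\{\alpha_n\},\]
giving $(n-3)/2+3=(n+3)/2$ orbits. By \eqref{index} of Sect.~\ref{bounds}, $\ind\p_{\Lambda}=\lvert E(\pi')\rvert=(n+3)/2$.

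The only potentially delicate point is verifying the formula for $i$ on $\pi\setminus\pi'$, but this is immediate once one checks that $j(\alpha_{n-1})=\alpha_n\notin\pi'$, so no iteration of $ij$ is needed in the prescription of \ref{ind}. Comparing the two counts yields $\lvert T\rvert=\ind\p_{\Lambda}$, as required.
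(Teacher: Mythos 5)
Your proposal is correct and follows essentially the same route as the paper: identify the $\langle ij\rangle$-orbits in $\pi$, count them to get $(n+3)/2$ via \eqref{index}, and compare with the direct count $\lvert T\rvert=3+(n-3)/2$. The paper simply states the orbit decomposition without spelling out the computation of $i$ and $j$, which you do explicitly (and correctly), so there is nothing to object to.
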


\begin{proof}
The set $E(\pi')$ of $\langle ij\rangle$-orbits in $\pi$ is the following :
$$\begin{array}{cc}
E(\pi')=\Bigl\{\Gamma_u=\{\alpha_u,\,\alpha_{n-1-u}\},\,1\le u\le (n-3)/2,\,\Gamma_{(n-1)/2}=\{\alpha_{(n-1)/2}\},\\
\Gamma_{n-1}=\{\alpha_{n-1}\},\,\Gamma_n=\{\alpha_n\}\Bigr\}\end{array}$$

Then $\lvert E(\pi')\rvert=(n-3)/2+3=(n+3)/2$ and it is equal to $\lvert T\rvert$.
\end{proof}

\subsection{The semisimple element of the adapted pair.}
All conditions of Proposition \ref{propAP} are satisfied. Hence $y=\sum_{\gamma\in S}x_{\gamma}$ is regular in $\p_{\Lambda}^*$ and there exists a uniquely defined $h\in\h_{\Lambda}$ such that $(h,\,y)$ is an adapted pair for $\p_{\Lambda}$.

Below we give the semisimple element $h$ :

$$\begin{array}{cc}
h=-\sum_{k=1}^{(n-3)/2}k\alpha_{2k}^\vee+\sum_{k=1}^{[(n-1)/4]}((n-1)/2+k)\alpha_{2k-1}^\vee+\\
\sum_{k=[(n-1)/4]+1}^{(n-3)/2}(3(n-1)/2+1-3k)\alpha_{2k-1}^\vee-((n-1)/2)\alpha_{n-2}^\vee\in\h'\subset\h_{\Lambda}\\
\end{array}$$

\subsection{Computation of the improved upper bound.}
However for $n\ge 7$, both bounds in \eqref{ch} of Sect. \ref{bounds} do not coincide since for $\Gamma=\{\alpha_2,\,\alpha_{n-3}\}\in E(\pi')$ one has $d_{\Gamma}=\varpi_2+\varpi_{n-3}\in\mathcal B_{\pi}$ and $d'_{\Gamma}=\varpi'_2+\varpi'_{n-3}\in\mathcal B_{\pi'}$, hence $\ep_{\Gamma}=1/2$. We then need to compute the improved upper bound mentioned in Sect. \ref{IUB}.

\begin{lm}
We have that
$${\rm ch}(Y(\p_{\Lambda}))=\Bigl(1-e^{-2(\varpi_{n-1}+\varpi_n)}\Bigr)^{-(n-3)/2}\Bigl(1-e^{-(\varpi_{n-1}+\varpi_n)}\Bigr)^{-3}$$

\end{lm}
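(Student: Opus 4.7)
The plan is to apply the criterion from Section \ref{IUB}: having already constructed an adapted pair $(h,y)$ for $\p_{\Lambda}$ with $\g_T$ an $ad\,h$-stable complement to $ad\,\p_{\Lambda}(y)$ in $\p_{\Lambda}^*$, it suffices by \cite[Lem. 6.11]{J6bis} to verify the equality \eqref{eqIUB} of Section \ref{IUB}, namely that the lower bound $\prod_{\Gamma\in E(\pi')}(1-e^{\delta_{\Gamma}})^{-1}$ and the improved upper bound $\prod_{\gamma\in T}(1-e^{-(\gamma+s(\gamma))})^{-1}$ coincide. Both products will then equal ${\rm ch}(Y(\p_{\Lambda}))$, and comparison with the claimed right-hand side of the lemma will finish the proof.

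First I would evaluate $\delta_{\Gamma}$ for every $\Gamma\in E(\pi')$ using formula \eqref{poids} and the explicit list of $\langle ij\rangle$-orbits given in the proof of Lemma \ref{Tcase7}. Since $n$ is odd, the involution $j$ exchanges $\alpha_{n-1}$ and $\alpha_n$ and fixes the remaining simple roots; consequently $j$ preserves each paired orbit $\Gamma_u=\{\alpha_u,\alpha_{n-1-u}\}\subset\pi'$ (for $1\le u\le(n-3)/2$) and the singleton $\Gamma_{(n-1)/2}$, while it swaps $\Gamma_{n-1}$ with $\Gamma_n$. Using $\varpi_k=\ep_1+\cdots+\ep_k$ for $k\le n-2$, $\varpi_{n-1}+\varpi_n=\ep_1+\cdots+\ep_{n-1}$, the standard expressions for the $A_{n-2}$-fundamental weights $\varpi'_k$, and the fact that $i|_{\pi'}$ is the Dynkin-diagram flip of the $A_{n-2}$-Levi, a direct expansion in the $\ep$-basis will show that, modulo elements of $\h^*$ vanishing on $\h_{\Lambda}$, we have $\delta_{\Gamma_u}=-2(\varpi_{n-1}+\varpi_n)$ for $1\le u\le(n-3)/2$, while $\delta_{\Gamma_{(n-1)/2}}=\delta_{\Gamma_{n-1}}=\delta_{\Gamma_n}=-(\varpi_{n-1}+\varpi_n)$. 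This recovers the lower bound in the announced form.

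Next I would compute, for each $\gamma\in T$, the unique element $s(\gamma)\in\mathbb Q S$ with $\gamma+s(\gamma)$ vanishing on $\h_{\Lambda}$. For $\gamma=\ep_{2i-1}-\ep_{2i}$ with $1\le i\le(n-3)/2$, the required expansion is entirely parallel to the case analysis in the proof of Lemma \ref{comp}, now using the identity $\tilde\beta_{(n-1)/2}+\tilde\beta_{(n+1)/2}=2\ep_{n-2}$ to absorb the $\ep_{n-2}$-contribution and suitable $-\tilde\beta'_j$'s to absorb the remaining $\ep_j$-contributions; it will yield $\gamma+s(\gamma)=2(\varpi_{n-1}+\varpi_n)$. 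For the three distinguished roots $\gamma\in\{\ep_{n-2}-\ep_2,\,\ep_{n-2}+\ep_{n-1},\,\ep_{n-1}-\ep_n\}$, a shorter direct computation will give $\gamma+s(\gamma)=\varpi_{n-1}+\varpi_n$ in each case. Tallying multiplicities then matches the two products, establishing \eqref{eqIUB}.

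The main technical obstacle is the bookkeeping forced by the extra Cartan direction $\mathcal H^{-1}(\ep_n)\in\h_{\Lambda}\setminus\h'$, specific to $D_n$ with $n$ odd: one must carefully track how the basis elements $\tilde\beta_{(n-1)/2}=\ep_{n-2}+\ep_n$ and $\tilde\beta_{(n+1)/2}=\ep_{n-2}-\ep_n$ of $S$ pair with this direction, both when verifying that the weights $\delta_{\Gamma_u}$ indeed descend to $-2(\varpi_{n-1}+\varpi_n)$ on $\h_{\Lambda}$, and when solving for $s(\gamma)$ on the three exceptional roots, whose $\ep_n$-components must be cancelled using the right linear combination of $\tilde\beta_{(n-1)/2}$ and $\tilde\beta_{(n+1)/2}$.
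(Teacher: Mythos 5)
Your proposal follows exactly the paper's route: it verifies equality \eqref{eqIUB} by computing the weights $\delta_{\Gamma}$ for the orbits listed in the proof of Lemma \ref{Tcase7} (obtaining $-2(\varpi_{n-1}+\varpi_n)$ with multiplicity $(n-3)/2$ and $-(\varpi_{n-1}+\varpi_n)$ with multiplicity $3$) and then computing $s(\gamma)$ for each $\gamma\in T$, using the combination $\tilde\beta_{(n-1)/2}+\tilde\beta_{(n+1)/2}=2\ep_{n-2}$ just as in the paper's explicit formulas for $s(\gamma_i)$. The only caution is that the identities $\delta_{\Gamma}=-(\gamma+s(\gamma))$ must be exact equalities in $\h^*$ (not merely modulo forms vanishing on $\h_{\Lambda}$, a space which here is spanned by $\varpi_{n-1}+\varpi_n$ itself), but the direct $\ep$-basis expansion you describe does yield the exact values, so the argument is sound.
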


\begin{proof}
It suffices to prove that \eqref{eqIUB} of Sect. \ref{IUB} holds. Recall the $\langle ij\rangle$-orbits computed in the proof of Lemma \ref{Tcase7} and the lower bound for ${\rm ch}(Y(\p_{\Lambda}))$ given by \eqref{ch} in Sect. \ref{bounds}, with the weights $\delta_{\Gamma}$, for all $\Gamma\in E(\pi')$, given by \eqref{poids}. For $1\le u\le(n-3)/2$, we have that $\delta_{\Gamma_u}=-2(\varpi_u+\varpi_{n-1-u})+2(\varpi'_u+\varpi'_{n-1-u})=-2(\varpi_{n-1}+\varpi_n)$. Then $\delta_{\Gamma_{(n-1)/2}}=-2\varpi_{(n-1)/2}+2\varpi'_{(n-1)/2}=-(\varpi_{n-1}+\varpi_n)$. Finally observe that $j(\Gamma_{n-1})=\Gamma_n$ and then $\delta_{\Gamma_{n-1}}=-(\varpi_{n-1}+\varpi_n)=\delta_{\Gamma_n}$. It follows that the lower bound for ${\rm ch}(Y(\p_{\Lambda}))$ is equal to the right hand side of equality in the lemma. Now we have to compute the improved upper bound and for this purpose we have to compute, for all $\gamma\in T$, the $s(\gamma)\in\mathbb Q S$ such that $\gamma+s(\gamma)$ vanishes on $\h_{\Lambda}$, that is, we have to determine $s(\gamma)\in\mathbb Q S$ such that $\gamma+s(\gamma)=k(\varpi_{n-1}+\varpi_n)$ for some $k\in\mathbb Q$ (in fact $k\in\mathbb N$). Recall the sets $S$ and $T$  given in the beginning of this Section. For $1\le i\le(n-3)/2$, set $\gamma_i=\ep_{2i-1}-\ep_{2i}$. Assume first that $1\le i\le[(n-1)/4]$. Then one checks that
$$\begin{array}{cl} 
s(\gamma_i)=&2(\ep_{n-1}-\ep_1)+(\ep_{n-2}-\ep_n)+(\ep_{n-2}+\ep_n)+\\
&2\sum_{j=2}^{2i-1}(\ep_{n-j-1}-\ep_j)+4\sum_{j=1}^{i-1}(\ep_{2j-1}+\ep_{2j})+\\
&2\sum_{j=i+1}^{(n-1)/2-i}(\ep_{2j-1}+\ep_{2j})++3(\ep_{2i-1}+\ep_{2i})\in\mathbb N S\end{array}$$
so that $\gamma_i+s(\gamma_i)=2(\varpi_{n-1}+\varpi_n)$.

Now assume that $[(n-1)/4]<i\le(n-3)/2$. Then one checks that
$$\begin{array}{cl} s(\gamma_i)=&2\sum_{j=2}^{n-1-2i}(\ep_{n-1-j}-\ep_j)+4\sum_{j=1}^{(n-1)/2-i}(\ep_{2j-1}+\ep_{2j})+\\
&2\sum_{j=(n-1)/2-i+1}^{i-1}(\ep_{2j-1}+\ep_{2j})+(\ep_{2i-1}+\ep_{2i})+\\
&2(\ep_{n-1}-\ep_1)+(\ep_{n-2}-\ep_n)+(\ep_{n-2}+\ep_n)\in\mathbb N S\end{array}$$
so that $\gamma_i+s(\gamma_i)=2(\varpi_{n-1}+\varpi_n)$.

For $\gamma=\ep_{n-2}-\ep_2\in T$, one checks that $s(\gamma)=2(\ep_1+\ep_2)+(\ep_3+\ep_4)+\ldots+(\ep_{n-4}+\ep_{n-3})+(\ep_{n-1}-\ep_1)\in\mathbb N S$ so that $\gamma+s(\gamma)=\varpi_{n-1}+\varpi_n$.

For $\gamma=\ep_{n-2}+\ep_{n-1}\in T$, one checks that $s(\gamma)=(\ep_1+\ep_2)+(\ep_3+\ep_4)+\ldots+(\ep_{n-4}+\ep_{n-3})\in\mathbb N S$ so that $\gamma+s(\gamma)=\varpi_{n-1}+\varpi_n$.

Finally for $\gamma=\ep_{n-1}-\ep_n\in T$, one checks that $s(\gamma)=(\ep_1+\ep_2)+(\ep_3+\ep_4)+\ldots+(\ep_{n-4}+\ep_{n-3})+(\ep_{n-2}+\ep_n)\in\mathbb N S$ so that $\gamma+s(\gamma)=\varpi_{n-1}+\varpi_n$.

We deduce that the improved upper bound is equal to the right hand side of equality in the lemma. Hence the lemma, by what we said in Sect. \ref{IUB}. 
\end{proof}

\subsection{Existence of a Weierstrass section for coadjoint action.}

By what we said in Sect. \ref{IUB} (see also Remark \ref{rqeqIUB} of subsection \ref{REG}) we have the following Theorem.

\begin{thm}
Let $\g$ be a simple Lie algebra of type ${\rm D}_n$, with $n\ge 5$, $n$ odd, and $\p$ be the standard parabolic subalgebra of $\g$ associated with the subset $\pi'=\pi\setminus\{\alpha_{n-1},\,\alpha_n\}$ of the set $\pi$ of simple roots of $\g$. Then there exists a Weierstrass section for coadjoint action of the canonical truncation $\p_{\Lambda}$ of $\p$ and it follows that the algebra of symmetric invariants $Y(\p_{\Lambda})$ is a polynomial algebra over $\Bbbk$.
\end{thm}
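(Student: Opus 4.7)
The plan is to apply Proposition \ref{propAP} to the explicit data $(S^+,S^-,\{\Gamma_\gamma\}_{\gamma\in S},T,T^*)$ constructed in the preceding paragraphs, obtaining an adapted pair $(h,y)$ for $\p_\Lambda$, and then to invoke the improved-upper-bound machinery of Section \ref{IUB} to upgrade this adapted pair to a Weierstrass section. The simpler bound-coincidence criterion of Remark \ref{rqeqbounds} is not available here because some $\ep_\Gamma$ take the value $1/2$ (for instance for $\Gamma=\{\alpha_2,\alpha_{n-3}\}$ when $n\ge 7$, since then both $d_\Gamma\in\mathcal B_\pi$ and $d'_\Gamma\in\mathcal B_{\pi'}$).

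First I would verify the six conditions of Proposition \ref{propAP}. Conditions $(ii)$, $(iii)$, $(iv)$ follow from Lemma \ref{HS} applied to the largest Heisenberg sets $H_{\beta_i}$ and $H_{\beta'_1}$ together with the three ad hoc sets $\Gamma_{\tilde\beta_{(n-1)/2)}}$, $\Gamma_{\tilde\beta_{(n+1)/2}}$, $\Gamma_{-\tilde\beta'_i}$. Condition $(i)$ reduces to a determinant computation, ordering $S$ and a basis of $\h_\Lambda=\h'\oplus \Bbbk\mathcal H^{-1}(\ep_n)$ so that the matrix $(s_i(h_j))$ is block lower triangular with $\pm 1$ on the diagonal. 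Condition $(vi)$ follows from a count: both $\lvert T\rvert$ and $\lvert E(\pi')\rvert$ equal $(n+3)/2$. The most delicate point is condition $(v)$: I would apply Proposition \ref{condition(v)} (the triangle-of-roots lemma of subsection \ref{condition(v)}) with the triple $(\gamma_1,\gamma_2,\gamma_3)=(\ep_{n-2}+\ep_n,\,\ep_{n-2}-\ep_n,\,\ep_{k+1}+\ep_k)$ and appropriate $\gamma'_i$, so as to express $x_{\ep_{n-2}-\ep_k}$ modulo $\g_T$ as an element of $ad\,\p_\Lambda(y)$, for each $3\le k\le n-3$; the cases $k$ odd and $k$ even are handled symmetrically.

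Once $(h,y)$ is secured, the second step is to compute the lower bound of \eqref{ch}. Unwinding \eqref{poids} for the $\langle ij\rangle$-orbits $\Gamma_u=\{\alpha_u,\alpha_{n-1-u}\}$ ($1\le u\le(n-3)/2$), $\Gamma_{(n-1)/2}=\{\alpha_{(n-1)/2}\}$, $\Gamma_{n-1}=\{\alpha_{n-1}\}$, $\Gamma_n=\{\alpha_n\}$, one finds $\delta_{\Gamma_u}=-2(\varpi_{n-1}+\varpi_n)$ for every $u$, whereas $\delta_{\Gamma_{(n-1)/2}}=\delta_{\Gamma_{n-1}}=\delta_{\Gamma_n}=-(\varpi_{n-1}+\varpi_n)$, the last two coinciding because $j$ swaps $\alpha_{n-1}$ and $\alpha_n$ in odd rank.

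The main obstacle is the third step: verifying equality \eqref{eqIUB} of Sect. \ref{IUB}, i.e.\ that the multiset $\{-(\gamma+s(\gamma)):\gamma\in T\}$ matches $\{\delta_\Gamma:\Gamma\in E(\pi')\}$. I would exhibit, for each diagonal root $\gamma_i=\ep_{2i-1}-\ep_{2i}$ ($1\le i\le (n-3)/2$) and each of the three remaining roots $\ep_{n-2}-\ep_2$, $\ep_{n-2}+\ep_{n-1}$, $\ep_{n-1}-\ep_n$ in $T$, an explicit nonnegative combination $s(\gamma)\in\mathbb NS$ such that $\gamma+s(\gamma)$ equals $2(\varpi_{n-1}+\varpi_n)$ in the first family and $\varpi_{n-1}+\varpi_n$ in the second. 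The bookkeeping splits according to whether $i\le[(n-1)/4]$ or not, yielding two slightly different expressions for $s(\gamma_i)$ but the same total weight $2(\varpi_{n-1}+\varpi_n)$. Once equality \eqref{eqIUB} is established, \cite[Lem.\ 6.11]{J6bis} yields the restriction isomorphism $Y(\p_\Lambda)\isomto\Bbbk[y+\g_T]$; hence $y+\g_T$ is a Weierstrass section for coadjoint action of $\p_\Lambda$, and $Y(\p_\Lambda)=Sy(\p)$ is a polynomial algebra on $(n+3)/2$ generators, as claimed.
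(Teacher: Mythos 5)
Your proposal follows essentially the same route as the paper: construct the adapted pair via Proposition \ref{propAP} with exactly the data of Section \ref{case7} (including the triple $(\ep_{n-2}+\ep_n,\,\ep_{n-2}-\ep_n,\,\ep_{k+1}+\ep_k)$ for condition $(v)$), note that $\ep_\Gamma=1/2$ for $\Gamma=\{\alpha_2,\alpha_{n-3}\}$ forces the use of the improved upper bound, and verify \eqref{eqIUB} by computing $s(\gamma)$ for each $\gamma\in T$, with the same weights $-2(\varpi_{n-1}+\varpi_n)$ and $-(\varpi_{n-1}+\varpi_n)$ and the same split at $i=[(n-1)/4]$. This matches the paper's argument in both structure and detail.
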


\subsection{Weights and degrees of a set of generators.}\label{wdcas8}

By what we said in Sect. \ref{IUB} to each $\gamma\in T$ is associated an element $p_{\gamma}$ such that $\{p_{\gamma};\;\gamma\in T\}$ is a set of algebraically independent homogeneous and $\h$-weight generators of the polynomial algebra $Y(\p_{\Lambda})$. Moreover for all $\gamma\in T$, $p_{\gamma}$ has  a weight $wt(p_{\gamma})$ equal to $-(\gamma+s(\gamma))$ and a degree $\deg(p_{\gamma})$ equal to $1+\lvert s(\gamma)\rvert$. Below we give the weight $wt(p_{\gamma})$ and the degree $\deg(p_{\gamma})$ of $p_{\gamma}$, for all $\gamma\in T$. Set, for all $1\le i\le(n-3)/2$, $\gamma_i=\ep_{2i-1}-\ep_{2i}$.\bigskip

\begin{tabular}{|c|c|c|}
\hline
$\gamma\in T$&$wt(p_{\gamma})$&$\deg(p_{\gamma})$\\
\hline
$\gamma_i$, $1\le i\le[(n-1)/4]$&$-2(\varpi_{n-1}+\varpi_n)$&$n-1+4i$\\
\hline
$\gamma_i$, $[(n-1)/4]+1\le i\le(n-3)/2$&$-2(\varpi_{n-1}+\varpi_n)$&$3n-4i-1$\\
\hline
$\gamma=\ep_{n-2}-\ep_2$&$-(\varpi_{n-1}+\varpi_n)$&$(n+3)/2$\\
\hline
$\gamma=\ep_{n-2}+\ep_{n-1}$&$-(\varpi_{n-1}+\varpi_n)$&$(n-1)/2$\\
\hline
$\gamma=\ep_{n-1}-\ep_n$&$-(\varpi_{n-1}+\varpi_n)$&$(n+1)/2$\\
\hline
\end{tabular}

\section{Case \ref{cas8} for type D.}\label{Cas8}

In this Section we consider a simple Lie algebra $\g$ of type ${\rm D}_n$, $n\ge 5$, $n$ odd and the standard parabolic subalgebra $\p=\p_1$  associated with $\pi'=\pi\setminus\{\alpha_{n-3},\,\alpha_{n-1},\,\alpha_n\}$. It corresponds to the case \ref{cas8} of subsection \ref{Mainres}. As in previous case, both bounds in \eqref{ch} of Sect. \ref{bounds} do not coincide. Hence the existence of an adapted pair for $\p_{\Lambda}$ will not produce immediately a Weierstrass section for coadjoint action of $\p_{\Lambda}$. We will have to compute the improved upper bound mentioned in Sect. \ref{IUB} and show that the latter coincides with the lower bound in \eqref{ch}, namely that equality \eqref{eqIUB} holds. Then by Remark \ref{rqeqIUB} of subsection \ref{REG} this will produce a Weierstrass section for coadjoint action of $\p_{\Lambda}$.

Recall the elements $\beta_i=\ep_{2i-1}+\ep_{2i}$ of the Kostant cascade $\beta_{\pi}$ of $\g$.
We set $S=S^+\sqcup S^-$ with

$$S^+=\{\beta_i,\,1\le i\le (n-5)/2,\,\ep_{n-4}+\ep_{n-2},\,\ep_{n-3}+\ep_n,\,\ep_{n-3}-\ep_n\}$$
and
$$S^-=\{\ep_{n-3-k}-\ep_k,\,1\le k\le (n-5)/2\}.$$

For all $1\le i\le (n-5)/2$, we set $\Gamma_{\beta_i}=H_{\beta_i}$ the largest Heisenberg set with centre $\beta_i$ which is included in $\Delta^+$, as defined in subsection \ref{HS}.

We also set 
$$\begin{array}{cl}\Gamma_{\ep_{n-4}+\ep_{n-2}}=&\{\ep_{n-4}+\ep_{n-2},\,\ep_{n-4}+\ep_{n-1},\,\ep_{n-2}-\ep_{n-1},\\
&\ep_{n-4}+\ep_n,\,\ep_{n-2}-\ep_n,\,\ep_{n-4}-\ep_{n-1},\,\ep_{n-2}+\ep_{n-1},\\
&\ep_{n-4}-\ep_n,\,\ep_{n-2}+\ep_n,\,\ep_{n-4}-\ep_{n-3},\,\ep_{n-2}+\ep_{n-3}\},\end{array}$$

$$\Gamma_{\ep_{n-3}+\ep_n}=\{\ep_{n-3}+\ep_n,\,\ep_{n-3}-\ep_{n-1},\,\ep_{n-1}+\ep_n\},$$
$$\Gamma_{\ep_{n-3}-\ep_n}=\{\ep_{n-3}-\ep_n\}$$
and for all $1\le k\le (n-5)/2$,
$$\Gamma_{\ep_{n-3-k}-\ep_k}=\{\ep_{n-3-k}-\ep_k,\,\ep_{n-3-k}-\ep_j,\,\ep_j-\ep_k;\;k+1\le j\le n-4-k\}.$$

Finally we set 

$$T^*=\{\ep_{n-3}-\ep_k;\;1\le k\le n-2,\,k\neq n-3\}$$
and 
$$\begin{array}{cc}T=\{\ep_{2i-1}-\ep_{2i};\;1\le i\le (n-5)/2,\,\ep_{n-4}+\ep_{n-3},\\
\ep_{n-4}-\ep_{n-2},\,\ep_{n-3}+\ep_{n-1},\,\ep_{n-1}-\ep_n,\,\ep_{n-1}-\ep_{n-2}\}.\end{array}$$

By construction for all $\gamma\in S^+$, resp. $\gamma\in S^-$, we have that $\Gamma_{\gamma}\subset\Delta^+$, resp. $\Gamma_{\gamma}\subset\Delta^-_{\pi'}$, is a Heisenberg set with centre $\gamma$ and all the sets $\Gamma_{\gamma}$, for $\gamma\in S$, together with the sets $T$ and $T^*$ are disjoint. We easily verify that condition $(iv)$ of Prop. \ref{propAP} is satisfied, using $i)$ of Lemma \ref{HS}. Conditions $(ii)$ and $(iii)$ of Prop. \ref{propAP} follow easily from $ii)$, $iii)$ and $iv)$ of Lemma \ref{HS}.

\subsection{Conditions {\it (i)} of Proposition \ref{propAP}.}

We have the following lemma.

\begin{lm}
We have that $S_{\mid\h_{\Lambda}}$ is a basis for $\h_{\Lambda}^*$.
\end{lm}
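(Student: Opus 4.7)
The plan is a dimension count plus a block lower-triangular matrix argument, parallel in spirit to Lemma~\ref{baseFC} and to the analogous lemma in Section~\ref{case7}. Since $n$ is odd and neither $\alpha_{n-1}$ nor $\alpha_n$ lies in $\pi'$, the description of $\h_\Lambda$ recalled in Section~\ref{not} gives $\h_\Lambda = \h' \oplus \Bbbk(\alpha_n^\vee - \alpha_{n-1}^\vee)$, of dimension $(n-3)+1 = n-2$. A direct count yields $|S| = (n-5)/2 + 3 + (n-5)/2 = n-2$ as well. Hence it suffices to exhibit an ordering $(s_u)_{1\le u\le n-2}$ of $S$ and a basis $(h_v)_{1\le v\le n-2}$ of $\h_\Lambda$ for which $\det(s_u(h_v))\neq 0$.

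I would order $S$ by first taking the cascade elements $s_u = \beta_u$ for $1\le u\le (n-5)/2$, then the three exceptional elements $s_{(n-3)/2} = \ep_{n-3}+\ep_n$, $s_{(n-1)/2} = \ep_{n-3}-\ep_n$, $s_{(n+1)/2} = \ep_{n-4}+\ep_{n-2}$, and finally $s_{(n+1)/2+k} = \ep_{n-3-k}-\ep_k$ for $1\le k\le (n-5)/2$. For the basis, I take $h_v = \alpha_{2v}^\vee$ for $1\le v\le (n-5)/2$, then $h_{(n-3)/2} = \alpha_n^\vee - \alpha_{n-1}^\vee$, $h_{(n-1)/2} = \alpha_{n-4}^\vee$, $h_{(n+1)/2} = \alpha_{n-2}^\vee$, and for the last $(n-5)/2$ positions the basis of the subspace of $\h'$ spanned by $\alpha_1^\vee,\alpha_3^\vee,\ldots,\alpha_{n-6}^\vee$ furnished by Lemma~\ref{KCA} applied to the subsystem $\{\alpha_1,\ldots,\alpha_{n-5}\}$ of type ${\rm A}_{n-5}$, whose Kostant cascade is precisely $\{\ep_k-\ep_{n-3-k} : 1\le k\le (n-5)/2\}$.

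With these choices the matrix $(s_u(h_v))$ takes the block shape
$$\begin{pmatrix} A & 0 & 0 \\ \ast & M & 0 \\ \ast & \ast & C \end{pmatrix},$$
where $A$ is lower triangular with $1$ on the diagonal (as in Lemma~\ref{baseFC}, via $\beta_u = \varpi_{2u}-\varpi_{2u-2}$), $C$ is lower triangular with $-1$ on the diagonal by Lemma~\ref{KCA}, and direct computation yields
$$M = \begin{pmatrix} 2 & -1 & 0 \\ -2 & -1 & 0 \\ 0 & 1 & 1 \end{pmatrix}, \qquad \det M = -4,$$
obtained by expanding along the third column. The vanishing of the three upper-right blocks reduces to routine index comparisons: for $i\le (n-5)/2$ the root $\beta_i$ is supported in $\{\ep_1,\ldots,\ep_{n-5}\}$, so it annihilates $\alpha_n^\vee-\alpha_{n-1}^\vee$, $\alpha_{n-4}^\vee$ and $\alpha_{n-2}^\vee$; the identity $(\ep_{2i-1}+\ep_{2i},\ep_j-\ep_{j+1}) = 0$ for $j$ odd shows $\beta_i$ also vanishes on every odd-indexed coroot; and each of the three exceptional rows $\ep_{n-3}\pm\ep_n$ and $\ep_{n-4}+\ep_{n-2}$ is supported in $\{\ep_{n-4},\ep_{n-3},\ep_{n-2},\ep_n\}$, hence annihilates every $\alpha_j^\vee = \ep_j-\ep_{j+1}$ with $j$ odd and $j\le n-6$.

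The only non-automatic step is the middle block $M$: the naive matching of each exceptional $s$ with its ``obvious'' dual $h$ in $\h_\Lambda$ produces a vanishing diagonal entry (for instance $\ep_{n-3}-\ep_n$ annihilates $\alpha_{n-2}^\vee$), so the pairing must be permuted as above in order for the $3\times 3$ middle block to be nonsingular. Once that permutation is fixed, the rest is the bookkeeping described above, and the determinant equals $\det(A)\det(M)\det(C) = 1\cdot(-4)\cdot(-1)^{(n-5)/2}\neq 0$, so $S_{\mid\h_\Lambda}$ is a basis for $\h_\Lambda^*$.
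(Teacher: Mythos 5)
Your proof is correct and follows essentially the same route as the paper's: a dimension count ($\lvert S\rvert=n-2=\dim\h_{\Lambda}$) followed by an ordered block lower-triangular determinant whose first and last diagonal blocks are handled exactly as you do (via $\beta_u=\varpi_{2u}-\varpi_{2u-2}$ and Lemma \ref{KCA} for the ${\rm A}_{n-5}$ cascade). The only cosmetic difference is that the paper first replaces the pair $\ep_{n-3}\pm\ep_n$ by $\ep_{n-3},\,\ep_n$ (a nonsingular change of basis of their span) so that the middle reduces to three $1\times1$ diagonal entries, whereas you keep the original roots and compute the $3\times 3$ block $M$ with $\det M=-4$ directly; both yield the same conclusion.
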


\begin{proof}
First as in previous Section, one has that $\dim\h_{\Lambda}=\dim\h'+1$ since $\h_{\Lambda}=\h'\oplus\mathcal H^{-1}(\varpi_n-\varpi_{n-1})=\h'\oplus\mathcal H^{-1}(\ep_n)$ by Section \ref{not}. We check that $\lvert S\rvert=n-2=\dim\h_{\Lambda}$.

Set $s_i=\beta_i$ for all $1\le i\le (n-5)/2$, $s_{(n-3)/2}=\ep_{n-3}$, $s_{(n-1)/2}=\ep_n$, $s_{(n+1)/2}=\ep_{n-4}+\ep_{n-2}$,
$s_{(n+1)/2+k}=\ep_{n-3-k}-\ep_k$, $1\le k\le (n-5)/2$ and we take the elements of $S$ in this order.

For a basis $(h_v)$ of $\h_{\Lambda}$, we take in this order, 

$$\begin{array}{cc}\alpha_{2j}^\vee;\,1\le j\le (n-5)/2,\,\alpha_{n-4}^\vee,\,\mathcal H^{-1}(\ep_n),\,\alpha_{n-2}^\vee,\\
\,\alpha_{2k-1}^\vee,\,\alpha_{n-4-2k}^\vee;\,1\le k\le [(n-3)/4]\end{array}$$
without repetitions for the last coroots.

Then it suffices to show that $\det(s_u(h_v))_{1\le u,\,v\le n-2}\neq 0$.

One can easily verify that

$$(s_u(h_v))=\begin{pmatrix}A&0&0&0&0\\
*&-1&0&0&0\\
*&*&1&0&0\\
*&*&*&1&0\\
*&*&*&*&B\end{pmatrix}$$

where $A$, resp. $B$, is a lower triangular square matrix of size $(n-5)/2$, with one, resp. $-1$, on its diagonal. Hence the lemma.
\end{proof}

\subsection{Condition {\it (v)} of Proposition \ref{propAP}.}

It follows from lemma below. Set $y=\sum_{\alpha\in S} x_{\alpha}$.

\begin{lm}

For all $1\le k\le n-2$, $k\neq n-3$, we have that $x_{\ep_{n-3}-\ep_k}\in ad\,\p_{\Lambda}(y)+\g_T$.

\end{lm}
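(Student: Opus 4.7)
The plan is to adapt the strategy used in Section~\ref{FC} (see the proof of Lemma~\ref{condition(v)caseeven}): for each admissible $k$, apply Lemma and Proposition~\ref{condition(v)} with the choice $\gamma_1 = \ep_{n-3}+\ep_n \in S^+$ and $\gamma_2 = \ep_{n-3}-\ep_n \in S^+$, together with an auxiliary root $\gamma'_2 = \ep_n - \ep_k$ so that $\gamma_2 + \gamma'_2 = \ep_{n-3}-\ep_k$, and a third root $\gamma_3 \in S^+$ chosen according to $k$. I would split the proof into four cases: (i) $k$ odd with $1 \le k \le n-6$, taking $\gamma_3 = \beta_{(k+1)/2} = \ep_k + \ep_{k+1}$, $\gamma'_1 = \ep_{k+1}-\ep_{n-3}$, $\gamma'_3 = -\ep_k-\ep_n$; (ii) $k$ even with $2 \le k \le n-5$, taking $\gamma_3 = \beta_{k/2} = \ep_{k-1}+\ep_k$ with the analogous $\gamma'_1 = \ep_{k-1}-\ep_{n-3}$, $\gamma'_3 = -\ep_k-\ep_n$; and the two boundary cases (iii) $k = n-4$ and (iv) $k = n-2$, where $\gamma_3 = \ep_{n-4}+\ep_{n-2} \in S^+$, and the auxiliary roots are adjusted accordingly. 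Verifying conditions $(1)$–$(5)$ of Lemma~\ref{condition(v)} in each case is a direct check of root-sum identities.

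Proposition~\ref{condition(v)} then yields, after a suitable rescaling, an expansion
$$ad\,x_{\gamma'_2}(y) \;=\; x_{\gamma_1+\gamma'_1} \;+\; x_{\ep_{n-3}-\ep_k} \;+\; X',$$
together with analogous formulas for $ad\,x_{\gamma'_1}(y)$ and $ad\,x_{\gamma'_3}(y)$. To conclude it then remains to show that the residual terms $X$, $X'$, $X''$ lie in $ad\,\p_\Lambda(y) + \g_T$. Most contributions vanish for weight reasons: the bracket of $x_{\gamma'_i}$ with a summand $x_\alpha$ of $y$ fails to produce a root whenever the four $\ep$-coordinates of $\gamma'_i + \alpha$ cannot cancel down to a root. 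The non-vanishing residues then come from brackets of the $\gamma'_i$ with elements of $S^-$ (in cases (i) and (ii)) or with the remaining elements of $S^+$ (in cases (iii) and (iv)); each such residual is a single root vector $x_\delta$ where $\delta$ either already lies in $T$, or can be written as $ad\,x_{\delta'}(y)$ for some $\delta' \in \p_\Lambda$ by a secondary direct computation or a further application of Lemma~\ref{condition(v)} of exactly the same shape.

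The main obstacle is the boundary cases $k \in \{n-4,\,n-2\}$. Here $\gamma_3 = \ep_{n-4}+\ep_{n-2}$ is not a Kostant cascade root, and its Heisenberg set $\Gamma_{\ep_{n-4}+\ep_{n-2}}$ has the nontrivial internal structure listed in the definition of $S$ (involving $\ep_{n-4}\pm\ep_{n-1}$, $\ep_{n-2}\pm\ep_{n-1}$, $\ep_{n-4}\pm\ep_n$, $\ep_{n-2}\pm\ep_n$, $\ep_{n-4}-\ep_{n-3}$, $\ep_{n-2}+\ep_{n-3}$). Consequently the residual terms arising in these cases link many root vectors at the ``top'' of the root system, and one must chain several applications of Proposition~\ref{condition(v)} together, exploiting the elements $\ep_{n-4}+\ep_{n-3},\,\ep_{n-4}-\ep_{n-2},\,\ep_{n-3}+\ep_{n-1},\,\ep_{n-1}-\ep_n,\,\ep_{n-1}-\ep_{n-2} \in T$, to funnel every residual into $ad\,\p_\Lambda(y)+\g_T$. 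The case $k = n-2$, where $\ep_{n-3}-\ep_{n-2} = \alpha_{n-3}$ is precisely the simple root removed from $\pi'$, is the most delicate, since the natural ``linking'' auxiliary roots are themselves close to $S$ and one must track carefully which of the secondary residuals already belong to $T$ and which require one more reduction step.
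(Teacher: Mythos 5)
Your proposal is correct and follows essentially the same route as the paper: the same applications of Lemma and Proposition \ref{condition(v)} with the same triples of roots of $S$ and the same auxiliary roots (only the labels $1,2,3$ are permuted so that the target $\ep_{n-3}-\ep_k$ appears as $\gamma_2+\gamma'_2$ rather than $\gamma_1+\gamma'_1$ or $\gamma_3+\gamma'_3$), the same case split on the parity of $k$ and on the boundary values $k\in\{n-4,\,n-2\}$, and the same handling of residuals. The one point where your assessment diverges from what actually happens is that in the boundary cases $k\in\{n-4,\,n-2\}$ all the residuals $X,\,X',\,X''$ vanish --- these are the easiest cases, not the most delicate, and no chaining is needed --- whereas the only nonzero residual occurs for generic odd (or even) $k\le (n-7)/2$ from the bracket of $\gamma'_i$ with the element $\ep_{n-4-k}-\ep_{k+1}$ of $S^-$, exactly as you predict in cases (i) and (ii).
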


\begin{proof}

We will use Lemma and Proposition of subsection \ref{condition(v)}. First assume that $k=n-2$ and set $$\begin{cases}\gamma_1=\ep_{n-4}+\ep_{n-2}\in S,\,\gamma'_1=-\ep_n-\ep_{n-2}\not\in S\\
\gamma_2=\ep_{n-3}+\ep_n\in S,\,\gamma'_2=\ep_{n-4}-\ep_{n-3}\not\in S\\
\gamma_3=\ep_{n-3}-\ep_n\in S,\,\gamma'_3=\ep_n-\ep_{n-2}\not\in S\\
\end{cases}$$

One checks easily that all conditions of Lemma \ref{condition(v)} are satisfied. Moreover since one can take the vectors $X,\,X',\,X''$ in Prop. \ref{condition(v)} equal to zero, one deduces that $x_{\ep_{n-3}-\ep_{n-2}}\in ad\,\p_{\Lambda}(y)+\g_T$.

Assume now that $k=n-4$ and set $$\begin{cases}\gamma_1=\ep_{n-3}+\ep_n\in S,\,\gamma'_1=-\ep_n-\ep_{n-4}\not\in S\\
\gamma_2=\ep_{n-2}+\ep_{n-4}\in S,\,\gamma'_2=\ep_n-\ep_{n-4}\not\in S\\
\gamma_3=\ep_{n-3}-\ep_n\in S,\,\gamma'_3=\ep_{n-2}-\ep_{n-3}\not\in S\\
\end{cases}$$

One checks that all conditions of Lemma \ref{condition(v)} are satisfied. Moreover since one can take the vectors $X,\,X',\,X''$ in Prop. \ref{condition(v)} equal to zero, one deduces that $x_{\ep_{n-3}-\ep_{n-4}}\in ad\,\p_{\Lambda}(y)+\g_T$.

Assume that $1\le k\le n-6$, $k$ odd, and set
$$\begin{cases} \gamma_1=\ep_{n-3}+\ep_n\in S,\,\gamma'_1=-\ep_k-\ep_n\not\in S\\
\gamma_2=\ep_k+\ep_{k+1}\in S,\,\gamma'_2=\ep_n-\ep_k\not\in S\\
\gamma_3=\ep_{n-3}-\ep_n\in S,\,\gamma'_3=\ep_{k+1}-\ep_{n-3}\not\in S\\
\end{cases}$$

One checks that all conditions of Lemma \ref{condition(v)} are satisfied. Moreover since one can take in Prop. \ref{condition(v)}, $X=X'=0$ and $X''=x_{\ep_{n-4-k}-\ep_{n-3}}=ad\,x_{-\ep_{n-5-k}-\ep_{n-3}}(y)$ if $k\le(n-7)/2$, $X''=0$ otherwise, one deduces that $x_{\ep_{n-3}-\ep_k}\in ad\,\p_{\Lambda}(y)+\g_T$.

A similar computation for $2\le k\le n-5$, $k$ even, shows that $x_{\ep_{n-3}-\ep_k}\in ad\,\p_{\Lambda}(y)+\g_T$.
\end{proof}

\subsection{Condition {\it (vi)} of Proposition \ref{propAP}.}\label{Tcas8} It follows from lemma below.

\begin{lm}
One has that $\lvert T\rvert=\ind\p_{\Lambda}=\lvert E(\pi')\rvert$.

\end{lm}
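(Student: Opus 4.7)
The plan is to verify the equality by a direct enumeration on both sides, in the spirit of Lemma \ref{TD} and the analogous counting lemma in Sect. \ref{case7}. On one side, $\lvert T\rvert$ is read off the definition in the paragraph preceding this lemma: the family $\{\ep_{2i-1}-\ep_{2i}\mid 1\le i\le (n-5)/2\}$ contributes $(n-5)/2$ elements, and the five explicit roots $\ep_{n-4}+\ep_{n-3}$, $\ep_{n-4}-\ep_{n-2}$, $\ep_{n-3}+\ep_{n-1}$, $\ep_{n-1}-\ep_n$, $\ep_{n-1}-\ep_{n-2}$ bring the total to $\lvert T\rvert=(n+5)/2$.

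On the other side, I would compute $\lvert E(\pi')\rvert$ using \eqref{index} by describing the involutions $i,j$ on $\pi$ and then the orbits of the cyclic group $\langle ij\rangle$. Since $n$ is odd, $j$ fixes $\alpha_1,\ldots,\alpha_{n-2}$ and exchanges $\alpha_{n-1}\leftrightarrow\alpha_n$. The subset $\pi'$ has connected components $\pi'_1=\{\alpha_1,\ldots,\alpha_{n-4}\}$ of type ${\rm A}_{n-4}$ and $\{\alpha_{n-2}\}$ of type ${\rm A}_1$, so $i$ acts on $\pi'_1$ by $\alpha_k\mapsto\alpha_{n-3-k}$ and trivially on $\{\alpha_{n-2}\}$. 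For the elements of $\pi\setminus\pi'=\{\alpha_{n-3},\alpha_{n-1},\alpha_n\}$, the recipe in \ref{ind} gives $i(\alpha_{n-3})=\alpha_{n-3}$ (since $j(\alpha_{n-3})=\alpha_{n-3}\notin\pi'$), while $i(\alpha_{n-1})=\alpha_n$ and $i(\alpha_n)=\alpha_{n-1}$ (since in each case $j(\alpha)\notin\pi'$).

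Composing, $ij$ acts on $\pi'_1$ as the same involution $\alpha_k\leftrightarrow\alpha_{n-3-k}$, fixes $\alpha_{n-2}$ and $\alpha_{n-3}$, and — this is the crucial point — fixes $\alpha_{n-1}$ and $\alpha_n$ separately, because $ij(\alpha_{n-1})=i(\alpha_n)=\alpha_{n-1}$ and $ij(\alpha_n)=i(\alpha_{n-1})=\alpha_n$. Listing the $\langle ij\rangle$-orbits: on $\pi'_1$ we get $(n-5)/2$ two-element orbits $\{\alpha_k,\alpha_{n-3-k}\}$ for $1\le k\le (n-5)/2$, together with the singleton $\{\alpha_{(n-3)/2}\}$ at the centre of ${\rm A}_{n-4}$; and then the four additional singletons $\{\alpha_{n-3}\}$, $\{\alpha_{n-2}\}$, $\{\alpha_{n-1}\}$, $\{\alpha_n\}$. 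Hence $\lvert E(\pi')\rvert=(n-5)/2+5=(n+5)/2=\lvert T\rvert$, and combining with \eqref{index} yields $\lvert T\rvert=\ind\p_{\Lambda}=\lvert E(\pi')\rvert$.

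The only genuine subtlety, and the place to be careful, is reading $\langle ij\rangle$ as the cyclic group generated by the composition $ij$: this is what forces $\{\alpha_{n-1}\}$ and $\{\alpha_n\}$ to be two separate orbits rather than being grouped together. This is exactly the phenomenon underlying \ref{degreegenerator}, where an orbit $\Gamma$ with $\Gamma\neq j(\Gamma)$ contributes a pair of generators $s_\Gamma,t_\Gamma$, and it is what produces the two extra elements of $T$ (namely $\ep_{n-1}-\ep_n$ and $\ep_{n-1}-\ep_{n-2}$) characteristic of the type $\mathrm{D}_n$, $n$ odd case with both $\alpha_{n-1},\alpha_n\notin\pi'$.
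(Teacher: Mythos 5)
Your proposal is correct and follows the same route as the paper: count $\lvert T\rvert=(n-5)/2+5=(n+5)/2$ directly and match it against $\lvert E(\pi')\rvert$ computed via \eqref{index}; the paper simply asserts the orbit list "one easily checks", whereas you carry out the computation of $i$, $j$ and the $\langle ij\rangle$-orbits explicitly, correctly obtaining the $(n-5)/2$ pairs $\{\alpha_u,\alpha_{n-3-u}\}$ and the five singletons $\{\alpha_{(n-3)/2}\},\{\alpha_{n-3}\},\{\alpha_{n-2}\},\{\alpha_{n-1}\},\{\alpha_n\}$. Your observation that $ij$ fixes $\alpha_{n-1}$ and $\alpha_n$ separately (even though $j$ swaps them) is exactly what the paper's listing $\Gamma_{n-1}=\{\alpha_{n-1}\}$, $\Gamma_n=\{\alpha_n\}$ encodes, so no gap remains.
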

\begin{proof}
Recall that $E(\pi')$ is the set of $\langle ij\rangle$-orbits in $\pi$. One easily checks that

$$\begin{array}{cc}E(\pi')=\Bigl\{\Gamma_u=\{\alpha_u,\,\alpha_{n-3-u}\};\;1\le u\le (n-5)/2,\,\Gamma_{(n-3)/2}=\{\alpha_{(n-3)/2}\},\\
\,\Gamma_{n-3}=\{\alpha_{n-3}\},\,\Gamma_{n-2}=\{\alpha_{n-2}\},\,\Gamma_{n-1}=\{\alpha_{n-1}\},\,\Gamma_n=\{\alpha_n\}\Bigr\}\end{array}$$
Hence $\ind\p_{\Lambda}=\lvert E(\pi')\rvert=(n-5)/2+5=(n+5)/2$, which is equal to $\lvert T\rvert$ (see beginning of this Section).
\end{proof}

\subsection{The semisimple element of the adapted pair.}

All conditions of Proposition \ref{propAP} are satisfied. Hence $y=\sum_{\gamma\in S} x_{\gamma}$ is regular in $\p_{\Lambda}^*$ and there exists a uniquely defined semisimple element $h\in\h_{\Lambda}$ such that $ad\,h(y)=-y$, namely such that $(h,\,y)$ is an adapted pair for $\p_{\Lambda}$. Below we give the semisimple element $h$ :

$$\begin{array}{cc}
h=-\sum_{k=1}^{(n-5)/2}k\alpha_{2k}^\vee+\sum_{k=1}^{[(n-3)/4]}((n-3)/2+k)\alpha_{2k-1}^\vee+\\
\sum_{k=[(n-3)/4]+1}^{(n-5)/2}(3(n-3)/2+1-3k)\alpha_{2k-1}^\vee+\\
\alpha_{n-4}^\vee-((n-1)/2)\alpha_{n-2}^\vee\in\h'\subset\h_{\Lambda}\\
\end{array}$$

\subsection{Computation of the improved upper bound.}

Here both bounds in \eqref{ch} of Sect. \ref{bounds} do not coincide since, for $\Gamma=\Gamma_{n-3}\in E(\pi')$, we have that $\ep_{\Gamma_{n-3}}=1/2$ (recall  \eqref{epsilon} of subsection \ref{compepsilon}). Indeed by subsection \ref{compepsilon}, we have that $d_{\Gamma_{n-3}}=\varpi_{n-3}\in\mathcal B_{\pi}$ and $d'_{\Gamma_{n-3}}=0\in\mathcal B_{\pi'}$. Hence the existence of an adapted pair for $\p_{\Lambda}$ is not sufficient to assure the existence of a Weierstrass section for coadjoint action of $\p_{\Lambda}$. We will show below that \eqref{eqIUB} of Sect. \ref{IUB} holds and by what we said in Sect. \ref{IUB} it will be sufficient to provide a Weierstrass section.

\begin{lm}
We have that
$$\begin{array}{cc}
{\rm ch}\, (Y(\p_{\Lambda}))=(1-e^{-2\varpi_{n-3}})^{-(n-3)/2}\times(1-e^{-\varpi_{n-3}})^{-1}\times\\
(1-e^{-(\varpi_{n-3}+\varpi_{n-1}+\varpi_n)})^{-1}\times
(1-e^{-(\varpi_{n-1}+\varpi_n)})^{-2}\end{array}$$

\end{lm}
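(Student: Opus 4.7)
The plan is to mimic the strategy of Lemma \ref{comp} and of the corresponding lemma in Section~\ref{case7}: first compute the lower bound in \eqref{ch} of Sect.~\ref{bounds} explicitly, then compute the improved upper bound in \eqref{Improv} of Sect.~\ref{IUB} coming from the adapted pair $(h,y)$ constructed above, and finally check that these two bounds coincide. By \eqref{ch} and the remark after \eqref{eqIUB} in Sect.~\ref{IUB}, this forces equality throughout and yields the stated formula for ${\ch}\,(Y(\p_{\Lambda}))$.

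For the lower bound, I will recall the $\langle ij\rangle$-orbits $\Gamma_u$, $\Gamma_{(n-3)/2}$, $\Gamma_{n-3}$, $\Gamma_{n-2}$, $\Gamma_{n-1}$, $\Gamma_n$ listed in the proof of Lemma~\ref{Tcas8}, and for each $\Gamma$ I will compute $\delta_{\Gamma}$ using \eqref{poids}. Note that $j(\Gamma_{n-1})=\Gamma_n$ while all other orbits are $j$-stable. The Cartan matrix relation $2\varpi_{n-2}=\alpha_{n-2}+\varpi_{n-3}+\varpi_{n-1}+\varpi_n$, together with $\varpi'_{n-2}=\alpha_{n-2}/2$ (since $\{\alpha_{n-2}\}$ is a connected component of $\pi'$ of type ${\rm A}_1$), will give $\delta_{\Gamma_{n-2}}=-(\varpi_{n-3}+\varpi_{n-1}+\varpi_n)$. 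Similar, shorter Cartan matrix identifications give $\delta_{\Gamma_u}=-2\varpi_{n-3}$ for $1\le u\le(n-5)/2$, $\delta_{\Gamma_{(n-3)/2}}=-\varpi_{n-3}$, $\delta_{\Gamma_{n-3}}=-2\varpi_{n-3}$, and $\delta_{\Gamma_{n-1}}=\delta_{\Gamma_n}=-(\varpi_{n-1}+\varpi_n)$. Multiplying the $(1-e^{\delta_{\Gamma}})^{-1}$ factors then reproduces the right hand side of the displayed formula.

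For the improved upper bound, for each $\gamma\in T$ (listed in the paragraph preceding subsection~\ref{Tcas8}) I have to exhibit an explicit $s(\gamma)\in\mathbb Q S$ with $\gamma+s(\gamma)$ vanishing on $\h_{\Lambda}=\h'\oplus\Bbbk(\alpha_n^\vee-\alpha_{n-1}^\vee)$; equivalently, $\gamma+s(\gamma)$ must lie in $\Bbbk\varpi_{n-3}\oplus\Bbbk(\varpi_{n-1}+\varpi_n)$. The cardinalities match on the nose: $|T|=(n+5)/2=(n-3)/2+1+1+2$, one entry of $T$ per factor in the right hand side. I will split $T$ into four groups according to the expected value of $-(\gamma+s(\gamma))$: the $(n-5)/2$ roots $\gamma_i=\ep_{2i-1}-\ep_{2i}$ and the root $\ep_{n-4}-\ep_{n-2}$ should give the weight $2\varpi_{n-3}$, the root $\ep_{n-4}+\ep_{n-3}$ should give $\varpi_{n-3}$, the root $\ep_{n-1}-\ep_{n-2}$ (the one bridging the removed $\alpha_{n-3}$ with the removed pair $\alpha_{n-1},\alpha_n$) should give $\varpi_{n-3}+\varpi_{n-1}+\varpi_n$, and the two remaining roots $\ep_{n-3}+\ep_{n-1}$ and $\ep_{n-1}-\ep_n$ should give $\varpi_{n-1}+\varpi_n$. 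For each root I will write $s(\gamma)$ as an explicit nonnegative integer combination of elements of $S$; the combinations will closely resemble those in the proof of Lemma~\ref{comp} and the lemma of Section~\ref{case7}, the only novelty being the use of the two roots $\ep_{n-3}\pm\ep_n\in S$ to compensate for the removal of $\alpha_{n-1},\alpha_n$ and the use of $\ep_{n-4}+\ep_{n-2}\in S$ to handle the removed $\alpha_{n-3}$.

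The only real obstacle is bookkeeping: verifying that for each $\gamma\in T$ one can expand $\gamma+s(\gamma)$ as the prescribed fundamental weight combination, noting in particular that the condition $\gamma+s(\gamma)\in\Bbbk\varpi_{n-3}\oplus\Bbbk(\varpi_{n-1}+\varpi_n)$ (rather than $\Bbbk\varpi_{n-3}\oplus\Bbbk\varpi_{n-1}\oplus\Bbbk\varpi_n$ separately) forces the $\varpi_{n-1}$- and $\varpi_n$-coefficients to be equal; this is precisely why we took $\ep_{n-3}+\ep_n$ and $\ep_{n-3}-\ep_n$ together in $S^+$, so that each use of one is balanced by a use of the other in $s(\gamma)$. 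Once these explicit expansions are checked case by case, \eqref{eqIUB} follows and the character formula is proved.
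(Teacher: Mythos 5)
Your proposal follows exactly the paper's proof: compute the lower bound of \eqref{ch} from the weights $\delta_{\Gamma}$ of the orbits in $E(\pi')$, compute the improved upper bound of Sect.~\ref{IUB} from the $s(\gamma)$, $\gamma\in T$, and observe that the two products coincide; moreover all the values you predict (the $\delta_{\Gamma}$'s, including $\delta_{\Gamma_{n-2}}=-(\varpi_{n-3}+\varpi_{n-1}+\varpi_n)$ via $\varpi'_{n-2}=\alpha_{n-2}/2$, and the partition of $T$ into groups contributing $2\varpi_{n-3}$, $\varpi_{n-3}$, $\varpi_{n-3}+\varpi_{n-1}+\varpi_n$ and $\varpi_{n-1}+\varpi_n$) agree with what the paper actually computes. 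The only part left implicit is the explicit expansion of each $s(\gamma)$ in $\mathbb N S$, which is the substance of the verification and which the paper carries out case by case; note also that your remark that every use of $\ep_{n-3}+\ep_n$ is balanced by one of $\ep_{n-3}-\ep_n$ inside $s(\gamma)$ is not literally true for $\gamma=\ep_{n-1}-\ep_n$, where the single occurrence of $\ep_{n-3}+\ep_n$ instead cancels the $-\ep_n$ of $\gamma$ itself, the correct invariant being that the total $\ep_n$-coefficient of $\gamma+s(\gamma)$ vanishes.
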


\begin{proof}

We will prove that the improved upper bound mentioned in Sect. \ref{IUB} is equal to the lower bound appearing in left hand side of \eqref{ch} of Sect. \ref{bounds}, namely that \eqref{eqIUB} of Sect. \ref{IUB} holds.

Recall that the lower bound for ${\rm ch}\, (Y(\p_{\Lambda}))$ is equal to $\prod_{\Gamma\in E(\pi')}(1-e^{\delta_{\Gamma}})^{-1}$ where $\delta_{\Gamma}$ is given by \eqref{poids} of subsection \ref{comppoids}.

Recall the set $E(\pi')$ computed in the proof of Lemma \ref{Tcas8} and that for all $\Gamma\in E(\pi')$ one has that $i(\Gamma\cap\pi')=j(\Gamma)\cap\pi'$ (by \ref{comppoids}). Then for $1\le u\le (n-5)/2$, and $\Gamma_u=\{\alpha_u,\,\alpha_{n-3-u}\}$, one has that
$$\begin{array}{ccl}\delta_{\Gamma_u}&=&-2(\varpi_u+\varpi_{n-3-u})+2(\varpi'_u+\varpi'_{n-3-u})\\
&=&-2(\ep_1+\ldots+\ep_u+\ep_1+\ldots+\ep_{n-3-u})+\\
&&2(\ep_1-\ep_{n-3}+\ldots+\ep_u-\ep_{n-2-u})\\
&=&-2\varpi_{n-3}.\end{array}$$ 
For $\Gamma_{(n-3)/2}=\{\alpha_{(n-3)/2}\}$, one has that 
$$\begin{array}{ccl}\delta_{\Gamma_{(n-3)/2}}&=&-2\varpi_{(n-3)/2}+2\varpi'_{(n-3)/2}\\
&=&-2(\ep_1+\ldots+\ep_{(n-3)/2})+\\
&&2(\ep_1-\ep_{n-3}+\ep_2-\ep_{n-4}+\ldots+\ep_{(n-3)/2}-\ep_{(n-1)/2})\\
&=&-(\ep_1+\ldots+\ep_{n-3})=-\varpi_{n-3}.\end{array}$$
Then for $\Gamma_{n-3}=\{\alpha_{n-3}\}$, one has that $\delta_{\Gamma_{n-3}}=-2\varpi_{n-3}$.
For $\Gamma_{n-2}=\{\alpha_{n-2}\}$, one has that 
$$\begin{array}{ccl}
\delta_{\Gamma_{n-2}}&=&-2\varpi_{n-2}+2\varpi'_{n-2}\\
&=&-2(\ep_1+\ldots+\ep_{n-2})+(\ep_{n-2}-\ep_{n-1})\\
&=&-2(\ep_1+\ldots+\ep_{n-3})-\ep_{n-2}-\ep_{n-1}\\
&=&-(\varpi_{n-3}+\varpi_{n-1}+\varpi_n)\\
\end{array}$$
Finally for $\Gamma_{n-1}=\{\alpha_{n-1}\}$ and for $\Gamma_n=\{\alpha_n\}=j(\Gamma_{n-1})$, one has that
$\delta_{\Gamma_{n-1}}=\delta_{\Gamma_n}=-(\varpi_{n-1}+\varpi_n)$. Hence the right hand side of equality of the lemma is equal to the lower bound for ${\rm ch}\, (Y(\p_{\Lambda}))$.

Now the improved upper bound for ${\rm ch}\, (Y(\p_{\Lambda}))$ is equal to $\prod_{\gamma\in T}(1-e^{-(\gamma+s(\gamma))})^{-1}$, where we have that $ad\,\p_{\Lambda}(y)\oplus\g_T=\p_{\Lambda}^*$ with $\dim\g_T= \ind\p_{\Lambda}$ and where, for all $\gamma\in T$, $s(\gamma)\in\mathbb Q S$ is such that $\gamma+s(\gamma)$ vanishes on $\h_{\Lambda}$, that is, $\gamma+s(\gamma)=k\varpi_{n-3}+k'(\varpi_{n-1}+\varpi_n)$, with $k,\,k'\in\mathbb Q$.

Set, for all $1\le i\le (n-5)/2$, $\gamma_i=\ep_{2i-1}-\ep_{2i}\in T$.

Assume first that $1\le i\le[(n-3)/4]$. Then one has that
$$\begin{array}{cl}
s(\gamma_i)=&(\ep_{n-3}-\ep_n)+(\ep_{n-3}+\ep_n)+2\sum_{j=1}^{2i-1}(\ep_{n-3-j}-\ep_j)+\\
&4\sum_{j=1}^{i-1} (\ep_{2j-1}+\ep_{2j})+2\sum_{j=i+1}^{(n-3)/2-i}(\ep_{2j-1}+\ep_{2j})+3(\ep_{2i-1}+\ep_{2i})\end{array}$$
so that $\gamma_i+s(\gamma_i)=2\varpi_{n-3}$.

Now for $[(n-3)/4]+1\le i\le (n-5)/2$, one has that

$$\begin{array}{ccl}
s(\gamma_i)&=&(\ep_{n-3}-\ep_n)+(\ep_{n-3}+\ep_n)+2\sum_{j=1}^{n-3-2i}(\ep_{n-3-j}-\ep_j)\\
&&+4\sum_{j=1}^{(n-3)/2-i} (\ep_{2j-1}+\ep_{2j})+2\sum_{j=(n-3)/2-i+1}^{i-1}(\ep_{2j-1}+\ep_{2j})\\
&&+(\ep_{2i-1}+\ep_{2i})\end{array}$$
so that $\gamma_i+s(\gamma_i)=2\varpi_{n-3}$.

For $\gamma=\ep_{n-4}+\ep_{n-3}\in T$, one has that $s(\gamma)=(\ep_1+\ep_2)+\ldots+(\ep_{n-6}+\ep_{n-5})$, so that
$\gamma+s(\gamma)=\varpi_{n-3}$.

For $\gamma=\ep_{n-4}-\ep_{n-2}\in T$, one has that $s(\gamma)=2((\ep_1+\ep_2)+\ldots+(\ep_{n-6}+\ep_{n-5}))+(\ep_{n-4}+\ep_{n-2})+(\ep_{n-3}+\ep_n)+(\ep_{n-3}-\ep_n)$ so that $\gamma+s(\gamma)=2\varpi_{n-3}$.

For $\gamma=\ep_{n-3}+\ep_{n-1}\in T$, one has that $s(\gamma)=(\ep_1+\ep_2)+\ldots+(\ep_{n-6}+\ep_{n-5})+(\ep_{n-4}+\ep_{n-2})$ so that $\gamma+s(\gamma)=\ep_1+\ep_2+\ldots+\ep_{n-3}+\ep_{n-2}+\ep_{n-1}=\varpi_{n-1}+\varpi_n$.

For $\gamma=\ep_{n-1}-\ep_n\in T$, one has that $s(\gamma)=(\ep_1+\ep_2)+\ldots+(\ep_{n-6}+\ep_{n-5})+(\ep_{n-4}+\ep_{n-2})+(\ep_{n-3}+\ep_n)$ so that $\gamma+s(\gamma)=\varpi_{n-1}+\varpi_n$.

Finally for $\gamma=\ep_{n-1}-\ep_{n-2}\in T$, one has that $s(\gamma)=2((\ep_1+\ep_2)+\ldots+(\ep_{n-6}+\ep_{n-5}))+2(\ep_{n-4}+\ep_{n-2})+(\ep_{n-3}+\ep_n)+(\ep_{n-3}-\ep_n)$ so that
$\gamma+s(\gamma)=2(\ep_1+\ldots+\ep_{n-3})+\ep_{n-2}+\ep_{n-1}=\varpi_{n-3}+\varpi_{n-1}+\varpi_n$.
Thus we obtain that the improved upper bound is also equal to the right hand side of the equality in the lemma, which gives the lemma, by what we said in Sect. \ref{IUB}.
\end{proof}

\subsection{Existence of a Weierstrass section.}

By the above (see also Remark \ref{rqeqIUB} of subsection \ref{REG}) one can deduce the following Theorem.

\begin{thm}

Let $\g$ be a simple Lie algebra of type ${\rm D}_n$, with $n\ge 5$, $n$ odd, and let $\p$ be a standard parabolic subalgebra of $\g$ associated with the subset of simple roots $\pi'=\pi\setminus\{\alpha_{n-3},\,\alpha_{n-1},\,\alpha_n\}$. Then there exists a Weierstrass section for coadjoint action of the canonical truncation $\p_{\Lambda}$ of $\p$ and it follows that $Sy(\p)=Y(\p_{\Lambda})$ is a polynomial algebra over $\Bbbk$.

\end{thm}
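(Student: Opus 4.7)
The plan is to invoke Remark (2)(b) of subsection \ref{REG}: construct an adapted pair $(h,y)$ for $\p_\Lambda$ via Proposition \ref{propAP}, then verify equality \eqref{eqIUB} of Section \ref{IUB} so that $y+\g_T$ becomes a Weierstrass section. The direct approach via Remark (2)(a) fails here, since by subsection \ref{compepsilon} the orbit $\Gamma_{n-3}=\{\alpha_{n-3}\}\in E(\pi')$ satisfies $j(\Gamma_{n-3})=\Gamma_{n-3}$ with $d_{\Gamma_{n-3}}=\varpi_{n-3}\in\mathcal{B}_\pi$ and $d'_{\Gamma_{n-3}}=0\in\mathcal{B}_{\pi'}$, whence $\varepsilon_{\Gamma_{n-3}}=1/2$ and the two bounds in \eqref{ch} do not coincide.

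First I would exhibit explicit sets $S=S^+\sqcup S^-\subset\Delta^+\sqcup\Delta^-_{\pi'}$, Heisenberg sets $\Gamma_\gamma$ for $\gamma\in S$, and subsets $T,T^*$ subject to the constraints of Proposition \ref{propAP}. For $S^+$ I would take the truncated Kostant cascade $\{\beta_i=\varepsilon_{2i-1}+\varepsilon_{2i}:1\le i\le(n-5)/2\}$ together with the three tail roots $\varepsilon_{n-4}+\varepsilon_{n-2}$, $\varepsilon_{n-3}+\varepsilon_n$, $\varepsilon_{n-3}-\varepsilon_n$; the "deformation" of the last two $\beta_i$'s is forced, since $\varpi_{n-3}(h)=0$ is needed in $\h_\Lambda=\h'\oplus\Bbbk\mathcal{H}^{-1}(\varepsilon_n)$ but would otherwise equal $-(n-3)/2$. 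For $S^-$ I would take $\{\varepsilon_{n-3-k}-\varepsilon_k:1\le k\le(n-5)/2\}$. The Heisenberg sets $\Gamma_{\beta_i}=H_{\beta_i}$ are the standard largest ones, while $\Gamma_{\varepsilon_{n-4}+\varepsilon_{n-2}}$, $\Gamma_{\varepsilon_{n-3}\pm\varepsilon_n}$, and the $\Gamma_{\varepsilon_{n-3-k}-\varepsilon_k}$ would be built by hand from the root structure. The residual roots split into $T$ (of size $(n+5)/2$) and $T^*=\{\varepsilon_{n-3}-\varepsilon_k:1\le k\le n-2,\,k\ne n-3\}$.

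The verification of conditions (i)--(iv) and (vi) of Proposition \ref{propAP} is routine: (iv) and (vi) are partition and cardinality checks matching $|T|=(n+5)/2=|E(\pi')|=\mathrm{ind}\,\p_\Lambda$; (ii) and (iii) follow from Lemma \ref{HS}\,$ii)$--$iv)$ on the largest Heisenberg sets together with direct inspection of the hand-built ones; and (i) follows by ordering $S$ and a basis of $\h_\Lambda$ so that the matrix $(s_u(h_v))$ is block lower triangular with nonzero diagonal. The main obstacle is condition (v): for each $\alpha\in T^*$, one must show $x_\alpha\in ad\,\p_\Lambda(y)+\g_T$. I would apply Proposition \ref{condition(v)} with triples built from the split $\varepsilon_{n-3}-\varepsilon_k=(\varepsilon_{n-3}\pm\varepsilon_n)+(\mp\varepsilon_n-\varepsilon_k)$, combined with a suitable $\beta_i$ or with $\varepsilon_{n-4}+\varepsilon_{n-2}$; the auxiliary vectors $X,X',X''$ can be taken zero except when $k\le(n-7)/2$, in which case an additional bracket with $x_{-\varepsilon_{n-5-k}-\varepsilon_{n-3}}$ supplies the needed correction.

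Finally I would compute the improved upper bound of Section \ref{IUB}. For each of the $(n+5)/2$ elements $\gamma\in T$, I would determine the unique $s(\gamma)\in\mathbb{Q}S$ for which $\gamma+s(\gamma)$ vanishes on $\h_\Lambda$, and check that the resulting multiset $\{-(\gamma+s(\gamma)):\gamma\in T\}$ coincides with $\{\delta_\Gamma:\Gamma\in E(\pi')\}=\{-2\varpi_{n-3}\text{ (with multiplicity }(n-3)/2),\,-\varpi_{n-3},\,-(\varpi_{n-3}+\varpi_{n-1}+\varpi_n),\,-(\varpi_{n-1}+\varpi_n)\text{ (twice)}\}$. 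This establishes \eqref{eqIUB}, and Remark \ref{rqeqIUB} of subsection \ref{REG} then produces the Weierstrass section $y+\g_T$, whence the polynomiality of $Y(\p_\Lambda)=Sy(\p)$.
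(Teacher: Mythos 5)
Your proposal follows the paper's own proof essentially verbatim: the same sets $S^{\pm}$, the same hand-built Heisenberg sets, the same $T$ and $T^{*}$, the same use of Proposition \ref{condition(v)} (including the correction term $ad\,x_{-\ep_{n-5-k}-\ep_{n-3}}(y)$ for $k\le (n-7)/2$), and the same final matching of $\{-(\gamma+s(\gamma))\}_{\gamma\in T}$ against $\{\delta_{\Gamma}\}_{\Gamma\in E(\pi')}$ to establish \eqref{eqIUB}. The identification of $\varepsilon_{\Gamma_{n-3}}=1/2$ as the obstruction to the simpler route, and the resulting multiset of weights, both agree exactly with the paper.
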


\subsection{Weights and degrees of a set of generators.}

As in subsection \ref{wdcas8} we give below the weights and degrees of each element of a set $\{p_{\gamma};\;\gamma\in T\}$ of homogeneous and $\h$-weight algebraically independent generators of the polynomial algebra $Y(\p_{\Lambda})$. Recall that the weight $wt(p_{\gamma})$ of $p_{\gamma}$ is equal to $-(\gamma+s(\gamma))$ and the degree $\deg(p_{\gamma})$ of $p_{\gamma}$ is equal to $1+\lvert s(\gamma)\rvert$ and that we set $\gamma_i=\ep_{2i-1}-\ep_{2i}$ for all $1\le i\le(n-5)/2$.
\bigskip

\begin{tabular}{|c|c|c|}
\hline
$\gamma\in T$&$wt(p_{\gamma})$&$\deg(p_{\gamma})$\\
\hline
$\gamma_i$ &$-2\varpi_{n-3}$&$n-3+4i$\\
$1\le i\le[(n-3)/4]$&&\\
\hline
$\gamma_i$&$-2\varpi_{n-3}$&$3n-4i-7$\\
 $[(n-3)/4]+1\le i\le(n-5)/2$&&\\
\hline
$\gamma=\ep_{n-4}+\ep_{n-3}$&$-\varpi_{n-3}$&$(n-3)/2$\\
\hline
$\gamma=\ep_{n-4}-\ep_{n-2}$&$-2\varpi_{n-3}$&$n-1$\\
\hline
$\gamma=\ep_{n-3}+\ep_{n-1}$&$-(\varpi_{n-1}+\varpi_n)$&$(n-1)/2$\\
\hline
$\gamma=\ep_{n-1}-\ep_n$&$-(\varpi_{n-1}+\varpi_n)$&$(n+1)/2$\\
\hline
$\gamma=\ep_{n-1}-\ep_{n-2}$&$-(\varpi_{n-3}+\varpi_{n-1}+\varpi_n)$&$n$\\
\hline
\end{tabular}

\section{Case \ref{cas9} for type D.}\label{Cas9}

Here we consider the parabolic subalgebra $\p=\q_{s,\,\ell}$ of $\g$ simple of type ${\rm D}_n$ with $n$ odd, $n\ge 5$, $s$ odd et $\ell\in\mathbb N$ such that $s+2\ell\le n-2$ (note that in this case one has that $s+2\ell\neq n-3$, hence it does not coincide with some $\p_{\ell'}$). This corresponds to the case \ref{cas9} of subsection \ref{Mainres} that is, the parabolic subalgebra $\q_{s,\,\ell}$ of $\g$ associated with $\pi'=\pi\setminus\{\alpha_s,\,\alpha_{s+2},\,\ldots,\,\alpha_{s+2\ell},\,\alpha_{n-1},\,\alpha_n\}$.

When $s+2\ell<n-2$, there exists a connected component of $\pi'$ of type ${\rm A}_{n-2-s-2\ell}$ which we denote by $\pi'_2$. Then, when moreover $s\ge 3$, there exist two connected components of $\pi'$ of type ${\rm A}_k$ with $k\ge 2$, namely $\pi'_1$ of type ${\rm A}_{s-1}$ and $\pi'_2$ above. Denote by $\beta_{\pi'_k}\subset\beta_{\pi'}$ the Kostant cascade of the simple factor of $\g'$ associated with $\pi'_k$ for $k\in\{1,\,2\}$. 
We have that $\beta_{\pi'_1}=\{\beta'_i=\ep_i-\ep_{s+1-i}\mid 1\le i\le (s-1)/2\}$ and $\beta_{\pi'_2}=\{\beta''_i=\ep_{s+2\ell+i}-\ep_{n-i}\mid 1\le i\le(n-s-2\ell-2)/2\}$. 
Recall that $\beta^0_{\pi'}=\beta_{\pi'}\setminus(\beta_{\pi'}\cap\pi')$. Then $\beta^0_{\pi'}=\beta_{\pi'_1}\cup\beta_{\pi'_2}$.
We also have that (see subsection \ref{KcasBD}) $\beta^0_{\pi}=\beta_{\pi}\setminus(\beta_{\pi}\cap\pi)=\{\beta_i=\ep_{2i-1}+\ep_{2i}\mid 1\le i\le (n-1)/2\}$.
We set $$S^+=\{\beta_i;\;1\le i\le (n-3)/2,\,\tilde\beta_{(n-1)/2}=\beta_{(n-1)/2}-\alpha_{n-1}=\ep_{n-2}+\ep_n\},$$
$$S^-=-\beta^0_{\pi'}$$ and $S=S^+\cup S^-$.

\subsection{Conditions {\it (i)} to {\it (v)} of Proposition \ref{propAP}.}

For all $\beta_i\in S^+$ with $1\le i\le (n-3)/2$, we set $\Gamma_{\beta_i}=H_{\beta_i}\subset\Delta^+$ 
and for all $\gamma\in S^-$, we set $\Gamma_{\gamma}=-H_{-\gamma}\subset\Delta^-_{\pi'}$ with the notation of subsection \ref{HS}. Finally we set $\Gamma_{\ep_{n-2}+\ep_n}=\{\ep_{n-2}+\ep_n,\,\ep_{n-2}-\ep_{n-1},\,\ep_{n-1}+\ep_n\}$, $T^+=\{\beta_{(n-1)/2},\,\ep_{n-2}-\ep_n,\,\ep_{n-1}-\ep_n,\,\ep_{2i-1}-\ep_{2i},\;1\le i\le (n-3)/2\}$ and $T^-=-(\beta_{\pi'}\cap\pi')$. 
By construction every set $\Gamma_{\gamma}$, for $\gamma\in S$,  is a Heisenberg set with centre $\gamma$ such that, if $\gamma\in S^+$, then $\Gamma_{\gamma}\subset\Delta^+$ and if $\gamma\in S^-$, then $\Gamma_{\gamma}\subset\Delta^-_{\pi'}$. Moreover, for all $1\le i\le (n-1)/2$, we have that $\ep_{2i-1}-\ep_{2i}=\alpha_{2i-1}\in\beta_\pi\cap\pi$ (see subsection \ref{KcasBD}) and $H_{\alpha_{2i-1}}=\{\alpha_{2i-1}\}$. We observe that $H_{\beta_{(n-1)/2}}\sqcup H_{\alpha_{n-2}}=\Gamma_{\ep_{n-2}+\ep_n}\sqcup (T^+\cap H_{\beta_{(n-1)/2}})$. Then by $i)$ of Lemma \ref{HS}, the sets $T^+$, $T^-$ and the $\Gamma_{\gamma}$'s, $\gamma\in S$, are disjoint and one has that $\Delta^+=\sqcup_{\gamma\in S^+}\Gamma_{\gamma}\sqcup T^+$ and $\Delta^-_{\pi'}=\sqcup_{\gamma\in S^-}\Gamma_{\gamma}\sqcup T^-$. Then by setting $T^*=\emptyset$, condition $(iv)$ of Prop. \ref{propAP} holds, with $T=T^+\sqcup T^-$. One also deduces by $ii)$, $iii)$ and $iv)$ of Lemma \ref{HS} that conditions $(ii)$ and $(iii)$ of Prop. \ref{propAP} are satisfied.
Condition $(i)$ follows from the following Lemma.

\begin{lm}

$S_{\mid_{\h_{\Lambda}}}$ is a basis for $\h_{\Lambda}^*$.

\end{lm}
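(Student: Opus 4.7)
The plan is to follow the same template as Lemma \ref{baseFC} and Lemma \ref{BaseC}: exhibit orderings of $S$ and of a basis of $\h_\Lambda$ so that the evaluation matrix is block lower triangular with nonzero (in fact $\pm 1$) diagonal entries.

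First a dimension count. Since $n$ is odd and neither $\alpha_{n-1}$ nor $\alpha_n$ lies in $\pi'$, Section \ref{not} gives $\h_\Lambda = \h' \oplus \Bbbk h_0$ where $h_0 := \mathcal H^{-1}(\ep_n) = \tfrac12(\alpha_n^\vee - \alpha_{n-1}^\vee)$, so that $\dim \h_\Lambda = \lvert\pi'\rvert + 1 = n-\ell-2$. Summing $\lvert S^+\rvert = (n-3)/2 + 1$ and $\lvert S^-\rvert = (s-1)/2 + (n-s-2\ell-2)/2$ yields $\lvert S\rvert = n-\ell-2 = \dim \h_\Lambda$, which is the right cardinality for a basis.

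Next I would order $S$ as
\[
\beta_1,\ldots,\beta_{(n-3)/2},\; -\beta'_1,\ldots,-\beta'_{(s-1)/2},\; -\beta''_1,\ldots,-\beta''_{(n-s-2\ell-2)/2},\; \tilde\beta_{(n-1)/2},
\]
and take as ordered basis of $\h_\Lambda$ the concatenation of: the $(n-3)/2$ even-subscripted coroots $\alpha_2^\vee,\alpha_4^\vee,\ldots,\alpha_{n-3}^\vee$ of $\pi'$ (the only deleted simple root with even subscript is $\alpha_{n-1}$); the Lemma \ref{KCA} basis of $\pi'_1$; an analogous basis of $\pi'_2$ obtained by applying Lemma \ref{KCA} to the reverse labelling of the type $A_{n-s-2\ell-2}$ diagram $\pi'_2$, which produces a basis of globally odd-subscripted coroots (namely $\alpha_{n-2}^\vee,\alpha_{s+2\ell+2}^\vee,\ldots$); and finally $h_0$.

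The verification of lower-triangularity rests on four observations. Using $\beta_i = \varpi_{2i} - \varpi_{2i-2}$ (valid since $2i \le n-3$), one gets $\beta_i(\alpha_k^\vee) = \delta_{k,2i} - \delta_{k,2i-2}$, which vanishes whenever $k$ is odd or $k > 2i$, and $\beta_i(h_0) = 0$ since $\ep_n$ does not occur in $\beta_i$; hence the first $(n-3)/2$ rows give a lower bidiagonal block with $1$'s on the diagonal and are zero elsewhere. The roots $-\beta'_i$ are supported on $\ep_1,\ldots,\ep_s$, so they vanish on $h_0$ and on the odd-subscripted $\pi'_2$-coroots, and pair with the $\pi'_1$-basis to produce a lower triangular block with $-1$'s on the diagonal by Lemma \ref{KCA}. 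Symmetrically, each $-\beta''_i$ is supported on $\ep_{s+2\ell+1},\ldots,\ep_{n-1}$ and yields a lower triangular block with $-1$'s on the diagonal. Finally $\tilde\beta_{(n-1)/2}(h_0) = 1$, while the remaining nonzero pairings of $\tilde\beta_{(n-1)/2}$ (namely $\tilde\beta_{(n-1)/2}(\alpha_{n-3}^\vee) = -1$, and $\tilde\beta_{(n-1)/2}(\alpha_{n-2}^\vee) = 1$ when $s + 2\ell < n-2$) all lie strictly below the diagonal since $\tilde\beta_{(n-1)/2}$ is the last row.

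The main subtlety is that, because the $\beta_i$-block already consumes every even-subscripted coroot of $\pi'$, the $-\beta''_i$-block must be triangularised against globally odd-subscripted coroots of $\pi'_2$; this forces us to apply Lemma \ref{KCA} to $\pi'_2$ after reversing its labelling, a move that is legitimate by the reflection symmetry of type~$A$. Once that bookkeeping is in place, the diagonal of the full matrix reads $(1,\ldots,1,-1,\ldots,-1,-1,\ldots,-1,1)$, so the determinant is $\pm 1$ and $S_{\mid\h_\Lambda}$ is a basis of $\h_\Lambda^*$.
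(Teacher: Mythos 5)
Your proof is correct and follows essentially the same route as the paper's: the same ordering of $S$ (the $\beta_i$'s, then $-\beta'_i$'s, then $-\beta''_i$'s, then $\tilde\beta_{(n-1)/2}$), the same ordered basis of $\h_{\Lambda}$ (even-subscripted coroots, the Lemma \ref{KCA} bases of $\pi'_1$ and of $\pi'_2$ with reversed labelling, then $\mathcal H^{-1}(\ep_n)$), and the same block lower triangular matrix with diagonal entries $\pm 1$. The dimension count and the vanishing arguments justifying the zero blocks all match the paper's computation.
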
 

\begin{proof}

First we observe that $\h_{\Lambda}=\h'\oplus\Bbbk\mathcal H^{-1}(\ep_n)$ by what we said in Sect. \ref{not}. Hence $\dim\h_{\Lambda}=\dim\h'+1=n-\ell-3+1=n-\ell-2$.
We first verify that $\lvert S\rvert=(n-3)/2+1+(s-1)/2+(n-s-2\ell-2)/2=n-\ell-2=\dim\h_{\Lambda}$.

Then we order the elements $s_u$ of $S$ as follows :

$$\beta_1,\,\ldots,\,\beta_{(n-3)/2},\,-\beta'_1,\,\ldots,\,-\beta'_{(s-1)/2},\,-\beta''_1,\,\ldots,\,-\beta''_{(n-s-2\ell-2)/2},\,\ep_{n-2}+\ep_n$$
 Set $t=[(s+1)/4]$ and $t'=[(n-s-2\ell]/4]$.
For a basis $(h_v)$ of $\h_{\Lambda}$ we take, in this order,

$$\begin{array}{cc}\alpha_2^\vee,\,\alpha_4^\vee,\,\ldots,\,\alpha_{n-3}^\vee,\\
\,h'_1=\alpha_1^\vee,\,h'_2=\alpha_{s-2}^\vee,\ldots,h'_{2t-1}=\alpha_{2t-1}^\vee, h'_{2t}=\alpha_{s-2t}^\vee,\\
h''_1=\alpha_{n-2}^\vee,h''_2=\alpha_{s+2\ell+2}^\vee,\,\ldots,\,h''_{2t'-1}=\alpha_{n-2t'}^\vee,\,h''_{2t'}=\alpha_{s+2\ell+2t'}^\vee\\
\mathcal H^{-1}(\ep_n)\\
\end{array}$$

without repetitions for the $h'_j$'s and the $h''_j$'s.
Recall that $\beta_i=\varpi_{2i}-\varpi_{2i-2}$ (where $\varpi_0=0$) and Lemma \ref{KCA}. Then we obtain that

$$(s_u(h_v))_{1\le u,\,v\le n-2-\ell}=\begin{pmatrix} A&0&0&0\\
*&B&0&0\\
*&*&C&0\\
*&*&*&1\\
\end{pmatrix}$$
with $A=(\beta_u(\alpha_{2v}^\vee))_{1\le u,\,v\le (n-3)/2}$, resp. $B=(-\beta'_u(h'_v))_{1\le u,\,v\le(s-1)/2}$, and $C=(-\beta''_u(h''_v))_{1\le u,\,v\le (n-s-2\ell-2)/2}$, which are lower triangular matrices with $1$, resp. $-1$ on their diagonal. Hence the lemma.
\end{proof}

\subsection{Condition {\it (vi)} of Proposition \ref{propAP}.}\label{TDS}

Condition $(vi)$ of Prop. \ref{propAP} follows from the following Lemma.

\begin{lm}
We have that $\lvert T\rvert=\ind\p_{\Lambda}$.

\end{lm}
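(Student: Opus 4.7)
The plan is to compute both sides of the asserted equality separately and verify they agree. For $|T|$, since $T = T^+ \sqcup T^-$, I would count $|T^+|$ directly from the definition: the three explicit roots $\beta_{(n-1)/2}$, $\ep_{n-2}-\ep_n$, $\ep_{n-1}-\ep_n$, plus the $(n-3)/2$ simple roots $\alpha_{2i-1}$, give $|T^+| = (n+3)/2$. For $|T^-| = |\beta_{\pi'} \cap \pi'|$, I would inspect each connected component of $\pi'$: the component $\pi'_1$ of type ${\rm A}_{s-1}$ contributes no simple element to its Kostant cascade because the condition $\beta'_i = \ep_i - \ep_{s+1-i} \in \pi'_1$ forces $s+1-i = i+1$, impossible for $s$ odd; similarly for $\pi'_2$ of type ${\rm A}_{n-2-s-2\ell}$ where $\beta''_i \in \pi'_2$ would force the odd number $n-1-s-2\ell$ to be $2i$; and each of the $\ell$ middle ${\rm A}_1$ components $\{\alpha_{s+2j-1}\}$ contributes its unique simple root. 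This yields $|T^-| = \ell$, hence $|T| = (n+3+2\ell)/2$.

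For $\ind\p_\Lambda = |E(\pi')|$, I would first describe $i$ and $j$ on $\pi$ explicitly (following subsection \ref{ind}): $j$ is the identity except that it swaps $\alpha_{n-1}$ and $\alpha_n$ (since $n$ is odd); $i$ restricted to $\pi'_1$ flips $\alpha_t \leftrightarrow \alpha_{s-t}$, fixes each ${\rm A}_1$ component, and restricted to $\pi'_2$ sends $\alpha_{s+2\ell+k}$ to $\alpha_{n-1-k}$; for each $\alpha \in \pi \setminus \pi'$ one checks $j(\alpha) \notin \pi'$ and concludes $i(\alpha) = j(\alpha)$, so $i = j$ on $\{\alpha_s, \alpha_{s+2}, \ldots, \alpha_{s+2\ell}\}$ (where both act as identity) and $i$ swaps $\alpha_{n-1} \leftrightarrow \alpha_n$.

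I would then enumerate the $\langle ij\rangle$-orbits: the pair orbits $\{\alpha_t, \alpha_{s-t}\}$ for $1 \leq t \leq (s-1)/2$ coming from $\pi'_1$; singleton orbits for each of the $\ell+1$ deleted middle roots $\alpha_{s+2k}$ ($0 \leq k \leq \ell$); singleton orbits for the $\ell$ middle ${\rm A}_1$ roots $\alpha_{s+2j-1}$; pair orbits $\{\alpha_{s+2\ell+k}, \alpha_{n-1-k}\}$ for $1 \leq k \leq (n-2-s-2\ell)/2$ from $\pi'_2$ (sizes all equal to $2$ since $n-1-s-2\ell$ is odd). The point which could be mistaken is the behavior at $\alpha_{n-1}, \alpha_n$: both $i$ and $j$ swap them, so $ij$ \emph{fixes} each one individually, producing two singleton orbits rather than one orbit of size $2$. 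Summing,
\[
|E(\pi')| = \tfrac{s-1}{2} + (\ell+1) + \ell + \tfrac{n-2-s-2\ell}{2} + 2 = \tfrac{n+3+2\ell}{2},
\]
which matches $|T|$. The main (and essentially only) obstacle is the last point above about the pair $\{\alpha_{n-1}, \alpha_n\}$ splitting into two $\langle ij\rangle$-orbits; everything else is a parity and bookkeeping exercise.
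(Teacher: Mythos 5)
Your proposal is correct and follows essentially the same route as the paper: both sides are computed by enumerating the $\langle ij\rangle$-orbits in $E(\pi')$ and counting $T=T^+\sqcup T^-$ directly, arriving at $(n+3+2\ell)/2$ on each side. You merely spell out more of what the paper leaves to the reader (the explicit action of $i$ and $j$, the splitting of $\{\alpha_{n-1},\alpha_n\}$ into two singleton orbits, and the parity argument showing $\lvert\beta_{\pi'}\cap\pi'\rvert=\ell$), all of which is consistent with the paper's stated orbit list.
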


\begin{proof}
One easily checks that
$$\begin{array}{cc}
E(\pi')=\Bigl\{\Gamma_u=\{\alpha_u,\,\alpha_{s-u}\},\;1\le u\le (s-1)/2,\\
\,\Gamma_{s+2\ell+v}=\{\alpha_{s+2\ell+v},\,\alpha_{n-1-v}\},\,1\le v\le (n-s-2\ell-2)/2,\\
\;\Gamma_{s+k}=\{\alpha_{s+k}\},\,0\le k\le 2\ell,\\
\Gamma_{n-1}=\{\alpha_{n-1}\},\,\Gamma_n=\{\alpha_n\}\Bigr\}\end{array}$$
hence $\ind\p_{\Lambda}=\lvert E(\pi')\rvert=(s-1)/2+(n-s-2\ell-2)/2+2\ell+1+2=(n-3)/2+\ell+3=\lvert T^+\rvert+\lvert T^-\rvert=\vert T\rvert$.
\end{proof}

\subsection{} 

All conditions of Proposition \ref{propAP} are satisfied, hence setting $y=\sum_{\gamma\in S} x_{\gamma}$ and $h\in\h_{\Lambda}$ such that for all $\gamma\in S$,  $\gamma(h)=-1$, we obtain that $(h,\,y)$ is an adapted pair for $\p_{\Lambda}$. This is sufficient by Remark \ref{rqeqbounds} of subsection \ref{REG} to provide a Weierstrass section for coadjoint action of $\p_{\Lambda}$, by the following Lemma.

\begin{lm}

For every $\Gamma\in E(\pi')$, we have that $\varepsilon_{\Gamma}=1$.

\end{lm}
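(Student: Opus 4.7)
The plan is to verify, case by case on the list of $\langle ij\rangle$-orbits computed in Lemma~\ref{TDS}, that each $\Gamma\in E(\pi')$ fails at least one of the three conditions in the definition \eqref{epsilon} of $\varepsilon_\Gamma$, so that $\varepsilon_\Gamma=1$. Throughout I will use that for $\mathrm{D}_n$ with $n$ odd the involution $j$ is the identity on $\{\alpha_1,\ldots,\alpha_{n-2}\}$ and swaps $\alpha_{n-1},\alpha_n$; I will also use the explicit description of the generators of $\mathcal B_\pi$ and $\mathcal B_{\pi'}$ recalled in subsection~\ref{compepsilon}. From the latter, an element $d=\sum c_i\varpi_i$ with $c_i\in\mathbb N$ lies in $\mathcal B_\pi$ iff $c_i$ is even for every odd $i\in\{1,\ldots,n-2\}$ and $c_{n-1}=c_n$; a completely analogous constraint governs $\mathcal B_{\pi'}$ on each connected component of $\pi'$.

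First I dispose of the orbits $\Gamma_{n-1}=\{\alpha_{n-1}\}$ and $\Gamma_n=\{\alpha_n\}$: since $j(\alpha_{n-1})=\alpha_n$, we have $j(\Gamma_{n-1})=\Gamma_n\neq\Gamma_{n-1}$ and likewise for $\Gamma_n$, so $\varepsilon_{\Gamma_{n-1}}=\varepsilon_{\Gamma_n}=1$ directly from the ``otherwise'' clause in \eqref{epsilon}. All remaining orbits in $E(\pi')$ involve only simple roots among $\alpha_1,\ldots,\alpha_{n-2}$, hence satisfy $\Gamma=j(\Gamma)$, and for these I must exhibit the required failure of membership in $\mathcal B_\pi$ or $\mathcal B_{\pi'}$.

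For the two-element orbits $\Gamma_u=\{\alpha_u,\alpha_{s-u}\}$ with $1\le u\le (s-1)/2$, one checks that $s$ odd forces $u$ and $s-u$ to have opposite parities, so $d_{\Gamma_u}=\varpi_u+\varpi_{s-u}$ has an odd coefficient equal to $1$ at some odd index in $\{1,\ldots,n-2\}$; hence $d_{\Gamma_u}\notin\mathcal B_\pi$. The same parity argument works for $\Gamma_{s+2\ell+v}=\{\alpha_{s+2\ell+v},\alpha_{n-1-v}\}$: since $n,s$ are odd and $2\ell$ is even, the indices $s+2\ell+v$ and $n-1-v$ have opposite parities and both lie in $\{1,\ldots,n-2\}$, so again $d_\Gamma\notin\mathcal B_\pi$.

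Finally, for the singletons $\Gamma_{s+k}=\{\alpha_{s+k}\}$ with $0\le k\le 2\ell$, I split on the parity of $k$. When $k$ is even, $\alpha_{s+k}\notin\pi'$ and $d_{\Gamma_{s+k}}=\varpi_{s+k}$ with $s+k$ odd and $\le n-2$, so $\varpi_{s+k}\notin\mathcal B_\pi$. When $k$ is odd, $\alpha_{s+k}\in\pi'$ and is isolated in $\pi'$ (both simple-root neighbors have been deleted), so it forms a connected component of type $\mathrm A_1$; then $i(\alpha_{s+k})=\alpha_{s+k}$ and $d'_{\Gamma_{s+k}}=\varpi'_{s+k}\notin\mathcal B_{\pi'}$, since on an $\mathrm A_1$ component only $2\varpi'_{\alpha_{s+k}}$ belongs to $\mathcal B_{\pi'}$. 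The only potential obstacle is keeping track of the bookkeeping of which indices lie where, but once the parities and the descriptions of $\mathcal B_\pi$, $\mathcal B_{\pi'}$ are lined up, each verification is immediate; no delicate step arises.
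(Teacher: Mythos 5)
Your proof is correct and follows essentially the same case-by-case verification as the paper: for each orbit you exhibit, via the same parity arguments on the generators of $\mathcal B_{\pi}$ and $\mathcal B_{\pi'}$, that $d_{\Gamma}\notin\mathcal B_{\pi}$ or $d'_{\Gamma}\notin\mathcal B_{\pi'}$. The only (equally valid) variation is that for $\Gamma_{n-1}=\{\alpha_{n-1}\}$ and $\Gamma_n=\{\alpha_n\}$ you invoke $j(\Gamma_{n-1})=\Gamma_n\neq\Gamma_{n-1}$ to land in the ``otherwise'' clause, whereas the paper instead notes that $d_{\Gamma_{n-1}}=\varpi_{n-1}\notin\mathcal B_{\pi}$; both dispose of these orbits.
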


\begin{proof}

Recall the set $E(\pi')$ given in the proof of Lemma \ref{TDS}. Recall subsection \ref{compepsilon}. Set $1\le u\le (s-1)/2$. Then $d_{\Gamma_u}=\varpi_u+\varpi_{s-u}\not\in\mathcal B_{\pi}$ since $u$ and $s-u$ are of different parity. For the same reason, for $1\le v\le(n-s-2\ell-2)/2$, we have that $d_{\Gamma_{s+2\ell+v}}\not\in\mathcal B_{\pi}$. Now for $0\le k\le 2\ell$ and $k$ odd,
$d_{\Gamma_{s+k}}=\varpi_{s+k}\in\mathcal B_{\pi}$, but $d'_{\Gamma_{s+k}}=\varpi'_{s+k}\not\in\mathcal B_{\pi'}$ since $\alpha_{s+k}$ belongs to a connected component of $\pi'$ of type ${\rm A}_1$. If $0\le k\le 2\ell$ and $k$ even, then $\alpha_{s+k}\not\in\pi'$ and $d'_{\Gamma_{s+k}}=0\in\mathcal B_{\pi'}$ but $d_{\Gamma_{s+k}}=\varpi_{s+k}\not\in\mathcal B_{\pi}$. Finally $d_{\Gamma_{n-1}}\not\in\mathcal B_{\pi}$ and $d_{\Gamma_n}\not\in\mathcal B_{\pi}$. Hence the lemma.
\end{proof}

We then obtain the following Theorem.

\begin{thm}
Let $\g$ be a simple Lie algebra of type ${\rm D}_n$, with $n\ge 5$, $n$ odd and let $s,\,\ell$ be integers such that $s$ is odd and $s+2\ell\le n-2$. \par
Let $\q_{s,\,\ell}$ be the parabolic subalgebra of $\g$ associated with the subset $\pi'=\pi\setminus\{\alpha_s,\,\alpha_{s+2},\,\ldots,\,\alpha_{s+2\ell},\,\alpha_{n-1},\,\alpha_n\}$. Then there exists a Weierstrass section for coadjoint action of the canonical truncation of $\q_{s,\,\ell}$.

\end{thm}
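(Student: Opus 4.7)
The plan is to apply Proposition \ref{propAP} to produce an adapted pair $(h,y)$ for the canonical truncation of $\q_{s,\ell}$ and then to verify, via Remark \ref{rqeqbounds} of subsection \ref{REG}, that this adapted pair already delivers a Weierstrass section because the lower and upper bounds of \eqref{ch} coincide.

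First I would specify the data. Following the scheme that worked in Sect.~\ref{SC}, I would take $S^- = -\beta^0_{\pi'} = -(\beta_{\pi'_1} \sqcup \beta_{\pi'_2})$ and, for $S^+$, a slight deformation of $\beta^0_\pi$: namely $S^+ = \{\beta_i \mid 1 \le i \le (n-3)/2\} \cup \{\tilde\beta_{(n-1)/2} := \ep_{n-2}+\ep_n\}$, with $y = \sum_{\gamma \in S}x_\gamma$. The deformation is forced because $\h_\Lambda = \h' \oplus \Bbbk\mathcal H^{-1}(\ep_n)$ (Sect.~\ref{not}) and no root from $\beta^0_\pi$ detects the extra line. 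For each $\gamma \in (S^+ \setminus \{\tilde\beta_{(n-1)/2}\})$ I take $\Gamma_\gamma = H_\gamma$, for each $\gamma \in S^-$ I take $\Gamma_\gamma = -H_{-\gamma}$, and I set $\Gamma_{\tilde\beta_{(n-1)/2}} = \{\ep_{n-2}+\ep_n, \ep_{n-2}-\ep_{n-1}, \ep_{n-1}+\ep_n\}$. Finally $T^* = \emptyset$, $T^- = -(\beta_{\pi'} \cap \pi')$, and $T^+ = \Delta^+ \setminus \bigsqcup_{\gamma \in S^+}\Gamma_\gamma$, so that $T = T^+ \sqcup T^-$.

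Second I would verify conditions (i)--(vi) of Proposition~\ref{propAP}. Condition (iv) follows from the key identity $H_{\beta_{(n-1)/2}} \sqcup H_{\alpha_{n-2}} = \Gamma_{\tilde\beta_{(n-1)/2}} \sqcup (T^+ \cap H_{\beta_{(n-1)/2}})$ combined with item (i) of Lemma~\ref{HS}; conditions (ii) and (iii) follow from items (ii), (iii), (iv) of the same lemma applied to each of the three kinds of Heisenberg sets; condition (v) is vacuous. For (i), I would build an ordered basis of $\h_\Lambda$ consisting of the even coroots $\alpha_{2i}^\vee$ pairing triangularly with the $\beta_i$'s, the coroots of $\pi'_1$ and $\pi'_2$ that pair triangularly (by Lemma~\ref{KCA}) with $-\beta'_j$ and $-\beta''_k$, together with $\mathcal H^{-1}(\ep_n)$ which pairs non-trivially with $\tilde\beta_{(n-1)/2}$; the resulting matrix is block lower triangular with non-zero diagonal entries $\pm 1$. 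For (vi), a direct count gives $|T^+| = (n-3)/2+3$ and $|T^-| = \ell$, matching $\ind\q_{s,\ell,\Lambda} = |E(\pi')|$, where the orbits are the pairs $\{\alpha_u, \alpha_{s-u}\}$ for $1\le u\le (s-1)/2$, the pairs $\{\alpha_{s+2\ell+v}, \alpha_{n-1-v}\}$ for $1\le v\le (n-s-2\ell-2)/2$, the singletons $\{\alpha_{s+k}\}$ for $0 \le k \le 2\ell$, and $\{\alpha_{n-1}\}, \{\alpha_n\}$.

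Third, to upgrade the resulting adapted pair into a Weierstrass section via Remark \ref{rqeqbounds} of subsection \ref{REG}, I would show $\ep_\Gamma = 1$ for every $\Gamma \in E(\pi')$. In the pair cases a parity argument (the two subscripts have different parities) gives $d_\Gamma \notin \mathcal B_\pi$ using the Tables recalled in subsection~\ref{compepsilon}. For a singleton $\{\alpha_{s+2k-1}\}$, with $\alpha_{s+2k-1}$ in a component of $\pi'$ of type $A_1$, one has $d'_\Gamma = \varpi'_{s+2k-1} \notin \mathcal B_{\pi'}$; for $\{\alpha_{s+2k}\}$ with $\alpha_{s+2k} \notin \pi'$, $d_\Gamma = \varpi_{s+2k} \notin \mathcal B_\pi$ by parity; and finally $d_{\Gamma_{n-1}} = \varpi_{n-1}$ and $d_{\Gamma_n} = \varpi_n$ both lie outside $\mathcal B_\pi$. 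Consequently both bounds in \eqref{ch} coincide, $Sy(\q_{s,\ell}) = Y(\q_{s,\ell,\Lambda})$ is polynomial, and $y + \g_T$ is a Weierstrass section.

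The main obstacle is the construction of the adapted pair itself, and specifically enforcing condition~(i): the extra one-dimensional summand $\Bbbk\mathcal H^{-1}(\ep_n) \subset \h_\Lambda$ specific to type $D_n$ with $n$ odd and $\alpha_{n-1}, \alpha_n \notin \pi'$ cannot be separated from $\h'$ by any linear combination of roots from $\beta^0_\pi \cup (-\beta^0_{\pi'})$. Replacing $\beta_{(n-1)/2}$ by $\tilde\beta_{(n-1)/2} = \ep_{n-2}+\ep_n$ resolves this obstruction, at the cost of enlarging the Heisenberg set above it to $\{\tilde\beta_{(n-1)/2}, \ep_{n-2}-\ep_{n-1}, \ep_{n-1}+\ep_n\}$; the delicate point is then checking that this enlargement is still consistent with the cardinality equality $|T| = \ind\q_{s,\ell,\Lambda}$, which it is.
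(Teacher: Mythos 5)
Your proposal is correct and follows essentially the same route as the paper: the same deformed set $S^+$ with $\tilde\beta_{(n-1)/2}=\ep_{n-2}+\ep_n$, the same choice $S^-=-\beta^0_{\pi'}$, the same Heisenberg sets (including the three-element set over $\tilde\beta_{(n-1)/2}$ and the identity $H_{\beta_{(n-1)/2}}\sqcup H_{\alpha_{n-2}}=\Gamma_{\tilde\beta_{(n-1)/2}}\sqcup(T^+\cap H_{\beta_{(n-1)/2}})$), the same $T$, $T^*=\emptyset$, and the same verification that $\ep_{\Gamma}=1$ for all $\Gamma\in E(\pi')$ so that Remark \ref{rqeqbounds} of subsection \ref{REG} applies. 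The orbit count, the block-triangular basis argument for condition (i), and the cardinality check $\lvert T\rvert=(n-3)/2+\ell+3=\ind\p_{\Lambda}$ all agree with the paper's proof.
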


\begin{proof}

Indeed (with the notation in Prop. \ref{propAP}) $y+\g_T$ is a Weierstrass section for coadjoint action of the canonical truncation of $\q_{s,\,\ell}$ by Remark \ref{rqeqbounds} of subsection \ref{REG}.
\end{proof}

\subsection{Weights and degrees}

Here both bounds (see \eqref{ch} in Sect. \ref{bounds}) for ${\ch}(Y(\p_{\Lambda}))$ coincide and then $Y(\p_{\Lambda})$ is a polynomial algebra whose homogeneous and $\h$-weight generators have weights and degrees which can be easily computed.
To each $\Gamma\in E(\pi')$ is associated an homogeneous and $\h$-weight  generator of $Y(\p_{\Lambda})$ which has weight $\delta_{\Gamma}$ given by \eqref{poids} and a degree $\partial_{\Gamma}$ given by \eqref{degre} or by \eqref{degrebis} of Sect. \ref{bounds}.

Below  we give for completeness weights and degrees of a set of homogeneous and $\h$-weight algebraically independent generators of $Y(\p_{\Lambda})$, each of them corresponding to an $\langle ij\rangle$-orbit $\Gamma_r$ in $E(\pi')$.\smallskip

\begin{tabular}{|c|c|c|}
\hline
$\langle ij\rangle$-orbit in $E(\pi')$& Weight & Degree\\
\hline
$\Gamma_u=\{\alpha_u,\,\alpha_{s-u}\}$&$-2\varpi_s$&$s+1+2u$\\
$1\le u\le (s-1)/2$&&\\
\hline
$\Gamma_{s+2\ell+v}=\{\alpha_{s+2\ell+v},\,\alpha_{n-1-v}\}$& $-2(\varpi_{n-1}+\varpi_n)$&$n+3s+6\ell+2v$\\
$1\le v\le(n-s-2\ell-2)/2$&&\\
\hline
$\Gamma_{s+k}=\{\alpha_{s+k}\}$& $-2\varpi_{s+k}$&$s+k+1$\\
$0\le k\le 2\ell$,\,$k$ even&&\\
\hline
$\Gamma_{s+k}=\{\alpha_{s+k}\}$& $-\varpi_{s+k-1}-\varpi_{s+k+1}$&$2(s+k)$\\
$1\le k\le 2\ell-1$,\,$k$ odd&&\\
\hline
$\Gamma_{n-1}=\{\alpha_{n-1}\}$& $-\varpi_{n-1}-\varpi_n$&$(n-1)/2$\\
\hline
$\Gamma_{n}=\{\alpha_n\}$&$-\varpi_n-\varpi_{n-1}$&$(n+1)/2$\\
\hline

\end{tabular}

\subsection{}

\begin{Rqs}\rm

\begin{enumerate}

\item Consider now the parabolic subalgebra $\p=\q_{s,\,\ell}$ of $\g$ of type ${\rm D}_n$, with $s$ an even integer and assume that we have found an adapted pair $(h,\,y)\in\h_{\Lambda}\times\p_{\Lambda}^*$ for $\p_{\Lambda}$. Then the set $S$ cannot contain, as in the case $s$ odd and $n$ odd, the set $\{\beta_i\mid 1\le i\le [(n-3)/2]\}$, at least for $s/2\le[(n-3)/2]$. Indeed by \eqref{ch} of Sect. \ref{bounds}, one has that $-2\varpi_s\in\Lambda(\p)$ then necessarily $\varpi_s(h)=0\iff\beta_1+\ldots+\beta_{s/2}=0$ in contradiction with the fact that, for all $1\le i\le [(n-3)/2]$, one should have also that $\beta_i(h)=-1$. Moreover for $s=n-2$ (with $s$ even), the set $S=\{\beta_i;\;1\le i\le (n-4)/2,\,\tilde\beta_{(n-2)/2}=\ep_{n-3}+\ep_{n-1}\}\cup(-\beta_{\pi'}^0)$ is such that $S_{\mid\h_{\Lambda}}$ is not in general a basis for $\h_{\Lambda}^*$ (since for all $s\in S$, $s(\alpha_{n/2-1}^\vee)=0$
for $n\ge 8$).

\item Now consider in $\g$ simple of type ${\rm D}_n$, the parabolic subalgebra $\p=\q_{s,\,\ell}$ with $s$ odd and $n$ even, and take for $S$ a similar set as in case $s$ odd and $n$ odd, namely $S=\{\beta_i;\;1\le i\le (n-4)/2,\,\tilde\beta_{(n-2)/2}=\ep_{n-3}+\ep_{n-1}\}\cup(-\beta_{\pi'}^0)$. Then either $S_{\mid\h_{\Lambda}}$ is not a basis for $\h_{\Lambda}^*$ or in case it is, then take the Heisenberg sets similar as those taken in case $n$ and $s$ odd (with $\Gamma_{\tilde\beta_{(n-2)/2}}=\{\tilde\beta_{(n-2)/2},\,\ep_{n-3}\pm\ep_n,\,\ep_{n-1}\mp\ep_n,\,\ep_{n-3}-\ep_{n-2},\,\ep_{n-2}+\ep_{n-1}\}$). Take also $T$ and $T^*$ disjoint sets such that conditions $(iv)$ and $(vi)$ of Proposition \ref{propAP} hold. But then condition $(v)$ of Proposition \ref{propAP} is not satisfied.

\item Finally consider a parabolic subalgebra $\p$ of $\g$ simple of type ${\rm B}_n$ or ${\rm C}_n$, associated with the subset $\pi'=\pi\setminus\{\alpha_s,\,\alpha_{s+2},\,\ldots,\,\alpha_{s+2\ell},\,\alpha_n\}$ for $s+2\ell\le n-1$. Then a similar construction as this made for $\q_{s,\,\ell}$ for $\g$ simple of type ${\rm D}_n$ with $n$ and $s$ odd does not give a regular element $y$ in $\p^*_{\Lambda}$.

\end{enumerate}
\end{Rqs}

\bigskip

\end{document}